\documentclass[pdflatex,sn-mathphys-ay]{sn-jnl}

\usepackage[utf8]{inputenc}
\usepackage{quiver}
\usepackage{mathrsfs}
\usepackage{hyperref}
\usepackage{amsfonts}
\usepackage{amsthm}
\usepackage{amssymb}
\usepackage{amscd}
\usepackage{amstext}
\usepackage{blkarray}
\usepackage{amsmath}
\usepackage{mathtools}
\usepackage{graphicx}
\usepackage[all]{xy}
\xyoption{2cell}
\UseTwocells
\usepackage{stackengine}
\usepackage{calrsfs}
\usepackage{framed}
\usepackage{url}
\usepackage{float}
\usepackage{tikz}
\usetikzlibrary{positioning,intersections,cd,matrix, arrows, backgrounds}
\usetikzlibrary{shapes,shapes.geometric,shapes.misc}
\usetikzlibrary{decorations}
\usetikzlibrary{decorations.pathreplacing}
\newcommand\Ar[3]{\ar[from={#1}, to={#2}, #3]}
\newcommand\Nname[1]{|[alias=#1]|}
\usepackage[OT2,T1]{fontenc}
\usepackage{enumitem}
\newcommand{\refitem}[1]{\hyperref[#1]{#1}}
\usepackage{anyfontsize}
\usepackage{aliascnt}
\usepackage[noabbrev,capitalise]{cleveref}
\let\emptyset\varnothing
\usepackage{booktabs}
\newtheorem{thm}{Theorem}[section]
\crefname{thm}{Theorem}{Theorems}

\newaliascnt{prp}{thm}
\newtheorem{prp}[prp]{Proposition}
\aliascntresetthe{prp}
\crefname{prp}{Proposition}{Propositions}

\newaliascnt{lem}{thm}
\newtheorem{lem}[lem]{Lemma}
\aliascntresetthe{lem}
\crefname{lem}{Lemma}{Lemmas}

\newaliascnt{cor}{thm}
\newtheorem{cor}[cor]{Corollary}
\aliascntresetthe{cor}
\crefname{cor}{Corollary}{Corollaries}

\theoremstyle{definition}
\newaliascnt{dfn}{thm}
\newtheorem{dfn}[dfn]{Definition}
\aliascntresetthe{dfn}
\crefname{dfn}{Definition}{Definitions}

\newaliascnt{exm}{thm}
\newtheorem{exm}[exm]{Example}
\aliascntresetthe{exm}
\crefname{exm}{Example}{Examples}

\newaliascnt{rmk}{thm}
\newtheorem{rmk}[rmk]{Remark}
\aliascntresetthe{rmk}
\crefname{rmk}{Remark}{Remarks}

\newaliascnt{ntn}{thm}
\newtheorem{ntn}[ntn]{Notation}
\aliascntresetthe{ntn}
\crefname{ntn}{Notation}{Notations}

\newaliascnt{cvn}{thm}

\aliascntresetthe{cvn}
\crefname{cvn}{Convention}{Conventions}

\crefname{equation}{equation}{equations}
\Crefname{equation}{Equation}{Equations}

\crefname{figure}{Fig.}{Figs.}

\newcommand\al{\alpha}
\newcommand\be{\beta}
\newcommand\ga{\gamma}
\newcommand\de{\delta}
\newcommand\ep{\varepsilon}
\newcommand\ze{\zeta}
\newcommand\et{\eta}
\renewcommand\th{\theta}
\newcommand\io{\iota}

\newcommand\la{\lambda}
\newcommand\ro{\rho}
\newcommand\si{\sigma}

\newcommand\ta{\tau}
\newcommand\up{\upsilon}

\newcommand\ps{\psi}

\newcommand\om{\omega}
\newcommand\Om{\Omega}

\newcommand\Ps{\Psi}

\renewcommand\top{\operatorname{top}}
\newcommand\Ab{\operatorname{Ab}}
\newcommand\soc{\operatorname{soc}}
\newcommand\Ker{\operatorname{Ker}}
\newcommand\Cok{\operatorname{Coker}}
\newcommand\cok{\operatorname{coker}}
\renewcommand\Im{\operatorname{Im}}

\newcommand\Hom{\operatorname{Hom}}

\newcommand\rad{\operatorname{rad}}

\newcommand\End{\operatorname{End}}

\renewcommand\mod{\operatorname{mod}}

\newcommand\supp{\operatorname{supp}}

\newcommand{\udim}{\operatorname{\underline{dim}}\nolimits}

\newcommand\prj{\operatorname{prj}}

\newcommand\p{\operatorname{\mathbf{p}}\nolimits}

\newcommand\rank{\operatorname{rank}}

\newcommand\Mat{\operatorname{Mat}}

\newcommand\calA{{\mathcal A}}

\newcommand\calC{{\mathcal C}}

\newcommand\calF{{\mathcal F}}

\newcommand\calK{{\mathcal K}}
\newcommand\calL{{\mathcal L}}

\newcommand\bbA{\mathbb{A}}
\newcommand\bbI{\mathbb{I}}

\newcommand\bbZ{\mathbb{Z}}

\newcommand\sfJ{{\mathsf J}}
\newcommand\sfP{{\mathsf P}}
\newcommand\sfQ{{\mathsf Q}}
\newcommand\sfM{{\mathsf M}}

\newcommand{\bfg}{\boldsymbol{g}}
\newcommand{\bsa}{\boldsymbol{a}}

\newcommand\op{^{\mathrm{op}}}

\newcommand\inv{^{-1}}

\renewcommand\implies{\text{$\Rightarrow$}\ }

\newcommand\iso{\cong}
\newcommand\ds{\oplus}

\newcommand\udl{\underline}
\newcommand\ovl{\overline}
\newcommand\Ds{\bigoplus}
\newcommand\DS{\bigoplus\limits}

\def\dsm#1,#2..#3{\bigoplus_{{#1}={#2}}^{#3}}
\def\sm#1,#2..#3{\sum_{{#1}={#2}}^{#3}}

\newcommand\id{1\kern-.25em{\text{{\rm l}}}}

\newcommand\isoto{\ \raise.8ex\hbox{$^{\sim}$}\kern-.7em\hbox{$\to$}\ }

\newcommand\Cdot{\raisebox{1pt}{\scalebox{0.4}{$\bullet$}}}
\newcommand\down{_{\Cdot}}
\renewcommand\up{^{\Cdot}}

\newcommand\ya[1]{\xrightarrow{#1}}
\newcommand\blank{\operatorname{-}}

\def\repr[#1;#2;#3;#4;#5]{
\left(
\begin{matrix}#1\\#2\end{matrix}
#3
\begin{matrix}#4\\#5\end{matrix}
\right)}

\newcommand\mat[1]{\begin{matrix} #1 \end{matrix}}
\newcommand\pmat[1]{\begin{pmatrix} #1 \end{pmatrix}}
\newcommand\bmat[1]{\begin{bmatrix} #1 \end{bmatrix}}
\newcommand\smat[1]{\begin{smallmatrix} #1 \end{smallmatrix}}
\newcommand\sbmat[1]{\left[\begin{smallmatrix} #1 \end{smallmatrix}\right]}

\newcommand\bfP{\mathbf{P}}

\renewcommand\k{\Bbbk}

\newcommand{\bfa}{\mathbf{a}}
\newcommand{\bfb}{\mathbf{b}}
\newcommand{\bfc}{\mathbf{c}}
\newcommand{\bfd}{\mathbf{d}}
\newcommand{\bfzero}{\mathbf{0}}

\newcommand{\src}{\operatorname{sc}}
\newcommand{\snk}{\operatorname{sk}}

\newcommand{\lex}{\mathrm{lex}}

\newcommand{\plex}{\preceq_{\mathrm{lex}}}

\newcommand{\Tr}{\operatorname{Tr}}

\newcommand{\Ac}{\operatorname{Ac}}
\newcommand{\Pzero}{\bmat{\sfP_{b_1,a_1} & \mathbf{0}\\\mathbf{0}&\mathbf{0}}}
\newcommand{\pr}{\operatorname{pr}}

\renewcommand{\vec}[1]{\smash{\ensurestackMath{\stackengine{1pt}{#1}{\scriptscriptstyle\sim}{U}{c}{F}{F}{S}}}
  \vphantom{#1}
}

\newcommand{\sub}{\mathrm{C}}

\newcommand{\ddset}{\mathord{\Downarrow}}
\newcommand{\uuset}{\mathord{\Uparrow}}
\newcommand{\dset}{\mathord{\downarrow}}
\newcommand{\uset}{\mathord{\uparrow}}

\newcommand{\bbIu}{\bbI_{\mathrm{u}}}
\newcommand{\bbId}{\bbI_{\mathrm{d}}}
\newcommand{\bbIr}{\bbI_{\mathrm{r}}}
\newcommand{\bbIl}{\bbI_{l}}
\newcommand{\crt}{\operatorname{crt}}
\newcommand{\zp}{\mathrm{zp}}

\DeclarePairedDelimiterX\setc[2]{\{}{\}}{\,#1 \;\delimsize\vert\; #2\,}

\raggedbottom

\newcommand{\thistheoremname}{}
\theoremstyle{plain}
\newtheorem*{genericthm*}{\thistheoremname}
\newenvironment{namedthm*}[1]
  {\renewcommand{\thistheoremname}{#1}
   \begin{genericthm*}}
  {\end{genericthm*}}

\begin{document}

\title{Interval multiplicities of persistence modules}

\author[1,2,3]{\fnm{Hideto} \sur{Asashiba}}\email{asashiba.hideto@shizuoka.ac.jp}

\author*[2]{\fnm{Enhao} \sur{Liu}}\email{liu.enhao.b93@kyoto-u.jp}

\affil[1]{\orgdiv{Department of Mathematics, Faculty of Science}, \orgname{Shizuoka University}, \orgaddress{\street{836 Ohya, Suruga-ku}, \city{Shizuoka}, \postcode{4228529}, \country{Japan}}}

\affil[2]{\orgdiv{Kyoto University Institute for Advanced Study}, \orgname{Kyoto University}, \orgaddress{\street{Yoshida Ushinomiya-cho, Sakyo-ku}, \city{Kyoto}, \postcode{6068501}, \country{Japan}}}

\affil[3]{\orgdiv{Osaka Central Advanced Mathematical Institute}, \orgaddress{\street{3-3-138 Sugimoto, Sumiyoshi-ku}, \city{Osaka}, \postcode{5588585}, \country{Japan}}}

\abstract{
For any persistence module $M$ over a finite poset $\bfP$, and any interval $I$ of $\bfP$, we give a formula for the multiplicity $d_M(V_I)$ of the interval module $V_I$
in the indecomposable decomposition of $M$ in terms of the ranks of matrices consisting of structure linear maps of $M$.
This generalizes the corresponding formula for 1-dimensional persistence modules.
As applications, the formula enables us to compute the maximal interval-decomposable direct summand of $M$, to decide whether $M$ is interval-decomposable, and to detect properties determined by prescribed interval summands without decomposing $M$.
We also give criteria, in terms of top and socle supports along minimal projective resolutions and injective coresolutions of $M$, restricting the intervals that can occur as direct summands of $M$ and thereby reduce the number of intervals to be computed in practice.

Moreover, the formula tells us which morphisms of $\bfP$ are essential to compute $d_M(V_I)$.
This leads to the notion of an order-preserving map $\ze \colon Z \to \bfP$ essentially covering $I$, for which the multiplicity is preserved under the induced restriction functor $R \colon \mod \bfP \to \mod Z$.
When $Z$ is of Dynkin type $\bbA$, also known as a zigzag poset, this allows the multiplicity to be computed more efficiently from the filtration level of topological spaces, without computing all structure linear maps of $M$.

Finally, we give a formula for $d_M(V_I)$ in terms of a projective (or injective) (co)presentation of $M$.
In the 2D-grid case, this is more practical since such resolutions can be computed from the filtration level of topological spaces.
}

\keywords{multi-parameter persistence, interval multiplicities, essential covers, zigzag persistence, persistent Betti numbers, presentation matrices}

\pacs[MSC Classification]{16G20, 16G70, 55N31, 62R40}

\maketitle

\section{Introduction}
Persistent homology analysis has been invented and well-developed in recent decades, regarded as one of the main tools in topological data analysis (TDA for short) \cite{MR1949898}. The standard workflow in the persistent homology analysis is that we first construct a filtration of complexes (or more generally, topological spaces) from the input data, and then record the homological cycles (holes) appearing in the filtration, not only the number but also the time of cycles when they get born and vanish.

In more detail, a \emph{filtration of topological spaces} can be defined as a (covariant) functor $\mathcal{F}\colon \bfP\to \mathrm{Top}$ from a poset $\bfP$ as a category to the category of topological spaces $\mathrm{Top}$ (morphisms are continuous maps). If we let $G$ be an abelian group and $H_{q}(\mbox{-}; G)\colon \mathrm{Top}\to \Ab$ be the $q$-th homology functor from $\mathrm{Top}$ to the category of abelian groups, then $H_{q}(\mbox{-}; G)\circ \mathcal{F}\colon \bfP\to \Ab$ is called the $q$-th \emph{persistent homology}. In most situations, we would let $G$ be a field $\k$ and require the tameness on the filtration, meaning that the poset $\bfP$ is finite and each topological space $\mathcal{F}(x)$ has finite-dimensional $q$-th homology (see~\cite{cohen-steinerStabilityPersistenceDiagrams2007}). Hence the $q$-th persistent homology $H_{q}(\mbox{-}; \k)\circ \mathcal{F}\colon \bfP\to \mod \k$ becomes a functor from the poset as a category to the category $\mod \k$ of finite-dimensional $\k$-vector spaces.

As a mathematical generalization, it is natural to forget about the process of taking homology and directly consider the (covariant) functor $M\colon \bfP\to \mod \k$, known as the definition of persistence module (over $\bfP$) in TDA community nowadays. This point of view makes it possible to study persistence modules in the context of representation theory. By convention, we call $H_{\star}(\mbox{-}; \k)\circ~\mathcal{F}$ (resp. $M$) the \emph{one-parameter persistent homology} (resp. \emph{1-dimensional persistence module}) if $\bfP$ is a totally ordered set. In this case, the construction of the filtration depends only on a single parameter. With the study of {\em zigzag persistence}, representations of a zigzag partially ordered set (\emph{zigzag poset} for short, including totally ordered set as a special case)
are proposed and called {\em zigzag modules}~\cite{carlssonZigzagPersistence2010,botnanAlgebraicStabilityZigzag2018}.
This type of persistence modules can be also regarded as representations of a Dynkin quiver of type $\bbA$, and they are uniquely determined by all intervals of underlying ordered sets~\cite{crawley-boeveyDecompositionPointwiseFinitedimensional2015,carlssonZigzagPersistence2010,botnanIntervalDecompositionInfinite2017}, hence we can visualize the persistent homology by drawing the persistence diagram or persistence barcodes, presenting the multiset of intervals~\cite{MR2121296}.

Nevertheless, in some applications, the construction of a filtration depends on multiple parameters, causing the underlying poset $\bfP$ to no longer be of Dynkin type $\bbA$. Hence, it is also necessary to study the persistence module for different types of poset.
In the context of multi-parameter persistent homology, for example,
the product poset of $d$ posets of Dynkin type $\bbA$ for some $d > 1$ is commonly considered and called the $d$D-grid (see Definition~\ref{dfn:2D-grid} for the 2D-grid as an example) \cite{deyComputingBottleneckDistance2018}. Except for only a few cases,
the category of $d$-dimensional persistence modules has infinitely many indecomposables up to isomorphism if $d > 1$ \cite{leszczynskiRepresentationTypeTensor1994,bauerCotorsionTorsionTriples2020}. In these cases, dealing with all indecomposable persistence modules is very difficult and is usually inefficient.

For the practical analysis, analog to the one-parameter persistence case, one can also restrict to the well-defined interval modules in the general poset setting as intervals encode lifetimes of topological features emerging from data and admit simple characterizations. On the other hand, the multiplicity of interval modules plays a key role in relating other invariants. For example, the interval rank invariants defined in~\cite{asashiba2024interval} can be interpreted as the multiplicity of some interval module after the restriction.
Therefore, computing the multiplicity of each interval summand\footnote{A direct summand is sometimes called just a summand for short in this paper.} of a given persistence module over $\bfP$ becomes a central task.

\subsection{Notation conventions}

Throughout this paper, we fix a field $\k$, and
all vector spaces are assumed to be over $\k$,
and the word ``linear'' always means ``$\k$-linear''.
The category of finite-dimensional vector spaces is denoted by $\mod \k$. We always assume the tameness on the filtration. For each positive integer $n$, we set $[n]\coloneqq \{1, 2, \dots, n\}$.

We also fix a finite poset $\bfP$ and regard it as a category in an obvious way, and for any $x, \, y \in \bfP$ with $x \le y$, a unique morphism $x \to y$ is denoted by $p_{y,x}$.
Then the incidence category $\k[\bfP]$ of $\bfP$ is defined
as a linearization of the category $\bfP$ (see Definition \ref{dfn:inc-cat}).
Each functor $F \colon \bfP \to \mod \k$ is uniquely extended to a linear functor
$\bar{F}\colon \k[\bfP] \to \mod \k$.
Therefore, we identify $F$ with $\bar{F}$, and denote it simply by $F$.

Let $\calC$ be a linear category with only a finite number of objects.
Then covariant (resp.\ contravariant) functors $\calC \to \mod \k$
are called finite-dimensional left (resp.\ right) \emph{modules} over $\calC$
or shortly left (resp.\ right) $\calC$-modules, the category of which
is denoted by $\mod \calC$ (resp.\ $\mod \calC\op$).
We usually consider finite-dimensional left modules and call them simply modules unless otherwise stated. In this paper, modules over the incidence category of a finite poset will be called persistence modules. By the Krull-Schmidt Theorem, every persistence module $M$ is uniquely decomposed into indecomposables up to isomorphism, which gives the multiplicity of each indecomposable $L$, denoted by $d_M(L)$, in the decomposition of $M$ (see~\cref{thm:KS}).

A full subposet $I$ of $\bfP$ is called an \emph{interval} if
it is convex in $\bfP$ and connected (see Definition \ref{dfn:ss-intervals}).
The set of all intervals of $\bfP$ is denoted by $\bbI$.
Each $I \in \bbI$ defines an indecomposable $\k[\bfP]$-module $V_I$ with support $I$,
which is called an interval module (see Definition \ref{dfn:intv}).
A persistence module is said to be \emph{interval-decomposable}
if it is isomorphic to the direct sum of a finite number of interval modules. In what follows, we call $d_{M}(L)$ the interval multiplicity of $L$
in $M$ if $L$ is an interval module.

The following is necessary to state our main results.

\begin{ntn}
\label{ntn: notations in Intro}
\begin{enumerate}
\item Let $x\in \bfP$, and $I$ an interval of $\bfP$.
We set $\uset x\coloneqq \{y \in \bfP \mid x \le y\}$ (resp.\ $\dset x\coloneqq \{y \in \bfP \mid y \le x\}$), and call it the \emph{up-set} (resp.\ \emph{down-set}) of $x$.
In turn, we set
\[
\begin{aligned}
\uset I\coloneqq \bigcup_{x \in I} \uset x\ \ (\text{resp.}\
\dset I\coloneqq \bigcup_{x \in I} \dset x),\ \text{and}\ \uuset  I\coloneqq \uset I\setminus I\ \ (\text{resp.}\ \ddset  I\coloneqq \dset I\setminus I),
\end{aligned}
\]
and call them the \emph{up-set} (resp.\ \emph{down-set}) of $I$, and
the \emph{proper up-set} (resp.\ \emph{proper down-set}) of $I$, respectively.

\item For any totally ordered set $T = (T, \preceq)$,
we denote by $\sub_2 T$ the set of all subsets of $T$ consisting of exactly two elements. For any $\bfa = \{i, j\} \in \sub_2 T$ with $i \prec j$ in $T$,
we set $\udl{\bfa}\coloneqq i$ (resp. $\ovl{\bfa}\coloneqq j$).
Then we can write $\bfa = \{\udl{\bfa},\, \ovl{\bfa}\}$.

Now, after giving total orders on $\src(I)$ and $\snk(I)$,
for any $\bfa \in \sub_2\src(I)$ (resp.\ $\bfb \in \sub_2\snk(I)$),
we set $\vee'\bfa \coloneqq \src (\uset \udl{\bfa} \cap \uset \ovl{\bfa})$ (\text{resp.\ } $\wedge'\, \bfb \coloneqq \snk (\dset \udl{\bfb} \cap \dset \ovl{\bfb}$),
and call it the \emph{pre-join} (resp.\ \emph{pre-meet}) of $\bfa$ (resp.\ $\bfb$).
We then set $\src_{1}(I)$ to be the disjoint union of all pre-joins of the two-element subsets of $\src(I)$. Namely,
\[
\begin{aligned}
\src_{1}(I)&\coloneqq\bigsqcup\limits_{\bfa \in \sub_2\src(I)}\kern -1em \vee'\bfa
= \setc*{\bfa_c\coloneqq(\bfa, c)}{\bfa \in \sub_2\src(I),\, c \in \vee'\bfa},\\
\text{and similarly, }\ \snk_{1}(I)&\coloneqq\bigsqcup\limits_{\bfb \in \sub_2\snk(I)}\kern -1.2em \wedge'\bfb = \setc*{\bfb_d\coloneqq (\bfb, d)}{\bfb \in \sub_2\snk(I),\, d \in \wedge'\bfb}.
\end{aligned}
\]
For example in \cref{fig:explain-of-labels}, for $\bfa \coloneqq \{a_2, a_3\}\in \sub_2\src(I)$ (with the additional total order $\preceq$) we have $\udl{\bfa} = a_2$ and $\ovl{\bfa} = a_3$. Moreover, the element $\{a_2, a_3\}_{x}$ is minimal in $\uset \udl{\bfa} \cap \uset \ovl{\bfa}$.

\item If $\src(\uuset I)\neq \varnothing$, then for each $a'\in \src(\uuset I)$, we have $\src(I)\cap \dset a'\neq \varnothing$ because $a' \in \uset I$. Fixing one element in $\src(I)\cap \dset a'$ for each $a'\in \src(\uuset I)$ yields a map $\bfc\colon \src(\uuset I)\to \src(I)$. We call such $\bfc$ a \emph{choice map}. Dually, if $\snk(\ddset I)\neq \varnothing$, then for each $b'\in \snk(\ddset I)$, we have $\snk(I)\cap \uset b'\neq \varnothing$ because $b' \in \dset I$. Fixing one element $b\in \snk(I)\cap \uset b'$ for each $b'\in \snk(\ddset I)$ yields another choice map $\bfd\colon \snk(\ddset I)\to \snk(I)$ that sends $b'$ to $b$. See \cref{fig:explain-of-labels} for an illustration of such an $a'\in \src(\uuset I)$ and a choice of $a_1\in \src(I)$ such that $a_1\leq a'$.
\begin{figure}[ht]
  \centering
  \includegraphics[width=0.45\textwidth]{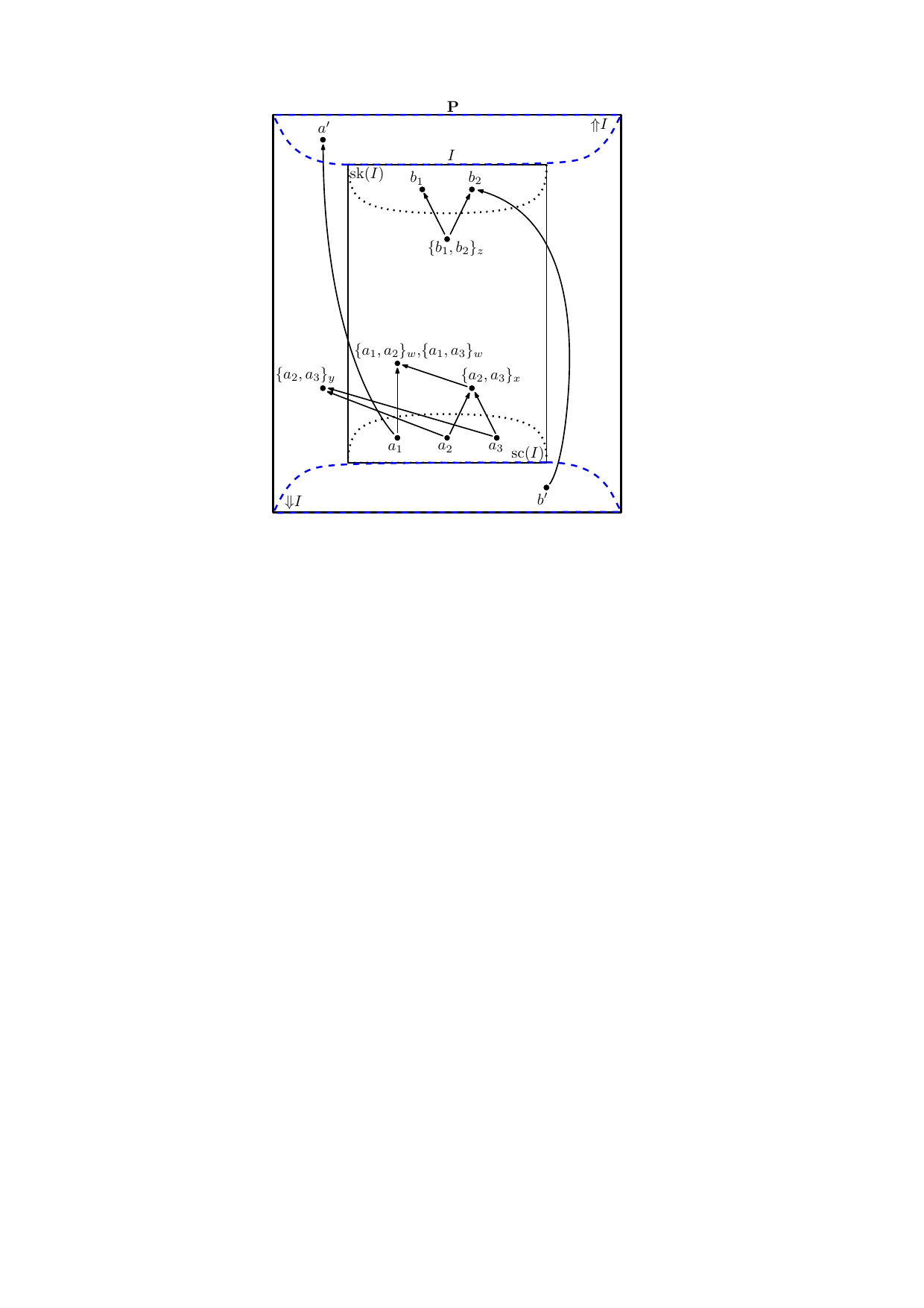}
  \caption{Illustration of notations}
  \label{fig:explain-of-labels}
\end{figure}

\end{enumerate}
\end{ntn}

As a remark in~\cref{fig:explain-of-labels}, there are two elements $x$ and $y$ in $\vee' \{a_2, a_3\}$, labeled as $\{a_2, a_3\}_{x}$ and $\{a_2, a_3\}_{y}$ in $\src_1(I)$, respectively. We address that $\src_1(I)$ may contain some elements which are not in $I$. For example, $y$ is not an element of $I$ in this illustration. We also note that the element $w\notin \vee' \{a_2, a_3\}$ since $w$ is not minimal in $\uset\, a_2 \cap \uset\, a_3$, but $w$ is in both $\vee' \{a_1, a_2\}$ and $\vee' \{a_1, a_3\}$. By this we write $\{a_1, a_2\}_w$ and $\{a_1, a_3\}_w$ in $\src_1(I)$, standing for $w\in I$.

\subsection{Purposes}
In the standard one-parameter persistent homology, the multiplicity of an interval can be computed by taking ranks (a.k.a., persistent Betti numbers) along some larger intervals and then operating an alternating sum of the ranks by the inclusion-exclusion principle. This computation is well-known as the formula of the persistent Betti numbers and the multiplicity in one-parameter persistent homology (see \cite[Chapter VII]{edelsbrunner2010computational}).

More precisely, if we let the poset $\bfP$ be the set $[n]$ together with the natural number ordering, $\calF$ a $\bfP$-filtration, and $M \coloneqq H_{q}(\mbox{-}; \k)\circ~\mathcal{F}$ the $q$-th persistent homology, then the multiplicity of each interval $I \coloneqq \{x\in \bfP \mid s\leq x\leq t\}\subseteq \bfP$ ($[s, t]$ for short) appearing in the $q$-th persistence barcodes of $M$ is given by
\begin{equation}
\label{eq:1-para-filt}
\mu_M(I) = \rank M(p_{t,s}) - \rank M(p_{t,s-1}) - \rank M(p_{t+1,s}) + \rank M(p_{t+1,s-1})
\end{equation}
where $\mu_M(I) \coloneqq d_M(V_I)$ denotes the multiplicity of $I$, and $\rank M(p_{t,s})$ denotes the rank of the linear map $M(p_{t,s}) \colon M(s) \to M(t)$. As a demonstration, in \cref{fig:intro} we consider the multiplicity of interval $[3, 4]$. The intervals that appear in the right-hand side of \eqref{eq:1-para-filt} are illustrated in the violet color. It is straightforward to see that $\rank M(p_{4,3}) = 2$, $\rank M(p_{4,2}) = \rank M(p_{5,3}) = 1$, and $\rank M(p_{5,2}) = 0$, thus $\mu_M(I) = 0$ follows by \eqref{eq:1-para-filt}. The reader can similarly check by \eqref{eq:1-para-filt} that multiplicities of intervals $[2,4]$ and $[3,5]$ are both $1$, and other intervals have zero multiplicities.

\begin{figure}[ht]
  \centering
  \includegraphics[width=\textwidth]{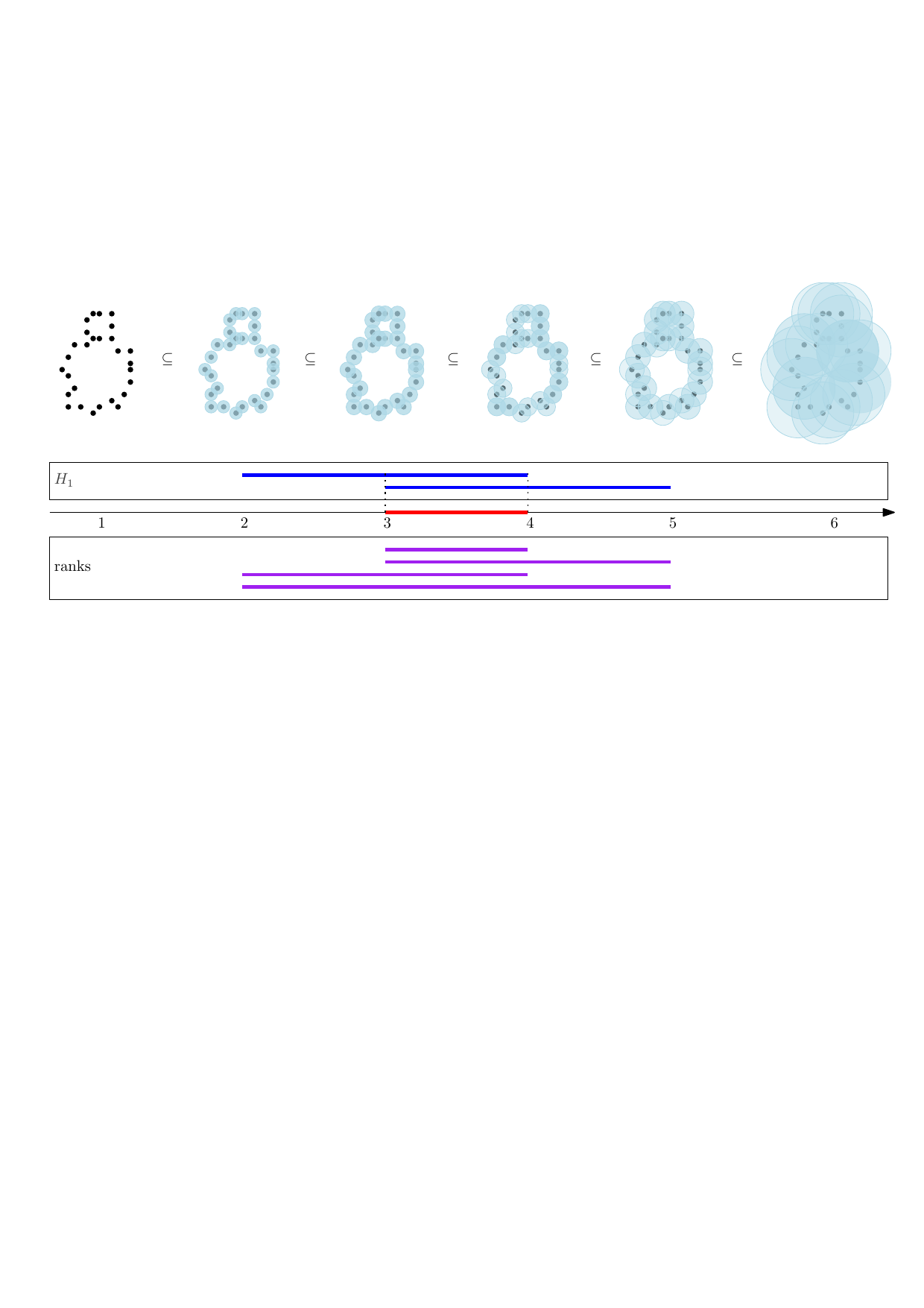}
  \caption{An illustration of the formula of persistent Betti numbers and multiplicities in one-parameter persistent homology. The barcodes of the $1$-st persistent homology are shown in blue. The red bar $[3,4]$ is the interval that we would like to demonstrate the computation of its multiplicity. The violet bars are intervals along which we take the ranks to recover the multiplicity.}
  \label{fig:intro}
\end{figure}

We recall that meanings of the multiplicity and  the rank are different. The multiplicity of an interval $[s, t]$ in persistence barcodes indicates the number of generators of homology that are newly born at $s$ (do not exist before $s$) and die at $t$ (do not exist after $t$), persisting from $s$ to $t$, while the rank along $[s, t]$ only counts the number of generators of homology that persist from $s$ to $t$ without caring those generators whether are newly born or eventually die at endpoints. In other words, the rank (or persistent Betti number) along an interval $[s, t]$ only needs the information of $[s,t]$, but the multiplicity needs extra information of $[s,t]$ that is recorded in a larger interval $[s-1, t+1]$ which contains $[s,t]$.

In the multi-parameter persistence or the persistence over general poset context, the persistent homology obtained is generally not interval-decomposable.
Therefore, we need a formula for the multiplicity $d_M(L)$ of $L$ in $M$ for any indecomposable direct summand $L$ of a given persistence module $M$ in general cases. This formula is given in \cite[Theorem 3]{Asashiba2017} (see Theorem \ref{thm:dMV} for detail)
in terms of the dimensions of Hom-spaces $\Hom_{\k[\bfP]}(X, M)$ with
$X \in \{L, E_L, \ta\inv L\}$, where there exists a minimal left almost split morphism from $L$ to $E_L$ (see also \cite[Corollary 2.3]{dowborMultiplicityProblemIndecomposable2007}).
Thus, to apply this formula, we have to know a minimal left almost split morphism $f \colon L \to E_L$,
$\ta\inv L = \Cok f$ and the dimensions of Hom-spaces above, which are in general hard to do if
the computation of the Auslander-Reiten quiver (AR-quiver for short) is not easy.
For example, in the finite 2D-grid case, namely when $\bfP$ is the product poset $G_{m,n}\coloneqq [m] \times [n]$ for some $m,n \ge 2$, let $L$ be an interval module and $M$ a $\k[\bfP]$-module.
Then by \cite[Propositions 42 and 43]{Asashiba2017},
the time complexity of the computation of $\{f, E_L, \ta\inv L\}$
and that of multiplicity $d_{L}(M)$ was given as
$O(mnz^\om)$ and $O(((\dim M)^\om + mn)z^\om)$, respectively,
where $z\coloneqq \min\{m,n\}$ and $\om < 2.373$ is the matrix multiplication exponent.

However, when the AR-quiver is known, the general formula can
easily be applied.  For example, the AR-quiver of $\k[\bfP]$ for $\bfP = [n]$ with $n \ge 1$
is well-known, and the formula \eqref{eq:1-para-filt} follows from Theorem \ref{thm:dMV}
by computing the dimensions $\dim\Hom_{\k[\bfP]}(X, M)$ in terms of the structure linear maps of $M$
as was shown in \cite[Formula (9) in Example 3]{Asashiba2017}.
Hence Theorem \ref{thm:dMV} can be seen as a generalization of \eqref{eq:1-para-filt}.

Concerning the formula \eqref{eq:1-para-filt},
the following general question naturally arises:
For a given interval $I$,
which structure of $M$ determines the multiplicity $d_M(V_I)$?
If we find a formula of multiplicities of intervals in terms of the structure linear maps of $M$ analogous to \eqref{eq:1-para-filt}, then it will give an answer to the question.
However, this kind of formula is not yet given explicitly in the literature.
The main purpose of this paper is to give such an explicit formula for the interval multiplicity in the finite poset case.
This makes it clear which structure of $M$ is essential
to determine the multiplicity $d_M(V_I)$, and leads to an idea of essential cover explained in  the next subsection \ref{ssec:contrib} (3).
On the other hand, in \cite[Lemma~4.8]{asashiba2024interval}, we developed a way (Lemma \ref{lem:dim-Hom-coker} in the present paper) to compute $\dim\Hom_{\k[\bfP]}(X, M)$ in terms of the structure linear maps of $M$ by using a projective presentation of $X$.
Thus our purpose can be achieved by computing $E_{V_I}$ and $\ta\inv V_I$ as mentioned above,
and to compute projective presentations of $V_I, E_{V_I}$ and $\ta\inv V_I$
for any interval $I$ in general.
A priori, our proposed formula is different from the M{\"o}bius inversion formula of signed interval multiplicities (resp. generalized persistence diagrams) and interval rank invariants given in~\cite{asashiba2024interval} (resp. generalized rank invariants given in~\cite{kim2021generalized}) because the interval multiplicities and the signed interval multiplicities do not coincide in general when the persistence module $M$ is not interval-decomposable.

\subsection{Our contributions}
\label{ssec:contrib}

(1) We provide an explicit formula for computing the multiplicities of interval summands of a given persistence module over arbitrary finite poset, at the algebraic level
(Theorem \ref{thm:gen-formula-unified}). The formula only depends on some of the structure linear maps inside the persistence module. It turns out that the task of computing the interval multiplicity converts to the task of computing the rank of some matrices.

\begin{namedthm*}{Main result A (\cref{thm:gen-formula-unified})}
\label{thm:main-result-formula-int-multiplicity}
Let $M \in \mod \k[\bfP]$, and $I$ an interval of $\bfP$. Then
\begin{align}
\label{eq:int-multiplicity}
\scalebox{1}{$
d_M(V_I) = \rank
\begin{blockarray}{cccc}
& \scalebox{0.8}{$\src(I)$} & \scalebox{0.8}{$\snk(\ddset I)$} & \scalebox{0.8}{$\snk_1(I)$} \\
\begin{block}{c[ccc]}
  \scalebox{0.8}{$\src_1(I)$} & \sfM_1 & \mathbf{0} & \mathbf{0}  \\
  \scalebox{0.8}{$\src(\uuset I)$} & \sfM_2 & \mathbf{0} & \mathbf{0}  \\
  \scalebox{0.8}{$\snk(I)$} & \sfM_3 & \sfM_4 & \sfM_5 \\
\end{block}
\end{blockarray}
- \rank
\bmat{
  \sfM_1 & \mathbf{0} & \mathbf{0}  \\
  \sfM_2 & \mathbf{0} & \mathbf{0}  \\
  \mathbf{0} & \sfM_4 & \sfM_5 \\
}
$}
\end{align}
holds. Here the $(\bfa_c,a)$-entry of $\sfM_1$ (\textnormal{resp.} the $(b, \bfb_d)$-entry of $\sfM_5$) is given by
\begin{align*}
\scalebox{1}{$
(\sfM_1)_{\bfa_c,\, a}\coloneqq
\begin{cases}
M_{c,a} & (a = \udl{\bfa}),\\
-M_{c,a} & (a = \ovl{\bfa}),\\
\mathbf{0} & (a \not\in \bfa),
\end{cases}
$}
\ \left(\textnormal{resp. }
\scalebox{1}{$
(\sfM_5)_{b,\, \bfb_d}\coloneqq
    \begin{cases}
    M_{b,d} & (b=\udl{\bfb}),\\
    -M_{b,d} & (b=\ovl{\bfb}),\\
    \mathbf{0},\,& (b \not\in \bfb),
    \end{cases}
    $}
    \right)
\end{align*}
and
\begin{align*}
\sfM_2\coloneqq \bmat{\de_{a,\, \bfc(a')}M_{a',\, \bfc(a')}}_{(a',a) \in \src(\uuset I) \times \src(I)}\\
\bigg(\textnormal{resp. } \sfM_4\coloneqq \bmat{\de_{b,\, \bfd(b')}M_{\bfd(b'),\, b'}}_{(b,b') \in \snk(I) \times \snk(\ddset I)}\bigg).
\end{align*}
$\sfM_3$ is given by a choice of pair $(b_j, a_i)\in \snk(I)\times \src(I)$ with $b_j\ge a_i$. Namely,
the $(b_j, a_i)$-entry is the only non-zero entry of $\sfM_3$ and it equals to $M_{b_j, a_i}$. Index sets of matrices in formula~\eqref{eq:int-multiplicity} are allowed to be empty. In this case, we remove rows (columns) of matrices corresponding to empty index sets.
\end{namedthm*}

We remark that the proposed formula~\eqref{eq:int-multiplicity} is a generalization of the formula~\eqref{eq:1-para-filt} in the one-parameter persistence case
(see Remark \ref{rmk:1par-multpar}).

(2) Because the 2D-grid poset setting is well motivated
and has more interest for TDA researchers,
especially in the multi-parameter persistent homology, we also give the explicit formula in this poset setting (Theorem \ref{thm:gen-formula}). In this case, we set
$$
\begin{aligned}
\src^\circ_1(I)= \{a_{i,i+1}\coloneqq a_i \vee a_{i+1} \mid i \in [k-1]\},\ \snk^\circ_1(I)= \{b_{i,i+1}\coloneqq b_i \wedge b_{i+1} \mid i \in [l-1]\}.
\end{aligned}
$$
We note that $\src^\circ_1(I)\subseteq \src_1(I)$ and $\snk^\circ_1(I)\subseteq \snk_1(I)$. Then we have the following.

\begin{namedthm*}{Main result B (\cref{thm:gen-formula})}
\label{thm:formula-int-multiplicity}
Let $M \in \mod \k[\bfP]$, and $I$ an interval of $\bfP$. Then
\begin{align}
\label{eq:main-result-int-multiplicity-2D}
\scalebox{0.9}{$
d_M(V_I) = \rank
\begin{blockarray}{cccc}
& \scalebox{0.8}{$\src(I)$} & \scalebox{0.8}{$\snk(\ddset I)$} & \scalebox{0.8}{$\snk^\circ_1(I)$} \\
\begin{block}{c[ccc]}
  \scalebox{0.8}{$\src^\circ_1(I)$} & \sfM_1 & \mathbf{0} & \mathbf{0}  \\
  \scalebox{0.8}{$\src(\uuset I)$} & \sfM_2 & \mathbf{0} & \mathbf{0}  \\
  \scalebox{0.8}{$\snk(I)$} & \sfM_3 & \sfM_4 & \sfM_5 \\
\end{block}
\end{blockarray}
- \rank
\bmat{
  \sfM_1 & \mathbf{0} & \mathbf{0}  \\
  \sfM_2 & \mathbf{0} & \mathbf{0}  \\
  \mathbf{0} & \sfM_4 & \sfM_5 \\
}
$}
\end{align}
holds. Here $\sfM_1$ and $\sfM_5$ are given by
\[
\sfM_1\coloneqq
\bmat{
M_{a_{12},a_1} & -M_{a_{12},a_{2}}\\
&M_{a_{23},a_{2}} & -M_{a_{2,3},a_{3}}\\
&& \ddots & \ddots\\
&&& M_{a_{k-1,k},a_{k-1}} & -M_{a_{k-1,k},a_{k}}
}
\]
and
\[
\sfM_5\coloneqq \bmat{
M_{b_1,b_{12}} \\
-M_{b_2,b_{12}}&M_{b_2,b_{23}} \\
& -M_{b_3,b_{23}}& \ddots \\
&&\ddots& M_{b_{l-1},b_{l-1,l}}\\
&&&-M_{b_{l},b_{l-1,l}}
}.
\]
$\sfM_2$, $\sfM_4$ are given by
\begin{align*}
\sfM_2\coloneqq \bmat{\de_{a,\, \bfc(a')}M_{a',\, \bfc(a')}}_{(a',a) \in \src(\uuset I) \times \src(I)},\ \sfM_4\coloneqq \bmat{\de_{b,\, \bfd(b')}M_{\bfd(b'),\, b'}}_{(b,b') \in \snk(I) \times \snk(\ddset I)},
\end{align*}
respectively. $\sfM_3$ is given by a choice of pair $(b_j, a_i)\in \snk(I)\times \src(I)$ with $b_j\ge a_i$. Namely,
the $(b_j, a_i)$-entry is the only non-zero entry of $\sfM_3$ and it equals to $M_{b_j, a_i}$. Index sets of matrices in formula~\eqref{eq:main-result-int-multiplicity-2D} are allowed to be empty. In this case, we remove rows (columns) of matrices corresponding to empty index sets.
\end{namedthm*}

We provide~\cref{exm:2Dgrid} of computing the interval multiplicity in the 2D-grid case by utilizing the formula~\eqref{eq:main-result-int-multiplicity-2D}.

(3) As we saw above, one can compute interval multiplicities from the persistent homology (persistence module) thus obtained. However, computing persistent homology directly from an arbitrary filtration of topological spaces is generally inefficient. To address this, we introduce the \emph{essential-cover} technique, which computes the invariants by focusing on the essential structure of poset $\bfP$ and building a new filtration indexed by another poset. This approach subsequently enables the use of efficient computational algorithms of standard one-parameter or zigzag persistence if the newly constructed filtration is indexed by the poset of Dynkin type $\bbA$, hence making it possible to compute the interval multiplicity starting from the filtration level.

Roughly speaking, the essential cover $\ze\colon Z\to \bfP$ is an order-preserving map, and we say that $\ze$ \emph{essentially covers} an interval $I$ (\textnormal{resp.} \emph{relative to compression systems}) if morphisms in $\bfP$ that appear in formula~\eqref{eq:int-multiplicity} have preimages in $Z$, subject to some mild technical conditions. We refer the reader to~\cref{dfn:ess-cov2} for the detailed definition. Then we have the next main result.

\begin{namedthm*}{Main result C (\cref{thm:ess-cover-equality})}
\label{thm:intro3}
Let $M \in \mod \k[\bfP]$, and $I$ an interval of $\bfP$. If $\ze\colon Z\to \bfP$ essentially covers $I$, then we have
\begin{align}
d_M(V_{I}) = \bar{d}_{R(M)}(R(V_{I})),\label{eq:ess-cov-a}
\end{align}
where $R$ denotes the restriction functor induced by $\ze$, and $\bar{d}_N(L)$ denotes the maximal number of copies of $L$ that can be taken as a direct summand of $N$ such that no further copies of $L$ remain in the complement as direct summands.
\end{namedthm*}

We remark that in \eqref{eq:ess-cov-a}, If $L$ is indecomposable, then $\bar{d}_N(L)$ is just the usual multiplicity of $L$ in $N$. Note that it may happen that $R(V_I)$ is decomposable.
This is why $\bar{d}$ is used here instead of $d$ (see~\cref{rmk:snake}).

For every interval of $\bfP$, \cref{eq:ess-cov-a} provides us with a method to transfer the computation of the interval multiplicity to the computation of corresponding multiplicity over another ``essential'' poset $Z$. When $\bfP$ is a special type of poset, the ``essential'' poset $Z$ can be taken as either a single zigzag poset or a directed tree formed by connecting several zigzag posets. This makes it possible to utilize algorithms designed for computing zigzag persistence (for example, \cite{deyFastComputationZigzag2022,milosavljevicZigzagPersistentHomology2011,carlssonZigzagPersistentHomology2009}) to compute interval multiplicities. The idea of utilizing the computation of zigzag persistence for computing invariants has also been considered in the literature. For example, \citet{deyComputingGeneralizedRanks2024} recently achieve computing the generalized rank invariant by the unfolding process.

Our last main result gives a formula for computing interval multiplicities of a persistence module by utilizing its (minimal) projective presentation or injective copresentation, without knowing the structure (linear maps) of persistence module over arbitrary finite posets. Notably, the computation of the minimal (co)presentations of 2-parameter persistent homology has been actively studied in the literature, and many fast algorithms are currently available for this purpose. \cite{lesnickComputingMinimalPresentations2022} first develop a way of computing the minimal projective presentation of 2-parameter persistent homology. Later, \cite{Kerber-Rolle2021,fugacciCompression2parameterPersistent2023} improve the pioneering work of Lesnick and Wright by some techniques such as multi-chunk~\cite{fugacciChunkReductionMultiparameter2019}. Regarding the minimal injective copresentation, \cite{bauerEfficientTwoparameterPersistence2023} propose a cohomological algorithm for computing minimal free resolutions of 2-parameter persistent cohomology.

Let us denote by $\sfP$ the extension of the \emph{Yoneda embedding} $Y\up \colon \k[\bfP\op] \to \prj \k[\bfP]$, $x \mapsto P_x\coloneqq \k[\bfP](x, \blank)$, where $P_x$ denotes the projective indecomposable $\k[\bfP]$-module at $x$. Similarly, by $\sfP'$ we denote the extension of the Yoneda embedding $Y\down \colon \k[\bfP] \to \prj (\k[\bfP\op])$,
$x \mapsto P'_x\coloneqq \k[\bfP](\blank, x)$. See~\cref{cor:Yoneda} for details.

\begin{namedthm*}{Main result D (\cref{thm:formula-pp(M)-icp(ass),thm:formula-icp(M)-pp(ass)})}
\label{thm:intro4}
Let $I$ be an interval of $\bfP$. If $M \in \mod \k[\bfP]$ has
a projective presentation
$$
\Ds_{j\in [n]} P_{y_j} \ya{\sfP(\al)} \Ds_{i\in [m]} P_{x_i} \to M \to 0,
$$
where $\sfP(\al)$ is called the {\it presentation matrix} of $M$, and $\al$ denotes a matrix whose entries are morphisms in $\k[\bfP]$ and column indices are $(x_1,\ldots,x_m)\eqqcolon x$. Then we have the following formula for $d_M(V_I)$:
\begin{align}
d_M(V_I)
&= \rank \left[\begin{array}{c|c|cc}
 \sfP'(\bfg_1)(x) & \bfzero & \sfP'(\src_{1}(I) \ds \src(\uuset I))(\al) & \bfzero\\
 \hline
\sfP'(\bfg_3)(x)
& \sfP'(\bfg_2)(x) &
\bfzero & \sfP'(\snk(I))(\al)
 \end{array}\right] \nonumber\\
 &\quad - \rank \left[\begin{array}{c|c|cc}
 \sfP'(\bfg_1)(x) & \bfzero & \sfP'(\src_{1}(I) \ds \src(\uuset I))(\al) & \bfzero\\
 \hline
\bfzero
& \sfP'(\bfg_2)(x) &
\bfzero & \sfP'(\snk(I))(\al)
 \end{array}\right]. \label{eq:intro-thm4}
 \end{align}
Here, all block matrices in~\eqref{eq:intro-thm4} are defined in~{\rm \cref{rmk:order-smnds}}, and can be fast written down by~{\rm \cref{prp:fast-way-block-matrices}}. Note that for the 2D-grid case, we can replace $\src_1(I)$ with $\src_1^\circ(I)$ in~\eqref{eq:intro-thm4}.

Dually, if $M$ has an injective copresentation
$$
0 \to M \to \Ds_{i\in [m']} Q_{x'_i}\ya{\sfQ(\al')} \Ds_{j\in [n']} Q_{y'_j},
$$
where $\sfQ(\al')$ is called the copresentation matrix of $M$, and $\al'$ denotes a matrix whose entries are morphisms in $\k[\bfP]$ and row indices are $(x'_1,\ldots,x'_{m'})\eqqcolon x'$. Then we have the following formula for $d_M(V_I)$:
$$
\begin{aligned}
d_M(V_I)
&= \rank \left[\begin{array}{c|c|cc}
 \sfP(\bfg_1)(x') & \bfzero & \sfP(\src(I))(\al') & \bfzero\\
 \hline
\sfP(\bfg_3)(x')
& \sfP(\bfg_2)(x') &
\bfzero & \sfP(\snk(\ddset I) \ds \snk_1(I))(\al')
 \end{array}\right]\\
 &\quad - \rank \left[\begin{array}{c|c|cc}
 \sfP(\bfg_1)(x') & \bfzero & \sfP(\src(I))(\al') & \bfzero\\
 \hline
\bfzero
& \sfP(\bfg_2)(x') &
\bfzero & \sfP(\snk(\ddset I) \ds \snk_1(I))(\al')
 \end{array}\right].
 \end{aligned}
$$
Note that
for the 2D-grid case, we can replace $\snk_1(I)$ with $\snk_1^\circ(I)$.
\end{namedthm*}

The formulas we present are general and apply to all intervals of a poset. However, as the size of poset increases, the number of its intervals also grows. In practical computations, it is therefore inefficient to iterate over every interval. For this reason, in~\cref{sect:Reducing candidates of the interval direct summands} we provide a criterion in~\cref{lem:critical-i}: given a persistence module $M$, all of its interval summands are constrained by the tops of the projective modules appearing in a minimal projective resolution of $M$ and the socles of the injective modules appearing in a minimal injective coresolution of $M$. This means that it suffices to consider what we will call a critical set of intervals; every interval with nonzero multiplicity can occur only within such a set. In practical computations, this allows us to rule out many intervals before applying the formulas (see also~\cref{prp:criterion-int-sum}).

Finally, we will provide several examples to demonstrate the use of essential-cover technique. In~\cref{exm:ess-cover-filt} we show how to compute the interval multiplicity in 2D-grid case from the level of filtration. \cref{exm:ess-cover-filt-2} shows that in some cases, the essential cover of intervals starts from a directed tree formed by connecting several zigzag posets, not a single zigzag poset. In Examples~\ref{exm:int-rk_D_4_cases} and \ref{exm:int-rk_D_4_cases_2nd} we compute the interval multiplicity in another type of posets, namely the posets of Dynkin type $\mathbb{D}$. Furthermore, we investigate the computation in \emph{bipath posets}, the posets that always possess the interval decomposability studied initially by~\citet{aoki2023summand}. We propose an alternative way of computing the bipath persistence diagram from a given bipath filtration. Compared with the original algorithm provided in \cite{aokiBipathPersistence2024}, an advantage of our approach is that we do not need to do the basis changes at the global minimum and maximum of bipath poset.

\subsection{Organization}
We outline the paper as follows. In Section~\ref{section-2}, we will introduce some preliminaries. In Section~\ref{section-3}, we will give the formula for computing the interval multiplicities in the general finite poset setting (\cref{ssec:formula-gen-case}),
and particularly in the 2D-grid setting (\cref{ssec:2Dgrid-case}). \ref{ssec:2Dgrid-case} has a simpler formula and easier to grasp than \ref{ssec:formula-gen-case}.
The reader may read \ref{ssec:2Dgrid-case} first by looking at
Example \ref{exm:2Dgrid} to have a rough outline.
It contains enough information to apply the formula for 2D-grids.
The details of proofs written in \ref{ssec:formula-gen-case} can be read afterward. In Section~\ref{section-4}, we develop the essential-cover technique for the sake of practical data analysis. In~\cref{sec:multi-by-pp}, we give formulas for computing interval multiplicities by using (co)presentations. In~\cref{section-5}, we show some examples of the use of essential-cover technique in different types of underlying posets.

\section{Preliminaries}
\label{section-2}
Throughout this paper, $\k$ is a field, $\bfP = (\bfP, \le)$ is a finite poset.
The category of finite-dimensional $\k$-vector spaces is denoted by $\mod \k$.

\begin{dfn}
\label{dfn:left-right-mod}
A $\k$-linear category $\calC$ is said to be {\em finite}
if it has only finitely many
objects and for each pair $(x,y)$ of objects,
the Hom-space $\calC(x,y)$ is finite-dimensional.

Covariant functors $\calC \to \mod \k$
are called {\em left $\calC$-modules}.
They together with natural transformations between them as morphisms form a $\k$-linear category, which is denoted by $\mod \calC$.

Similarly, contravariant functors $\calC \to \mod \k$ are called
\emph{right $\calC$-modules},
which are usually identified with covariant functors
$\calC\op \to \mod \k$.
The category of right $\calC$-modules is denoted by $\mod \calC\op$.

We denote by $D$ the usual $\k$-duality $\Hom_\k(\blank, \k)$,
which induces the duality functors
$\mod \calC \to \mod \calC\op$ and $\mod \calC\op \to \mod \calC$.
\end{dfn}

\begin{dfn}
\label{dfn:inc-cat}
The poset $\bfP$ is regarded as a category as follows.
The set $\bfP_0$ of objects is defined by $\bfP_0\coloneqq \bfP$.
For each pair $(x,y) \in \bfP \times \bfP$,
the set $\bfP(x,y)$ of morphisms from $x$ to $y$
is defined by $\bfP(x,y)\coloneqq \{p_{y,x}\}$ if $x \le y$,
and $\bfP(x,y)\coloneqq \emptyset$ otherwise, where
we set $p_{y,x}\coloneqq (y,x)$.
The composition is defined by $p_{z,y}p_{y,x} = p_{z,x}$
for all $x, y, z \in \bfP$ with $x \le y \le z$.
The identity $\id_x$ at an object $x \in \bfP$ is
given by $\id_x = p_{x,x}$.
\begin{enumerate}
\item
The \emph{incidence category} $\k[\bfP]$ of $\bfP$ is
defined as the $\k$-linearization of the category $\bfP$.
Namely, it is
a $\k$-linear category defined as follows.
The set of objects $\k[\bfP]_0$ is equal to $\bfP$,
for each pair $(x,y) \in \bfP \times \bfP$, the set of morphisms $\k[\bfP](x,y)$ is the vector space with basis $\bfP(x,y)$; thus it is a one-dimensional vector space $\k p_{y,x}$ if $x \le y$, or zero otherwise. The composition is defined as the $\k$-bilinear extension of that of $\bfP$. Note that $\k[\bfP]$ is a finite $\k$-linear category.

\item
Covariant ($\k$-linear) functors $\k[\bfP] \to \mod\k$ are called
\emph{persistence modules} over or indexed by $\bfP$.
\end{enumerate}
In the sequel,
we set $[\le]_\bfP\coloneqq \{(x,y) \in \bfP \times \bfP \mid x \le y\}$,
and $A\coloneqq \k[\bfP]$ (therefore, $A_0 = \bfP$),
and so the category of finite-dimensional persistence modules is denoted by $\mod A$.
\end{dfn}

\begin{dfn}
\label{dfn:ss-intervals}
Let $I$ be a nonempty full subposet of $\bfP$.
\begin{enumerate}
\item
For any $(x, y) \in [\le]_\bfP$, we set $[x,y]\coloneqq \{z \in \bfP \mid x \le z \le y\}$, and call it the \emph{segment} from $x$ to $y$ in $\bfP$.
\item
A \emph{source} (resp.\ \emph{sink}) of $I$ is nothing but a minimal (resp.\ maximal) element in $I$.
The set of all sources (sinks) of $I$ is denoted by $\src(I)$
(resp.\ $\snk(I)$).
If $I$ has the maximum (resp.\ minimum) element, then it is denoted by $\max(I)$ (resp.\ $\min(I)$). By convention, we set $\src (\emptyset) \coloneqq \emptyset$
and $\snk (\emptyset) \coloneqq \emptyset$.
\item
$I$ is said to be \emph{connected} if for all $x, y\in I$, there is a sequence of elements $x = z_0, z_1, \ldots, z_{n-1}, z_n = y$ in $I$ satisfying that every two consecutive elements $z_i$ and $z_{i+1}$ are comparable. Namely, either $z_i \leq z_{i+1}$ or $z_{i+1}\leq z_i$ holds for $i = 0, \ldots, n-1$.
\item
$I$ is said to be \emph{convex} if
for any $x, y \in I$ with $x \le y$, we have $[x,y] \subseteq I$.
\item
$I$ is called an \emph{interval} if $I$ is connected and convex.
\item
The set of all intervals of $\bfP$ is denoted by $\bbI(\bfP)$, or simply by $\bbI$. We regard $\bbI$ as a poset $\bbI = (\bbI, \le)$ with the inclusion relation: $I \le J \Leftrightarrow I \subseteq J$
for all $I, J \in \bbI$.
Since $\bfP$ is finite, $\bbI$ is also finite.
\end{enumerate}
\end{dfn}

Note that any segment $[x,y]$ is an interval with source $x$ and sink $y$. Following \cite{BBH2024approximations}, we introduce the subsequent definition.

\begin{dfn}
\label{dfn:interval-antichain-exp}
A subset $K$ of $\bfP$ is called an \emph{antichain} in $\bfP$ if every two distinct elements of $K$ are incomparable under the partial order of $\bfP$. We denote by $\Ac(\bfP)$ the set of all antichains in $\bfP$.
For any $K, L \in \Ac(\bfP)$, we define $K \le L$ if for all $x \in K$, there exists $z_x \in L$ such that $x \le z_x$, and for all $z \in L$, there exists $x_z \in K$ such that $x_z \le z$. In this case, we define $[K, L]\coloneqq \{y \in \bfP \mid x \le y \le z \text{ for some }
x \in K \text{ and for some }z \in L\}$. Therefore, every interval $I$ of $\bfP$ can be expressed by $[\src(I), \snk(I)]$.
\end{dfn}

To each interval, one may associate a persistence module as follows.

\begin{dfn}
\label{dfn:intv}
Let $I$ be an interval of $\bfP$.
\begin{enumerate}
\item
A persistence module $V_I$ over $\bfP$ is defined as follows. For $x\in \bfP$,
\[
V_{I}(x)\coloneqq\begin{cases}
	\k, & \textnormal{if } x\in I,\\
	0, & \textnormal{otherwise},
\end{cases}
\]
and for $p\in \k[\bfP](x, y)$,
\[
V_{I}(p)\coloneqq\begin{cases}
	k \id_\k, & \textnormal{if } (x, y)\in [\leq]_{I}\ \textnormal{and } p \coloneqq kp_{y,x}\ \textnormal{for some } k\in \k,\\
	0, & \textnormal{otherwise}.
\end{cases}
\]
It is easy to check that $V_I$ is indecomposable.
\item
A persistence module isomorphic to $V_I$ for some $I \in \bbI$
is called an \emph{interval module}.
\item
A persistence module is said to be \emph{interval-decomposable} if
it is isomorphic to a finite direct sum of interval modules.
Thus 0 is trivially interval-decomposable.
\end{enumerate}
\end{dfn}

We will use the notation $d_M(L)$ to denote
the multiplicity of an indecomposable direct summand $L$
of a module $M$ in its indecomposable decomposition
as explained in the following well-known theorem.

\begin{thm}[Krull--Schmidt]
\label{thm:KS}
Let $\calC$ be a finite $\k$-linear category, and
fix a complete set $\calL=\calL_\calC$
of representatives of isoclasses of indecomposable
objects in $\mod \calC$.
Then every finite-dimensional left $\calC$-module $M$ is
isomorphic to the direct sum $\Ds_{L \in \calL} L^{d_M(L)}$
for some unique function $d_M \colon \calL \to \bbZ_{\ge 0}$.
Therefore another finite-dimensional left $\calC$-module $N$
is isomorphic to $M$ if and only if $d_M = d_N$.
In this sense, the function $d_M$ is a complete invariant
of $M$ under isomorphisms.
\end{thm}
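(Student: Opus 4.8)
The plan is to prove that $\mod\calC$ is a Krull--Schmidt category, from which the stated uniqueness of the function $d_M$ is immediate. Since $\calC$ has finitely many objects and each Hom-space is finite-dimensional, every $M \in \mod\calC$ carries a well-defined \emph{total dimension} $\ell(M) := \sum_x \dim_\k M(x) < \infty$, the sum running over the finitely many objects $x$ of $\calC$, and $\ell$ is additive on direct sums. For existence of an indecomposable decomposition I would induct on $\ell(M)$: if $M = 0$ or $M$ is indecomposable there is nothing to do, and otherwise $M \cong M' \oplus M''$ with $M', M''$ nonzero, so $\ell(M'), \ell(M'') < \ell(M)$ and the inductive hypothesis applies to each summand. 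This produces $M \cong \Ds_{L \in \calL} L^{c_L}$ for some finitely supported $c \colon \calL \to \bbZ_{\ge 0}$.

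The crucial structural input is that endomorphism rings of indecomposables are local. For $L \in \calL$ put $E := \End_\calC(L)$, a finite-dimensional $\k$-algebra. Given $f \in E$, the descending chain $\Im f \supseteq \Im f^2 \supseteq \cdots$ and the ascending chain $\Ker f \subseteq \Ker f^2 \subseteq \cdots$ both stabilize by finiteness of $\ell(L)$, so for $n \gg 0$ the Fitting decomposition $L = \Im f^n \oplus \Ker f^n$ holds in $\mod\calC$. Indecomposability of $L$ forces either $\Ker f^n = 0$, so $f$ is an isomorphism, or $\Im f^n = 0$, so $f$ is nilpotent. Hence, if $f, g \in E$ are non-isomorphisms and $f+g$ were invertible, then composing with $(f+g)^{-1}$ reduces to $f + g = \id_L$, whence $g = \id_L - f$ with $f$ nilpotent is invertible, a contradiction. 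Thus the non-units of $E$ form a two-sided ideal and $E$ is local.

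For uniqueness I would first record the cancellation lemma: if $L$ is indecomposable and $L \oplus X \cong L \oplus Y$ in $\mod\calC$, then $X \cong Y$; this follows by writing the isomorphism and its inverse in block-matrix form with respect to the two decompositions and using that the $(L,L)$-block is a unit in the local ring $\End_\calC(L)$, which lets one split off $L$ from both sides compatibly. Now given $\Ds_{i=1}^m L_i \cong \Ds_{j=1}^n N_j$ with all $L_i, N_j$ indecomposable, induct on $m$: pre- and post-composing the isomorphism with the inclusion of $L_1$ and the projections onto the $N_j$ writes $\id_{L_1}$ as a sum of composites $L_1 \to N_j \to L_1$, so by locality of $\End_\calC(L_1)$ one of them is a unit; hence $L_1$ is a direct summand of some $N_j$, and indecomposability of $N_j$ gives $L_1 \cong N_j$. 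The cancellation lemma removes this pair, and the inductive hypothesis yields $m = n$ together with a permutation matching the $L_i$ to the $N_j$. Applied to two indecomposable decompositions of the same $M$, this shows the function $c$ from the first paragraph is unique; we set $d_M := c$. The final assertion, that $M \cong N$ iff $d_M = d_N$, is then formal.

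The argument is entirely classical; the single delicate point is the locality step, specifically that a sum of two non-isomorphisms of an indecomposable is again a non-isomorphism, which is exactly where Fitting's lemma — and hence the finiteness hypothesis on $\calC$ — is indispensable. Everything else is bookkeeping with matrices of morphisms and straightforward induction on $\ell(M)$ or on the number of summands.
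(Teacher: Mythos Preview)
Your proof is correct and entirely standard: induction on total dimension for existence, Fitting's lemma to show endomorphism rings of indecomposables are local, and then the usual matrix argument plus cancellation for uniqueness. The paper itself gives no proof of this theorem --- it is stated as a ``well-known theorem'' and used only as background to introduce the multiplicity function $d_M$ --- so there is nothing to compare against; your argument would serve perfectly well as a self-contained justification.
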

In the sequel, we simply call $d_{M}(L)$ the interval multiplicity of $L$ (in $M$) whenever $L$ is an interval module. In one-parameter persistent homology, this function
$d_M$ corresponds to the persistence diagram of $M$.

\begin{ntn}
\label{ntn:Yoneda}
Let $M \in \mod A$, and $x, y \in \bfP$.
\begin{enumerate}
\item
We set $P_x\coloneqq A(x,\blank)$ (resp.\ $P'_x\coloneqq A\op(x,\blank)$)
to be the projective indecomposable $A$-module (resp.\ $\k[\bfP\op]$-module) corresponding to the vertex $x$, and $Q_x\coloneqq D(A(\blank, x))$ (resp.\ $Q'_x\coloneqq D(A\op(\blank, x))$) to be the injective indecomposable $A$-module (resp.\ $A\op$-module) corresponding to the vertex $x$.

\item
By the Yoneda lemma, we have an isomorphism
\[
M(x) \to \Hom_{A}(P_x, M),\quad
m \mapsto \ro^M_m\ (m \in M(x)),
\]
where $\ro^M_m \colon P_x \to M$ is a morphism
$(\ro^M_{m,y} \colon P_x(y) \to M(y))_{y \in \bfP}$
defined by
$\ro^M_{m,y}(p)\coloneqq M(p)(m)\, (= p\cdot m)$ for all $y \in \bfP$
and $p \in P_x(y) = A(x,y)$,
where $M(p) \colon M(x) \to M(y)$ is a structure linear map of $M$.
Sometimes we just write $\ro^M_{m}(p)\coloneqq M(p)(m)$ by omitting $y$.

Similarly, by considering an $A\op$-module $N$ to be a right $A$-module,
we have an isomorphism
\[
N(x) \to \Hom_{A\op}(P'_x, N),\quad
m \mapsto \la^N_m\ (m \in N(x)),
\]
where $\la^N_m \colon P'_x \to N$ is defined by
$\la^N_m(p)\coloneqq N(p)(m)\, (= m \cdot p)$.

\item
For a morphism $p_{y,x} \colon x \to y$ in $\bfP$,
we set $M_{y,x}$ to be the linear map $M(p_{y,x}) \colon M(x) \to M(y)$.

\item
Since $p_{y,x} \in A(x,y) = P_x(y)$,
we can set $\sfP_{y,x}\coloneqq \ro_{p_{y,x}}\coloneqq \ro_{p_{y,x}}^{P_x} \colon P_y \to P_x$.
We note here that $P_{y,x} = 0$ if $x \nleq y$ in $\bfP$.
Similarly, we set $p\op_{x,y}\coloneqq p_{y,x} \in \bfP\op(y,x) = \bfP(x,y)$
for all $(x,y) \in [\le]_\bfP$.
It induces a morphism $\sfP'_{x,y}\coloneqq \ro_{p\op_{x,y}} \colon P'_x \to P'_y$ in $\mod A\op$.
\end{enumerate}
\end{ntn}

To shorten the formula, we introduce the following notation.

\begin{ntn}
\label{ntn:notation-M(morphism)}
Let $M \in \mod A$, and
$$
\mu \colon \Ds_{j\in [n]} P_{y_j} \to \Ds_{i\in [m]} P_{x_i}
$$
a morphism between projective modules of the form
$\mu\coloneqq \bmat{a_{ji}\sfP_{y_j,x_i}}_{(i,j) \in [m]\times [n]}$ with $a_{ji} \in \k,\ ((i,j) \in [m]\times [n])$
for some $x_1, x_2 \dots, x_m, y_1, y_2, \dots, y_n \in \bfP$.
Then we set $M(\mu) \colon \Ds_{i \in [m]}M(x_i) \to \Ds_{j \in [n]}M(y_j)$ to be the linear map defined by the matrix
$$
M(\mu)\coloneqq \bmat{a_{ji}M_{y_j, x_i}}_{(j,i) \in [n]\times [m]}.
$$
The right-hand side is obtained from $\mu$ by first replacing the letter $\sfP$ with $M$, and then making the transpose.
\end{ntn}

We here make the following remark.

\begin{rmk}
\label{rmk:form-add-hull}
Since both $\sfP_{y,x} = \ro_{p_{y,x}} \colon P_y \to P_x$ and
$M_{y,x} = M(p_{y,x}) \colon M(x) \to M(y)$
are defined by $p_{y,x}$ for all $x, y \in \bfP$,
we can think that both $\mu$ and $M(\mu)$ come from the common matrix
$\al\coloneqq \bmat{a_{ji}p_{y_j,x_i}}_{(j,i) \in [n] \times [m]}$
with entries in $\k[\bfP](x_i, y_j) = \k p_{y_j, x_i}$
regarded as a morphism
$(x_1, \dots, x_m) \to (y_1, \dots, y_n)$, where we set $a_{ji}p_{y_j,x_i}\coloneqq 0$ unless
$x_i \le y_j$.
This point of view is formalized as the formal additive hull $\Ds \k[\bfP]$ of $\k[\bfP]$.
We refer this to~\cref{section-4}.
By this reason, we also set
$\sfP(\al)\coloneqq \mu$ and $M(\al)\coloneqq M(\mu)$, more explicitly
\begin{equation}
\label{eq:dfn-sfP-M}
\begin{aligned}
\sfP(\bmat{a_{ji}p_{y_j,x_i}}_{(j,i) \in [n] \times [m]})&\coloneqq
\bmat{a_{ji}\sfP_{y_j,x_i}}_{(i,j) \in [m] \times [n]}
\colon \Ds_{j\in [n]}P_{y_j} \to \Ds_{i\in [m]}P_{x_i},\\
M(\bmat{a_{ji}p_{y_j,x_i}}_{(j,i) \in [n] \times [m]})&\coloneqq
M(\bmat{a_{ji}\sfP_{y_j,x_i}}_{(i,j) \in [m] \times [n]})
\colon \Ds_{i \in [m]}M(x_i) \to \Ds_{j\in [n]}M(y_j).
\end{aligned}
\end{equation}
In the above, notice the difference of $p$ and $\sfP$,
and also the positions of $i, j$ and $[m], [n]$.
This formulation makes it possible to unify all various cases of formulas
\eqref{eq:formula-d_M(V_I)-general-non-inj} by using empty matrices.
\end{rmk}

We cite the following from \cite[Lemma~4.8]{asashiba2024interval} including its proof
for the convenience of the reader.
For any $C, M \in \mod A$,
the following lemma makes it possible to compute
the dimension of $\Hom_{A}(C,M)$
by using a projective presentation of $C$ and the module structure of $M$. Later we will mainly apply this to the case where $C$ is a term of the almost split sequence starting from an interval module $V_I$ for some $I \in \bbI$.

\begin{lem}
\label{lem:dim-Hom-coker}
Let $C, M \in \mod A$.
Assume that $C$ has a projective presentation
\begin{equation}
\label{eq:proj-pres-C}
\Ds_{j\in [n]} P_{y_j} \ya{\mu} \Ds_{i\in [m]} P_{x_i} \ya{\ep} C
 \to 0
 \end{equation}
for some
$x_1, x_2 \dots, x_m, y_1, y_2, \dots, y_n \in \bfP$, and
$\mu\coloneqq \bmat{a_{ji}\sfP_{y_j,x_i}}_{(i,j) \in [m]\times [n]}$ with $a_{ji} \in \k,\ ((i,j) \in [m]\times [n])$.
Then we have
\begin{equation}
\label{eq:dimHom-formula}
\dim \Hom_{A}(C, M) = \sum_{i=1}^m \dim M(x_i) - \rank M(\mu).
\end{equation}
\end{lem}

\begin{proof}
Set $Y\coloneqq\Ds_{j\in [n]} P_{y_j}, X\coloneqq\Ds_{i\in [m]} P_{x_i}$ for short.
Then we have an exact sequence $Y \ya{\mu} X \ya{\ep} C \to 0$, which yields an exact sequence
\[
0 \to \Hom_{A}(C, M) \to \Hom_{A}(X, M) \ya{\Hom_{A}(\mu,M)} \Hom_{A}(Y,M).
\]
Hence $\Hom_{A}(C, M) \iso \Ker\Hom_{A}(\mu,M)$.
Now we have
\[
\begin{aligned}
&\Ker\Hom_{A}(\mu,M) = \{f \in \Hom_{A}(X,M) \mid f\mu = 0\}\\
&= \left.\left\{\bmat{f_1,\dots,f_m}\in \Hom_{A}(\Ds_{i\in [m]} P_{x_i},M) \right| \bmat{f_1,\dots,f_m}\cdot \bmat{a_{ji}\sfP_{y_j, x_i}}_{(i,j)} = 0 \right\}\\
&\iso \left.\left\{\sbmat{f_1\\\vdots\\f_m} \in \Ds_{i\in [m]} \Hom_{A}(P_{x_i},M) \right|
\bmat{\sum\limits_{i\in [m]} a_{ji} f_i \sfP_{y_j, x_i}}_{j\in [n]} = 0 \right\}\\
&\iso \left.\left\{\sbmat{b_1\\\vdots\\b_m} \in \Ds_{i\in [m]} M(x_i) \right|
\bmat{\sum\limits_{i\in [m]} a_{ji}M_{y_j, x_i}(b_i)}_{j\in [n]} = 0 \right\}\\
&= \left.\left\{\sbmat{b_1\\\vdots\\b_m} \in \Ds_{i\in [m]} M(x_i) \right| {}^t\!\!\left(\bmat{a_{ji}M_{y_j, x_i}}_{(i,j)}\right)\cdot \sbmat{b_1\\\vdots\\b_m} = 0 \right\}\\
& = \Ker\left(M(\mu)
\colon \Ds_{i\in [m]} M(x_i) \to \Ds_{j\in [n]} M(y_j)\right).
\end{aligned}
\]
Hence $\dim \Hom_{A}(C, M) = \sum_{i\in [m]} \dim M(x_i) -\rank M(\mu)$.
\end{proof}

\begin{rmk}
By using notation introduced in Remark \ref{rmk:form-add-hull}, the lemma above is stated as follows:
Let $C, M \in \mod A$, and assume that $C$ has a projective presentation of the form \eqref{eq:proj-pres-C}
with $\mu = \sfP(\al)$ for some matrix $\al = \bmat{\al_{y_j, x_i}}_{(j,i) \in [m]\times[n]}$
with entries in morphisms in $\k[\bfP]$.
Then $\dim \Hom_A(C,M) = \sum_{i=1}^m \dim M(x_i) - \rank M(\al)$.
\end{rmk}

\begin{dfn}
\label{dfn:2D-grid}
For each positive integer $n$, we denote by $\bbA_n$ the poset
$\{1, 2, \dots, n\}$ with the usual linear order $i < i+1\ (i = 1, 2, \dots, n-1)$, and for each poset
$P_1, P_2$, we regard the direct product $P_1 \times P_2$
as the poset with the partial order defined by
$(x, y) \le (x', y')$ if and only if
$x \le x'$ and $y \le y'$ for all $(x, y), (x', y') \in P_1 \times P_2$.
We set $G_{m,n}\coloneqq \bbA_m \times \bbA_n$, and call it a {\em 2D-grid}.
For example, $G_{5,2}$ has the following Hasse quiver:
\[
\begin{tikzcd}
(1,2) & (2,2) & (3,2) & (4,2) & (5,2)\\
(1,1) & (2,1) & (3,1) & (4,1) & (5,1)
\Ar{1-1}{1-2}{}
\Ar{1-2}{1-3}{}
\Ar{1-3}{1-4}{}
\Ar{1-4}{1-5}{}
\Ar{2-1}{2-2}{}
\Ar{2-2}{2-3}{}
\Ar{2-3}{2-4}{}
\Ar{2-4}{2-5}{}
\Ar{2-1}{1-1}{}
\Ar{2-2}{1-2}{}
\Ar{2-3}{1-3}{}
\Ar{2-4}{1-4}{}
\Ar{2-5}{1-5}{}
\end{tikzcd}.
\]
\end{dfn}

\section{The formula of interval multiplicities}
\label{section-3}
Throughout this section,
$I$ is an interval of $\bfP$ and $M \in \mod A$.
The purpose of this section is to compute the multiplicity $d_M(V_I)$ of $V_I$ in $M$.

\begin{dfn}
\label{dfn:up-set and down-set}
Let $x\in \bfP$, and let $U$ be a subset of $\bfP$. Then we set
$$
\begin{aligned}
\uset_\bfP\, x\coloneqq \{y \in \bfP \mid y \ge x\}, & \text{ and}\ \dset_\bfP\, x\coloneqq \{y \in \bfP \mid y \le x\},\\
\uset_\bfP\, U\coloneqq \bigcup_{x \in U} \uset_\bfP x, & \text{ and}\
\dset_\bfP\, U\coloneqq \bigcup_{x \in U} \dset_\bfP x.
\end{aligned}
$$
We say that $U$ is an up-set (resp. a down-set) of $\bfP$
if $U = \uset_\bfP\, U$ (resp. $U = \dset_\bfP\, U$).
Up-sets and down-sets are naturally considered as full subposets of $\bfP$ without specifying in the sequel.
\end{dfn}

\begin{rmk}
\label{rmk: remarks of up-set and down-set}
(1) Since $\bfP$ is finite, any up-set $U$ can be written as $U = \uset_\bfP\, \src(U) = \bigcup_{x\in \src(U)}\uset_\bfP\, x$. Dually, any down-set $U$ can be written as $U = \dset_\bfP\, \snk(U) = \bigcup_{x\in \snk(U)}\dset_\bfP\, x$.

(2) If $X = \uset_\bfP\, U$, then $\src(X) = \src(U)$. Dually, if  $X = \dset_\bfP\, U$, then $\snk(X) = \snk(U)$.

(3) It is easy to see that $\uset_\bfP\! \uset_\bfP\, U = \uset_\bfP\, U$ and $\dset_\bfP\! \dset_\bfP\, U = \dset_\bfP\, U$.

(4) If $U$ is an up-set (resp. down-set) of $\bfP$ and $x$ is any element of $U$, then $\uset_{\bfP}\, x = \uset_{U}\, x$ (resp. $\dset_{\bfP}\, x = \dset_{U}\, x$).
\end{rmk}

We simply write $\uset, \dset$ for $\uset_{\bfP}, \dset_{\bfP}$,
respectively if there seems to be no confusion.
To compute $d_M(V_I)$, we apply \cite[Theorem 3]{Asashiba2017} below.

\begin{thm}
\label{thm:dMV}
Let $M$ and $L$ be two finite-dimensional modules
over a finite-dimensional algebra $A$, and assume that $L$ is indecomposable. When $L$ is non-injective, let
\begin{equation}
\label{eq:almost-split-sequence}
0 \to L \to E \to \ta\inv L \to 0
\end{equation}
be an almost split sequence starting
from $L$.
Then we have the following formulas.
\begin{enumerate}[label=\rm (\arabic*)]
\item
If $L$ is injective, then
\begin{equation}
\label{eq:dMV-inj}
d_M(L) = \dim\Hom_A(L, M) - \dim\Hom_A(L/\soc L, M).
\end{equation}
\item
If $L$ is non-injective, then
\begin{equation}
\label{eq:dMV-noninj}
d_M(L) = \dim\Hom_A(L, M) - \dim\Hom_A(E, M) + \dim\Hom_A(\ta\inv L, M).
\end{equation}
\end{enumerate}
\end{thm}

In the next subsection, we will give our result in general case.
This will be specialized in~\cref{ssec:2Dgrid-case} for the case of 2D-grids.
The latter has a simpler formula and easier to grasp than the former.
The reader may read~\cref{ssec:2Dgrid-case} first by looking at
Example \ref{exm:2Dgrid} to have rough outline.
It contains enough information to apply the formula for 2D-grids.
The details of proofs written in~\cref{ssec:formula-gen-case} can be read afterward.

\subsection{The general poset case}
\label{ssec:formula-gen-case}

Without loss of generality, we may assume that the poset $\bfP$ is connected.

\begin{ntn}
\label{ntn: notations for general case}
For a totally ordered set $T$, we set
$\sub_2T\coloneqq \{\{i,j\} \subseteq T \mid i \ne j\}$
to be the set of two-element subsets of $T$, and
for any $\bfa \in \sub_2T$,
we set $\udl{\bfa}\coloneqq\min \bfa$ and
$\ovl{\bfa}\coloneqq \max \bfa$.
Thus $\bfa = \{\udl{\bfa},\, \ovl{\bfa}\}$.

Let $U$ be a subset of $\bfP$.
\begin{enumerate}
\item
Suppose $|\src(U)| = n$ (resp. $|\snk(U)| = m$). We give a total order on the set $\src(U)$ (resp.\ $\snk(U)$)
by giving a poset isomorphism
$a \colon [n] \to \src(U)$, $i \mapsto a_i$
(resp.\ $b \colon [m] \to \snk(U)$, $i \mapsto b_i$), and
apply the notation above for $T = \src(U)$ and $\snk(U)$.
\item
Let $\{a, b\} \in \bfP$ with $a \ne b$.
Then we denote by $\vee'\{a, b\}$ (resp.\ $\wedge'\{a, b\}$)
the set of minimal upper (resp.\ maximal lower) bounds of $a$ and $b$ in $\bfP$.
Since $\bfP$ is a finite poset,
if $\vee'\{a, b\}$ (resp.\ $\wedge'\{a, b\}$)
consists of a single element, then it coincides with the join
$a \vee b = \vee\{a, b\}$ (resp.\ the meet $a \wedge b = \wedge\{a, b\}$).
We call $\vee'\{a, b\}$  (resp.\ $\wedge'\{a, b\}$)
the \emph{pre-join} (resp.\ \emph{pre-meet}) of $\{a, b\}$.
For example, if $\bfP$ is presented by the following Hasse diagram,
then the pre-join of $\{a, b\}$ is given by $\vee'\{a,b\} = \{c, d\}$:
$$
\begin{tikzcd}
& e \\
c && d \\
a & {} & b
\arrow[from=2-1, to=1-2]
\arrow[from=2-3, to=1-2]
\arrow[from=3-1, to=1-2]
\arrow[from=3-1, to=2-1]
\arrow[from=3-1, to=2-3]
\arrow[from=3-3, to=1-2]
\arrow[from=3-3, to=2-1]
\arrow[from=3-3, to=2-3]
\end{tikzcd}
$$
We adopt this notation
to each $\bfa \in \sub_2\src(U)$
(resp.\ $\bfb \in \sub_2\snk(U)$).
Thus $\vee'\bfa$ (resp.\ $\wedge'\bfb$) is the pre-join of $\bfa$
(resp.\ the pre-meet of $\bfb$), more explicitly
$$
\vee'\bfa \coloneqq \src (\uset_{\bfP}\, \udl{\bfa} \cap \uset_{\bfP}\, \ovl{\bfa}),\quad
\wedge'\bfb \coloneqq \snk (\dset_{\bfP}\, \udl{\bfb} \cap \dset_{\bfP}\, \ovl{\bfb}).
$$
We fix a linear order on $\vee'\bfa$ (resp.\ $\wedge'\bfb$)\footnote{
For example, we fix a linear order on $\bfP$, and define a linear order on every
subset $S$ of $\bfP$ in such a way that the inclusion from $S$ to $\bfP$ becomes
an order-preserving map.}.
Note that if $U$ is an up-set (resp.\ a down-set),
then $\vee'\bfa \subseteq U$ (resp. $\wedge'\bfb \subseteq U$) by the definition of up-sets (resp. down-sets).
\item We further set $\src_1(U)$ (resp.\ $\snk_1(U)$) to be the disjoint union of
the pre-joins (pre-meets) of the two-element subsets of $\src(U)$ (resp.\ $\snk(U)$):
\[
\begin{aligned}
\src_{1}(U)&\coloneqq\bigsqcup_{\bfa \in \sub_2\src(U)}\!\! \vee'\bfa
\ = \{\bfa_c\coloneqq(\bfa, c) \mid \bfa \in \sub_2\src(U),\, c \in \vee'\bfa\},\\
\snk_{1}(U)&\coloneqq\bigsqcup_{\bfb \in \sub_2\snk(U)}\!\! \wedge'\bfb
\ = \{\bfb_d\coloneqq (\bfb, d) \mid \bfb \in \sub_2\snk(U),\, d \in \wedge'\bfb\}.
\end{aligned}
\]
Note here that the family $(\vee' \bfa)_{\bfa \in \sub_2\src(U)}$
(resp.\ $(\wedge' \bfb)_{\bfb \in \sub_2\snk(U)}$) does not need to be disjoint.
For example, consider the case where $U$ is presented as follows:
$$
\begin{tikzcd}
	d && e \\
	a & b & c
	\arrow[from=2-1, to=1-1]
	\arrow[from=2-2, to=1-1]
	\arrow[from=2-2, to=1-3]
	\arrow[from=2-3, to=1-1]
	\arrow[from=2-3, to=1-3]
\end{tikzcd}
$$
Then $\sub_2\src(U) = \{\{a,b\}, \{a,c\},\{b,c\}\}$,
and $\vee'\{a,b\} = \{d\}$, $\vee'\{a,c\} = \{d\}$, $\vee'\{b,c\} = \{d,e\}$.
Hence $\src_1(U) = \{\{a,b\}_d,\, \{a,c\}_d,\, \{b,c\}_d,\, \{b,c\}_e\}$.

By the definition above, we see that
if $\src(W) = \src(U)$ (resp.\ $\snk(W) = \snk(U)$) for a $W \subseteq \bfP$,
then $\src_1(W) = \src_1(U)$ (resp.\ $\snk_1(W) = \snk_1(U)$).

Furthermore, we equip $\src_{1}(U)$ with another total order $\plex$, defined by
$\bfa_c \plex \bfa'_{c'}$ if and only if
$(\udl{\bfa}, \ovl{\bfa}, c) \le_{\lex} (\udl{\bfa'}, \ovl{\bfa'}, c')$, where $\leq_{\lex}$ denotes the lexicographic order from left to right.
Similarly, we give a total order to $\snk_{1}(W)$.
These total orders will be used to express matrices having
$\src_{1}(U)$ or $\snk_{1}(W)$ as an index set later.

\item For any non-empty subset $X$ of $\bfP$,
or a disjoint union $X = \bigsqcup_{s \in S} X_s\coloneqq \{s_x \mid s \in S, \, x \in X_s\}$
of non-empty subsets $X_s$ of $\bfP$ with non-empty
index set $S$, we set
$P_X\coloneqq \Ds_{t \in X}P_t$ and $P'_X\coloneqq \Ds_{t \in X}P'_t$,
where $P_{t}\coloneqq P_x,\, P'_{t}\coloneqq P'_x$
if $t = s_x \in X = \bigsqcup_{s \in S} X_s$ with $s \in S$ and $x \in X_s$.
In addition, we set $P_X$ and $P'_X$ to be the zero modules if $X=\emptyset$.
\end{enumerate}
\end{ntn}

The following lemmas are necessary in the sequel.

\begin{lem}
Let $U$ be either an up-set or a down-set of $\bfP$.
Then $U$ is convex in $\bfP$.
Moreover, if $U$ is connected, then $U$ is an interval.
\qed
\end{lem}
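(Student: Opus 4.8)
The statement has two parts: (i) any up-set or down-set $U$ is convex in $\bfP$; (ii) if such a $U$ is moreover connected, then it is an interval. Part (ii) is immediate from part (i) together with Definition~\ref{dfn:ss-intervals}: an interval is by definition a connected convex full subposet, so once convexity is established, connectedness is exactly the remaining hypothesis. Hence the entire content is part (i), and by the obvious order-reversing symmetry (replace $\bfP$ by $\bfP\op$, which swaps up-sets and down-sets and preserves convexity) it suffices to treat the case where $U$ is an up-set, i.e.\ $U = \uset_\bfP U$.

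\textbf{Proof of convexity.} Let $x, y \in U$ with $x \le y$, and let $z \in \bfP$ satisfy $x \le z \le y$; we must show $z \in U$. Since $x \in U = \uset_\bfP U = \bigcup_{u \in U}\uset_\bfP u$, in particular $x \ge x$ gives nothing new, but the key point is simpler: because $U$ is an up-set and $x \in U$, every element $\ge x$ lies in $U$ (this is Remark~\ref{rmk: remarks of up-set and down-set}(4), or directly: $\uset_\bfP x \subseteq \uset_\bfP U = U$). Since $z \ge x$, we conclude $z \in U$. This proves $[x,y] \subseteq U$, so $U$ is convex. The down-set case follows by duality, or directly: if $U = \dset_\bfP U$ and $x \le z \le y$ with $y \in U$, then $z \le y$ forces $z \in \dset_\bfP y \subseteq \dset_\bfP U = U$.

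\textbf{Conclusion.} For the ``moreover'' clause: assume $U$ (an up-set or down-set) is connected in the sense of Definition~\ref{dfn:ss-intervals}(4). We have just shown $U$ is convex, so by Definition~\ref{dfn:ss-intervals}(6) $U$ is an interval of $\bfP$. \qed

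\textbf{Remark on difficulty.} There is essentially no obstacle here; the lemma is a direct unwinding of the definitions of up-set/down-set and of convexity. The only thing worth noting is the quiet use of the fact that for $x$ in an up-set $U$ one has $\uset_\bfP x \subseteq U$ (and dually), which is precisely why one must not drop the convexity hypothesis on $U$ in later applications — connectedness of the Hasse quiver alone would not suffice, as the Remark following Definition~\ref{dfn:ss-intervals} already warns.
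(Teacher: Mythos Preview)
Your proof is correct. The paper itself gives no proof at all (the lemma is stated with an immediate \qed), so your direct verification from the definitions is exactly the routine argument the authors left to the reader.
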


\begin{lem}
\label{lem:up-set of conn is again conn}
Let $I$ be a connected subposet of $\bfP$.
Then both $\uset I$ and $\dset I$ are again connected.
\qed
\end{lem}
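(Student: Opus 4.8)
The plan is to prove connectedness of $\uset I$ by gluing together paths in the Hasse quiver, using two ingredients: (a) for every single element $x\in\bfP$ the up-set $\uset x$ is connected, and (b) $I$ itself is connected. Recall that, by \cref{dfn:ss-intervals}, connectedness of a subposet $S$ means connectedness of the full subquiver $H(\bfP)|_S$ of $H(\bfP)$ on the vertex set $S$; the only property of $\bfP$ we shall need is finiteness.

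First I would record the elementary ``saturated chain'' fact: if $x\le y$ in the finite poset $\bfP$, there is a path in $H(\bfP)$ from $x$ to $y$ all of whose vertices lie in $[x,y]\subseteq\uset x$. Indeed, if $x<y$ then $\{w\mid x<w\le y\}$ is finite and nonempty, hence has a minimal element $z$, which is necessarily a cover of $x$ (so $a_{z,x}\in H(\bfP)_1$); iterating on $[z,y]\subsetneq[x,y]$ terminates and produces the chain. Two consequences: $\uset x$ is connected, because every vertex of $\uset x$ is joined to $x$ by a path lying inside $\uset x$; and, trivially, whenever $S'\subseteq S\subseteq\bfP$ the graph $H(\bfP)|_{S'}$ is a subgraph of $H(\bfP)|_S$, since a full subquiver on a larger vertex set retains all edges between the smaller one. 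In particular any path witnessing connectedness inside $\uset x$, or inside $I$, is also a path inside $\uset I$.

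Now I would assemble the argument. Since $I\neq\emptyset$, the set $\uset I=\bigcup_{x\in I}\uset x$ is nonempty. Let $y,y'\in\uset I$, say $y\in\uset x$ and $y'\in\uset x'$ with $x,x'\in I$. There is a path in $H(\bfP)|_{\uset x}\subseteq H(\bfP)|_{\uset I}$ from $y$ to $x$, a path in $H(\bfP)|_{\uset x'}\subseteq H(\bfP)|_{\uset I}$ from $y'$ to $x'$, and (as $I$ is connected) a path in $H(\bfP)|_{I}\subseteq H(\bfP)|_{\uset I}$ from $x$ to $x'$. Concatenating the three gives a path in $H(\bfP)|_{\uset I}$ from $y$ to $y'$; hence $\uset I$ is connected. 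The claim for $\dset I$ follows by applying this to the opposite poset $\bfP\op$ (connectedness of $I$ is insensitive to reversing all arrows of $H(\bfP)$), or dually via the ``ascending'' version of the chain fact. I do not expect a genuine obstacle here: the only point requiring care is keeping track of the three ambient vertex sets $\uset x$, $I$, $\uset I$ together with the definition of connectedness through $H(\bfP)$ rather than through the Hasse quiver of the subposet itself, but the gluing above uses only the inclusions $H(\bfP)|_{S'}\subseteq H(\bfP)|_S$ for $S'\subseteq S$, so no difficulty arises.
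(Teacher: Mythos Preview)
Your argument is correct and complete: the saturated-chain fact shows each $\uset x$ is connected, and the three-segment concatenation through $x$ and $x'$ inside $\uset I$ is exactly the right gluing. The paper itself omits the proof entirely (the statement is marked with a bare \qed), so your write-up simply supplies the standard argument the authors left to the reader.
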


\begin{lem}
\label{lem:relation between interval and up-set (down-set)}
Let $I$ be an interval of $\bfP$.
Then there exist up-sets $U,\, U'$ and down-sets $W,\, W'$ of $\bfP$ such that
$I = U\setminus U' = W\setminus W'$, which are given by
\begin{equation}
\label{eq:dfn-of-uuset-ddset}
U\coloneqq \uset I,\, U'\coloneqq \uuset I\coloneqq \uset I \setminus I; \text{ and }
W\coloneqq \dset I,\, W'\coloneqq \ddset I\coloneqq \dset I \setminus I.
\end{equation}
\end{lem}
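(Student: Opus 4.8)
The plan is to verify the three claims bundled into the statement: the set identities $I = U \setminus U'$ and $I = W \setminus W'$; that $U = \uset I$ is an up-set and $W = \dset I$ a down-set; and that $U' = \uuset I$ is an up-set and $W' = \ddset I$ a down-set. By the evident up-down duality---passing to $\bfP\op$ exchanges $\uset$ with $\dset$, exchanges up-sets with down-sets, and preserves the notion of interval---it suffices to treat the ``up'' half, namely the identity $I = U \setminus U'$ together with the statements about $U$ and $U'$.

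For the bookkeeping part, I would first note that $I \subseteq \uset I$, since each $x \in I$ satisfies $x \le x$ and hence $x \in \uset x \subseteq \uset I$. Thus $U' = \uuset I = \uset I \setminus I = U \setminus I$, so that $U \setminus U' = U \setminus (U \setminus I) = U \cap I = I$; this is the identity $I = U \setminus U'$, and the dual computation gives $I = W \setminus W'$. That $U = \uset I$ is an up-set is then immediate from part~(3) of \cref{rmk: remarks of up-set and down-set}: $\uset_\bfP U = \uset_\bfP(\uset_\bfP I) = \uset_\bfP I = U$, which is the defining condition. (Dually, $W = \dset I$ is a down-set.)

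The only step that is not purely formal---and the one place where convexity of $I$ is actually used---is showing that $U' = \uuset I$ is an up-set. For this, let $y \in U'$ and let $z \in \bfP$ with $z \ge y$; I must show $z \in \uset I$ and $z \notin I$. Since $y \in \uset I$, there is $x \in I$ with $x \le y$, so $x \le z$ and hence $z \in \uset I$. If $z$ were in $I$, then from $x \le y \le z$ with $x, z \in I$ and $I$ convex we would obtain $[x,z] \subseteq I$, forcing $y \in I$ and contradicting $y \in U' = \uset I \setminus I$. Hence $z \notin I$, so $z \in U'$ and $\uset_\bfP U' = U'$. The dual argument---using convexity of $I$ with $\dset$ in place of $\uset$---shows $W' = \ddset I$ is a down-set. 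I expect no genuine obstacle; the lone substantive point is this convexity argument for $U'$, and dually for $W'$.
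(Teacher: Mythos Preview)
Your proof is correct and follows essentially the same approach as the paper's own proof. The paper omits the bookkeeping steps you spelled out (the set identity $I = U \setminus U'$ and that $\uset I$ is an up-set) as evident, and focuses solely on the convexity argument showing $\uuset I$ is an up-set, which matches yours almost verbatim.
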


\begin{proof}
We only need to show that
$\uuset I\coloneqq \uset I \setminus I$ is an up-set of $\bfP$.
Take any $u\in \uuset I$ and $x\in \bfP$ with $u\le x$.
Then there exists $y\in I$ with $y\le u$ since $u\in \uset I$.
Hence $I \ni y\le u\le x$, which shows that $x\in \uset I$.
If $x \in I$, then the convexity of $I$ shows that $u \in I$, a contradiction.
Thus $x\notin I$, and hence $x \in \uuset I$.
The proof for $\ddset I\coloneqq \dset I \setminus I$ to be a down-set of $\bfP$ is similar.
\end{proof}

\begin{rmk}
We note here that $\uset I$ (resp.\ $\dset I$) is connected,
thus an interval by the previous lemmas.
However, $\uuset I$ (resp.\ $\ddset I$) may not be connected in general.
\end{rmk}

Let $U$ be a connected up-set, and $W$ a connected down-set of $\bfP$. Note that $V_U$ is projective (resp.\ $V_W$ is injective) if and only if
$|\src(U)| = 1$ (resp. $|\snk(W)| = 1$) because $V_U$ and $V_W$ are indecomposable modules and
$\dim V_U/\rad V_U = |\src(U)|$ (resp. $\dim \soc V_W = |\snk(W)|$).
To show that the set $\src_1(U)$ (resp.\ $\snk_1(W)$) is not empty
if $V_U$ is not projective (resp.\ $V_W$ is not injective),
we review a fundamental property of finite posets.

\begin{dfn}
\label{dfn:Alexandrov}
Let $S$ be a finite poset. A topology on $S$ is defined by setting the set of up-sets to be the open sets
of $S$,
which is called the \emph{Alexandrov topology} on $S$.
It is easy to see that it has a basis $\{\uset_S x \mid x \in S\}$.
\end{dfn}

The following lemmas are easy to show and the proofs are left to the reader.

\begin{lem}
\label{lem:top-conn}
Let $S$ be a finite poset considered as a topological space by
the Alexandrov topology on $S$.
Then $S$ is a connected space if and only if $S$ is a connected poset.
\qed
\end{lem}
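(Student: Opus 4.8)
The plan is to translate topological connectedness of $(S,\text{Alexandrov})$ into a purely combinatorial statement about $S$, using the defining fact (Definition~\ref{dfn:Alexandrov}) that the open subsets of $S$ are exactly its up-sets. Under this dictionary a disconnection $S = U_1 \sqcup U_2$ of $S$ as a space, with $U_1, U_2$ nonempty open, is the same as a partition of $S$ into two nonempty up-sets. First I would record the elementary observation that such a partition exists if and only if $S$ can be written as a disjoint union $S = S_1 \sqcup S_2$ of two nonempty subsets with no element of $S_1$ comparable to any element of $S_2$: if $U_1, U_2$ are up-sets and $u \in U_1$, $v \in U_2$ were comparable, say $u \le v$, then $v \in \uset_S u \subseteq U_1$, contradicting $v \in U_2$; conversely, if $S_1, S_2$ have no comparability between them, then each of them is automatically an up-set of $S$.

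Next I would match ``$S$ admits a partition into two nonempty comparability-separated subsets'' with ``the Hasse quiver $H(S)$ is disconnected as an undirected graph'', which is precisely the negation of ``$S$ is a connected poset'' in the sense of Definition~\ref{dfn:ss-intervals}(4) applied with $\bfP = S$. One direction is immediate: every arrow of $H(S)$ is a covering relation, hence a comparability, so no arrow can join $S_1$ to $S_2$, and thus $H(S)$ is disconnected once both parts are nonempty. For the converse, when $H(S)$ is disconnected I would take $S_1$ to be one connected component of $H(S)$ and $S_2$ its (nonempty) complement; if some $u \in S_1$ and $v \in S_2$ were comparable, say $u < v$, then finiteness of $S$ provides a saturated chain $u = z_0 \lessdot z_1 \lessdot \cdots \lessdot z_n = v$, i.e.\ a path in $H(S)$ from $u$ to $v$, contradicting that $u, v$ lie in distinct components. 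Concatenating the two equivalences gives ``$S$ disconnected as a space'' $\iff$ ``$H(S)$ disconnected'' $\iff$ ``$S$ is not a connected poset'', and negating all three yields the lemma.

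The argument is essentially bookkeeping and I do not anticipate a genuine obstacle; the only step requiring care is the appeal to finiteness of $S$ that lets one replace an arbitrary comparability by a chain of covering relations, which is exactly what bridges the up-set/comparability picture and the Hasse-quiver picture. If a more topological phrasing is preferred, an alternative is to first verify that each $\uset_S x$ is a connected subspace --- any open set containing $x$ must contain all of $\uset_S x$ --- so that any two comparable, hence any two fence-connected, elements lie in a common connected subset, and poset-connectedness of $S$ then forces a single connected component; I would keep the partition version as the main proof since it handles both implications symmetrically and relegate this to a parenthetical remark.
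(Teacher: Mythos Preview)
Your argument is correct and complete; the only subtlety you flag --- invoking finiteness of $S$ to turn an arbitrary comparability into a saturated chain and hence a path in $H(S)$ --- is exactly what is needed. Note that the paper itself provides no proof of this lemma (it is stated with a \textsf{qed} symbol and declared ``easy to show'' with the proof left to the reader), so there is no approach to compare against; your write-up would in fact fill that gap.
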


\begin{lem}
\label{lem:local-conn}
Let $S$ be a finite poset considered as a topological space by
the Alexandrov topology on $S$.
Then $S$ is a locally connected space.
\qed
\end{lem}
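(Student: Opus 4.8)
The plan is to verify the usual local criterion for local connectedness: for every open set $U$ of $S$ and every point $x\in U$, one must produce a connected open set $V$ with $x\in V\subseteq U$. In the Alexandrov topology the open sets are exactly the up-sets of $S$, so if $U$ is open and contains $x$ it already contains $\uset_S x$. Hence the whole statement reduces to the single claim that each minimal open neighborhood $\uset_S x$ is connected as a subspace of $S$; once this is known, $V:=\uset_S x$ works for every open $U\ni x$, and these $\uset_S x$ even form a basis.

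To prove that claim I would first observe that the subspace topology $\uset_S x$ inherits from $S$ coincides with the Alexandrov topology of $\uset_S x$ regarded as a poset in its own right: since $\uset_S x$ is an up-set of $S$, the traces $W\cap\uset_S x$ of up-sets $W$ of $S$ are precisely the up-sets of $\uset_S x$ (this is essentially Remark~\ref{rmk: remarks of up-set and down-set}~(4)). Moreover $\uset_S x$ is convex in $S$, so by the remark following Definition~\ref{dfn:ss-intervals} its connectedness as a subposet (the full subquiver of $H(S)$ on $\uset_S x$) is the same as connectedness of its intrinsic Hasse quiver $H(\uset_S x)$. Combining this with Lemma~\ref{lem:top-conn}, it suffices to show that the poset $\uset_S x$ is connected, i.e.\ that $H(\uset_S x)$ is a connected graph.

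For the last point I would use that $\uset_S x$ has a minimum element, namely $x$. Given any $y\in\uset_S x$, finiteness of $S$ provides a saturated chain $x=z_0<z_1<\dots<z_k=y$; each $z_i$ satisfies $z_i\ge x$ and so lies in $\uset_S x$, and each consecutive pair is a cover, so $a_{z_{i+1},z_i}$ is an arrow of $H(S)$ and hence of the full subquiver $H(\uset_S x)$. This exhibits a walk from $x$ to $y$, so $H(\uset_S x)$ is connected. (Alternatively, one may simply invoke Lemma~\ref{lem:up-set of conn is again conn} applied to the connected one-point subposet $\{x\}$.) I do not anticipate a genuine obstacle here; the only place that needs care is matching the two notions of connectedness in play — connectedness of $\uset_S x$ as a subposet of $S$ versus connectedness of its own Hasse quiver — which is exactly where convexity of up-sets enters.
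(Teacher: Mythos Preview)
Your proof is correct and is precisely the natural argument one would expect: the paper itself omits the proof entirely (it is left to the reader with a \qed), having already noted in Definition~\ref{dfn:Alexandrov} that $\{\uset_S x \mid x\in S\}$ is a basis. Showing each basic open $\uset_S x$ is connected via Lemma~\ref{lem:top-conn} and the existence of a minimum is exactly the intended route.
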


Under the preparation above, we prove the following.

\begin{prp}
Let $U$ be a connected up-set, $W$ a connected down-set of $\bfP$.
\begin{enumerate}[label=\rm (\arabic*)]
\item
$|\src(U)| \ge 2$ if and only if $\src_1(U) \ne \emptyset$.
\item
$|\snk(W)| \ge 2$ if and only if $\snk_1(W) \ne \emptyset$.
\end{enumerate}
\end{prp}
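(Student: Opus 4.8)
The plan is to prove both statements simultaneously, using the duality $D \colon \mod A \to \mod A\op$ which interchanges up-sets with down-sets, sources with sinks, and pre-joins with pre-meets; so it suffices to prove (1), and (2) follows by applying (1) to $W\op \subseteq \bfP\op$. For (1), the direction $\src_1(U) \ne \emptyset \implies |\src(U)| \ge 2$ is essentially immediate from the definition: the disjoint union $\src_1(U) = \bigsqcup_{\bfa \in \sub_2\src(U)} \vee'\bfa$ is indexed by $\sub_2\src(U)$, which is nonempty only if $\src(U)$ has at least two elements.

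For the converse, suppose $|\src(U)| \ge 2$; I must produce a two-element subset $\bfa = \{a_i, a_j\}$ of $\src(U)$ whose pre-join $\vee'\bfa = \src(\uset a_i \cap \uset a_j)$ is nonempty, equivalently such that $a_i$ and $a_j$ have a common upper bound in $\bfP$ (then minimality of finite posets gives a minimal such bound, which lies in $\src(\uset a_i \cap \uset a_j)$ since $\uset a_i \cap \uset a_j$ is an up-set, hence $\src$ of it is nonempty whenever the set itself is nonempty). So the real content is: \emph{if $U$ is a connected up-set with at least two sources, then some two of its sources have a common upper bound in $\bfP$ (equivalently, in $U$, since $U$ is an up-set).} Here I would invoke the topological characterization prepared in the excerpt. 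First, by \cref{rmk: remarks of up-set and down-set}(1), $U = \bigcup_{a \in \src(U)} \uset_U a$, so the sets $\uset_U a$ for $a \in \src(U)$ form an open cover of $U$ (in the Alexandrov topology, by \cref{dfn:Alexandrov}); each $\uset_U a$ is connected by \cref{lem:top-conn} since it is a connected poset (it has minimum $a$). Now I argue by contradiction: if \emph{no} two distinct sources shared an upper bound, then $\uset_U a \cap \uset_U a' = \emptyset$ for all distinct $a, a' \in \src(U)$, so $U$ would be the disjoint union of the nonempty clopen sets $\uset_U a$ — but being a disjoint union of more than one nonempty open set contradicts connectedness of $U$ (via \cref{lem:top-conn} applied to $U$ itself). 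Hence some pair of sources has a common upper bound, giving a nonempty pre-join and thus $\src_1(U) \ne \emptyset$.

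A small subtlety to handle carefully: the cover $\{\uset_U a\}_{a \in \src(U)}$ has more than one (nonempty, since $a \in \uset_U a$) member precisely because $|\src(U)| \ge 2$, so the disjointness-plus-connectedness contradiction genuinely bites; and one should note that pairwise disjointness of the cover members already forces the union to be disconnected once there are at least two of them, without needing the full strength of local connectedness (\cref{lem:local-conn}), though that lemma was presumably included for a cleaner or more uniform argument. The main obstacle, such as it is, is just the bookkeeping of passing between "common upper bound exists" and "pre-join is nonempty" — i.e., checking that $\uset_\bfP a \cap \uset_\bfP a'$, when nonempty, is a nonempty finite up-set and therefore has at least one minimal element, so that $\src(\uset_\bfP a \cap \uset_\bfP a') \ne \emptyset$ — which is routine given \cref{rmk: remarks of up-set and down-set}. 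I expect the whole proof to be short once the topological reformulation is in place.
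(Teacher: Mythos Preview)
Your proposal is correct and follows essentially the same approach as the paper: argue by contradiction that if all pre-joins were empty then the sets $\uset a$ for $a \in \src(U)$ would form a nontrivial disjoint open cover of $U$, contradicting connectedness via the Alexandrov topology characterization. The only cosmetic differences are that the paper partitions as $\uset a_1$ versus $\uset a_2 \cup \cdots \cup \uset a_n$ rather than your full pairwise-disjoint cover, and it handles (2) by the phrase ``shown similarly'' rather than invoking the duality explicitly; you are also right that \cref{lem:local-conn} is not actually used in the argument.
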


\begin{proof}
(1) Since the implication ($\Leftarrow$) is trivial, we show the implication ($\Rightarrow$). Set $\src(U) = \{a_1, \dots, a_n\}$ and
assume that $n \ge 2$. By Remark~\ref{rmk: remarks of up-set and down-set} (1), we have
\begin{equation}
\label{eq:cup-QI}
U = \uset a_1 \cup (\uset a_2 \cup \cdots \cup \uset a_n).
\end{equation}
Now suppose that $\src_1(U) = \emptyset$.
Then for any $\{i, j\} \in \sub_2[n]$,
we have $\src(\uset a_i \cap \uset a_j) = \emptyset$, and hence $\uset a_i \cap \uset a_j = \emptyset$.
This shows that
\begin{equation}
\label{eq:cap-QI}
\uset a_1 \cap (\uset a_2 \cup \cdots \cup \uset a_n) = \emptyset.
\end{equation}
The equalities \eqref{eq:cup-QI} and \eqref{eq:cap-QI} show that
the topological space $U$ with Alexandrov topology
is not connected (also by noticing Remark~\ref{rmk: remarks of up-set and down-set} (4)).
Hence $U$ is not a connected poset by Lemma \ref{lem:top-conn},
a contradiction. As a consequence, $\src_1(U) \ne \emptyset$.

(2) This is shown similarly.
\end{proof}

With the above preliminaries, we first give a projective presentation of $V_U\in \mod A$, where $U$ is a connected up-set of $\bfP$.

\begin{prp}
\label{prp:proj.pre.for conn. up-set module}
Let $U$ be a connected up-set of $\bfP$.
Then $V_U$ has the following (not necessarily minimal) projective presentation:
$$
P_{\src_1(U)} \ya{\ep_1^U} P_{\src(U)} \ya{\ep_0^U} V_U \to 0,
$$
where $\ep_0^U = \bmat{\ro^{V_U}_{1_a}}_{a\in \src(U)}$, and we set
$1_u \coloneqq 1_\k \in \k = V_U(u)$ for all $u \in U$, and
\begin{align}\label{eq:matrix form of ep^U_1}
\ep^{U}_{1}\coloneqq
\bmat{\tilde{\sfP}_{a,\bfa_c}}_{(a,\bfa_c) \in \src(U) \times \src_1(U)}
\end{align}
with the entries given by
\begin{align}\label{eq:entries of ep^U_1}
\tilde{\sfP}_{a,\bfa_c}\coloneqq
\begin{cases}
\sfP_{c,a} & (a = \udl{\bfa}),\\
-\sfP_{c,a} & (a = \ovl{\bfa}),\\
\mathbf{0} & (a \not\in \bfa),
\end{cases}
\end{align}
for all $\bfa_c \in \src_1(U)$ and $a \in \src(U)$. Here and subsequently, we write the matrices following the lexicographic order $\plex$ (see~{\rm \cref{ntn: notations for general case} (3)}) of indices.

We here remark that $\ep^U_0$ is a projective cover of $V_U$.
\end{prp}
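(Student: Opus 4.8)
The plan is to exhibit the claimed sequence explicitly and verify exactness term by term, relying on the fact that $V_U$ is an interval module (so its structure maps are all $0$ or $\id_\k$) together with the Yoneda description of morphisms out of projectives from Notation~\ref{ntn:Yoneda}. First I would check that $\ep_0^U$ is a well-defined epimorphism: by the Yoneda lemma each $\ro^{V_U}_{1_a}\colon P_a \to V_U$ corresponds to the element $1_a \in V_U(a) = \k$, and since $U = \uset_\bfP\src(U)$ (Remark~\ref{rmk: remarks of up-set and down-set}~(1)) every object $u \in U$ lies above some $a \in \src(U)$, whence $(\ep_0^U)_u$ hits $1_u$ and $\ep_0^U$ is surjective. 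Next I would verify $\ep_0^U \circ \ep_1^U = 0$. Composing, the $\bfa_c$-column of $\ep_1^U$ (with $\bfa = \{a_i, a_j\}$, $a_i \prec a_j$, $c \in \vee'\bfa$) is sent by $\ep_0^U$ to $\ro^{V_U}_{1_{a_i}}\circ\sfP_{c,a_i} - \ro^{V_U}_{1_{a_j}}\circ\sfP_{c,a_j}$, which by the Yoneda formalism equals $\ro^{V_U}_{V_U(p_{c,a_i})(1_{a_i})} - \ro^{V_U}_{V_U(p_{c,a_j})(1_{a_j})}$. Since $c \in \vee'\bfa \subseteq U$ and $V_U$ takes value $\k$ on all of $U$ with all internal maps the identity, both terms equal $\ro^{V_U}_{1_c}$, so the difference vanishes.

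The substantive part is showing exactness at $P_{\src(U)}$, i.e. that $\Ker\ep_0^U = \Im\ep_1^U$. Evaluating at an object $t \in \bfP$, the map $(\ep_0^U)_t\colon \bigoplus_{a\in\src(U)} P_a(t) \to V_U(t)$ sends a tuple $(\la_a p_{t,a})_a$ (only the summands with $a \le t$ being nonzero) to $\sum_{a \le t,\, a\in\src(U)} \la_a \in \k$ if $t \in U$, and the codomain is $0$ if $t \notin U$. So $\Ker(\ep_0^U)_t$ consists either of all tuples (when $t\notin U$, equivalently no $a\in\src(U)$ lies below $t$) or of tuples with $\sum_{a\le t}\la_a = 0$ (when $t\in U$). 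I then need: the span of the columns of $(\ep_1^U)_t$ over all $\bfa_c$ with $c \le t$ is exactly this kernel. For $t \in U$, picking any $a_0 \in \src(U)$ with $a_0 \le t$, the vectors $e_{a_0} - e_a$ for the other $a \le t$ span the hyperplane $\{\sum\la_a = 0\}$; the point is that each such difference is realized by a column of $\ep_1^U$ at $t$, which requires finding, for every pair $\{a_0,a\}$ of sources both below $t$, some $c \in \vee'\{a_0,a\}$ with $c \le t$ — this $c$ exists because $t$ is a common upper bound of $a_0$ and $a$, hence lies above some minimal such bound. For $t \notin U$: here I must confirm that the relevant columns still span the full space $\bigoplus_{a\le t}P_a(t)$; but $t\notin U=\uset\src(U)$ forces no source of $U$ to lie below $t$, so this space is already zero and there is nothing to check. (One should be slightly careful that $c \in \vee'\bfa$ is a minimal upper bound in $\bfP$, not necessarily in $U$, but since $U$ is an up-set any upper bound of two elements of $U$ lies in $U$, so $\vee'\bfa \subseteq U$ and the argument is unaffected; the connectedness of $U$ is not actually needed for exactness, only for $\src_1(U)$ to be nonempty when $|\src(U)|\ge 2$, which is already handled by the preceding proposition.)

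The main obstacle I anticipate is the bookkeeping in the exactness argument at $P_{\src(U)}$: one must carefully track, for each object $t$, exactly which columns of $\ep_1^U$ survive (those indexed by $\bfa_c$ with $c\le t$) and argue that the corresponding differences $e_{\udl{\bfa}} - e_{\ovl{\bfa}}$ already span the hyperplane cut out by $\ep_0^U$. The key enabling fact — that for any two sources $a_i, a_j$ below a common $t$ there is some $c\in\vee'\{a_i,a_j\}$ with $c\le t$ — is the crux, and it is a clean consequence of finiteness of $\bfP$ (every nonempty subset with an upper bound has a minimal upper bound). Once that is in hand, a short rank count (or a direct spanning argument) closes the exactness, and the sequence is the desired projective presentation. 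I would not claim minimality, matching the statement's parenthetical.
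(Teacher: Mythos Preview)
Your argument is correct and self-contained. The paper's own proof simply refers the reader to \cite[Proposition~5.10]{asashiba2024interval} with the substitution $Q_I \leadsto U$, so it gives no details here; your pointwise exactness check is exactly the natural direct verification one would expect that external reference to contain. The key step you isolate --- that for any two sources $a_i,a_j \le t$ some $c \in \vee'\{a_i,a_j\}$ satisfies $c \le t$, because $t$ itself is a common upper bound and $\bfP$ is finite --- is precisely what makes the columns of $(\ep_1^U)_t$ hit every difference $e_{a_i}-e_{a_j}$ and hence the full hyperplane $\{\sum \la_a = 0\}$. Your side observation that connectedness of $U$ plays no role in the exactness (only in guaranteeing $\src_1(U)\ne\emptyset$ when $|\src(U)|\ge 2$) is also correct and is implicitly what underlies the paper's Proposition~\ref{prp:proj.pre.for general up-set module} for disconnected up-sets.
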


\begin{proof}
We refer the reader to the proof of~\cite[Proposition 5.10]{asashiba2024interval},
and substitute $I^\xi$ by $U$.
\end{proof}

Given an up-set $U$ of $\bfP$ ($U$ might be non-connected),
we consider its decomposition into connected components and apply Proposition~\ref{prp:proj.pre.for conn. up-set module} on each connected component.
Following this spirit, we let $U\coloneqq U_1\sqcup \cdots \sqcup U_k$.
By Lemma~\ref{lem:local-conn} and \cite[Theorem~25.3]{MR3728284},
each component is again an open set, thus an up-set of $\bfP$. Then the following is easy to show.

\begin{prp}
\label{prp:proj.pre.for general up-set module}
Let $\bfP$ be a finite poset,
and $U = U_1\sqcup \cdots \sqcup U_k$ an up-set of $\bfP$ with $k$ connected components $(k\ge 1)$. Set $V_U\coloneqq V_{U_1}\ds \cdots \ds V_{U_k}$. Then $V_U$ has the following (not necessarily minimal) projective presentation:
$$
P_{\src_1(U)} \ya{\ep_1^U} P_{\src(U)} \ya{\ep_0^U} V_U \to 0,
$$
where $\ep_i^U \coloneqq \ep_i^{U_1}\ds \cdots \ds \ep_i^{U_k}$ $(i=0,1)$.
\qed
\end{prp}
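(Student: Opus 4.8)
The plan is to reduce directly to the connected case already handled in Proposition \ref{prp:proj.pre.for conn. up-set module}, together with the basic fact that direct sums of projective presentations are projective presentations. First I would recall that since $U = U_1 \sqcup \cdots \sqcup U_k$ is a disjoint union of connected up-sets of $\bfP$ (each $U_j$ is open in the Alexandrov topology by Lemma \ref{lem:local-conn} and \cite[Theorem 25.3]{MR3728284}, hence an up-set), the interval module $V_U$ splits as $V_U \iso V_{U_1} \ds \cdots \ds V_{U_k}$. This is immediate from the pointwise description in Definition \ref{dfn:intv}: at a vertex $x$, at most one of the subposets $U_j$ contains $x$, so $V_U(x) = \bigoplus_j V_{U_j}(x)$, and similarly for the structure maps, since any arrow $a_{y,x}$ of $H(\bfP)$ with both endpoints in $U$ has both endpoints in the same component $U_j$ by connectedness of the Hasse subquiver on $U_j$ and convexity.

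Next I would observe that $\src(U) = \bigsqcup_j \src(U_j)$ and $\src_1(U) = \bigsqcup_j \src_1(U_j)$: a source of $U$ lying in $U_j$ is a source of $U_j$ since $\uset_\bfP x = \uset_{U}x$ for $x\in U$ (Remark \ref{rmk: remarks of up-set and down-set}(4)) and the components are unions of up-sets $\uset_\bfP x$; and for $\src_1(U)$, any two-element subset $\bfa \subseteq \src(U)$ whose pre-join $\vee'\bfa$ is nonempty must have both elements in the same component (a common upper bound connects them in the Alexandrov topology), so $\sub_2\src(U)$ contributing to $\src_1(U)$ is the disjoint union over $j$ of $\sub_2\src(U_j)$, and the pre-joins computed in $\bfP$ agree with those relevant to $U_j$. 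Consequently $P_{\src(U)} = \bigoplus_j P_{\src(U_j)}$ and $P_{\src_1(U)} = \bigoplus_j P_{\src_1(U_j)}$ by Notation \ref{ntn: notations for general case}(4).

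Then I would apply Proposition \ref{prp:proj.pre.for conn. up-set module} to each connected $U_j$, obtaining projective presentations $P_{\src_1(U_j)} \ya{\ep_1^{U_j}} P_{\src(U_j)} \ya{\ep_0^{U_j}} V_{U_j} \to 0$, and take their direct sum. Since a finite direct sum of exact sequences is exact and a finite direct sum of projective modules is projective, the resulting sequence $P_{\src_1(U)} \ya{\ep_1^U} P_{\src(U)} \ya{\ep_0^U} V_U \to 0$ with $\ep_i^U := \ep_i^{U_1} \ds \cdots \ds \ep_i^{U_k}$ is a projective presentation of $V_U$. I do not expect any real obstacle here; the only point requiring a line of care is the bookkeeping identification $\src_1(U) = \bigsqcup_j \src_1(U_j)$ and the verification that the off-block entries of $\ep_1^U$ vanish — but these vanish precisely because a two-element subset of $\src(U)$ straddling two components has empty pre-join, so it contributes nothing to $\src_1(U)$, making $\ep_1^U$ genuinely block-diagonal. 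This is why the statement can be left to the reader, as the authors indicate with \qed.
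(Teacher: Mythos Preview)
Your proposal is correct and follows exactly the approach the paper intends: the paper states just before the proposition that each component is an up-set (by Lemma~\ref{lem:local-conn} and \cite[Theorem~25.3]{MR3728284}), that $V_U \iso V_{U_1}\ds\cdots\ds V_{U_k}$, and that the rest is easy to show, leaving the details to the reader with a \qed. Your write-up supplies precisely those details, including the bookkeeping identifications $\src(U)=\bigsqcup_j\src(U_j)$ and $\src_1(U)=\bigsqcup_j\src_1(U_j)$, which is more than the paper spells out.
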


The following lemma will be frequently used later.

\begin{lem}
\label{lem:proj.presentation for factor module}
Let $M, M'\in \mod A$.
Assume that
\begin{enumerate}[font=\normalfont,label=(\arabic*)]
\item $M'$ is a submodule of $M$, i.e., we have a short exact sequence
$$
0\to M' \ya{\iota} M \ya{\pi} M/M' \to 0;
$$

\item $M$ has a projective presentation (not necessarily a minimal one)
$$
P^{M}_{1} \ya{\ep_1^M} P^{M}_{0} \ya{\ep_0^M} M \to 0; \text{and}
$$

\item $M'$ has an epimorphism $\ep_0^{M'}\colon P^{M'}_{0} \to M'$ with $P^{M'}_{0}$ projective.
\end{enumerate}
Then the factor module $M/M'$ has a projective presentation (not necessarily a minimal one)
$$
P^{M}_{1} \ds P^{M'}_{0}\ya{\eta_1} P^{M}_{0} \ya{\eta_0} M/M' \to 0.
$$
Here $\eta_0 \coloneqq \pi \ep_0^{M}$ and $\eta_1 \coloneqq \bmat{\ep_1^M, \eta_{11}}$, where
$\eta_{11} \colon P^{M'}_{0}\to P^{M}_{0}$
is a lift of $\iota \ep_0^{M'}$ along $\ep_0^M$, i.e.,
a morphism satisfying the equality $\ep_0^M \eta_{11}=\iota \ep_0^{M'}$,
the existence of which is guaranteed by the projectivity of $P_0^{M'}$.
\end{lem}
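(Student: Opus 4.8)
The plan is to build the desired projective presentation of $M/M'$ by a diagram chase, using the projectivity of $P_0^{M'}$ to produce the lift $\eta_{11}$ and then verifying exactness term by term. First I would invoke the projectivity of $P_0^{M'}$: since $\ep_0^M\colon P_0^M \to M$ is an epimorphism and $\iota\ep_0^{M'}\colon P_0^{M'}\to M$ is a morphism into $M$, there exists $\eta_{11}\colon P_0^{M'}\to P_0^M$ with $\ep_0^M\eta_{11} = \iota\ep_0^{M'}$. Then set $\eta_0:=\pi\ep_0^M$ and $\eta_1:=\bmat{\ep_1^M,\eta_{11}}\colon P_1^M\ds P_0^{M'}\to P_0^M$, and the task is to show the sequence
$$
P_1^M\ds P_0^{M'}\ya{\eta_1} P_0^M\ya{\eta_0} M/M'\to 0
$$
is exact.

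The key steps, in order, are: (i) $\eta_0$ is an epimorphism, since it is the composite of the two epimorphisms $\pi$ and $\ep_0^M$. (ii) $\eta_0\eta_1 = 0$: on the first component, $\eta_0\ep_1^M = \pi\ep_0^M\ep_1^M = \pi\circ 0 = 0$ using the given presentation of $M$; on the second component, $\eta_0\eta_{11} = \pi\ep_0^M\eta_{11} = \pi\iota\ep_0^{M'} = 0$ using exactness of the short exact sequence $0\to M'\to M\to M/M'\to 0$. Hence $\Im\eta_1\subseteq\Ker\eta_0$. (iii) The reverse inclusion $\Ker\eta_0\subseteq\Im\eta_1$: take $z\in P_0^M$ with $\pi\ep_0^M(z)=0$, so $\ep_0^M(z)\in\Ker\pi = \Im\iota$, i.e. $\ep_0^M(z) = \iota(m')$ for some $m'\in M'$; choose $w\in P_0^{M'}$ with $\ep_0^{M'}(w)=m'$ (using that $\ep_0^{M'}$ is an epimorphism), so $\ep_0^M(z) = \iota\ep_0^{M'}(w) = \ep_0^M\eta_{11}(w)$, whence $z-\eta_{11}(w)\in\Ker\ep_0^M = \Im\ep_1^M$; writing $z-\eta_{11}(w)=\ep_1^M(v)$ for some $v\in P_1^M$ gives $z = \ep_1^M(v)+\eta_{11}(w) = \eta_1\sbmat{v\\w}$, so $z\in\Im\eta_1$. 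Combining (i)--(iii) gives exactness, and since $P_1^M\ds P_0^{M'}$ and $P_0^M$ are projective, this is a projective presentation of $M/M'$.

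This is a routine homological diagram chase, so there is no serious obstacle; the only point requiring a little care is keeping the roles of $\iota$, $\pi$, and the two presentations straight, and noting that the argument is entirely elementary (it never uses finiteness of $\bfP$ or any special structure of $A$). It may be worth remarking that the same proof works in any abelian category with enough projectives, and that minimality of the resulting presentation is not claimed — indeed the block form $\eta_1 = \bmat{\ep_1^M,\eta_{11}}$ will generally fail to be minimal even when $\ep_1^M$ and $\ep_0^{M'}$ are, which is exactly why the statement is phrased with the parenthetical caveat ``not necessarily a minimal one.'' In the intended applications $M$, $M'$ will be interval-type modules $V_U$ for up-sets $U$, and the presentations $\ep_\bullet^M$, $\ep_0^{M'}$ supplied by \prpref{prp:proj.pre.for general up-set module} make all the maps $\eta_0,\eta_1$ explicit in terms of the $\sfP_{y,x}$, which is what is needed for the subsequent rank computations.
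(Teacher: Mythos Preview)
Your proof is correct. The paper's argument reaches the same conclusion by a slightly different route: it factors $\ep_1^M$ through its image, sets up a $3\times 3$ commutative diagram with exact rows
\[
\begin{aligned}
0 \to P_1^M &\to P_1^M \ds P_0^{M'} \to P_0^{M'} \to 0,\\
0 \to \Im\ep_1^M &\to P_0^M \to M \to 0,
\end{aligned}
\]
with vertical maps $\ta_1$, $\eta_1$, $\iota\ep_0^{M'}$, and then applies the snake lemma to conclude that the induced map $\Cok\eta_1 \to M/M'$ is an isomorphism. Your direct element chase and the paper's snake-lemma argument are really two packagings of the same computation: the snake lemma encodes precisely the three steps (i)--(iii) you wrote out. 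Your version is more self-contained and makes the exactness verification explicit; the paper's version is a bit more categorical and would generalize verbatim to abelian categories without elements. Either is perfectly adequate here.
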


\begin{proof}
Express $\ep^{M}_1$ as the composite
$\ep^{M}_1 = \iota_0 \ta_1 \colon P^M_1 \ya{\ta_1} \Im \ep^M_1 \ya{\io_0} P^M_0$, where $\iota_0$ is the inclusion and $\ta_1$ is an epimorphism obtained from $\ep^{M}_1$ by restricting the codomain.
Consider the following commutative diagram of solid arrows
with exact rows surrounded by dashed lines:
\begin{equation}
\label{eq:snake-lem-nsrc}
\begin{tikzcd}
\Nname{lu}0 & P_1^M & P_1^M \ds P^{M'}_{0}  & P^{M'}_{0} &\Nname{ru} 0\\
\Nname{ld}0 & \Im \ep^M_1 & P^{M}_{0} & M & \Nname{rd}0\\
& 0 & \Cok \et_1 & M/M' & 0\\
&& 0 & 0
\Ar{1-1}{1-2}{}
\Ar{1-2}{1-3}{"{\sbmat{\id \\ \mathbf{0}}}"}
\Ar{1-3}{1-4}{"{\sbmat{\mathbf{0},\, \id}}"}
\Ar{1-4}{1-5}{}
\Ar{2-1}{2-2}{}
\Ar{2-2}{2-3}{"\iota_{0}"}
\Ar{2-3}{2-4}{"\ep^{M}_{0}"}
\Ar{2-4}{2-5}{}
\Ar{3-2}{3-3}{}
\Ar{3-3}{3-4}{"\overline{\ep^{M}_{0}}"}
\Ar{3-4}{3-5}{}
\Ar{1-2}{2-2}{"\ta_1"}
\Ar{1-3}{2-3}{"\et_1"}
\Ar{1-4}{2-4}{"\iota \ep_0^{M'}"}
\Ar{1-4}{2-3}{"\eta_{11}" ', dashed}
\Ar{2-2}{3-2}{}
\Ar{2-3}{3-3}{"\cok \et_1" '}
\Ar{2-4}{3-4}{"\pi"}
\Ar{3-3}{4-3}{}
\Ar{3-4}{4-4}{}
\ar[to path={([yshift=8pt]lu.north east)--([yshift=8pt]ru.north east)--
(rd.south east)--(ld.south west)--([yshift=8pt]lu.north west)--([yshift=8pt]lu.north east)}, dash, dashed, rounded corners]
\end{tikzcd}
\end{equation}
Notice that the right-most column
$$
P^{M'}_{0}\ya{\iota \ep_0^{M'}} M\ya{\pi} M/M'\to 0
$$
is exact since $\Im \iota \ep_0^{M'} = \Im \iota = M' = \Ker\pi$.
By applying the snake lemma to this diagram, we obtain that
$\overline{\ep^{M}_{0}}\colon \Cok\et_1 \to M/M'$ is an isomorphism.
Since $\overline{\ep^{M}_{0}}\circ \cok \et_1 = \pi \ep^{M}_{0} = \eta_{0}$,
the central column yields the exact sequence
\begin{equation*}
P^{M'}_{0}\ds P_1^M \ya{\et_1} P^{M}_{0}\ya{\eta_{0}} M/M'\to 0. \tag*{\qedhere}
\end{equation*}
\end{proof}

\begin{ntn}
\label{ntn:choice function}
Let $I$ be an interval of $\bfP$.

(1) Note that $\src(I) = \src(\uset I)$, and hence also $\src_1(I) = \src_1(\uset I)$.
Therefore, $\ep_1^{\uset I} \colon P_{\src_1(\uset I)} \to P_{\src(\uset I)}$ is denoted by
$\ep_1^{\uset I} \colon P_{\src_1(I)} \to P_{\src(I)}$, where the definition of $\ep_1^{\uset I}$
is given by \eqref{eq:matrix form of ep^U_1} for $U\coloneqq \uset I$.

(2) Note that for each $a'\in \src(\uuset I)$, we have $\src(I)\,\cap\, \dset a'\neq \emptyset$
because $a' \in \uset I$.
Fixing one element in $\src(I)\,\cap\, \dset a'$ for each $a'\in \src(\uuset I)$
yields a map $\bfc\colon \src(\uuset I)\to \src(I) = \src(\uset I)$. We call such $\bfc$ a \emph{choice map}.
\end{ntn}

The following is used in the computations below.

\begin{lem}
\label{lem:comp-ro-P}
Let $M \in \mod A$, $x, y \in \bfP$, and $a \in M(x)$.
Then the composite of morphisms $P_y \ya{\sfP_{y,x}} P_x \ya{\ro^M_a} M$ is given by
$$
\ro^M_a \cdot \sfP_{y,x} = \ro^M_{M(p_{y,x})(a)}.
$$
\end{lem}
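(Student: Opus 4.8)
The plan is to unwind the definition of $\ro^M_a$ from \cref{ntn:Yoneda}~(2) and chase an element of $P_y$ through the composite. Recall that $\ro^M_a \colon P_x \to M$ is the natural transformation whose component at $z \in \bfP$ sends $q \in P_x(z) = A(x,z)$ to $M(q)(a)$, and that $\sfP_{y,x} = \ro^{P_x}_{p_{y,x}} \colon P_y \to P_x$ is, componentwise at $z$, the map sending $r \in P_y(z) = A(y,z)$ to $P_x(r)(p_{y,x}) = r \cdot p_{y,x} = p_{z,y}p_{y,x} = p_{z,x} \in A(x,z)$ (using the composition rule in \cref{dfn:inc-cat}). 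So the plan is: fix $z \in \bfP$ and compute the $z$-component of $\ro^M_a \cdot \sfP_{y,x}$ on a basis element $p_{z,y} \in P_y(z)$ (the space being zero unless $y \le z$, in which case $p_{z,y}$ is the unique basis vector).

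First I would apply $\sfP_{y,x}$ at level $z$ to get $p_{z,x} \in P_x(z)$, then apply $\ro^M_{a}$ at level $z$ to get $M(p_{z,x})(a) \in M(z)$. Next I would use functoriality of $M$ together with the factorization $p_{z,x} = p_{z,y}\,p_{y,x}$ to rewrite $M(p_{z,x})(a) = M(p_{z,y})\big(M(p_{y,x})(a)\big)$. Finally I would recognize the right-hand side as the $z$-component of $\ro^M_{b}$ applied to $p_{z,y}$, where $b := M(p_{y,x})(a) \in M(y)$; that is exactly the definition of $\ro^M_{M(p_{y,x})(a)}$ evaluated at the basis vector $p_{z,y}$. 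Since both sides are linear and agree on the (one-dimensional, or zero) space $P_y(z)$ for every $z$, they agree as morphisms $P_y \to M$.

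There is essentially no obstacle here: the statement is a direct consequence of the Yoneda lemma (the assignment $m \mapsto \ro^M_m$ is the Yoneda isomorphism $M(x) \cong \Hom_A(P_x, M)$, which is natural in $x$), and the only thing to check is bookkeeping with the composition convention $p_{z,y}p_{y,x} = p_{z,x}$ and the fact that $\sfP_{y,x}$ represents precomposition by $p_{y,x}$. One minor point worth stating explicitly is the degenerate case $x \nleq y$: then $P_{y,x} = \sfP_{y,x} = 0$ and also $M(p_{y,x}) = 0$ by convention, so both sides vanish and the identity holds trivially. The only "care" needed is to keep straight that $\ro^M_a\cdot\sfP_{y,x}$ means "first $\sfP_{y,x}$, then $\ro^M_a$", matching the reading of the composite $P_y \ya{\sfP_{y,x}} P_x \ya{\ro^M_a} M$ in the statement.
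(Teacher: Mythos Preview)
Your proof is correct and follows essentially the same approach as the paper: both fix $z \in \bfP$, take an element of $P_y(z)$, and chase it through the composite using the definitions in \cref{ntn:Yoneda} together with functoriality of $M$ on the factorization $p_{z,x} = p_{z,y}\,p_{y,x}$. Your version is slightly more explicit (working with the basis vector $p_{z,y}$, noting linearity, and handling the degenerate case $x \nleq y$), but the substance is identical.
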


\begin{proof}
Let $z \in \bfP$ and $q \in P_y(z)$.
Then by Notation \ref{ntn:Yoneda}, we have
$$
(\ro^M_a \cdot \sfP_{y,x})(q)
= \ro^M_a(q\cdot p_{y,x})
= M(q\cdot p_{y,x})(p)
= (M(q)M(p_{y,x}))(p).
$$
On the other hand,
$$
\ro^M_{M(p_{y,x})(a)}(q) = M(q)(M(p_{y,x})(a)).
$$
Therefore, the assertion holds.
\end{proof}

Now we are in a position to give a projective presentation of $V_I$ for any interval $I$ of $\bfP$.

\begin{prp}
\label{prp:proj.presentation for V_I}
Let $I$ be an interval of $\bfP$.
Then $V_I$ has the following (not necessarily minimal) projective presentation:
\begin{equation}
\label{eq:pp-V_I}
P_{\src_1(I)}\ds P_{\src(\uuset I)} \ya{\ep_1} P_{\src(I)} \ya{\ep_0} V_I \to 0.
\end{equation}
Here $\ep_0 = \bmat{\ro^{V_I}_{1_a}}_{a \in \src(I)}$, where we set
$1_u \coloneqq 1_\k \in \k = V_I(u)$ for all $u \in I$;
$\ep_1 \coloneqq \ep_1(\bfc)\coloneqq \bmat{\ep^{\uset I}_{1}, \ep_{11}}$,
where $\ep_{11} \coloneqq \ep_{11}(\bfc) \coloneqq \bmat{\de_{a,\bfc(a')}\sfP_{a',\bfc(a')}}_{(a,a') \in \src(I) \times \src(\uuset I)}$, and
$\ep^{\uset I}_{1}$ is defined as in~{\rm \cref{ntn:choice function}}.
Note that $\ep_0$ is a projective cover of $V_I$.
\end{prp}

\begin{proof}
By Lemma~\ref{lem:relation between interval and up-set (down-set)},
we can write $I = U\setminus U'$ in terms of two up-sets $U=\uset I$ and $U'=\uuset I$,
and hence $V_I \iso V_{U}/V_{U'}$.
By Proposition~\ref{prp:proj.pre.for conn. up-set module}
and Lemma~\ref{lem:up-set of conn is again conn},
$V_{U}$ has the following (not necessarily minimal) projective presentation:
\begin{equation}
\label{eq:projective presentation for V_uset_I}
P_{\src_1(U)} \ya{\ep_1^{U}} P_{\src(U)} \ya{\ep_0^{U}} V_{U} \to 0.
\end{equation}
By Remark~\ref{rmk: remarks of up-set and down-set} (2),
$P_{\src(U)} = P_{\src(I)}$, and $P_{\src_1(U)} = P_{\src_1(I)}$.
Since the natural projection $\pi\colon V_{U}\to V_I$ is just the restriction on $I$, it follows
by Lemma~\ref{lem:proj.presentation for factor module}, that
$\ep_0 = \pi \ep_0^{U}
= \pi \circ \bmat{\ro^{V_U}_{1_a}}_{a \in \src(U)}
= \bmat{\ro^{V_I}_{1_a}}_{a \in \src(I)}$.

On the other hand, the up-set $U'$ is the disjoint union $U_1\sqcup \cdots \sqcup U_k$
of some connected up-sets $U_1, \dots, U_k$ with $k\in \bbZ_{\ge 1}$,
and hence $V_{U'} \iso  V_{U_1}\ds \cdots \ds V_{U_k}$.
By Propositions~\ref{prp:proj.pre.for conn. up-set module}
and \ref{prp:proj.pre.for general up-set module},
there is an epimorphism starting from a projective module:
\begin{equation}
P_{\src(U')} \ya{\ep_0^{U'}} V_{U'}, \label{eq:epi for V_U'}
\end{equation}
where $\ep_0^{U'} \coloneqq \bmat{\ro^{V_{U_1}}_{1_a}}_{a \in \src(U_1)} \ds \cdots \ds \bmat{\ro^{V_{U_k}}_{1_a}}_{a \in \src(U_k)} = \bmat{\ro^{V_{U'}}_{1_a}}_{a \in \src(U')}$.
If the equality
\begin{equation}
\label{eq:lift-U}
\ep_0^{U} \ep_{11} = \iota \ep_0^{U'} \colon P_{\src(U')} \to V_U
\end{equation}
holds,
where $\iota\colon V_{U'} \iso  V_{U_1}\ds \cdots \ds V_{U_k}\to V_{U}$ is the inclusion,
then by
combining \eqref{eq:projective presentation for V_uset_I}, \eqref{eq:epi for V_U'}
and Lemma~\ref{lem:proj.presentation for factor module},
we obtain a projective presentation of $V_I$
having the form
\begin{equation}
  P_{\src_1(I)}  \ds P_{\src(U')} \ya{\bmat{\ep^{U}_{1}, \ep_{11}}} P_{\src(I)} \ya{\ep_0^I} V_I \to 0
\end{equation}
as claimed.
The equality \eqref{eq:lift-U} is verified as follows:
\begin{align*}
\ep_0^{U} \ep_{11}
& = \bmat{\ro^{V_U}_{1_a}}_{a \in \src(I)} \cdot \bmat{\de_{a,\bfc(a')}\sfP_{a',\bfc(a')}}_{(a,a') \in \src(I) \times \src(U')} \\
& = \bmat{\sum\limits_{a\in \src(I)} \de_{a,\bfc(a')}\ro^{V_U}_{1_a}\cdot \sfP_{a',\bfc(a')}}_{a'\in \src(U')}\\
& = \bmat{\ro^{V_U}_{1_{\bfc(a')}}\cdot \sfP_{a',\bfc(a')}}_{a'\in \src(U')}\\
& \overset{*}{=} \bmat{\ro^{V_U}_{V_U(p_{a',\bfc(a')})(1_{\bfc(a')})}}_{a'\in \src(U')}\\
& = \bmat{\ro^{V_U}_{1_{a'}}}_{a'\in \src(U')}\\
& = \iota\circ \bmat{\ro^{V_{U'}}_{1_{a'}}}_{a'\in \src(U')}\\
& =\iota \ep_0^{U'}.
\end{align*}
In the above, the equality ($\overset{*}{=}$) follows by Lemma \ref{lem:comp-ro-P}.
\end{proof}

\subsubsection{The case where \texorpdfstring{$V_I$}{VI} is injective}

Assume that $V_I$ is injective in $\mod A$.
Then $b\coloneqq \max I$ exists, and we have $I = \dset b$.
Since $V_I$ is indecomposable injective, $\soc V_I = V_{\{b\}}$ is a simple module
at $b$, and $\ep^{b}_0 = \ro_{1_{b}}\colon P_{b}\to V_{\{b\}}$
is the projective cover of $V_{\{b\}}$.
Hence by Lemma~\ref{lem:proj.presentation for factor module}
and Proposition~\ref{prp:proj.presentation for V_I},
we have the following.

\begin{ntn}
\label{ntn:choice function 2}
Let $I$ be an interval of $\bfP$ with the maximum element $b$. Fixing one element $a\in \src(I)$ induces another choice map $\bfc'\colon \{b\}\to \src(I)$ by $\bfc'(b) \coloneqq a$.
\end{ntn}

\begin{thm}
\label{thm:dMV-VI-inj-general}
Let $I$ be an interval of $\bfP$. Assume that $V_I$ is injective, i.e., $I = \dset b$ with $b = \max I$.
Then $V_I$ and $V_I/\soc V_I$ have projective presentations of the following forms.
$$
\begin{aligned}
&P_{\src_1(I)} \ds P_{\src(\uuset I)} \ya{\ep_1} P_{\src(I)} \ya{\ep_0} V_I \to 0,\\
&(P_{\src_1(I)}\ds P_{\src(\uuset I)}) \ds P_{b} \ya{\ep'_1} P_{\src(I)}  \to V_I/\soc V_I \to 0.
\end{aligned}
$$
Here $\ep_1 \coloneqq \bmat{\ep^{\uset I}_{1}, \ep_{11}}$ and
$\ep'_1 \coloneqq \bmat{\ep_{1}, \ep''_1}$,
where $\ep_{11}, \ep^{\uset I}_{1}$ are given in~{\rm \cref{prp:proj.presentation for V_I}},
and $\ep''_1\coloneqq \ep''_1(\bfc')\coloneqq \bmat{\de_{a, \bfc'(b)} \sfP_{b,\, \bfc'(b)}}_{(a, b)\in \src(I)\times \{b\}}$. Therefore, we have
\begin{equation}
d_M(V_I) = \rank\bmat{M(\ep_{1})\\M(\ep''_1)}
- \rank\bmat{M(\ep_{1})}.\label{eq:formula for injective modules}
\end{equation}
\end{thm}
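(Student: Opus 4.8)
The plan is to bootstrap from the projective presentation of $V_I$ supplied by \cref{prp:proj.presentation for V_I}, manufacture a compatible projective presentation of $V_I/\soc V_I$, and then feed both into \cref{thm:dMV}(1) using \cref{lem:dim-Hom-coker}.

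First I would use the facts recorded just above the statement: since $V_I$ is indecomposable injective we have $I=\dset b$ with $b=\max I$, the socle $\soc V_I = V_{\{b\}}$ is the simple module at $b$ and is a submodule of $V_I$, and $\ep_0^{b}:=\ro^{V_{\{b\}}}_{1_b}\colon P_b\to V_{\{b\}}$ is an epimorphism from a projective. \cref{prp:proj.presentation for V_I} gives the presentation $P_{\src_1(I)}\ds P_{\src(\uuset I)}\ya{\ep_1}P_{\src(I)}\ya{\ep_0}V_I\to 0$. Applying \cref{lem:proj.presentation for factor module} with $M=V_I$, $M'=\soc V_I$, this presentation of $M$, and the epimorphism $\ep_0^b$ for $M'$ yields at once a projective presentation $(P_{\src_1(I)}\ds P_{\src(\uuset I)})\ds P_b\ya{\eta_1}P_{\src(I)}\to V_I/\soc V_I\to 0$ with $\eta_1=\bmat{\ep_1,\eta_{11}}$, where $\eta_{11}\colon P_b\to P_{\src(I)}$ may be any lift of the composite $P_b\ya{\ep_0^b}V_{\{b\}}\hookrightarrow V_I$ along $\ep_0$.

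The only step requiring a computation is to show that $\eta_{11}$ can be taken to be $\ep''_1=\bmat{\de_{a,\bfc'(b)}\sfP_{b,\bfc'(b)}}_{(a,b)\in\src(I)\times\{b\}}$, i.e.\ that $\ep_0\,\ep''_1=\iota\,\ep_0^{b}$. This mirrors the verification of \eqref{eq:lift-U} in the proof of \cref{prp:proj.presentation for V_I}: expand the product $\bmat{\ro^{V_I}_{1_a}}_{a\in\src(I)}\cdot\bmat{\de_{a,\bfc'(b)}\sfP_{b,\bfc'(b)}}$, collapse the sum over $\src(I)$ using the Kronecker delta to obtain $\ro^{V_I}_{1_{\bfc'(b)}}\cdot\sfP_{b,\bfc'(b)}$, and apply \cref{lem:comp-ro-P} to rewrite this as $\ro^{V_I}_{V_I(p_{b,\bfc'(b)})(1_{\bfc'(b)})}=\ro^{V_I}_{1_b}$, the last equality because $\bfc'(b)$ and $b$ both lie in $I$ so the corresponding structure map of $V_I$ is $\id_\k$; and $\ro^{V_I}_{1_b}$ is exactly $\iota\,\ep_0^b$. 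Setting $\ep'_1:=\bmat{\ep_1,\ep''_1}$ then produces both presentations asserted in the statement.

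Finally I would assemble the rank formula. By \cref{thm:dMV}(1), $d_M(V_I)=\dim\Hom_A(V_I,M)-\dim\Hom_A(V_I/\soc V_I,M)$. Both presentations have the same degree-zero term $P_{\src(I)}$, so \cref{lem:dim-Hom-coker} gives $\dim\Hom_A(V_I,M)=\sum_{a\in\src(I)}\dim M(a)-\rank M(\ep_1)$ and $\dim\Hom_A(V_I/\soc V_I,M)=\sum_{a\in\src(I)}\dim M(a)-\rank M(\ep'_1)$. Subtracting cancels the common sum; and by \cref{ntn:notation-M(morphism)} the transpose turns the row of blocks $\ep'_1=\bmat{\ep_1,\ep''_1}$ into the column $M(\ep'_1)=\bmat{M(\ep_1)\\M(\ep''_1)}$, whence $d_M(V_I)=\rank M(\ep'_1)-\rank M(\ep_1)=\rank\bmat{M(\ep_1)\\M(\ep''_1)}-\rank\bmat{M(\ep_1)}$, as claimed. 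I expect no genuine obstacle beyond keeping the transpose convention of $M(\blank)$ and the index bookkeeping straight; the lift identity is the only place that uses anything nontrivial, namely \cref{lem:comp-ro-P}.
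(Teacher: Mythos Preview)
Your proposal is correct and follows essentially the same route as the paper: apply \cref{lem:proj.presentation for factor module} with $M=V_I$, $M'=\soc V_I=V_{\{b\}}$, verify the lift identity $\ep_0\,\ep''_1=\iota\,\ep_0^{\{b\}}$ via \cref{lem:comp-ro-P}, and then combine \cref{thm:dMV}(1) with \cref{lem:dim-Hom-coker}. Your writeup is slightly more explicit about why the dimension sums cancel and how the transpose convention yields the stacked matrix, but the argument is the same.
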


\begin{proof}
We only need to show the statement for $V_I/\soc V_I$.
By Lemma~\ref{lem:proj.presentation for factor module},
it suffices to check $\ep_0 \ep''_{1}=\iota \ep^{\{b\}}_0$,
in which $\iota\colon V_{\{b\}} \to V_I$ is the inclusion.
\begin{align*}
\ep_0 \ep''_{1} & = \bmat{\ro^{V_I}_{1_a}}_{a \in \src(I)} \cdot \bmat{\de_{a, \bfc'(b)} \sfP_{b,\, \bfc'(b)}}_{(a, b)\in \src(I)\times \{b\}}
= \sum_{a\in \src(I)} \de_{a, \bfc'(b)}\ro^{V_I}_{1_a}\cdot \sfP_{b, a}\\
& = \ro^{V_I}_{1_{\bfc'(b)}}\cdot \sfP_{b, \bfc'(b)}
\overset{*}{=} \ro^{V_I}_{V_I(p_{b, \bfc'(b)})(1_{\bfc'(b)})}
= \ro^{V_I}_{1_{b}} = \iota \ro^{V_{\{b\}}}_{1_{b}}
= \iota \ep^{\{b\}}_0,
\end{align*}
where we applied Lemma \ref{lem:comp-ro-P} to have the equality $(\overset{*}{=})$.
Finally, \eqref{eq:formula for injective modules} follows by applying Lemma~\ref{lem:dim-Hom-coker} and Theorem~\ref{thm:dMV}.
\end{proof}

\subsubsection{The case where \texorpdfstring{$V_I$}{VI} is non-injective}

Throughout this subsection, we assume that $V_I$ is non-injective, and let the sequence
$$
   0 \to V_I \to E \to \ta\inv V_I \to 0
$$
be an almost split sequence starting from $V_I$. We identify $\k[\bfP\op]$ with $A\op = \k[\bfP]\op$ in an obvious way.
To apply Theorem \ref{thm:dMV} and Lemma \ref{lem:dim-Hom-coker},
we need to compute projective presentations of $\ta\inv V_I$ and $E$.
We first do it for $\ta\inv V_I$.

We denote by $(\blank)^t$ the contravariant functors
\[
\begin{aligned}
\Hom_{A}(\blank, A(\cdot, ?)) &\colon \mod A \to \mod A\op,\\
&M \mapsto \Hom_{A}({}_{?}M, A(\cdot, ?)), \text{ and}\\
\Hom_{A\op}(\blank, A\op(\cdot, ?)) &\colon \mod A\op \to \mod A,\\
&M \mapsto \Hom_{A\op}(M_{?}, A(?, \cdot)),
\end{aligned}
\]
which are dualities between
$\prj A$ and $\prj A\op$, where $\prj B$ denotes the full subcategory
of $\mod B$ consisting of projective modules for any finite $\k$-category $B$.
We use the notation $P'_x$ provided in Notation~\ref{ntn:Yoneda}.
By the Yoneda lemma, we have
\begin{equation}
\label{eq:identifn-prime-t}
P^t_x = \Hom_{A}(A(x, ?), A(\cdot, ?)) \iso A(\cdot, x) = A\op(x, \cdot) = P'_x
\end{equation}
for all $x \in \bfP$.
By this natural isomorphism, we usually identify $P'_x$ with $P^t_x$, and $\sfP'_{x,y}$ with $(\sfP_{y,x})^t$ for all $x, y \in \bfP$. For this reason, we write $P^t$ instead of $P'$ in the sequel if there is no confusion.

Remember that $\ta\inv M = \Tr D M$ for all $M \in \mod A$,
where for each $N \in \mod A\op$,
the {\em transpose} $\Tr N$ of $N$ is defined as the cokernel of some $f^t$
with $P_1 \ya{f} P_0 \to N \to 0$ a minimal projective presentation of $N$.
By applying Proposition~\ref{prp:proj.presentation for V_I}, we first obtain a projective presentation
of $DV_{I}$ as follows.
For this sake, we note that there exists an isomorphism $DV_I \iso V_{I\op}$
in $\mod A\op$, and by Lemma~\ref{lem:relation between interval and up-set (down-set)},
$I = W \setminus W'$, where $W\coloneqq \dset I$ and $W'\coloneqq \ddset I = \dset I \setminus I$
are two down-sets of $\bfP$.
By the duality,
$W\op = (\dset_{\bfP}\, I)\op = \uset_{\bfP\op}\, I\op$
and $(W')\op = \uset_{\bfP\op}\, I\op \setminus I\op$.
Hence $DV_I \iso V_{I\op} \iso V_{W\op}/V_{(W')\op}$, where
$$
\begin{aligned}
\src(W\op) &= \snk(W) = \snk(I),\quad \src_1(W\op) = \snk_1(W) = \snk_1(I),\ \text{and}\\
\src((W')\op) &= \snk(W') = \snk(\ddset I).
\end{aligned}
$$

\begin{ntn}
\label{ntn:choice function 3}
Let $I$ an interval of $\bfP$.

(1) Note that $\snk(I) = \snk(\dset I)$, and hence also $\snk_1(I) = \snk_1(\dset I)$.

(2) Note that for each $b'\in \snk(\ddset I)$, we have $\snk(I)\cap \uset b'\neq \emptyset$
because $b' \in \dset I$.
Fixing one element $b\in \snk(I)\cap \uset b'$ for each $b'\in \snk(\ddset I)$
yields a choice map $\bfd\colon \snk(\ddset I)\to \snk(I) = \snk(\dset I)$
that sends $b'$ to $b$.
\end{ntn}

\begin{prp}
\label{prp:mpp-ta-inv-VI-general}
Let $I$ be an interval of $\bfP$.
Then the interval module $D V_I$ has the following (not necessarily minimal) projective presentation
\footnote{We changed the order of direct summands as in
$P'_{\snk(\ddset I)} \ds P'_{\snk_1(I)}$ because we wanted to put matrices dependent
on choice maps closer to each other in the final formula.}
in $\mod A\op$:
\begin{equation}
\label{eq:proj-pre-DV_I-nsrccase-lem w/o conditions}
P'_{\snk(\ddset I)} \ds P'_{\snk_1(I)} \ya{\ps_1}
P'_{\snk(I)} \ya{\ps_0} DV_I \to 0.
\end{equation}
Here $\ps_0 \coloneqq \bmat{\la^{V_I}_{1_b}}_{b \in \snk(I)}$ and
$\ps_1\coloneqq \ps_1(\bfd)\coloneqq \bmat{\ps_{11}, \ps^{\dset I}_{1}}$,
where $\ps_{11}\coloneqq \ps_{11}(\bfd)$ is given by
$\ps_{11}\coloneqq \bmat{\de_{b,\bfd(b')}\sfP'_{b',\bfd(b')}}_{(b,b') \in \snk(I) \times \snk(\ddset I)}$, and
\begin{align}
\label{eq:matrix form of ps^dset_1 w/o conditions, dual version}
\psi^{\dset I}_{1}\coloneqq
\bmat{\vec{\sfP}_{b,\bfb_d}}_{(b, \bfb_d) \in \snk(I) \times \snk_1(I)},
\end{align}
where the entry is given by
\begin{align}\label{eq:entries of ps^dset_1 w/o conditions, dual version}
\vec{\sfP}_{b,\bfb_d}\coloneqq
\begin{cases}
\sfP'_{d,b} & (b=\udl{\bfb}),\\
-\sfP'_{d,b} & (b=\ovl{\bfb}),\\
\mathbf{0} & (b \not\in \bfb),
\end{cases}
\end{align}
for all $\bfb_d \in \snk_1(I)$ and $b \in \snk(I)$. \qed
\end{prp}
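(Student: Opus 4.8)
The plan is to obtain this projective presentation as a direct transcription, under the order-reversing duality $\bfP\leftrightarrow\bfP\op$, of the presentation of an interval module established in \cref{prp:proj.presentation for V_I}. Recall that $\bfP\op$ is again a finite poset, that convexity and connectedness are self-dual properties, so that $I\op$ is an interval of $\bfP\op$, and that there is an isomorphism $DV_I\iso V_{I\op}$ of $\k[\bfP\op]$-modules, as already observed just before the statement. Hence it suffices to exhibit the claimed presentation of $V_{I\op}$ over $\k[\bfP\op]$, and the strategy is simply to feed $\bfP\op$ and $I\op$ into \cref{prp:proj.presentation for V_I} and then rename every gadget through the duality dictionary.

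Concretely, I would first apply \cref{prp:proj.presentation for V_I} to the pair $(\bfP\op,I\op)$, obtaining
\[
P'_{\src_1(I\op)}\ds P'_{\src(\uuset_{\bfP\op} I\op)} \ya{\ep_1} P'_{\src(I\op)} \ya{\ep_0} V_{I\op}\to 0
\]
in $\mod\k[\bfP\op]$, where $P'_x$ is the indecomposable projective $\k[\bfP\op]$-module at $x$. Next I would translate the index sets: sources of $\bfP\op$ are sinks of $\bfP$, $\uset_{\bfP\op}(\blank)=(\dset_\bfP(\blank))\op$, and pre-joins in $\bfP\op$ are pre-meets in $\bfP$, so that $\src(I\op)=\snk(I)$ and $\src_1(I\op)=\snk_1(I)$; moreover, by \cref{lem:relation between interval and up-set (down-set)}, $\uuset_{\bfP\op}I\op=(\ddset_\bfP I)\op$, whence $\src(\uuset_{\bfP\op}I\op)=\snk(\ddset I)$. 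This rewrites the three terms as $P'_{\snk_1(I)}$, $P'_{\snk(\ddset I)}$ and $P'_{\snk(I)}$, and after swapping the two summands of the domain (as flagged in the footnote) the overall shape of \eqref{eq:proj-pre-DV_I-nsrccase-lem w/o conditions} is already in place.

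It then remains to match the morphisms. For $\ep_0$ this is immediate: the Yoneda parametrization $m\mapsto\ro_m$ used in \cref{prp:proj.presentation for V_I}, once instantiated over $\k[\bfP\op]$, is by definition the parametrization $m\mapsto\la_m$ for right $A$-modules of \cref{ntn:Yoneda}(2), so $\ep_0$ becomes $\ps_0=\bmat{\la^{V_I}_{1_b}}_{b\in\snk(I)}$. For $\ep_1=\bmat{\ep_1^{\uset_{\bfP\op}I\op},\,\ep_{11}}$, the key point is that the generators $\sfP_{y,x}$ appearing in \cref{prp:proj.presentation for V_I}, when that result is instantiated at $\bfP\op$, are exactly the morphisms $\sfP'_{y,x}\colon P'_y\to P'_x$ of $\mod A\op$ from \cref{ntn:Yoneda}(4) (for $y\le_\bfP x$). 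Substituting this into \eqref{eq:matrix form of ep^U_1}--\eqref{eq:entries of ep^U_1} for the connected up-set $\uset_{\bfP\op}I\op$ — whose sources are $\snk(I)$ and whose relevant pre-joins are the pre-meets $\wedge'\bfb$ of $\bfP$ — turns $\ep_1^{\uset_{\bfP\op}I\op}$ into the matrix $\psi^{\dset I}_1$ with entries \eqref{eq:entries of ps^dset_1 w/o conditions, dual version}, provided the linear orders are fixed so that $\plex$ on $\src_1(I\op)$ corresponds to $\plex$ on $\snk_1(I)$. Finally, a choice map $\bfc$ for $I\op$ in $\bfP\op$ assigns to $b'\in\snk(\ddset I)$ an element of $\src(I\op)\cap\dset_{\bfP\op}b'=\snk(I)\cap\uset_\bfP b'$, which is precisely the datum of a choice map $\bfd$ for $I$ in the sense of \cref{ntn:choice function 3}; with this identification $\ep_{11}$ becomes $\ps_{11}=\bmat{\de_{b,\bfd(b')}\sfP'_{b',\bfd(b')}}_{(b,b')\in\snk(I)\times\snk(\ddset I)}$, completing the matching.

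The only genuinely delicate part is this last transcription: one must track the sign conventions in \eqref{eq:entries of ep^U_1}, the orientation of the $\sfP'$-arrows, and the compatibility of the lexicographic orders and of the choice maps under the duality. Everything else is a verbatim citation of \cref{prp:proj.presentation for V_I}, so I do not expect any conceptual obstacle beyond careful bookkeeping.
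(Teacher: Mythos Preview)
Your proposal is correct and follows essentially the same approach as the paper: the paragraph immediately preceding the proposition already sets up the duality dictionary (noting $DV_I\iso V_{I\op}$, $W\op=\uset_{\bfP\op}I\op$, $(W')\op=\uuset_{\bfP\op}I\op$, and the correspondences $\src(W\op)=\snk(I)$, $\src_1(W\op)=\snk_1(I)$, $\src((W')\op)=\snk(\ddset I)$), after which the proposition is stated with a bare \qed as a direct instance of \cref{prp:proj.presentation for V_I} applied to $(\bfP\op,I\op)$. Your transcription of the morphisms, including the identification of choice maps $\bfc$ in $\bfP\op$ with $\bfd$ in $\bfP$ and of the $\sfP$'s with the $\sfP'$'s, is exactly the bookkeeping the paper leaves implicit.
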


The canonical isomorphism \eqref{eq:identifn-prime-t} allows us
to make the identifications:
$P'_{\snk(\ddset I)} \ds P'_{\snk_1(I)} = P^t_{\snk(\ddset I)} \ds P^t_{\snk_1(I)}
$ and $P'_{\snk(I)} = P^t_{\snk(I)}$.
Note here that $\psi_0$ is a projective cover of $DV_{I}$ in
\eqref{eq:proj-pre-DV_I-nsrccase-lem w/o conditions}
because it induces an isomorphism $\top P^t_{\snk(I)} \iso \top DV_{I}$,
but $\psi_1 \colon P^t_{\snk(\ddset I)} \ds P^t_{\snk_1(I)}\to \Im \psi_1$ is not always a projective cover. In any case, there exists a decomposition
\begin{equation}
\label{eq:decomp-P'snk1-I}
P^t_{\snk(\ddset I)} \ds P^t_{\snk_1(I)} = P^t_1 \ds P^t_2
\end{equation}
of the domain of $\ps_1$ such that the restriction
$\Psi \colon P^t_1 \to \Im \psi_1$  of $\ps_1$ is a projective cover.
With respect to the new decomposition $P^t_1 \ds P^t_2$ of the domain of $\ps_1$,
$\ps_1$ has the matrix expression $\psi_1 = \bmat{\Psi, \mathbf{0}}$.

\begin{prp}
\label{prp:proj-pre of tau-VI-ds-P2}
In the setting above, we can give a projective presentation of
$\ta\inv V_{I} \ds P_2$ as follows:
\begin{equation}
\label{eq:proj-resol-ta-invV_I+P-nsrccase-lem w/o conditions}
P_{\snk(I)} \ya{{\pi_{1}}=\sbmat{\Psi^t\\ \mathbf{0}}} P_1 \ds P_2 \ya{(\cok{\Psi^t})\ds \id_{P_2}} \ta\inv V_{I} \ds P_2 \to 0.
\end{equation}
Here by changing the decomposition of the middle term to the right hand side of the equality
$P_1 \ds P_2 = P_{\snk(\ddset I)} \ds P_{\snk_1(I)}$, we have
\begin{equation*}
\pi_1 = \pi_1(\bfd) = \bmat{\pi_{11} \\ \pi^{\dset I}_1}\coloneqq  \bmat{\ps^t_{11} \\ (\ps^{\dset I}_1)^t}\colon
P_{\snk(I)} \to P_{\snk(\ddset I)} \ds P_{\snk_1(I)}
\end{equation*}
that is induced from \eqref{eq:proj-pre-DV_I-nsrccase-lem w/o conditions},
where $\pi_{11}\coloneqq \pi_{11}(\bfd)$ is given by $\pi_{11}\coloneqq\ps^t_{11} = \bmat{\de_{b,\bfd(b')}\sfP_{\bfd(b'),b'}}_{(b',b) \in \snk(\ddset I)\times \snk(I)}$,
and by \eqref{eq:matrix form of ps^dset_1 w/o conditions, dual version}
and \eqref{eq:entries of ps^dset_1 w/o conditions, dual version},
the precise form of $\pi_1^{\dset I}\coloneqq (\ps^{\dset I}_1)^t$ is given as follows:
\begin{align*}
\pi^{\dset I}_1 =
\bmat{\vec{\sfP}_{b,\bfb_d}}_{(b, \bfb_d) \in \snk(I) \times \snk_1(I)}^t
= \bmat{\vec{\sfP}_{b,\bfb_d}^t}_{(\bfb_d, b) \in \snk_1(I) \times \snk(I)} \eqqcolon \bmat{\hat{\sfP}_{b,\bfb_d}}_{(\bfb_d, b) \in \snk_1(I) \times \snk(I)},
\end{align*}
where the entry is given by
\begin{align}\label{eq:entries of psi_1 w/o conditions, dual version}
\hat{\sfP}_{b,\bfb_d}\coloneqq
    \begin{cases}
    \sfP_{b,d} & (b=\udl{\bfb}),\\
    -\sfP_{b,d} & (b=\ovl{\bfb}),\\
    \mathbf{0} & (b \not\in \bfb),
    \end{cases}
\end{align}
for all $b \in \snk(I)$ and $\bfb_d \in \snk_1(I)$.
\end{prp}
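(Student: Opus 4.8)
The plan is to produce $\ta\inv V_I \ds P_2$ as the cokernel of the transpose of $\ps_1$, exploiting the fact that Proposition~\ref{prp:mpp-ta-inv-VI-general} together with the decomposition \eqref{eq:decomp-P'snk1-I} already packages a minimal projective presentation of $DV_I$. Concretely, I would first check that
\[
P^t_1 \ya{\Psi} P^t_{\snk(I)} \ya{\ps_0} DV_I \to 0
\]
is a \emph{minimal} projective presentation in $\mod A\op$: the map $\ps_0$ is a projective cover of $DV_I$ (observed just after \eqref{eq:proj-pre-DV_I-nsrccase-lem w/o conditions}), so $\Im\ps_1 = \Ker\ps_0 \subseteq \rad P^t_{\snk(I)}$; and by the very choice of the decomposition \eqref{eq:decomp-P'snk1-I}, the map $\Psi\colon P^t_1 \to \Im\ps_1$ is a projective cover, hence $\Ker\Psi \subseteq \rad P^t_1$. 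These are precisely the two minimality conditions.

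Next I would apply the transpose duality. By definition $\ta\inv V_I = \Tr DV_I$ is the cokernel of $\Psi^t$ applied to the minimal presentation above, and the canonical identification \eqref{eq:identifn-prime-t} makes $(\blank)^t$ an involution on projectives with $(P^t_x)^t = P_x$ and $(\sfP'_{x,y})^t = \sfP_{y,x}$; thus
\[
P_{\snk(I)} \ya{\Psi^t} P_1 \ya{\cok\Psi^t} \ta\inv V_I \to 0 .
\]
Forming the direct sum with the split presentation $0 \to P_2 \ya{\id} P_2 \to 0$ and noting that transposing $\ps_1 = \bmat{\Psi, \mathbf{0}}$ yields $\bmat{\Psi^t \\ \mathbf{0}}$, I obtain $\pi_1 = \ps_1^t = \bmat{\Psi^t \\ \mathbf{0}}\colon P_{\snk(I)} \to P_1 \ds P_2$ with $\cok\pi_1 \iso \cok(\Psi^t) \ds P_2 = \ta\inv V_I \ds P_2$. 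Exactness of \eqref{eq:proj-resol-ta-invV_I+P-nsrccase-lem w/o conditions} is then immediate: $(\cok\Psi^t)\ds\id_{P_2}$ is visibly onto and $\Ker\big((\cok\Psi^t)\ds\id_{P_2}\big) = \Im\Psi^t \ds 0 = \Im\pi_1$. Note that $\pi_1$ need not be a projective cover, which is exactly why the superfluous summand $P_2$ survives.

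Finally, to get the explicit matrix form I would re-express $\pi_1 = \ps_1^t$ in the original decomposition $P_1 \ds P_2 = P_{\snk(\ddset I)}\ds P_{\snk_1(I)}$. From the block form $\ps_1 = \bmat{\ps_{11}, \ps_1^{\dset I}}$ of Proposition~\ref{prp:mpp-ta-inv-VI-general} one gets $\pi_1 = \bmat{\ps_{11}^t \\ (\ps_1^{\dset I})^t}$, and transposing each block entrywise with the rule $(\sfP'_{x,y})^t = \sfP_{y,x}$ applied to the entries of $\ps_{11}$ and $\ps_1^{\dset I}$ given there delivers exactly $\pi_{11} = \ps_{11}^t$ with entries $\de_{b,\bfd(b')}\sfP_{\bfd(b'),b'}$ and $\pi_1^{\dset I} = (\ps_1^{\dset I})^t$ with the entries $\hat{\sfP}_{b,\bfb_d}$ as claimed. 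The one genuinely delicate point is the first step — certifying that the presentation supplied by Proposition~\ref{prp:mpp-ta-inv-VI-general} really becomes minimal after the passage to $P^t_1$; this is precisely why the decomposition \eqref{eq:decomp-P'snk1-I} was singled out beforehand, and once it is in hand everything downstream is routine bookkeeping with the two dualities $D$ and $(\blank)^t$ and with transposition of block matrices.
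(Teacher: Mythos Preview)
Your proof is correct and follows essentially the same approach as the paper: extract a minimal projective presentation of $DV_I$ from \eqref{eq:proj-pre-DV_I-nsrccase-lem w/o conditions} via the decomposition \eqref{eq:decomp-P'snk1-I}, apply $(\blank)^t$ to obtain the minimal presentation of $\ta\inv V_I = \Tr DV_I$, and then take the direct sum with $P_2$. You supply more detail than the paper does---in particular the explicit verification of minimality and the entrywise transposition of the blocks---but the underlying argument is identical.
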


\begin{proof}
By \eqref{eq:proj-pre-DV_I-nsrccase-lem w/o conditions}
and \eqref{eq:decomp-P'snk1-I}, $DV_{I}$ has a minimal projective presentation
\begin{equation}
\label{eq:min-proj-resol-DV_I-nsrccase-lem w/o conditions}
P^t_1 \ya{\Psi} P^t_{\snk(I)} \ya{\psi_{0}} DV_{I} \to 0.
\end{equation}
Hence by applying $(\blank)^t\coloneqq \Hom_{A\op}(\blank, A\op)$
to $\Psi$ in \eqref{eq:min-proj-resol-DV_I-nsrccase-lem w/o conditions},
we have a minimal projective presentation
\begin{equation}
\label{eq:min-proj-resol-ta-invV_I-nsrccase-lem w/o conditions}
P_{\snk(I)} \ya{\Psi^t} P_1 \ya{\cok{\Psi^t}} \ta\inv V_{I} \to 0
\end{equation}
of $\ta\inv V_{I} = \Tr D V_{I}$ in $\mod A$.
Hence the assertion follows.
\end{proof}

Note that in the projective presentation
\eqref{eq:proj-resol-ta-invV_I+P-nsrccase-lem w/o conditions} of $\ta\inv V_{I} \ds P_2$,
both of the projective terms and the form of the morphism $\ps_1^t$
between them is explicitly given, whereas those in the projective presentation
\eqref{eq:min-proj-resol-ta-invV_I-nsrccase-lem w/o conditions},
the forms of $P_1$ and $\Psi^t$ are not clear. Therefore, we will use the former presentation \eqref{eq:proj-resol-ta-invV_I+P-nsrccase-lem w/o conditions} in our computation.
Fortunately, as seen in \eqref{eq:formula-dRV-2-2 w/o conditions-general},
the unnecessary $P_2$ does not disturb it
because we can give an explicit form of projective presentation of $E \ds P_2$
as follows.

\begin{prp}
\label{prp:pp-E-ds-P_2-general}
Let $I$ be an interval of $\bfP$ with $\src(I) = \{a_1, \ldots, a_n\}$
and $\snk(I) = \{b_1, \ldots, b_m\} \ (m\ge 2)$.
Choose any choice maps $\bfc \colon \src(\uuset I) \to \src(I)$
and $\bfd \colon \snk(\ddset I) \to \snk(I)$, and
set $\ep_1\coloneqq \ep_1(\bfc),\ \pi_1\coloneqq \pi_1(\bfd)$.
Choose also any $i \in [n]$.
Then there exists some $j \in [m]$ such that $b_j \ge a_i$.
Let $\la\coloneqq \la(b_j, a_i)\coloneqq [\la_{b,a}]_{(b,a)\in \snk(I)\times \src(I)}$ to be the matrix with
$\la_{b,a} = \sfP_{b_j,a_i}$ if $(b,a) = (b_j, a_i)$ and $\la_{b,a} = 0$ otherwise. Then the following is a projective presentation of $E\ds P_2$:
\begin{equation}
\label{eq:pp-E+P2}
(P_{\src_1(I)} \ds P_{\src(\uuset I)}) \ds P_{\snk(I)} \ya{\mu_E}
P_{\src(I)}\ds (P_{\snk(\ddset I)} \ds P_{\snk_1(I)})
\ya{\ep_E} E\ds P_2  \to 0.
\end{equation}
Here $\mu_E$ is given by
\[
\mu_E\coloneqq \left[
\begin{array}{c|c}
\ep_1  &\la\\
\hline
\mathbf{0}& \pi_1
\end{array}
\right],
\]
where $\ep_1$ and $\pi_1$ are given in {\rm Propositions \ref{prp:proj.presentation for V_I}}
and {\rm \ref{prp:proj-pre of tau-VI-ds-P2}}, respectively.
\end{prp}

\begin{proof}
Without loss of generality, we may assume that $a_i = a_1$ and $b_j = b_1$.
Then
$$
\la = \bmat{\sfP_{b_1,a_1} & \mathbf{0}\\\mathbf{0}&\mathbf{0}}.
$$
By \cite[Section 3.6]{gabriel2006auslander}, an almost split sequence
\eqref{eq:almost-split-sequence} can be obtained as a pushout of the sequence
\eqref{eq:min-proj-resol-ta-invV_I-nsrccase-lem w/o conditions}
along a morphism $\et \colon P_{\snk(I)} \to V_{I}$ as follows:
\begin{equation}
\label{eq:pushout-general w/o conditions}
\begin{tikzcd}
\Nname{P_1}P_{\snk(I)} & \Nname{P_0}P_1 & \Nname{ta1}\ta\inv V_{I} & \Nname{01}0\\
\Nname{VI}V_{I} & \Nname{E}E & \Nname{ta2}\ta\inv V_{I} & \Nname{02}0
\Ar{P_1}{P_0}{"\Psi^t"}
\Ar{P_0}{ta1}{"\cok{\Psi^t}"}
\Ar{ta1}{01}{}
\Ar{VI}{E}{"\mu"}
\Ar{E}{ta2}{"\ep"}
\Ar{ta2}{02}{}
\Ar{P_1}{VI}{"\et"}
\Ar{P_0}{E}{"\th"}
\Ar{ta1}{ta2}{equal}
\end{tikzcd}.
\end{equation}
Here, $\et$ is the composite of morphisms
\[
P_{\snk(I)} \ya{\text{can.}} \top P_{\snk(I)} \isoto \soc \nu P_{\snk(I)}
\isoto \soc V_{I} \ya{\al} S \hookrightarrow \soc V_{I} \hookrightarrow V_{I},
\]
where $\nu$ is the Nakayama functor $\nu\coloneqq D\circ \Hom_{A}(\blank, A)$, $S$ is any simple $A$-$\End_{A}(V_{I})$-subbimodule of
$\soc V_{I}$, and $\al$ is a retraction.

Here we claim that
any simple $A$-submodule $S$ of $\soc V_{I}$ is automatically
a simple $A$-$\End_{A}(V_{I})$-subbimodule of $\soc V_{I}$. Indeed, this follows from the fact that
$\soc V_{I} = \Ds_{i \in [m]}V_{\{b_i\}}$, where $V_{\{b_i\}}$ are mutually non-isomorphic simple $A$-modules.
More precisely, it is enough to show that $f(S) \subseteq S$ for any $f \in \End_{A}(V_{I})\op$
because if this is shown, then $S$ turns out to be a right $\End_{A}(V_{I})$-submodule
and a simple $A$-$\End_{A}(V_{I})$-subbimodule of $\soc V_{I}$. By the fact above, $S \iso V_{\{b_i\}}$ for a unique $i \in [m]$, and hence
$\pr_j(S) = 0$ for all $j \in [m]\setminus \{i\}$,
where $\pr_j \colon \soc V_{I} \to V_{\{b_j\}}$ is the canonical projection.
Thus $S \subseteq V_{\{b_i\}}$, which shows that $S = V_{\{b_i\}}$ because the both hand sides are simple.
Now if $f = 0$, then $f(S) = 0 \subseteq S$; otherwise $f(S) \iso S$,
and then we have $f(S) = V_{\{b_i\}} = S$ by applying the argument above
to the simple $A$-submodule $f(S)$ of $\soc V_I$. This proves our claim.

Therefore, we may take $S\coloneqq V_{\{b_1\}}$, and
\[
\et\coloneqq \bmat{\ro^{V_I}_{1_{b_1}}, \mathbf{0}, \dots, \mathbf{0}} \colon P_{\snk({I})} = P_{b_1} \ds P_{b_2} \ds \cdots \ds P_{b_m} \to V_{I}.
\]
By assumption, $a_1 \le b_1$ in ${I}$.
Hence we have a commutative diagram
\[
\begin{tikzcd}
& P_{\src({I})}= P_{a_1} \ds \cdots \ds P_{a_n}\\
P_{\snk({I})} & V_{I}
\Ar{1-2}{2-2}{"{\ep_0 = \sbmat{\ro^{V_I}_{1_{a_{1}}},\dots, \ro^{V_I}_{1_{a_{n}}}}}"}
\Ar{2-1}{2-2}{"\et" '}
\Ar{2-1}{1-2}{"\et'\coloneqq{\Pzero}"}
\end{tikzcd}.
\]
Indeed, by Lemma \ref{lem:comp-ro-P}, we have
\[
\ep_0 \et'
= \bmat{\ro^{V_I}_{1_{a_1}}\sfP_{b_1,a_1}, \mathbf{0}, \dots, \mathbf{0}}
= \bmat{\ro^{V_I}_{V_{I}(p_{b_1,a_1})(1_{a_1})}, \mathbf{0}, \dots, \mathbf{0}}
= \bmat{\ro^{V_I}_{1_{b_1}}, \mathbf{0}, \dots, \mathbf{0}}
= \et.
\]
The pushout diagram \eqref{eq:pushout-general w/o conditions} yields the exact sequence
\begin{equation*}
    P_{\snk({I})} \ya{\sbmat{\et\\ \Psi^t}} V_{I} \ds P_1 \ya{\sbmat{\mu, -\th}} E \to 0.
\end{equation*}
Since $P_1 \ds P_2 = P_{\snk(\ddset I)} \ds P_{\snk_1(I)}$ and $\pi_1 = \sbmat{\Ps^t\\\bfzero}$,
this yields the exact sequence
\begin{equation*}
P_{\snk({I})} \ya{\sbmat{\et\\ \pi_{1}}} V_{I} \ds (P_{\snk(\ddset I)} \ds P_{\snk_1(I)}) \ya{\pi} E \ds P_2 \to 0,
\end{equation*}
where $\pi\coloneqq \bmat{\mu, -\th} \ds \bfzero$.
This is extended to the following commutative diagram with the bottom row exact:
\[
\begin{tikzcd}[row sep=35pt,column sep=20pt, ampersand replacement=\&]
(P_{\src_1(I)} \ds P_{\src(\uuset I)}) \ds P_{\snk({I})} \&[20pt]
P_{\src({I})}\ds (P_{\snk(\ddset I)} \ds P_{\snk_1(I)}) \& E \ds P_2 \& 0\\
(P_{\src_1(I)} \ds P_{\src(\uuset I)}) \ds P_{\snk({I})} \&
V_{I}\ds (P_{\snk(\ddset I)} \ds P_{\snk_1(I)}) \& E \ds P_2 \& 0
\Ar{1-1}{1-2}{"\mu_E"}
\Ar{1-2}{1-3}{"\ep_E"}
\Ar{1-3}{1-4}{}
\Ar{2-1}{2-2}{"\sbmat{\mathbf{0} & \et\\ \mathbf{0} & \pi_1}" '}
\Ar{2-2}{2-3}{"\pi" '}
\Ar{2-3}{2-4}{}
\Ar{1-1}{2-1}{equal}
 \Ar{1-2}{2-2}{"\sbmat{\ep_0 & \mathbf{0}\\ \mathbf{0} & \id}"}
 \Ar{1-3}{2-3}{equal}
\end{tikzcd},
\]
where we set $\mu_E\coloneqq \sbmat{\ep_1 & \et'\\\mathbf{0} & \pi_1}$ and
$\ep_E\coloneqq \pi \circ \sbmat{\ep_0 & \mathbf{0}\\ \mathbf{0} & \id}$,
which is an epimorphism as the composite of epimorphisms.
\par
It remains to show that $\ep_E$ is a cokernel morphism of $\mu_E$.
By the commutativity of the diagram and the exactness of
the bottom row, we see that $\ep_E \mu_E = 0$.
Let $\bmat{f,g}\colon P_{\src({I})}\ds (P_{\snk(\ddset I)} \ds P_{\snk_1(I)}) \to X$ be a morphism with $\bmat{f,g}\cdot \mu_E = 0$. Then $f \ep_1 = 0$.
Since $\ep_0$ is a cokernel morphism of $\ep_1$,
there exists some $f' \colon V_{I} \to X$ such that
$f = f' \ep_0$.
Then we have $\bmat{f,g} = \bmat{f',g}\cdot \sbmat{\ep_0 &\mathbf{0}\\ \mathbf{0}&\id}$.
Now $\bmat{f',g}\cdot \sbmat{\mathbf{0}& \et\\ \mathbf{0}& \pi_1} = \bmat{f',g} \sbmat{\ep_0 &\mathbf{0}\\ \mathbf{0}&\id} \mu_E = \bmat{f,g}\cdot \mu_E = 0$.
Hence $\bmat{f',g}$ factors through $\pi$, that is,
$\bmat{f',g} = h \pi$ for some $h \colon E \ds P_2 \to X$.
Therefore, we have $\bmat{f,g} = h \pi \sbmat{\ep_0 &\mathbf{0}\\ \mathbf{0}&\id} = h\, \ep_E$.
The uniqueness of $h$ follows from the fact that $\ep_E$ is an
epimorphism.
As a consequence, $\ep_E$ is a cokernel morphism of $\mu_E$.
\end{proof}

We are now in a position to state the formula of $d_M(V_I)$ in this case.

\begin{thm}
\label{thm:gen-formula-general}
Let $M \in \mod A$ and $I$ an interval of $\bfP$ with $\src(I) = \{a_1, \ldots, a_n\}$
and $\snk(I) = \{b_1, \ldots, b_m\}$.
Choose any choice maps $\bfc \colon \src(\uuset I) \to \src(I)$
and $\bfd \colon \snk(\ddset I) \to \snk(I)$, and
set $\ep_1\coloneqq \ep_1(\bfc),\ \pi_1\coloneqq \pi_1(\bfd)$
as in~{\rm \cref{prp:proj.presentation for V_I,prp:proj-pre of tau-VI-ds-P2}}.
Choose also any $(j, i) \in [m] \times [n]$ such that $b_j \ge a_i$, and
set $\la\coloneqq \la(b_j, a_i)$ as in~{\rm \cref{prp:pp-E-ds-P_2-general}}. Assume that $V_I$ is non-injective $($i.e., $m \ge 2)$.
Then
\begin{equation}
\label{eq:formula-d_M(V_I)-general-non-inj}
d_M(V_I) = \rank
\left[
\begin{array}{c|c}
M(\ep_1) & \bfzero\\
\hline
M(\la) & M(\pi_1)
\end{array}
\right]
- \rank M(\ep_1) - \rank M(\pi_1).
\end{equation}
\end{thm}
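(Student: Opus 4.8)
The plan is to apply Theorem~\ref{thm:dMV}(2) to $L = V_I$, which is permissible since $m \ge 2$ forces $V_I$ to be non-injective, and then to compute each of the three Hom-dimensions occurring there by means of Lemma~\ref{lem:dim-Hom-coker}, fed with the (not necessarily minimal) projective presentations already constructed in Propositions~\ref{prp:proj.presentation for V_I}, \ref{prp:proj-pre of tau-VI-ds-P2} and~\ref{prp:pp-E-ds-P_2-general}.

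The first step is to eliminate the auxiliary projective module $P_2$. Starting from
\[
d_M(V_I) = \dim\Hom_A(V_I, M) - \dim\Hom_A(E, M) + \dim\Hom_A(\ta\inv V_I, M)
\]
and using that $\Hom_A(\blank, M)$ carries direct sums to direct sums, so that $\dim\Hom_A(E, M) = \dim\Hom_A(E \ds P_2, M) - \dim\Hom_A(P_2, M)$ and likewise with $E$ replaced by $\ta\inv V_I$, the two copies of $\dim\Hom_A(P_2, M)$ cancel and we are left with
\[
d_M(V_I) = \dim\Hom_A(V_I, M) - \dim\Hom_A(E \ds P_2, M) + \dim\Hom_A(\ta\inv V_I \ds P_2, M).
\]
This is the convenient form, since Proposition~\ref{prp:pp-E-ds-P_2-general} supplies an explicit projective presentation of $E \ds P_2$ with connecting morphism $\mu_E$, whereas no explicit presentation of $E$ by itself was obtained.

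Next I would feed the three presentations into Lemma~\ref{lem:dim-Hom-coker}. The degree-zero projective term is $P_{\src(I)}$ for $V_I$; it is $P_{\src(I)} \ds P_{\snk(\ddset I)} \ds P_{\snk_1(I)}$ for $E \ds P_2$; and it is $P_{\snk(\ddset I)} \ds P_{\snk_1(I)}$ for $\ta\inv V_I \ds P_2$ (here one uses the identification $P_1 \ds P_2 = P_{\snk(\ddset I)} \ds P_{\snk_1(I)}$ from Proposition~\ref{prp:proj-pre of tau-VI-ds-P2}). Hence the three $\sum_i \dim M(x_i)$ contributions are $\sum_{a \in \src(I)}\dim M(a)$; then $\sum_{a \in \src(I)}\dim M(a) + \sum_{b' \in \snk(\ddset I)}\dim M(b') + \sum_{\bfb_d \in \snk_1(I)}\dim M(d)$; then $\sum_{b' \in \snk(\ddset I)}\dim M(b') + \sum_{\bfb_d \in \snk_1(I)}\dim M(d)$. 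With the signs $+, -, +$ coming from Theorem~\ref{thm:dMV}(2), these dimension terms cancel identically, leaving
\[
d_M(V_I) = \rank M(\mu_E) - \rank M(\ep_1) - \rank M(\pi_1).
\]
Finally I would identify $M(\mu_E)$ with the block matrix in~\eqref{eq:formula-d_M(V_I)-general-non-inj}. Since $\mu_E = \sbmat{\ep_1 & \la\\ \mathbf{0} & \pi_1}$ is built from the elementary morphisms $\sfP_{y,x}$, Notation~\ref{ntn:notation-M(morphism)} says that $M(\mu_E)$ is obtained by replacing each $\sfP_{y,x}$ with $M_{y,x}$ and then transposing; applied to the block decompositions of the domain and codomain of $\mu_E$, this moves the zero block to the upper-right corner, giving $M(\mu_E) = \sbmat{M(\ep_1) & \mathbf{0}\\ M(\la) & M(\pi_1)}$, which is exactly the matrix in the statement. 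The one step that requires genuine care is this last bookkeeping: one must track the transpose convention of Notation~\ref{ntn:notation-M(morphism)} consistently across the block structure, so that the zero block ends up in the upper-right corner and does not migrate, and one must double-check that the $\dim M(\blank)$ terms cancel down to the last summand; everything else is a direct substitution of the earlier propositions.
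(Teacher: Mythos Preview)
Your proposal is correct and follows essentially the same route as the paper: apply Theorem~\ref{thm:dMV}(2), add and subtract $\dim\Hom_A(P_2,M)$ to pass to $E\ds P_2$ and $\ta\inv V_I \ds P_2$, and then invoke Lemma~\ref{lem:dim-Hom-coker} together with Propositions~\ref{prp:proj.presentation for V_I}, \ref{prp:proj-pre of tau-VI-ds-P2} and~\ref{prp:pp-E-ds-P_2-general}. In fact you spell out the cancellation of the $\dim M(\blank)$ terms and the transpose bookkeeping for $M(\mu_E)$ more explicitly than the paper does, which simply asserts that ``the assertion follows'' after displaying \eqref{eq:formula-dRV-2-2 w/o conditions-general}.
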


\begin{proof}
Because $V_{I}$ is not injective, the value of $d_{M}(V_{I})$ can be computed from the three terms of the almost split sequence \eqref{eq:almost-split-sequence} by using Theorem~\ref{thm:dMV} as follows:
\begin{equation}
\label{eq:formula-dRV-2-2 w/o conditions-general}
\begin{aligned}
d_{M}(V_{I}) = \dim \Hom_{A}(V_I, M) &- \dim \Hom_{A}(E, M)\\
&+ \dim \Hom_{A}(\ta\inv V_I, M)\\
= \dim \Hom_{A}(V_I, M) &- \dim \Hom_{A}(E \ds P_2, M)\\
&+ \dim \Hom_{A}(\ta\inv V_I \ds P_2, M),
\end{aligned}
\end{equation}
where $P_2$ is a direct summand of $P_{\snk(\ddset I)} \ds P_{\snk_1(I)}$ as in \eqref{eq:decomp-P'snk1-I}.
Hence the assertion follows by Lemma \ref{lem:dim-Hom-coker}, and Propositions \ref{prp:proj.presentation for V_I},
\ref{prp:proj-pre of tau-VI-ds-P2} and \ref{prp:pp-E-ds-P_2-general}.
\end{proof}

\begin{rmk}
    We note here that the formula \eqref{eq:formula-d_M(V_I)-general-non-inj} covers all cases, regardless of whether $V_I$ is injective or not.
\end{rmk}

Summarizing Theorems \ref{thm:dMV-VI-inj-general} and \ref{thm:gen-formula-general}, we obtain the following.

\begin{thm}
\label{thm:gen-formula-unified}
Let $M \in \mod A$ and $I$ an interval of $\bfP$ with $\src(I) = \{a_1, \ldots, a_n\}$
and $\snk(I) = \{b_1, \ldots, b_m\}$.
Choose any choice maps $\bfc \colon \src(\uuset I) \to \src(I)$
and $\bfd \colon \snk(\ddset I) \to \snk(I)$,
and any $(j, i) \in [m] \times [n]$ such that $b_j \ge a_i$.
Set $\la\coloneqq \la(b_j, a_i)$ as in~{\rm \cref{prp:pp-E-ds-P_2-general}}.
Then
\begin{equation}
\label{eq:formula-d_M(V_I)-general}
d_M(V_I) = \rank\left[
\begin{array}{c|c}
M(\ep_1) & \bfzero\\
\hline
M(\la) & M(\pi_1)
\end{array}
\right]
- \rank M(\ep_1) - \rank M(\pi_1).
\end{equation}
Here we collect definitions of
$\ep_1\coloneqq \ep_1(\bfc)$ and $\pi_1\coloneqq \pi_1(\bfd)$
given in~{\rm \cref{prp:proj.presentation for V_I,prp:proj-pre of tau-VI-ds-P2}}:
$\ep_1 \coloneqq \bmat{\ep^{\uset I}_{1}, \ep_{11}}$, where
$$
\begin{aligned}
\ep_{11}&\coloneqq \bmat{\de_{a,\bfc(a')}\sfP_{a',\bfc(a')}}_{(a,a') \in \src(I) \times \src(\uuset I)},
\text{ and}\\
\ep^{\uset I}_{1}&\coloneqq
\bmat{\tilde{\sfP}_{a,\bfa_c}}_{(a,\bfa_c) \in \src(I) \times \src_1(I)}
\end{aligned}
$$
with the entries given by
$$
\begin{aligned}
\tilde{\sfP}_{a,\bfa_c}\coloneqq
\begin{cases}
\sfP_{c,a} & (a = \udl{\bfa}),\\
-\sfP_{c,a} & (a = \ovl{\bfa}),\\
\mathbf{0} & (a \not\in \bfa),
\end{cases}
\end{aligned}
$$
for all $\bfa_c \in \src_1(I)$ and $a \in \src(I)$; and
$
\pi_1 \coloneqq \bmat{\pi_{11} \\ \pi^{\dset I}_1},
$
where
$$
\begin{aligned}
\pi_{11}&\coloneqq \bmat{\de_{b,\bfd(b')}\sfP_{\bfd(b'), b'}}_{(b',b) \in \snk(\ddset I)\times \snk(I)},
\text{ and}\\
\pi^{\dset I}_1
&\coloneqq\bmat{\hat{\sfP}_{b,\bfb_d}}_{(\bfb_d, b) \in \snk_1(I) \times \snk(I)},
\end{aligned}
$$
with the entries given by
$$
\begin{aligned}
\hat{\sfP}_{b,\bfb_d}\coloneqq
    \begin{cases}
    \sfP_{b,d} & (b=\udl{\bfb}),\\
    -\sfP_{b,d} & (b=\ovl{\bfb}),\\
    \mathbf{0} &(b \not\in \bfb),
    \end{cases}
\end{aligned}
$$
for all $b \in \snk(I)$ and $\bfb_d \in \snk_1(I)$.
\end{thm}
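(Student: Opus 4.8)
The plan is to derive the unified formula~\eqref{eq:formula-d_M(V_I)-general} by splitting into the two cases that have already been settled separately: when $V_I$ is non-injective the statement is covered by Theorem~\ref{thm:gen-formula-general}, and when $V_I$ is injective it is covered by Theorem~\ref{thm:dMV-VI-inj-general}; the only genuine work lies in reconciling the two matrix presentations in the injective case. In the non-injective case, if $V_I$ is non-injective then~\eqref{eq:formula-d_M(V_I)-general} is verbatim the conclusion~\eqref{eq:formula-d_M(V_I)-general-non-inj} of Theorem~\ref{thm:gen-formula-general}, which is valid for arbitrary choice maps $\bfc,\bfd$ and for any pair $(j,i)$ with $b_j \ge a_i$; so there is nothing more to add.

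\textbf{The injective case, reduction of the formula.} Suppose $V_I$ is injective, i.e.\ $I = \dset b$ with $b = \max I$, so that $\snk(I) = \{b\}$. The first thing I would record is that the index sets $\snk_1(I)$ and $\snk(\ddset I)$ are both empty: $\sub_2\snk(I) = \varnothing$ forces $\snk_1(I) = \varnothing$, while $\dset I = \dset b = I$ forces $\ddset I = \dset I \setminus I = \varnothing$ and hence $\snk(\ddset I) = \varnothing$. Therefore the domain $P_{\snk(\ddset I)} \ds P_{\snk_1(I)}$ of $\pi_1$ is the zero module, so $M(\pi_1)$ is a matrix with no columns, $\rank M(\pi_1) = 0$, and the right-hand column block of the large matrix in~\eqref{eq:formula-d_M(V_I)-general} is empty as well; the large matrix collapses to $\sbmat{M(\ep_1)\\ M(\la)}$. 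Thus the right-hand side of~\eqref{eq:formula-d_M(V_I)-general} becomes $\rank\sbmat{M(\ep_1)\\ M(\la)} - \rank M(\ep_1)$.

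\textbf{The injective case, matching Theorem~\ref{thm:dMV-VI-inj-general}.} To finish I would identify this displayed expression with the right-hand side of~\eqref{eq:formula for injective modules}. When $m = 1$, the pair $(j,i)$ in Theorem~\ref{thm:gen-formula-unified} amounts to a choice of some $a_i \in \src(I)$ (the inequality $b = b_1 \ge a_i$ being automatic since $b = \max I$), and I would take $\bfc'(b) := a_i$ in Notation~\ref{ntn:choice function 2}. Then the matrix $\la = \la(b,a_i)$ of Proposition~\ref{prp:pp-E-ds-P_2-general} and the matrix $\ep''_1 = \ep''_1(\bfc')$ of Theorem~\ref{thm:dMV-VI-inj-general} each have $\sfP_{b,a_i}$ as their only nonzero entry, so by Notation~\ref{ntn:notation-M(morphism)} both $M(\la)$ and $M(\ep''_1)$ equal the matrix whose single nonzero block is $M_{b,a_i}$; in particular $M(\la) = M(\ep''_1)$. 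Since $\ep_1$ denotes the same morphism (that of Proposition~\ref{prp:proj.presentation for V_I}) in both statements, the two right-hand sides agree, and Theorem~\ref{thm:dMV-VI-inj-general} then gives that this common value is $d_M(V_I)$.

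\textbf{Expected obstacle.} There is no conceptual difficulty here: everything rests on Theorems~\ref{thm:dMV-VI-inj-general} and~\ref{thm:gen-formula-general}, which in turn rest on Theorem~\ref{thm:dMV} and Lemma~\ref{lem:dim-Hom-coker}. The only point requiring care is the purely notational reconciliation in the injective case --- verifying that the empty index sets $\snk_1(I)$ and $\snk(\ddset I)$ produce exactly the stated collapse of~\eqref{eq:formula-d_M(V_I)-general}, and that $M(\la)$ and $M(\ep''_1)$ really are the same matrix once the identifications in Notations~\ref{ntn:notation-M(morphism)} and~\ref{ntn:choice function 2} are unwound.
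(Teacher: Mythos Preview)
Your proposal is correct and follows essentially the same approach as the paper: the paper simply states that Theorem~\ref{thm:gen-formula-unified} is obtained by ``summarizing Theorems~\ref{thm:dMV-VI-inj-general} and~\ref{thm:gen-formula-general}'', having already remarked (just before the statement) that the non-injective formula~\eqref{eq:formula-d_M(V_I)-general-non-inj} covers the injective case as well. Your write-up fills in precisely the details that the paper leaves implicit in that remark, namely the collapse of $M(\pi_1)$ when $\snk_1(I)$ and $\snk(\ddset I)$ are empty and the identification $M(\la) = M(\ep''_1)$.
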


\subsection{The case of a 2D-grid}
\label{ssec:2Dgrid-case}
We specialize the general formula \eqref{eq:formula-d_M(V_I)-general}
to the case where $\bfP = G_{m,n}$ for some $m,n \ge 2$ to make the formula
easier to see.
Denote by the maximum element  $(m,n)$ (resp.\ minimum element $(0,0)$) of $\bfP$
by $\om$ (resp.\ $\hat{0}$).

In this subsection, by~\cref{dfn:interval-antichain-exp}, we will write $I = [\src(I), \snk(I)]$ for all $I \in \bbI$.

\begin{ntn}
\label{ntn:notation-in-2D-grid}
Set $\src(\uset I) = \src(I) = \{a_1, \dots, a_k\}$,
$\src(\uuset I) = \{a'_1, \dots, a'_{k'}\}$
and $\snk(\dset I) = \snk(I) = \{b_1, \dots, b_l\}$,
$\snk(\ddset I) = \{b'_1, \dots, b'_{l'}\}$
(see \eqref{eq:dfn-of-uuset-ddset} for the definitions of $\uuset I, \ddset I$).
Throughout this subsection,
we always assume
that
if $(x_i, y_i)$ is the coordinate of $a_i$ in $G_{m,n}$ for each $i \in [k]$,
then $i < j$ implies $y_i < y_j$, and the same for $a'_i,\, b_i$, and $b'_i$. Then we have
$$
\begin{aligned}
\uset I&= [\{a_1, \dots, a_k\}, \om],\ \uuset I = [\{a'_1, \dots, a'_{k'}\}, \om],
\text{ and}\\
\dset I&= [0, \{b_1,\dots, b_l\}],\ \ddset I = [0, \{b'_1,\dots, b'_{l'}\}].
\end{aligned}
$$
In this case, we set
$$
\begin{aligned}
\src^\circ_1(I)&= \{a_{i,i+1}\coloneqq a_i \vee a_{i+1} \mid i = 1,\dots, k-1\},
\text{ and}\\
\snk^\circ_1(I)&= \{b_{i,i+1}\coloneqq b_i \wedge b_{i+1} \mid i = 1 \dots, l-1\}.
\end{aligned}
$$
We note that $\src^\circ_1(I)\subseteq \src_1(I)$ and $\snk^\circ_1(I)\subseteq \snk_1(I)$.
\end{ntn}

With these notations, we have a specialization of
Theorem \ref{thm:gen-formula-unified}.
However, before stating this, we give the minimal projective presentations of
$V_I$, $\ta\inv V_I$, and the middle term $E$ of the almost split sequence
starting from $V_I$ without proofs (for the two latter modules, $V_I$ is assumed to be
non-injective.) because in the general case, we did not give them. We restate \cite[Proposition 39]{asashibaIntervalDecomposability2D2022} under these notations as follows.

\begin{lem}
Let $U$ be an up-set.
Then the interval module $V_{U}$ has the following minimal projective presentation:
$$
P_{\src^\circ_1({U})} \ya{\ep_1^{U}} P_{\src({U})} \ya{\ep_0^{U}} V_{U} \to 0,
$$
where $\ep_0^{U} = \bmat{\ro^{U}_{1_a}}_{a \in \src({U})}$ and
\begin{equation}
\label{eq:morph1_mini_proj_pres}
    \ep_1^{U}\coloneqq \bmat{
\sfP_{a_{12},a_1} \\
-\sfP_{a_{12},a_2}&\sfP_{a_{23},a_2}\\
&-\sfP_{a_{23},a_3}&\ddots \\
&&\ddots &\sfP_{a_{k-1,k},a_{k-1}}\\
&& & -\sfP_{a_{k-1,k},a_{k}}
}(\text{blank entries are zeros}).
\end{equation}
\end{lem}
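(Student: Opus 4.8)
The plan is to verify everything pointwise after identifying each indecomposable projective with an interval module. Since $P_x = A(x,\blank)$ has $P_x(y) = \k\, p_{y,x}$ for $x \le y$ (and $0$ otherwise), with every structure map carrying basis vector to basis vector, there is a natural isomorphism $P_x \iso V_{\uset x}$; under it the displayed sequence becomes a sequence of direct sums of interval modules on which $\ep_0^U$ and $\ep_1^U$ act as $\pm 1$ on each one‑dimensional component. I would first record the combinatorial fact special to a $2$D‑grid that powers the argument. Writing $a_i = (x_i,y_i)$, the convention of \cref{ntn:notation-in-2D-grid} makes the $y_i$ strictly increasing, and since $\src(U) = \src(\uset I)$ is an antichain the $x_i$ are then strictly decreasing; hence for any $y = (u,v)\in\bfP$ the set $S_y := \{\, i\in[k] \mid a_i \le y\,\}$ is a contiguous block $\{p,p+1,\dots,q\}$ of $[k]$, since $x_i \le u$ removes an upper tail of indices and $y_i \le v$ a lower tail. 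Moreover $a_{i,i+1} = a_i\vee a_{i+1} = (x_i, y_{i+1})$, so $a_{i,i+1}\le y$ iff $\{i,i+1\}\subseteq S_y$, i.e.\ iff $p\le i\le q-1$, and $a_i \vee a_{i+1}$ strictly dominates $a_i$ because $a_i,a_{i+1}$ are incomparable.

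Next I would check that the sequence is a complex and is exact. The map $\ep_0^U$ is an epimorphism since $U = \bigcup_i \uset a_i$, so some $a_i \le y$ whenever $y\in U$. The identity $\ep_0^U\ep_1^U = 0$ is checked column by column with \cref{lem:comp-ro-P}: the $i$‑th column sends the generator of $P_{a_{i,i+1}}$ to $\sfP_{a_{i,i+1},a_i} - \sfP_{a_{i,i+1},a_{i+1}}$, and postcomposing with $\ep_0^U$ yields $\ro^{V_U}_{1_{a_{i,i+1}}} - \ro^{V_U}_{1_{a_{i,i+1}}} = 0$ (convexity of the up‑set $U$ makes the relevant structure maps of $V_U$ equal to $\id_\k$). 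For exactness at $P_{\src(U)}$, fix $y$: if $y\notin U$ all three terms vanish; if $y\in U$ with $S_y = \{p,\dots,q\}$, then $(\ep_0^U)_y\colon \bigoplus_{j\in S_y}\k \to \k$ is the summation map, with kernel the hyperplane $\{\sum_j c_j = 0\}$, while the columns of $\ep_1^U$ nonzero at $y$ are precisely those indexed by $i\in\{p,\dots,q-1\}$, each contributing $e_i - e_{i+1}$ (writing $e_j$ for the basis vector of the $j$‑th summand). These $q-p$ vectors are linearly independent and span that hyperplane, so $\Im(\ep_1^U)_y = \Ker(\ep_0^U)_y$ and $(\ep_1^U)_y$ is injective; hence the displayed sequence is a projective presentation and $\ep_1^U$ is a monomorphism.

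Finally I would argue minimality. Every matrix entry of $\ep_1^U$ is $\pm\sfP_{a_{i,i+1},a_i}$ (or its analogue for $a_{i+1}$) with $a_i < a_{i,i+1}$ a \emph{proper} inequality, hence a non‑isomorphism, so $\ep_1^U$ is a radical morphism and $\Ker\ep_0^U = \Im\ep_1^U \subseteq \rad(P_{\src(U)})$; a submodule contained in the radical is superfluous, so $\ep_0^U$ is a projective cover. Since $\ep_1^U$ is a monomorphism its (zero) kernel is trivially superfluous, so $\ep_1^U$ is a projective cover of $\Ker\ep_0^U$. Therefore the presentation is minimal.

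The one genuinely delicate point is the contiguous‑block description of $S_y$ together with the bookkeeping of which summands are nonzero at a given $y$; the remaining verifications are routine one‑dimensional computations. Alternatively, one can start from the non‑minimal presentation of \cref{prp:proj.pre.for conn. up-set module} — valid here because every nonempty up‑set of $G_{m,n}$ is connected, each such $U$ containing the maximum $\om$ and being convex — where in the lattice $G_{m,n}$ each pre‑join $\vee'\{a_i,a_j\}$ is the single element $a_i\vee a_j$. For $j>i+1$ the $\{a_i,a_j\}$‑column then factors as the telescoping sum $\sum_{m=i}^{j-1}(\text{the }\{a_m,a_{m+1}\}\text{-column})\circ\sfP_{a_i\vee a_j,\,a_{m,m+1}}$, using $a_i\vee a_j\ge a_{m,m+1}$, so an invertible change of basis on the domain deletes every such column and leaves exactly $\ep_1^U$.
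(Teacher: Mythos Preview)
The paper does not prove this lemma: it simply restates \cite[Proposition~39]{asashibaIntervalDecomposability2D2022}, and the subsequent remark points to \cite[Remark~5.27, Lemma~5.28]{asashiba2024interval} for the explanation of why the extra columns of the general presentation in \cref{prp:proj.pre.for conn. up-set module} can be eliminated by column operations. So there is no in-paper proof to compare against.

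Your proof is correct and self-contained. The direct pointwise verification is clean: the key combinatorial observation---that for each $y\in\bfP$ the set $S_y=\{i\mid a_i\le y\}$ is a contiguous block of $[k]$---is exactly what makes the $2$D-grid case simpler than the general one, and once you have it, exactness and injectivity of $\ep_1^U$ at each $y$ reduce to the standard fact that $e_p-e_{p+1},\dots,e_{q-1}-e_q$ is a basis of the hyperplane $\sum c_j=0$ in $\k^{S_y}$. Your minimality argument via radical morphisms is the right one. Your alternative route---starting from \cref{prp:proj.pre.for conn. up-set module} and telescoping away the columns indexed by non-adjacent pairs---is precisely the ``fundamental column operations'' the paper's remark alludes to, and your telescoping identity using $a_i\vee a_j\ge a_{m,m+1}$ for $i\le m<j$ makes that explicit. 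Either route is fine; the direct one has the advantage of also proving that $\ep_1^U$ is a monomorphism (equivalently, the presentation is in fact a short projective resolution), which the column-reduction argument alone does not give.
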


\begin{rmk}
    We note the reader here that some columns are missing compared with \eqref{eq:matrix form of ep^U_1} because the above projective presentation is minimal in the 2D-grid setting and the missing columns are eliminated by a sequence of fundamental column operations. We refer the reader to \cite[Remark 5.27, Lemma 5.28]{asashiba2024interval} for the detailed explanations.
\end{rmk}

This together with the presentation of the up-set ${\uuset I}$ gives the following (see \cite[Proposition 41]{asashibaIntervalDecomposability2D2022}).

\begin{prp}
\label{prp:mpp-VI}
For each $a' \in \src({\uuset I})$,
set $\bfc(a')\coloneqq a \in \src(I)$ if and only if
$\pr_2(a) = \min\{\pr_2(c) \mid c \le a',\, c \in \src(I)\}$.
Then the interval module $V_I$ has the following minimal projective presentation:
\begin{equation}
\label{eq:mpp-VI}
P_{\src^\circ_1(I)} \ds P_{\src({\uuset I})} \ya{\ep_1\coloneqq \sbmat{\ep_1^{\uset I},\, \ep_{11}}} P_{\src(I)} \ya{\ep_0} V_I \to 0,
\end{equation}
where $\ep_0 = \bmat{\ro^{V_I}_{1_p}}_{p \in \src(I)}$,
$\ep_{11}\coloneqq \bmat{\de_{a,\bfc(a')}\sfP_{a',\bfc(a')}}_{(a,a') \in \src(I) \times \src({\uuset I})}$, and $\ep_1^{\uset I}$ is given in~\eqref{eq:morph1_mini_proj_pres} by letting $U = \uset I$.
\end{prp}

\subsubsection{The case where \texorpdfstring{$V_I$}{VI} is injective}

Assume that $V_I$ is injective.
Then $V_I \iso Q_{b_1} \coloneqq V_{\dset b_1}$, and hence
$I = [\hat{0}, b_1]$, $U = \bfP$, and ${\uuset I} = [\{a'_1, a'_2\}, \om]$,
where delete $a'_1$ or $a'_2$ if it does not exist in $\bfP$. Here and subsequently, we adopt the following void convention: if the set of sources or sinks of an interval is empty, we set that interval to be an empty set and the associated interval module to be $0$.
Moreover, $V_I/\soc V_I \iso V_J$ is also an interval module for the interval
$J = [\hat{0}, \{b_1 - (1,0), b_1 - (0,1)\}]$,
where  delete $b_1 - (1,0)$ or $b_1 - (0,1)$ if it is not in $\bfP$.
Then the pair $(\uset J, \uuset J)$ is a unique pair of up-sets in $\bfP$
such that $J = \uset J \setminus {\uuset J}$.
These are given by
$\uset J = \bfP$, and $\uuset J = [\{a'_1, a'_2, b_1\}, \om]$.
Hence by~\cref{thm:dMV,prp:mpp-VI,lem:dim-Hom-coker}, we have the following.

\begin{thm}
\label{thm:dMV-VI-inj}
Assume that $V_I$ is injective.
Then $V_I$ and $V_I/\soc V_I$ have minimal projective presentations
of the following forms.
$$
\begin{aligned}
&P_{\src({\uuset I})} \ya{\ep'_1} P_{\hat{0}} \to V_I \to 0,\\
&P_{\src({\uuset I})} \ds P_{b_1} \ya{\ep''_1} P_{\hat{0}} \to V_I/\soc V_I \to 0,
\end{aligned}
$$
where $\ep'_1 = [\sfP_{a'_1, \hat{0}},\,  \sfP_{a'_2, \hat{0}}]$, and
$\ep''_1 = [\sfP_{a'_1, \hat{0}},\, \sfP_{a'_2, \hat{0}},\, \sfP_{b_1, \hat{0}}]$.
Therefore, we have
$$
d_M(V_I) = \rank\bmat{M_{a'_1, \hat{0}}\\M_{a'_2, \hat{0}}\\M_{b_1, \hat{0}}}
- \rank\bmat{M_{a'_1, \hat{0}}\\M_{a'_2, \hat{0}}}.
$$
\end{thm}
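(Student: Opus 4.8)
The plan is to read off both minimal projective presentations directly from \cref{prp:mpp-VI}, applied to two suitable intervals, and then to substitute them into \cref{lem:dim-Hom-coker} together with the injective formula \eqref{eq:dMV-inj} of \cref{thm:dMV}.

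First I would apply \cref{prp:mpp-VI} to the interval $I=[\hat{0},b_1]$ itself. Since $\hat{0}=\min I$ is the unique source, $\src(I)=\{\hat{0}\}$ and hence $\src^\circ_1(I)=\emptyset$, so the banded matrix $\ep_1^{\uset I}$ of \eqref{eq:morph1_mini_proj_pres} is empty and $\ep_1=\ep_{11}$; moreover $P_{\src(I)}=P_{\hat{0}}$. The choice map $\bfc\colon\src(\uuset I)\to\src(I)=\{\hat{0}\}$ is forced to be constant, so $\ep_{11}=[\sfP_{a'_1,\hat{0}},\,\sfP_{a'_2,\hat{0}}]$, which is exactly the claimed presentation of $V_I$ with $\ep'_1:=\ep_{11}$.

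Next I would apply \cref{prp:mpp-VI} to the interval $J=[\hat{0},\{b_1-(1,0),b_1-(0,1)\}]$. As recalled just before the statement, $V_I/\soc V_I\iso V_J$, the unique pair of up-sets realizing $J$ is $\uset J=\bfP$ and $\uuset J=[\{a'_1,a'_2,b_1\},\om]$, and $\src(\uuset I)=\{a'_1,a'_2\}$. Again $\src(J)=\{\hat{0}\}$ is a singleton, so $\src^\circ_1(J)=\emptyset$ and the choice map for $J$ is constant; hence \cref{prp:mpp-VI} yields the minimal projective presentation $P_{\src(\uuset I)}\ds P_{b_1}\ya{\ep''_1}P_{\hat{0}}\to V_I/\soc V_I\to 0$ with $\ep''_1=[\sfP_{a'_1,\hat{0}},\,\sfP_{a'_2,\hat{0}},\,\sfP_{b_1,\hat{0}}]$. (Equivalently, one may build this presentation from the one for $V_I$ via \cref{lem:proj.presentation for factor module} with $M'=\soc V_I=V_{\{b_1\}}$, whose projective cover is $\ro_{1_{b_1}}\colon P_{b_1}\to V_{\{b_1\}}$; the lift of $\iota\,\ro_{1_{b_1}}$ along $\ep_0$ required there is $\sfP_{b_1,\hat{0}}$, by \cref{lem:comp-ro-P}.)

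Finally, I would feed each of the two presentations into \eqref{eq:dimHom-formula}: both have the single degree-$0$ term $P_{\hat{0}}$, so $\dim\Hom_A(V_I,M)=\dim M(\hat{0})-\rank M(\ep'_1)$ and $\dim\Hom_A(V_I/\soc V_I,M)=\dim M(\hat{0})-\rank M(\ep''_1)$, where by the transpose convention of \cref{ntn:notation-M(morphism)} one has $M(\ep'_1)=\bmat{M_{a'_1,\hat{0}}\\ M_{a'_2,\hat{0}}}$ and $M(\ep''_1)=\bmat{M_{a'_1,\hat{0}}\\ M_{a'_2,\hat{0}}\\ M_{b_1,\hat{0}}}$. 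Subtracting the two expressions and invoking \eqref{eq:dMV-inj} (legitimate since $V_I$ is injective), the $\dim M(\hat{0})$ terms cancel, giving $d_M(V_I)=\rank M(\ep''_1)-\rank M(\ep'_1)$, which is the asserted identity. I do not anticipate a serious obstacle; the only points needing care are the grid-combinatorial identifications $\src(\uuset I)=\{a'_1,a'_2\}$ and $\src(\uuset J)=\{a'_1,a'_2,b_1\}$ in $G_{m,n}$ --- including the boundary cases where one of $a'_1,a'_2$, or one of $b_1-(1,0),b_1-(0,1)$, lies outside $\bfP$ and must be omitted exactly as in the definition of $J$, under the convention that an empty index set deletes the corresponding row or column --- and keeping the transpose bookkeeping of \cref{ntn:notation-M(morphism)} straight.
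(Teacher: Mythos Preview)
Your proposal is correct and follows essentially the same route as the paper: the paper sets up the identifications $\src(I)=\{\hat 0\}$, $\src(\uuset I)=\{a'_1,a'_2\}$, $V_I/\soc V_I\iso V_J$ with $\src(\uuset J)=\{a'_1,a'_2,b_1\}$, and then simply invokes \cref{prp:mpp-VI}, \cref{lem:dim-Hom-coker}, and \cref{thm:dMV} exactly as you do. Your parenthetical alternative via \cref{lem:proj.presentation for factor module} is also valid but not needed.
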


\begin{prp}
\label{prp:mpp-ta-inv-VI}
For each $b' \in \snk({\ddset I})$,
set $\bfd(b')\coloneqq b \in \snk(I)$ if and only if
$\pr_2(b) = \min\{\pr_2(d) \mid b' \le d \in \snk(I)\}$.
Then the interval module $D V_I$ has the following minimal projective presentation:
$$
P'_{\snk({\ddset I})} \ds P'_{\snk^\circ_1(I)} \ya{\ps_1\coloneqq \sbmat{\ps_{11},\, \ps_1^{\dset I}}} P'_{\snk(I)} \ya{\ps_0} DV_I \to 0,
$$
where $\ps_0 = \bmat{\la^{I}_{1_b}}_{b \in \snk(I)}$,
$\ps_{11}\coloneqq \bmat{\de_{b,\bfd(b')}\sfP'_{\bfd(b'),b'}}_{(b,b') \in \snk(I) \times \snk({\ddset I})}$
and
$$
\ps_1^{\dset I}\coloneqq
\bmat{
\sfP'_{b_1,b_{12}} \\
-\sfP'_{b_2,b_{12}}&\sfP'_{b_2,b_{23}} \\
& -\sfP'_{b_3,b_{23}}& \ddots \\
&&\ddots& \sfP'_{b_{l-1},b_{l-1,l}}\\
&&&-\sfP'_{b_{l},b_{l-1,l}}
}.
$$
Therefore, $\ta\inv V_I = \Tr D V_I$ has a minimal projective presentation
of the following form:
\begin{equation}
\label{min-pp-ta-inv-VI}
P_{\snk(I)} \ya{\pi_1\coloneqq \sbmat{ \pi_{11}\\ \pi_1^{\dset I}}} P_{\snk(\ddset I)} \ds P_{\snk^\circ_1(I)} \ya{\pi_0} \ta\inv V_I \to 0,
\end{equation}
where $\pi_0$ is a projective cover of $\ta\inv V_I$,
$\pi_{11}\coloneqq \bmat{\de_{b,\bfd(b')}\sfP_{\bfd(b'),b'}}_{(b',b) \in \snk(\ddset I) \times \snk(I)}$
and
$$
\pi_1^{\dset I}\coloneqq
\bmat{
\sfP_{b_1,b_{12}} & -\sfP_{b_2,b_{12}}\\
&\sfP_{b_2,b_{23}} & -\sfP_{b_3,b_{23}}\\
&& \ddots & \ddots\\
&&& \sfP_{b_{l-1},b_{l-1,l}} & -\sfP_{b_{l},b_{l-1,l}}
}.
$$
\end{prp}

\begin{prp}
\label{prp:pp-E}
Under~{\rm \cref{ntn:notation-in-2D-grid}}, we may choose a pair $(b_1, a_1)\in \snk(I)\times \src(I)$ with $a_1\leq b_1$. Then the following is a projective presentation of $E$:
\begin{equation}
\label{eq:pp-E}
(P_{\src^\circ_1(I)} \ds P_{\src({\uuset I})} ) \ds P_{\snk(I)} \ya{\mu_E}
P_{\src(I)}\ds (P_{\snk(\ddset I)} \ds P_{\snk^\circ_1(I)})
\ya{\ep_E} E  \to 0.
\end{equation}
Here $\mu_E$ is given by
\[
\mu_E\coloneqq \left[
\begin{array}{c|c}
\ep_1  &\mat{\sfP_{b_1,a_1} & \mathbf{0}\\\mathbf{0}&\mathbf{0}}\\
\hline
\mathbf{0}& \pi_1
\end{array}
\right],
\]
where $\ep_1$ and $\pi_1$ are given in {\rm Propositions \ref{prp:mpp-VI}}
and {\rm \ref{prp:mpp-ta-inv-VI}}, respectively.
\end{prp}

\begin{thm}
\label{thm:gen-formula}
Let $M \in \mod A$ and $I$ an interval of $\bfP$.
Under~{\rm \cref{ntn:notation-in-2D-grid}}, we may choose a pair $(b_1, a_1)\in \snk(I)\times \src(I)$ with $a_1\leq b_1$. Then
\begin{equation}
\label{eq:formula-d_M(V_I)-gen}
d_M(V_I) = \rank\left[
\begin{array}{c|c}
M(\ep_1) & \bfzero\\
\hline
\mat{M_{b_1,a_1} & \mathbf{0}\\\mathbf{0}&\mathbf{0}} & M(\pi_1)
\end{array}
\right]
- \rank M(\ep_1) - \rank M(\pi_1).
\end{equation}
Here $M(\ep_1),\, M(\pi_1)$ have the forms
$$
M(\ep_1) = \bmat{M(\ep^{\uset I}_1)\\M(\ep_{11})} \text{ and }\
M(\pi_1) = \bmat{M(\pi_{11}), M(\pi^{\dset I}_1)},$$
where
$M(\ep_{11})$, $M(\ep^{\uset I}_1)$, $M(\pi_{11})$ and $M(\pi^{\dset I}_1)$ are given by
\begin{align*}
M(\ep_{11})&= \bmat{\de_{a,\bfc(a')}M_{a',\bfc(a')}}_{(a',a) \in \src(\uuset I) \times \src(I)},\\
M(\ep^{\uset I}_1)&= \bmat{
M_{a_{12},a_1} & -M_{a_{12},a_{2}}\\
&M_{a_{23},a_{2}} & -M_{a_{2,3},a_{3}}\\
&& \ddots & \ddots\\
&&& M_{a_{k-1,k},a_{k-1}} & -M_{a_{k-1,k},a_{k}}
},\\
M(\pi_{11})&= \bmat{\de_{b,\bfd(b')}M_{\bfd(b'),b'}}_{(b,b') \in \snk(I) \times \snk(\ddset I)}, \text{ and}\\
M(\pi^{\dset I}_1)&= \bmat{
M_{b_1,b_{12}} \\
-M_{b_2,b_{12}}&M_{b_2,b_{23}} \\
& -M_{b_3,b_{23}}& \ddots \\
&&\ddots& M_{b_{l-1},b_{l-1,l}}\\
&&&-M_{b_{l},b_{l-1,l}}
}.
\end{align*}
\end{thm}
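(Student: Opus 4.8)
The plan is to split according to whether $V_I$ is injective, and in each case to assemble the formula from the minimal projective presentations already recorded in \cref{prp:mpp-VI,prp:mpp-ta-inv-VI,prp:pp-E}, together with \cref{thm:dMV} and \cref{lem:dim-Hom-coker}. Note first that the asserted forms of $M(\ep_1)$ and $M(\pi_1)$ are nothing but the images, under the operation $M(-)$ of \cref{ntn:notation-M(morphism)}, of the explicit matrices $\ep_1$ and $\pi_1$ given in \cref{prp:mpp-VI} and \cref{prp:mpp-ta-inv-VI}; so that part of the statement needs no argument, and it remains to establish \eqref{eq:formula-d_M(V_I)-gen} itself.

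Assume first that $V_I$ is non-injective, so that $l = |\snk(I)| \ge 2$ and the almost split sequence \eqref{eq:almost-split-sequsence} starting from $V_I$ exists. By \cref{thm:dMV}\,(2),
\[
d_M(V_I) = \dim\Hom_A(V_I, M) - \dim\Hom_A(E, M) + \dim\Hom_A(\ta\inv V_I, M).
\]
I would apply \cref{lem:dim-Hom-coker} to each term, using the presentation of $V_I$ from \cref{prp:mpp-VI}, that of $\ta\inv V_I$ from \cref{prp:mpp-ta-inv-VI}, and that of $E$ from \cref{prp:pp-E} (it is here that the reduction $a_1 \le b_1$ and the non-injectivity of $V_I$ are used). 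Writing $\ell_1 := \sum_{a \in \src(I)}\dim M(a)$ and $\ell_2 := \sum_{x \in \snk(\ddset I)}\dim M(x) + \sum_{y \in \snk^\circ_1(I)}\dim M(y)$, these presentations give $\dim\Hom_A(V_I,M) = \ell_1 - \rank M(\ep_1)$, $\dim\Hom_A(\ta\inv V_I,M) = \ell_2 - \rank M(\pi_1)$, and $\dim\Hom_A(E,M) = \ell_1 + \ell_2 - \rank M(\mu_E)$, where $\mu_E = \sbmat{\ep_1 & \la\\ \mathbf{0}&\pi_1}$ with $\la = \mat{\sfP_{b_1,a_1}&\mathbf{0}\\ \mathbf{0}&\mathbf{0}}$ as in \cref{prp:pp-E}. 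Substituting and cancelling $\ell_1,\ell_2$ leaves $d_M(V_I) = \rank M(\mu_E) - \rank M(\ep_1) - \rank M(\pi_1)$. Unwinding the definition of $M(-)$ then identifies $M(\mu_E)$ with the block matrix $\sbmat{M(\ep_1)&\mathbf{0}\\ M(\la)&M(\pi_1)}$ on the right-hand side of \eqref{eq:formula-d_M(V_I)-gen}: the transpose built into $M(-)$ interchanges the two off-diagonal blocks and sends $\la$ to the lower-left block $M(\la) = \mat{M_{b_1,a_1}&\mathbf{0}\\ \mathbf{0}&\mathbf{0}}$. This settles the non-injective case.

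It then remains to check \eqref{eq:formula-d_M(V_I)-gen} when $V_I$ is injective, i.e.\ when $I = \dset b_1 = [\hat 0, b_1]$, by comparing it with \cref{thm:dMV-VI-inj}. Here $\src(I) = \{\hat 0\}$, hence $\src^\circ_1(I) = \emptyset$ and $M(\ep_1)$ reduces to the single column $\bmat{M_{a',\hat 0}}_{a' \in \src(\uuset I)}$; moreover $\dset I = I$ forces $\ddset I = \emptyset$ and $\snk(I) = \{b_1\}$ forces $\snk^\circ_1(I) = \emptyset$, so $M(\pi_1)$ is empty and $\rank M(\pi_1) = 0$, while $M(\la) = M_{b_1,\hat 0}$. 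Thus the right-hand side of \eqref{eq:formula-d_M(V_I)-gen} collapses to $\rank\sbmat{M_{a'_1,\hat 0}\\ M_{a'_2,\hat 0}\\ M_{b_1,\hat 0}} - \rank\sbmat{M_{a'_1,\hat 0}\\ M_{a'_2,\hat 0}}$ (with the usual convention for removing rows whose index is absent when $b_1$ lies on the boundary of the grid), which is exactly the formula of \cref{thm:dMV-VI-inj}.

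The substantive content is entirely in the presentations \cref{prp:mpp-VI,prp:mpp-ta-inv-VI,prp:pp-E} and in \cref{thm:dMV,lem:dim-Hom-coker}; the only real care here is bookkeeping, namely keeping the (possibly empty) index sets straight and matching the block structure of $\mu_E$ with that of the matrix in \eqref{eq:formula-d_M(V_I)-gen} through the transpose in $M(-)$. An alternative is to deduce the statement from the general formula \cref{thm:gen-formula-unified} by showing that, in a $2$D-grid, replacing $\src_1(I)$ and $\snk_1(I)$ by $\src^\circ_1(I)$ and $\snk^\circ_1(I)$ leaves the three ranks unchanged: a non-consecutive pre-join satisfies $a_i \vee a_j = \bigvee_{t=i}^{j-1} a_{t,t+1}$ in the grid lattice, so the rows of $M(\ep_1)$ indexed by $\src_1(I)\setminus\src^\circ_1(I)$ are left $M$-linear (telescoping) combinations of the rows indexed by $\src^\circ_1(I)$, and dually for $M(\pi_1)$.
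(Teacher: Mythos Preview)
Your proposal is correct and matches the paper's intended derivation: the paper presents \cref{thm:gen-formula} as the $2$D-grid specialization of \cref{thm:gen-formula-unified}, obtained exactly as you do---by feeding the minimal presentations of \cref{prp:mpp-VI,prp:mpp-ta-inv-VI,prp:pp-E} into \cref{thm:dMV} and \cref{lem:dim-Hom-coker} (here the minimality eliminates the auxiliary summand $P_2$ that appears in the general argument), and then checking the injective case against \cref{thm:dMV-VI-inj}. Your closing alternative, reducing $\src_1(I)$ to $\src^\circ_1(I)$ via telescoping in the lattice $G_{m,n}$, is precisely the content of \cite[Remark~5.27, Lemma~5.28]{asashiba2024interval} alluded to in the paper.
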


\begin{rmk}
\label{rmk:reduced-form}
We set $M(\ep_1) = \bmat{M(\ep_1)_1, M(\ep_1)_2}$ and $M(\pi_1) = \bmat{M(\pi_1)_1\\M(\pi_1)_2}$,
where $M(\ep_1)_1$ has $\dim M(a_1)$ columns and $M(\pi_1)_1$ has $\dim M(b_1)$ rows.
Then the matrix $R(M,I)$ in the first term of \eqref{eq:formula-d_M(V_I)-gen} has the following form:
$$
R(M,I) = \bmat{
M(\ep_1)_1 & M(\ep_1)_2 &\bfzero\\
M_{b_1,a_1} & \mathbf{0} & M(\pi_1)_1\\
\mathbf{0}&\mathbf{0}& M(\pi_1)_2
}.
$$
We denote by $E_r$ the identity matrix of rank $r$.
By elementary column transformations within the second block column and
elementary row transformations within the first block row,
we can transform $M(\ep_1)_2$ to the normal form $E_{r_1} \ds \bfzero$; and
by elementary column transformations within the third block column and
elementary row transformations within the third block row,
we can transform $M(\pi_1)_2$ to the normal form $E_{r_2} \ds \bfzero$,
where the obtained matrix $R(M,I)_1$ is equivalent to $R(M, I)$, and has the form:
$$
R(M,I)_1 = \left[
\begin{array}{c|cc|cc}
M'_1 & E_{r_1} & \bfzero & \bfzero & \bfzero\\
M_1 & \bfzero & \bfzero & \bfzero & \bfzero\\
\hline
M_2 & \bfzero & \bfzero & M'_3 & M_3\\
\hline
\bfzero & \bfzero & \bfzero& E_{r_2} &\bfzero\\
\bfzero & \bfzero &\bfzero & \bfzero & \bfzero
\end{array}
\right]
\sim
\left[
\begin{array}{c|cc|cc}
\bfzero & E_{r_1} & \bfzero & \bfzero & \bfzero\\
M_1 & \bfzero & \bfzero & \bfzero & \bfzero\\
\hline
M_2 & \bfzero & \bfzero & \bfzero & M_3\\
\hline
\bfzero & \bfzero & \bfzero& E_{r_2} &\bfzero\\
\bfzero & \bfzero &\bfzero & \bfzero & \bfzero
\end{array}
\right].
$$
In the same way, we can transform $M_1$ and $M_3$ to the normal forms:
$$
R(M,I) \sim
\left[
\begin{array}{c|cc|c|c}
\bfzero & E_{r_1} & \bfzero & \bfzero & \bfzero\\
\hline
\mat{\hspace{-10pt}E_{r'_1}&\ \bfzero\\\hspace{-10pt}\bfzero &\ \bfzero} & \bfzero & \bfzero & \bfzero & \bfzero\\
\hline
\mat{M_a&M_b\\M_c & M_d} & \bfzero & \bfzero & \bfzero & \mat{E_{r'_2}&\bfzero\\\bfzero & \bfzero}\\
\hline
\bfzero & \bfzero & \bfzero& E_{r_2} &\bfzero\\
\bfzero & \bfzero &\bfzero & \bfzero & \bfzero
\end{array}
\right]
\sim
M_d \ds E_{r_1} \ds E_{r'_1} \ds E_{r_2} \ds E_{r'_2},
$$
where the last equivalence is obtained by transforming $M_a, M_b, M_c$ to $\bfzero$
by using $E_{r'_1}$ and $E_{r'_2}$.
Then we have
$$
d_M(V_I) = \rank M_d
$$
because $\rank R(M,I) = \rank M_d + r_1 + r'_1 + r_2 + r'_2$,
$\rank M(\ep_1) = r_1 + r'_1$, and $\rank M(\pi_1) = r_2 + r'_2$.
\end{rmk}

\begin{exm}
\label{exm:2Dgrid}
In the following diagrams,
let $\bfP = G_{4,2}$ be given by the quiver on the left,
and $M \in \mod A$ be given by
the diagram on the right:
$$
\begin{tikzcd}
1' & 2' & 3' & 4'\\
1 & 2 & 3 & 4
\Ar{1-1}{1-2}{}
\Ar{1-2}{1-3}{}
\Ar{1-3}{1-4}{}
\Ar{2-1}{2-2}{}
\Ar{2-2}{2-3}{}
\Ar{2-3}{2-4}{}
\Ar{2-1}{1-1}{}
\Ar{2-2}{1-2}{}
\Ar{2-3}{1-3}{}
\Ar{2-4}{1-4}{}
\end{tikzcd}
\quad
\begin{tikzcd}[column sep=20pt, ampersand replacement=\&]
\k^2 \& \k^2 \& \k^2 \& \k\\
\k \& \k^3 \& \k^3 \& \k^3
\Ar{1-1}{1-2}{"\sbmat{1&0\\0&1}"}
\Ar{1-2}{1-3}{"\sbmat{1&0\\0&1}"}
\Ar{1-3}{1-4}{"\bfzero"}
\Ar{2-1}{1-1}{"\bfzero"}
\Ar{2-2}{1-2}{"\sbmat{1&0&0\\0&1&0}"}
\Ar{2-3}{1-3}{"\sbmat{1&0&0\\0&1&0}"}
\Ar{2-4}{1-4}{"\bfzero"}
\Ar{2-1}{2-2}{"{}^t\sbmat{0&0&1}" '}
\Ar{2-2}{2-3}{"\id"}
\Ar{2-3}{2-4}{"\id"}
\end{tikzcd}.
$$
Let $I\coloneqq [\{2,1'\}, \{4,3'\}]$ be an interval of $\bfP$.
Then $\udim V_I\coloneqq \sbmat{1&1&1&0\\0&1&1&1}$, and
$a_1 = 2,\, a_2 = 1',\, a_{12} = 2',\, b_1 = 4,\, b_2 = 3',\, b_{12} = 3,
\, a'_1 = 4',\, b'_1 = 1$.
Therefore,
$$
\begin{aligned}
R(M,I) &=
\bmat{
M_{a_{12},a_1} & -M_{a_{12},a_2} & \bfzero& \bfzero\\
M_{a'_1,a_1} & \bfzero & \bfzero& \bfzero\\
M_{b_1,a_1} & \bfzero & M_{b_1, b'_1} & M_{b_1,b_{12}}\\
\bfzero & \bfzero & \bfzero & -M_{b_2, b_{12}}}
=
\bmat{
M_{2',2} & -M_{2',1'} & \bfzero& \bfzero\\
M_{4',2} & \bfzero & \bfzero& \bfzero\\
M_{4,2} & \bfzero & M_{4, 1} & M_{4,3}\\
\bfzero & \bfzero & \bfzero & -M_{3', 3}}
\\
&=
\left[
\begin{array}{c|c|c|c}
\smat{1&0&0\\0&1&0} & \smat{-1&0\\0&-1} &\bfzero & \bfzero\\
\hline
\bfzero & \bfzero & \bfzero & \bfzero\\
\hline
\smat{{1}&0&0\\0&{1}&0\\0&0&1} & \bfzero  & \smat{0\\0\\1} &\smat{\ 1&\ 0&\ 0\\\ 0&\ 1&\ 0\\\ 0&\ 0&\ 1}\\
\hline
\bfzero & \bfzero &  \bfzero & \smat{-1&0&0\\0&-1&0}
\end{array}
\right]
\sim
\left[
\begin{array}{c|c|c|c}
\smat{0&0&0\\0&0&0} & \smat{1&0\\0&1} &\bfzero & \bfzero\\
\hline
\bfzero & \bfzero & \bfzero & \bfzero\\
\hline
\smat{{1}&0&0\\0&{1}&0\\0&0&0} & \bfzero &\smat{0\\0\\0} &  \smat{0&0&0\\0&0&0\\0&0&1}\\
\hline
\bfzero & \bfzero & \bfzero & \smat{1&0&0\\0&1&0}
\end{array}
\right].
\end{aligned}
$$
Hence $d_M(V_I) = 2$.
Noting that $M \iso V_I^2 \ds V_{[1,4]} \ds V_{[4',4']}$,
we see that this gives a correct value.
Define another $M' \in \mod A$ from $M$ by changing the linear maps $M_{4',3'}$ and $M_{4',4}$ to be $\sbmat{1&0}$ and $\sbmat{1&0&0}$,
respectively,
then
$$
R(M',I) = \left[
\begin{array}{c|c|c|c}
\smat{1&0&0\\0&1&0} & \smat{-1&0\\0&-1} &\bfzero & \bfzero\\
\hline
\smat{1 & 0 & 0} & \bfzero & \bfzero & \bfzero\\
\hline
\smat{1&0&0\\0&{1}&0\\0&0&1} & \bfzero & \smat{0\\0\\1} & \smat{\ 1&\ 0&\ 0\\\ 0&\ 1&\ 0\\\ 0&\ 0&\ 1}\\
\hline
\bfzero & \bfzero & \bfzero & \smat{-1&0&0\\0&-1&0}
\end{array}
\right]
\sim
\left[
\begin{array}{c|c|c|c}
\smat{0&0&0\\0&0&0} & \smat{1&0\\0&1} &\bfzero & \bfzero\\
\hline
\smat{1 & 0 & 0} & \bfzero & \bfzero & \bfzero\\
\hline
\smat{{0}&0&0\\0&{1}&0\\0&0&0} & \bfzero & \smat{0\\0\\0} & \smat{0&0&0\\0&0&0\\0&0&1}\\
\hline
\bfzero & \bfzero & \bfzero & \smat{1&0&0\\0&1&0}
\end{array}
\right].
$$
Hence $d_{M'}(V_I) = 1$, which coincides with the answer obtained from the decomposition
$M' \iso V_I \ds V_{[1,4]} \ds V_{[\{2,1'\},4']}$.
These decompositions can be easily seen by drawing the structure quivers of $M, M'$:
\[\begin{tikzcd}[row sep=0pt, column sep = 15pt]
	{1'_x} & {2'_x} && {3'_x} && {4'} \\
	{1'_y} && {2'_y} && {3'_y} \\
	& {2_x} && {3_x} && {4_x} \\
	&& {2_y} && {3_y} & {4_y} \\
	1 && {2_z} && {3_z} & {4_z}
	\arrow[from=1-1, to=1-2]
	\arrow[from=1-2, to=1-4]
	\arrow[dashed, from=1-4, to=1-6]
	\arrow[from=2-1, to=2-3]
	\arrow[from=2-3, to=2-5]
	\arrow[from=3-2, to=1-2]
	\arrow[from=3-2, to=3-4]
	\arrow[from=3-4, to=1-4]
	\arrow[from=3-4, to=3-6]
	\arrow[dashed, from=3-6, to=1-6]
	\arrow[from=4-3, to=2-3]
	\arrow[from=4-3, to=4-5]
	\arrow[from=4-5, to=2-5]
	\arrow[from=4-5, to=4-6]
	\arrow[from=5-1, to=5-3]
	\arrow[from=5-3, to=5-5]
	\arrow[from=5-5, to=5-6]
\end{tikzcd},\]
where $M$ is given by solid arrows, and $M'$ is given by both solid and broken arrows,
bases of $M(i)$ are denoted by $i$ or $i_a$ \ ($a \in \{x, y, z\}$) for all $i \in \bfP$.
\end{exm}

\subsection{Reducing candidates of the interval direct summands}
\label{sect:Reducing candidates of the interval direct summands}

Given a module $M \in \mod A$, we reduce the number of intervals $I \in \bbI$ to compute
the multiplicity $d_M(V_I)$ by removing some intervals $I$ such that $V_I$ cannot be a
direct summand of $M$, namely $I$ with $d_M(V_I) = 0$, by an easy criterion.

We set the \emph{support} of $M$ to be
$$
\supp M\coloneqq \{x \in \bfP \mid M(x) \ne 0\}.
$$
We denote by $\rad M$ the \emph{radical} of $M$, which is, by definition, the intersection of
all maximal submodules of $M$, and set $\top M\coloneqq M/\rad M$, called the \emph{top} of $M$.
Dually, we set $\soc M$ to be the sum of all simple submodules of $M$,
called the \emph{socle} of $M$.
Recall that an epimorphism $P \to M$ with $P$ projective is a projective cover of
$M$ if and only if it induces an isomorphism $\top P \to \top M$.
Dually, a monomorphism
$M \to Q$ with $Q$ injective
is an injective hull of $M$
if and only if it restricts to an isomorphism
$\soc M \to \soc Q$.
It is easy to see that
if $N$ is a direct summand of $M$, then $\top N$ (resp.\ $\soc N$) is a direct summand of $\top M$ (resp.\ $\soc M$).
Hence
\begin{equation}
\label{eq:Q-dsmd-P}
\supp(\top N) \subseteq \supp(\top M) \quad\text{and}\quad
\supp(\soc N) \subseteq \supp(\soc M).
\end{equation}

Here we introduce the following notation:

\begin{ntn}
\label{ntn:crt0}
Let $M \in \mod A$.
We set \emph{the $0$-th critical set of intervals of} $M$ to be
$$
\crt_0(M)\coloneqq \{I \in \bbI \mid \src(I) \subseteq \supp(\top M),\, \snk(I) \subseteq \supp(\soc M)\}.
$$
\end{ntn}

The following is immediate from \eqref{eq:Q-dsmd-P}.

\begin{lem}
\label{lem:critical}
Let $M \in \mod A$ and $I \in \bbI$.
If $V_I$ is a direct summand of $M$, then
$$
I \in \crt_0(M).
$$
Namely, if $I \not\in \crt_0(M)$, then $d_M(V_I) = 0$.
\end{lem}

\begin{proof}
This follows by
$\src(I) = \supp(\top V_I) \subseteq \supp(\top M)$, and
$\snk(I) = \supp(\soc V_I) \subseteq \supp(\soc M)$.
\end{proof}

Consequently, to determine all interval summands of $M$, it suffices to consider only the intervals in $\crt_0(M)$.
We will further introduce smaller critical sets of intervals of $M$ below.

\begin{ntn}
\label{ntn:mpp-form}
For each $M \in \mod A$, we let
\begin{equation}
\label{eq:prj-cover}
0 \to \Om(M) \ya{\si_M} P_0(M) \ya{\ep_M} M \to 0
\end{equation}
be an exact sequence with $\ep_M$ a projective cover, and
$$
P_1(M) \ya{\partial^M_0} P_0(M) \ya{\ep_M} M \to 0
$$
a minimal projective presentation of $M$ (so that $P_1(M) = P_0(\Om(M))$ and $\partial^M_0 = \si_M \ep_{\Om(M)}$).
More generally, we let $P\down(M)\coloneqq (P_i(M), \partial_i^M \colon P_{i+1}(M) \to P_i(M))_{i \ge 0}$ be a minimal projective resolution of $M$.

Dually, we let
$$
 0 \to M \ya{\mu_M} Q^0(M) \ya{d_M^0} Q^1(M)
 $$
 be a minimal injective copresentation of $M$,
 and $Q\up(M)\coloneqq (Q^i(M), d^i_M \colon Q^i(M) \to Q^{i+1}(M))_{i \ge 0}$ a minimal injective coresolution of $M$.
\end{ntn}

The following gives a relation between $\top M$ (resp.\ $\soc M$) and
the minimal projective presentation (resp.\ minimal injective copresentation) of $M$.

\begin{rmk}
\label{rmk:criterion-by-proj}
Let $M \in \mod A$.
If $P_0(M) = \Ds_{i\in [m]} P_{x_i}^{(r_i)}$ with $x_1, \dots, x_m \in \bfP$
and $r_i \ge 1$ for all $i \in [m]$,
then since $\top M \iso \top P_0(M)$, we have
$$
\supp(\top M) = \{x_1, \dots, x_m\}.
$$
Dually, if
$Q^0(M) = \Ds_{j\in [m']} Q_{x'_j}^{(s_j)}$ with $x'_1, \dots, x'_{m'} \in \bfP$ and $s_j \ge 1$ for all $j \in [m']$,
then
$$
\supp(\soc M) = \{x'_1, \dots, x'_{m'}\}.
$$
\end{rmk}

The following is well-known.

\begin{lem}
Let $X, Y \in \mod A$.
Then
$$
0 \to \Om(X)\ds \Om(Y) \ya{\si_{X} \ds \si_Y}
P_0(X)\ds P_0(Y) \ya{\ep_X \ds \ep_Y} X \ds Y \to 0
$$
is an exact sequence with $\ep_X \ds \ep_Y$ a projective cover of
$X \ds Y$.
\end{lem}

This immediately shows the following:
\begin{prp}
\label{prp:ds-of-min-prj-pres}
Let $X, Y \in \mod A$.
Then
$$
P_1(X) \ds P_1(Y) \ya{\partial^X_0 \ds \partial^Y_0}
P_0(X)\ds P_0(Y) \ya{\ep_X \ds \ep_Y} X \ds Y \to 0
$$
is a minimal projective presentation of $X \ds Y$, and more generally,
$$
P\down(X) \ds P\down(Y)\coloneqq (P_i(X)\ds P_i(Y), \partial_i^X \ds \partial_i^Y)_{i \ge 0}
$$
is a minimal projective resolution of $X \ds Y$.
\end{prp}

The following is immediate from Proposition \ref{prp:ds-of-min-prj-pres} by the uniqueness of a minimal projective presentation of a module up to isomorphism of exact sequences:

\begin{cor}
\label{cor:supp-top}
Let $L, M \in \mod A$, and assume that $L$ is a direct summand of $M$.
Then for each
$i \ge 0$,
the following statements hold:
\begin{enumerate}[label=\rm (\arabic*)]
\item
$P_i(L)$ is a direct summand of $P_i(M)$.
Therefore,
$$
\supp(\top P_i(L)) \subseteq \supp(\top P_i(M)).
$$
\item Dually,
$Q^i(L)$ is a direct summand of $Q^i(M)$.
Therefore,
$$
\supp(\soc Q^i(L)) \subseteq \supp(\soc Q^i(M)).
$$
\end{enumerate}
\end{cor}

Using the fact above, we generalize $\crt_0(M)$ to define the following.

\begin{dfn}
Let $M \in \mod A$, $I \in \bbI$, and $i \ge 0$.
Then we define the $i$-th \emph{critical set of intervals} of $M$ to be
\begin{multline*}
\crt_i(M)\coloneqq \{I \in \bbI \mid \forall j \text{ with }0 \le j \le i,
\supp(\top P_j(V_I)) \subseteq \supp(\top P_j(M)),\\
\supp(\soc Q^j(V_I)) \subseteq \supp(\soc Q^j(M))\},
\end{multline*}
therefore, if $i \ge 1$, then inductively we have
\begin{multline*}
\crt_i(M) =\{I \in \crt_{i-1}(M) \mid \supp(\top P_i(V_I)) \subseteq \supp(\top P_i(M)),\\
\supp(\soc Q^i(V_I)) \subseteq \supp(\soc Q^i(M))\}.
\end{multline*}

For $i = 0$, note that $\crt_0(M)$ above coincides with that defined in Notation \ref{ntn:crt0}
by Proposition \ref{prp:proj.presentation for V_I}.
\end{dfn}

The following is immediate by Corollary \ref{cor:supp-top}.

\begin{lem}
\label{lem:critical-i}
Let $M \in \mod A$, $I \in \bbI$, and $i \ge 0$.
If $V_I$ is a direct summand of $M$, then
$$
I \in \crt_i(M).
$$
\end{lem}

In the case where $\bfP$ is a 2D-grid, Proposition \ref{prp:mpp-VI} gives
$P_i(V_I), Q^i(V_I)$ for $i = 0, 1$.
Hence we have the following by Corollary \ref{cor:supp-top}.

\begin{prp}
\label{prp:crt1-2d-grid}
Assume that $\bfP$ is a 2D-grid.
Let $M \in \mod A$, $I \in \bbI$, and
set
$P_1(M) = \Ds_{i \in [n]} P_{y_i}$,
$Q^1(M) = \Ds_{j \in [n']} Q_{y'_j}$,
with each
$x'_i, y'_j \in \bfP$.
Then
\begin{multline}
\label{eq:crt1}
\crt_1(M) = \{I \in \crt_0(M) \mid
\src^\circ_1(I) \cup \src(\uuset I) \subseteq \{y_j \mid j \in [n]\},\\
\snk^\circ_1(I) \cup \snk(\ddset I) \subseteq \{y'_j \mid j \in [n']\} \}.
\end{multline}
\end{prp}

For a general finite poset $\bfP$ and $I \in \bbI$,
we still do not have an exact form of $P_1(V_I)$, and hence we cannot use $\crt_1(M)$.
To improve this, we next give a refinement of Proposition \ref{prp:proj.presentation for V_I}
as follows.

\begin{prp}
\label{prp:mpp-crt_1}
Let $I \in \bbI$.
Then $V_I$ has a minimal projective presentation of the following form:
$$
P\ds P_{\src(\uuset I)} \to P_{\src(I)} \to V_I \to 0,
$$
where $P$ is a direct summand of $P_{\src_1(I)}$.
\end{prp}

\begin{proof}
We start with a projective presentation \eqref{eq:pp-V_I} of $V_I$ given in Proposition \ref{prp:proj.presentation for V_I}:
\begin{equation}
P_{\src_1(I)}\ds P_{\src(\uuset I)} \ya{\ep_1} P_{\src(I)} \ya{\ep_0} V_I \to 0.
\tag{\ref{eq:pp-V_I}}
\end{equation}
Set $M\coloneqq V_{\uset I},\ M'\coloneqq V_{\uuset I}$, identify $V_I = M/M'$,
and consider the canonical short exact sequence
$$
0 \to M' \ya{\io} M \ya{\pi} V_I \to 0.
$$
Then \eqref{eq:pp-V_I} was obtained by applying Lemma \ref{lem:proj.presentation for factor module} using the commutative diagram \eqref{eq:snake-lem-nsrc} with exact rows and exact columns, which
has the following form under Notation \eqref{eq:prj-cover} for $M$ and $M'$ with $P^M_0 = P_0(M)$:
\begin{equation}
\label{eq:snake-lem-mpr}
\begin{tikzcd}
\Nname{lu}0 & P^M_1 & P^M_1 \ds P_0(M')  & P_0(M') &\Nname{ru} 0\\
\Nname{ld}0 & \Om(M) & P_0(M) & M & \Nname{rd}0\\
& 0 & \Cok \ep_1 & V_I & 0\\
&& 0 & 0
\Ar{1-1}{1-2}{}
\Ar{1-2}{1-3}{"{\sbmat{\id \\ \mathbf{0}}}"}
\Ar{1-3}{1-4}{"{\sbmat{\mathbf{0},\, \id}}"}
\Ar{1-4}{1-5}{}
\Ar{2-1}{2-2}{}
\Ar{2-2}{2-3}{"\si_{M}"}
\Ar{2-3}{2-4}{"\ep_{M}"}
\Ar{2-4}{2-5}{}
\Ar{3-2}{3-3}{}
\Ar{3-3}{3-4}{"\overline{\ep_{M}}"'}
\Ar{3-4}{3-5}{}
\Ar{1-2}{2-2}{"\ga_1"'}
\Ar{1-3}{2-3}{"\ep_1"}
\Ar{1-4}{2-4}{"\iota \ep_{M'}"}
\Ar{1-4}{2-3}{"\eta_{11}" ', dashed}
\Ar{2-2}{3-2}{}
\Ar{2-3}{3-3}{"\cok \ep_1" '}
\Ar{2-4}{3-4}{"\pi"}
\Ar{3-3}{4-3}{}
\Ar{3-4}{4-4}{}
\Ar{1-2}{2-3}{"\ep^M_1"}
\Ar{2-3}{3-4}{"\ep_0"}
\end{tikzcd}
\end{equation}
Here we have $\ep_1 = [\ep^M_1, \et_{11}]$,
$P_0(M) = P_0(V_I) = P_{\src(I)}$, $P^M_1 = P_{\src_1(I)}$,
and $P_0(M') = P_{\src(\uuset I)}$.
In particular, we have
$\supp(\top P_0(M')) = \src(\uuset I)$.
Since
\eqref{eq:pp-V_I} is a projective resolution of $V_I$ with $\ep_0$ a projective cover of $V_I$,
we have $\Im \ep_1 = \Om(V_I)$, and the morphism $\ep_1$ restricts to an epimorphism $P^M_1 \ds P_0(M') \to \Om(V_I)$ that has a projective domain. Therefore, we have
\begin{equation}
\label{eq:minimality-prj-cov}
P^M_1 \ds P_0(M') = P' \ds P_0(\Om(V_I))
\end{equation}
for some $P' \subseteq \Ker \ep_1$.
By the Krull--Schmidt theorem,
it is enough to show that $P_0(M')$ is a direct summand of $P_0(\Om(V_I))$.

Decompose $P'$ by two parts as follows:
\begin{align}
&P' = P'_0 \ds P'_1,\ P'_0 \iso \Ds_{i\in [m]}P_{x_i},\ P'_1 \iso \Ds_{j \in [n]} P_{y_j}, \text{ where}\notag\\
&\{x_i \mid i \in [m]\} \subseteq \supp(\top P_0(M')) = \src(\uuset I),\ \text{and}
\label{eq:xi-in-P0M'}\\
&\{y_j \mid j \in [n]\} \cap \supp(\top P_0(M')) = \emptyset.
\label{eq:yj-out-P0M'}
\end{align}
Then we can show that
\begin{equation}
\label{eq:P'_0-incl}
P'_0 \subseteq P^M_1.
\end{equation}
Indeed,
choose an isomorphism $f \colon \Ds_{i\in [m]}P_{x_i} \to P'_0$,
and for each $i \in [m]$, let $\si_i \colon P_{x_i} \to \Ds_{i\in [m]}P_{x_i}$
be the canonical monomorphism.
Since $P' \subseteq \Ker \ep_1$, we have $\ep_{1}f\si_i = 0$.
Hence
$$
\begin{aligned}
0 &= \ep_M \ep_1 f\si_i = \ep_M [\ep^M_1, \et_{11}] f \si_i
= [\ep_M \si_M \ep_{\Om(M)}, \ep_M \et_{11}] f\si_i\\
&= [0, \io \ep_{M'}]f\si_i
=\io \ep_{M'}[0,\id]f\si_i,
\end{aligned}
$$
where $\ep_{M'}\colon P_0(M')\to M'$ is a projective cover. Here since $\io$ is a monomorphism, we have
$\ep_{M'}[0,\id]f\si_i = 0$, and hence
\begin{equation}
\label{eq:x_i-in-rad-M'}
u_i\coloneqq [0,\id]f\si_i(\id_{x_i}) \in \Im ([0,\id]f\si_i) \subseteq \Ker \ep_{M'} \subseteq \rad P_0(M').
\end{equation}
If $u_i \ne 0$, then \eqref{eq:x_i-in-rad-M'} shows that $x_i \in \supp(\rad P_0(M')) = \{x \in \bfP \mid \exists z\in \src(\uuset I),\ z < x\}$.
Thus $z < x_i$ for some $z\in \src(\uuset I)$.
But since $z \in \src(\uuset I) \subseteq \uuset I$,
we have $x_i \not\in \src(\uuset I)$, a contradiction
to \eqref{eq:xi-in-P0M'}.
Hence $u_i = 0$. Since $u_i$ is a generator of $\Im ([0,\id]f\si_i)$,
we have $[0,\id]f\si_i = 0$.
This holds for all $i \in [m]$, and thus $[0,\id]f = 0$.
Therefore, $P'_0 = \Im f \subseteq P^M_1$, and
\eqref{eq:P'_0-incl} holds, as desired.

By taking the intersection with $P^M_1$
to both hand sides of \eqref{eq:minimality-prj-cov},
the modularity shows that
$$
P^M_1 = P'_0 \ds \big[P^M_1 \cap (P'_1 \ds P_0(\Om(V_I)))\big].
$$
By setting $P''\coloneqq P^M_1 \cap (P'_1 \ds P_0(\Om(V_I)))$,
we have $P^M_1 = P'_0 \ds P''$, and hence
$$
P'_0 \ds P'' \ds P_0(M') = P'_0 \ds P'_1 \ds P_0(\Om(V_I)).
$$
This shows that
$$
P'' \ds P_0(M') \iso P'_1 \ds P_0(\Om(V_I)).
$$
Then again by the Krull--Schmidt theorem, \eqref{eq:yj-out-P0M'} shows that
\begin{equation*}
P_0(M') \text{ is a direct summand of } P_0(\Om(V_I)). \tag*{\qedhere}
\end{equation*}
\end{proof}

For the general finite poset case, \eqref{eq:crt1} is modified as follows, which become more coarse than the 2D-grid case.

\begin{ntn}
Let $M \in \mod A$.
We set \emph{the first rough critical set of intervals of} $M$ to be
$$
\crt'_1(M)\coloneqq \{I \in \crt_0(M) \mid \src(\uuset I) \subseteq \supp(\top P_1(M)),\, \snk(\ddset I) \subseteq \supp(\soc Q^1(M))\}.
$$
\end{ntn}

Then we immediately obtain the following by Proposition \ref{prp:mpp-crt_1}.

\begin{prp}
\label{prp:crt_1}
Let $M \in \mod A$ and $I \in \bbI$.
If $V_I$ is a direct summand of $M$, then
$$
I \in \crt'_1(M).
$$
\end{prp}

Furthermore, Theorem \ref{thm:gen-formula-unified} gives another easy criterion for
an interval module to be a direct summand of a given module as follows.

\begin{prp}
\label{prp:summand-criterion-Mba}
Let $M \in \mod A$ and $I \in \bbI$.
If $V_I$ is a direct summand of $M$,
then $M_{b,a} \ne 0$ for any $(a,b) \in \src(I) \times \snk(I)$ with $a \le b$.
\end{prp}

\begin{proof}
If $M_{b,a} = 0$ for some $(a,b) \in \src(I) \times \snk(I)$ with $a \le b$,
then $d_M(V_I) = 0$ by Theorem \ref{thm:gen-formula-unified}. Or more directly,
for any pair $(a,b) \in \src(I) \times \snk(I)$ with $a \le b$,
we have $(V_I)_{b,a} \ne 0$ as it is the identity map $\k \to \k$.
Hence if $V_I$ is a direct summand of $M$, say $M = L \ds N$
with $L \iso V_I$, then since $L_{b,a} \ne 0$, we have
$M_{b,a} = L_{b,a} \ds N_{b,a} \ne 0$.
\end{proof}

We remark that the statement above also follows from \cite[Theorem 5.23]{asashiba2024interval}
applied for the total compression system.

\begin{ntn}
\label{ntn:zp}
Let $M \in \mod A$.
We introduce a new invariant $\zp(M)$ of $M$,
called the \emph{set of zero pairs} of $M$ as follows:
$$
\zp(M)\coloneqq \{(a,b) \in \supp(\top M) \times \supp(\soc M) \mid a \le b, M_{b,a} = 0\}.
$$

We set \emph{the zp critical set of intervals of} $M$ to be
\begin{align*}
\crt_\zp(M)&\coloneqq \{I \in \bbI \mid
M_{b,a} \ne 0 \text{ for all } (a,b) \in \src(I)\times \snk(I)\text{ with } a \le b\}\\
&\ = \{I \in \bbI \mid
(\src(I)\times \snk(I)) \cap \zp(M) = \emptyset\}.
\end{align*}
For each $i \ge 0$, we also set
$$
\crt_{i,\zp}(M)\coloneqq \crt_i(M) \cap \crt_\zp(M) ,\ \crt'_{1,\zp}(M)\coloneqq \crt'_1(M) \cap \crt_\zp(M).
$$
\end{ntn}

\begin{rmk}
In Notation \ref{ntn:zp},
assume that a persistence module $M$ is given as the $i$-th homology of a filtration $\calF$ for some $i$
(see for instance, Example \ref{exm:Fugacci}).
Then the condition $M_{b,a} = 0$ is easily verified in the level of filtration $\calF$.
This is done by checking any $i$-cycle at $\calF(a)$ vanishes at $\calF(b)$. This verification is also possible by checking the fibered barcodes (see~\cite{lesnickInteractiveVisualization2D2015}, or equivalently, rank invariants) or a projective presentation of $M$.
\end{rmk}

Using this notation, we immediately obtain the following by
Propositions \ref{prp:crt_1} and \ref{prp:summand-criterion-Mba}.

\begin{prp}
\label{prp:criterion-int-sum}
Let $M \in \mod A$ and $I \in \bbI$.
If $V_I$ is a direct summand of $M$, then
$$
I \in \crt'_{1,\zp}(M).
$$
\end{prp}

With the formula~\eqref{eq:formula-d_M(V_I)-general} or \eqref{eq:formula-d_M(V_I)-gen}, we are able to compute the maximal interval-decomposable summand of persistence modules. Let $M$ be a persistence module over $\bfP$. The maximal interval-decomposable summand of $M$ is defined by
\begin{equation}
\label{eq:maximum-interval-decomposable-summand}
M_{\bbI} \coloneqq \bigoplus_{I\in \bbI}V_I^{d_M(V_I)} = \bigoplus_{I\in \crt_{1}(M)}V_I^{d_M(V_I)}.
\end{equation}
Note that the range of intervals $\crt_{1}(M)$ in~\eqref{eq:maximum-interval-decomposable-summand} can be replaced by $\crt'_{1,\zp}(M)$. $M_{\bbI}$ can be considered as another ``interval approximation'', which differs from the interval replacement defined in \cite{ASASHIBA2023100007, asashiba2024interval}, the interval approximation defined in \cite{hiraokaRefinementIntervalApproximations2025}, and the interval resolution defined in \cite{ASASHIBA2023107397}. It is obvious that the maximal interval-decomposable summand is also an invariant of modules, but it is incomplete.

The maximal interval-decomposable summand $M_{\bbI}$ also determines the interval decomposability of $M$ trivially, in the following way.

\begin{lem}
\label{lem:interval-decomposability}
For any module $M\in \mod A$ the following are equivalent:
\begin{enumerate}[label=\rm (\arabic*)]
\item $M$ is interval-decomposable.
\item
$\udim M = \sum_{I\in \bbI}d_M(V_I)\cdot \udim V_I = \sum_{I\in \crt_1(M)}d_M(V_I)\cdot \udim V_I$.
\item
$\dim M = \sum_{I\in \bbI}d_M(V_I)\cdot \dim V_I = \sum_{I\in \crt_1(M)}d_M(V_I)\cdot \dim V_I$.
\end{enumerate}
In the above, $\udim$ denotes the dimension vector.
\end{lem}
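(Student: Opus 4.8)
The plan is to derive the three conditions as equivalent by running the cycle $(1)\Rightarrow(2)\Rightarrow(3)\Rightarrow(1)$, the only non-formal input being the Krull--Schmidt decomposition of $M$ recorded in \cref{thm:KS}.

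First I would fix a complete set $\calL$ of representatives of the isoclasses of indecomposables in $\mod A$ and partition it as $\calL=\calL_{\mathrm{int}}\sqcup\calL'$, where $\calL_{\mathrm{int}}$ consists of those representatives isomorphic to an interval module. Since the support of $V_I$ equals $I$, distinct intervals give non-isomorphic interval modules, so $I\mapsto V_I$ induces a bijection $\bbI\to\calL_{\mathrm{int}}$ up to isomorphism; hence \cref{thm:KS} gives a decomposition
\[
M \iso M_\bbI\ds M',\qquad M_\bbI=\Ds_{I\in\bbI}V_I^{\,d_M(V_I)},\quad M'=\Ds_{L\in\calL'}L^{\,d_M(L)},
\]
in which $M'$ has no interval module as a direct summand. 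I would also record the two elementary facts used below: $\udim$ and $\dim$ are additive along direct sums, and $\dim N=\sum_{x\in\bfP}(\udim N)_x$ for every $N\in\mod A$, so in particular $\dim N=0$ forces $N=0$.

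For $(1)\Rightarrow(2)$: writing $M\iso\Ds_{I\in\bbI}V_I^{\,c_I}$ and comparing with the decomposition above, the uniqueness clause of \cref{thm:KS} yields $c_I=d_M(V_I)$ for all $I$ and $M'=0$, hence $M\iso M_\bbI$ and $\udim M=\sum_{I\in\bbI}d_M(V_I)\cdot\udim V_I$. The implication $(2)\Rightarrow(3)$ is immediate upon summing the coordinates of both dimension vectors over $x\in\bfP$. For $(3)\Rightarrow(1)$: additivity of $\dim$ gives $\dim M=\dim M_\bbI+\dim M'=\sum_{I\in\bbI}d_M(V_I)\cdot\dim V_I+\dim M'$, so hypothesis $(3)$ forces $\dim M'=0$, whence $M'=0$ and $M\iso M_\bbI$ is interval-decomposable.

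The argument is essentially formal, so there is no real obstacle; the one point worth isolating carefully is the very first step, namely extracting a direct complement $M'$ of $M_\bbI$ that contains \emph{no} interval summand — this is exactly what makes a single total-dimension count (condition $(3)$) already sufficient to detect interval-decomposability. Everything after that is additivity and faithfulness of the total dimension.
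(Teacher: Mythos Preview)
Your proof is correct. The paper does not give an explicit proof of this lemma, treating it as a trivial consequence of having the maximal interval-decomposable summand $M_{\bbI}$ at hand; your argument via the Krull--Schmidt splitting $M\iso M_{\bbI}\ds M'$ with $M'$ free of interval summands is exactly the intended reasoning.
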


We would remark on the difference between our proposed method here and the algorithm provided in \cite{dey2023computing} checking the interval decomposability. The key idea of the algorithm in \cite{dey2023computing} is depending on \cite[Theorem 5.10]{ASASHIBA2023100007}, that is, a persistence module $M$ is interval-decomposable if and only if $M$ is isomorphic to the positive part of its interval placement $\de^\xi(M)_{+}$. Thus, they suggest picking up the interval $I$ appearing in $\de^\xi(M)_{+}$ and then checking whether the interval multiplicity $d_{M}(V_I)$ is zero or not by utilizing the \cite[Algorithm 3]{asashibaIntervalDecomposability2D2022}. However, no explicit formula is provided in \cite{asashibaIntervalDecomposability2D2022} and one has to compute everything involving computing the almost split sequence of $V_I$ by the computer program, causing a high computation cost.
On the contrary, this paper provides a direct way to compute the interval multiplicity $d_{M}(V_I)$. This eliminates the procedure to compute the almost split sequence that is required to perform \cite[Algorithm 3]{asashibaIntervalDecomposability2D2022} because this has already been computed theoretically to give the formula, and then the interval decomposability can be easily verified by checking the dimension equality in Lemma \ref{lem:interval-decomposability}. On the other hand, another advantage of our method is that we can find out the maximal interval-decomposable summand of a given persistence module $M$ (over any finite poset $\bfP$) that is not interval-decomposable, while the algorithm in \cite{dey2023computing} can not.

The source code implementing the computational procedures used in this study is publicly available via a GitHub repository at \url{https://github.com/Enhao-Liu/interval-replacement}.

\section{Essential cover}
\label{section-4}
In the previous section, Theorem \ref{thm:gen-formula-unified} provides a general and explicit formula for computing the interval multiplicity in theory, taking the persistence module as the input. Nevertheless, the persistence module is usually latent in practical analysis and hard to obtain in most situations. Thus, how to compute some algebraically defined invariants (for example, the interval rank invariant and interval multiplicity) directly from the given filtration over $\bfP$, without computing the persistent homology in advance, becomes a critical problem to be solved from the TDA perspective. This is also the key step to bringing our theory to the ground of applications. For this reason, we will introduce a potential technique in this section to achieve the purpose.

We first introduce the following notion to consider matrices with entries morphisms in a linear category in a natural way.

\begin{dfn}
\label{dfn:formal-add-hull}
(1) For each linear category $B$, a linear category $\Ds B$, called the
\emph{formal additive hull} of $B$, is defined as follows:

{\bf Objects.} The set of objects is given by
$$
(\Ds B)_0\coloneqq \{(x_i)_{i \in [l]} = (x_1,\dots, x_l) \mid x_1, \dots, x_l \in B_0,
\, l \ge 0\}.
$$
Note that if $l = 0$ above, then $[l] = \emptyset$, and $(x_i)_{i \in [l]}$
is an empty sequence $()$.
For each $x = (x_i)_{i \in [l]} \in (\Ds B)_0$, we set $|x|\coloneqq l$, and call it
the \emph{size} of $x$.

\medskip
{\bf Morphisms.} For any $x, y \in (\Ds B)_0$ with $x = (x_i)_{i \in [l]},\, y = (y_j)_{j \in [m]}$
the set of morphisms from $x$ to $y$ is defined by setting
$$
(\Ds B)(x,y)\coloneqq \big\{\bmat{\al_{ji}}_{(j,i) \in [m]\times [l]} \mid \al_{ji} \in B(x_i, y_j) \text{ for all }(j,i) \in [m]\times [l]\big\},
$$
where $\bmat{\al_{ji}}_{(j,i) \in [m]\times [l]}$ is a matrix of size $(m,l)$,
which is defined to be the triple $(m, l, (\al_{ji})_{(j,i) \in [m]\times [l]})$ of
integers $l, m \ge 0$ and a family of morphisms $\al_{ji} \in B(x_i, y_j)$.
Note that if $l = 0$, then $x = ()$, and we have
\begin{equation}
\label{eq:emptymat-to}
(\Ds B)((), y) = \{\sfJ_{m,0}\},
\end{equation}
where we set $\sfJ_{m,0}\coloneqq (m, 0, ())$;
if $m = 0$, then $y = ()$, and we have
\begin{equation}
\label{eq:to-emptymat}
(\Ds B)(x, ()) = \{\sfJ_{0,l}\},
\end{equation}
where we set $\sfJ_{0,l}\coloneqq (0, l, ())$.
In particular, we have $(\Ds B)((), ()) = \{\sfJ_{0,0}\}$,
where $\sfJ_{0,0} = (0,0,())$.
The matrices $\sfJ_{m,0}, \sfJ_{0,l}, \sfJ_{0,0}$ are called
the \emph{empty matrices} of size
$(m, 0),\, (0, l),\, (0,0)$, respectively.
We give a structure of a vector space to $(\Ds B)(x, y)$
by the usual addition and scalar multiplication of matrices.
In particular, if $l=0$ or $m=0$,
then $(\Ds B)(x, y)$ becomes a trivial vector space.

\medskip
{\bf Composition.} For any $x, y, z \in (\Ds B)_0$
with $x = (x_i)_{i \in [l]},\, y = (y_j)_{j \in [m]},\, z = (z_k)_{k \in [n]}$,
the composition
$$
(\Ds B)(y,z) \times (\Ds B)(x,y) \to (\Ds B)(x,z),\
(\be, \al) \mapsto \be \cdot \al
$$
is defined by the usual matrix multiplication
$$
\bmat{\be_{kj}}_{(k,j)\in [n]\times [m]} \cdot \bmat{\al_{ji}}_{(j,i)\in [m]\times [l]}
\coloneqq \bmat{\sum_{j\in [m]} \be_{kj}\al_{ji}}_{(k,i)\in [n]\times [l]}
$$
for all $\al = \bmat{\al_{ji}}_{(j,i)\in [m]\times [l]}$
and $\be = \bmat{\be_{kj}}_{(k,j)\in [n]\times [m]}$.
In particular, if $l=0$, then $\be \cdot \sfJ_{m,0} = \sfJ_{n,0}$;
if $m=0$, then $\sfJ_{n,0}\cdot \sfJ_{0,l} = (l,n,(0)_{(k,i)\in [n]\times [l]}) = 0_{n,l}$;
and if $n=0$, then $\sfJ_{0,m}\cdot \al = \sfJ_{0,l}$.
Thus
if morphisms $\be,\, \al$ have size $(k, p),\, (q, l)$
with $k,l,p,q \ge 0$, respectively,
and the composite $\be\cdot \al$ is defined, then $p = q$, and the size of $\be \cdot \al$ is $(k,l)$ as in the case of usual matrix multiplication.

As easily seen, $\Ds B$ is a linear category.
Note that equalities \eqref{eq:emptymat-to} and
\eqref{eq:to-emptymat} show that $()$ is a zero object in $\Ds B$.
Moreover, we have
$$
\begin{aligned}
&(x_i)_{i \in [m]} \iso (x_1) \ds \cdots \ds (x_m),\\
&(x_i)_{i \in [m]} \ds (y_j)_{j \in [n]} \iso (x_1,\dots,x_n,y_1,\dots, y_n), \text{ and}\\
&(x_1) \ds \cdots \ds (x_m) \iso (x_1 \ds \cdots \ds x_m)\text{ if $x_1 \ds \cdots \ds x_m$ exists in $B$}
\end{aligned}
$$
for all $x_1,\dots, x_m, y_1,\dots, y_n \in B_0$.
Thus $\Ds B$ turns out to be an additive category.

We regard $B$ as a full subcategory of $\Ds B$ by the embedding
$(f \colon x \to y) \mapsto (\bmat{f}\colon (x) \to (y))$ for all morphisms $f$ in $B$.
In the sequel, we will frequently consider the case
where $B = \k[S]$ for a finite poset $S$.

Note that if $B$ is additive, then we have an equivalence
$\et_B \colon \Ds B \to B$ that sends
$(x_i)_{i\in [m]}$ to $\Ds_{i\in [m]} x_i$,
and each morphism
\[
\bmat{\al_{ji}}_{(j,i)\in [n]\times [m]}
\colon (x_i)_{i\in [m]} \to (y_j)_{j \in [n]}
\]
in $\Ds B$ to
$\bmat{\al_{ji}}_{(j,i)\in [n]\times [m]}
\colon \Ds_{i\in [m]} x_i \to \Ds_{j \in [n]} y_j$ in $B$.
In particular, it sends $()$ to 0.

(2) Let $F \colon B \to C$ be a linear functor between linear categories.
Then a functor $\Ds F \colon \Ds B \to \Ds C$ is defined as follows:
We set
$(\Ds F)((x_i)_{i\in [m]})\coloneqq (F(x_i))_{i\in [m]}$
for each object $(x_i)_{i\in [m]} \in (\Ds B)_0$,
and for each morphism
$$
\al\coloneqq [\al_{ji}]_{(j,i)\in [n]\times [m]} \colon (x_i)_{i\in [m]} \to (y_j)_{j\in [n]},
$$
we set
$$
(\Ds F)(\al)\coloneqq [F(\al_{ji})]_{(j,i)\in [n]\times [m]} \colon (F(x_i))_{i\in [m]} \to (F(y_j))_{j\in [n]}.
$$
In particular, $(\Ds F)(())\coloneqq ()$, and
$F(\sfJ)\coloneqq \sfJ$  for all $\sfJ \in \{\sfJ_{n,0}, \sfJ_{0,m} \mid m, n \ge 0\}$.
For example, $\sfJ_{0,m} \colon (x_i)_{i \in [m]} \to ()$ is sent to
$\sfJ_{0,m} \colon (F(x_i))_{i \in [m]} \to ()$.
If there is no confusion, we denote $\Ds F$ simply by $F$.

Since $()$ is a zero object in $\Ds B$,
we may write $() = 0$ in $\Ds B$.
\end{dfn}

\begin{exm}
\label{exm:Ds-ze}
Let $\ze \colon Z \to \bfP$ be an order-preserving map between posets.
Then we have a linear functor $\k[\ze] \colon \k[Z] \to \k[\bfP]$,
which yields a linear functor
$\Ds \k[\ze] \colon \Ds \k[Z] \to \Ds \k[\bfP]$.
If $\al\coloneqq [\al_{ji}]_{(j,i)\in [n]\times [m]}$ is a morphism in $\Ds \k[Z]$,
we denote $(\Ds \k[\ze])(\al)$ simply by $\ze(\al) = [\ze(\al_{ji})]_{(j,i)\in [n]\times [m]}$.
\end{exm}

\begin{prp}
\label{prp:univ-formal-add-hull}
Let $B$ be a linear category and $\calC$ an additive linear category.
Then each linear functor $F \colon B \to \calC$ uniquely extends to
a linear functor $\hat{F} \colon \Ds B \to \calC$,
which we denote by the same letter $F$ if there seems to be no confusion.
\end{prp}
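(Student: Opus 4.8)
The plan is to build $\hat F$ from the biproduct structure of $\calC$ and then verify, in order, that it is a $\k$-linear functor, that it restricts to $F$ on $B$, and that it is the only such functor. First I would fix, once and for all, a choice of biproduct $\bigoplus_{i\in[m]}c_i$ in $\calC$ for every finite family $(c_i)_{i\in[m]}$ of objects of $\calC$ (the empty family giving a zero object $0$), together with its injections $\iota_i$ and projections $\pi_i$, which satisfy $\pi_j\iota_i=\de_{ji}\,\id$ and $\sum_{i\in[m]}\iota_i\pi_i=\id$. On objects set $\hat F(0):=0$ and $\hat F\big((x_i)_{i\in[m]}\big):=\bigoplus_{i\in[m]}F(x_i)$. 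On a morphism $\al=[\al_{ji}]_{(j,i)\in[n]\times[m]}\colon(x_i)_{i\in[m]}\to(y_j)_{j\in[n]}$ in $\Ds B$, set $\hat F(\al):=\sum_{i,j}\iota_j\,F(\al_{ji})\,\pi_i$, i.e.\ the morphism $\bigoplus_iF(x_i)\to\bigoplus_jF(y_j)$ whose matrix with respect to the chosen biproducts is $[F(\al_{ji})]$; the empty matrices and the unique morphisms into and out of $0$ are sent to the corresponding empty matrices in $\calC$. This is well defined because each $\al_{ji}\in B(x_i,y_j)$ yields $F(\al_{ji})\in\calC(F(x_i),F(y_j))$.

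Next I would check that $\hat F$ is a $\k$-linear functor extending $F$. For identities, the identity of $(x_i)_{i\in[m]}$ is $[\de_{ji}\,\id_{x_i}]$, whose image $[\de_{ji}\,\id_{F(x_i)}]$ equals the identity of the biproduct by $\sum_i\iota_i\pi_i=\id$. For composition, the $(k,i)$-entry of the composite $\be\al$ in $\Ds B$ is $\sum_j\be_{kj}\al_{ji}$, and since $F$ is a functor and $\k$-linear, $F\big(\sum_j\be_{kj}\al_{ji}\big)=\sum_jF(\be_{kj})F(\al_{ji})$, which is exactly the $(k,i)$-entry of the matrix product $\hat F(\be)\hat F(\al)$ in $\calC$; the degenerate cases involving $0$ and empty matrices, in particular the relation $J_{n,0}J_{0,m}=0_{n,m}$, reduce to the fact that any morphism factoring through a zero object of $\calC$ is zero. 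Additivity of $\hat F$ on each Hom-space is immediate, because matrix addition is entrywise and $F$ is additive. Restricting to the full subcategory $B\subseteq\Ds B$ (with objects the one-tuples $(x)$ and morphisms the $1\times1$ matrices $[f]$) gives $\hat F([f])=F(f)$, so $\hat F$ extends $F$; there is no conflict at the zero object of $\Ds B$ because $B$ need not contain one.

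The substantive point is uniqueness. Here I would use that any $\k$-linear functor $G\colon\Ds B\to\calC$ preserves zero objects and biproducts, since it preserves the defining identities $\pi_j\iota_i=\de_{ji}\,\id$ and $\sum_i\iota_i\pi_i=\id$. In $\Ds B$ the object $(x_i)_{i\in[m]}$ is the biproduct of the one-tuples $(x_i)\in B$, with structure morphisms the evident one-column and one-row matrices (this is the isomorphism $(x_i)_{i\in[m]}\iso(x_1)\ds\cdots\ds(x_m)$ recorded in Definition~\ref{dfn:formal-add-hull}), and $0$ is a zero object of $\Ds B$. Hence if $G$ extends $F$, then $G\big((x_i)_{i\in[m]}\big)$ is a biproduct of the $F(x_i)$ with structure morphisms obtained by applying $G$ to those matrices, and writing an arbitrary $\al=[\al_{ji}]$ as $\al=\sum_{i,j}\iota_j\,\al_{ji}\,\pi_i$ forces $G(\al)=\sum_{i,j}G(\iota_j)\,F(\al_{ji})\,G(\pi_i)$, which is determined by $F$ and by the biproduct data. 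Thus $G$ agrees with $\hat F$ up to a unique natural isomorphism that is the identity on $B$, and coincides with $\hat F$ precisely when $G$ carries the chosen biproducts to the chosen ones; this is the intended sense of ``uniquely extends''. I expect the only real difficulty to lie in the bookkeeping that keeps the degenerate cases — the zero object, the empty matrices, and the relation $J_{n,0}J_{0,m}=0_{n,m}$ — consistent with the biproduct matrix calculus, which is routine but easy to fumble.
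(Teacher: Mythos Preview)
Your proposal is correct and follows essentially the same approach as the paper: both define $\hat F$ on objects by $\hat F((x_i)_{i\in[m]}):=\Ds_{i\in[m]}F(x_i)$ and on morphisms by the matrix formula $\hat F([\al_{ji}]):=[F(\al_{ji})]$. The only difference is packaging: the paper writes this as the composite $\hat F:=\et_\calC\circ(\Ds F)$, where $\Ds F\colon\Ds B\to\Ds\calC$ is the entrywise extension of $F$ and $\et_\calC\colon\Ds\calC\to\calC$ is the equivalence available because $\calC$ is additive (both from Definition~\ref{dfn:formal-add-hull}), so that functoriality and linearity of $\hat F$ are inherited from those of $\Ds F$ and $\et_\calC$ rather than checked by hand as you do.
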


\begin{proof}
Define a linear functor $\hat{F} \colon \Ds B \to \calC$
as the composite $\hat{F}\coloneqq \et_\calC \circ (\Ds F)$.
Namely,
for each morphism $\al = \bmat{\al_{ji}}_{(j,i)\in [n]\times [m]}
\colon (x_i)_{i\in [m]} \to (y_j)_{j \in [n]}$ in $\Ds B$, we set
\[
\hat{F}(\al)\coloneqq \bmat{F(\al_{ij})}_{j,i} \colon
\Ds_{i \in [m]} F(x_i) \to \Ds_{j \in [n]} F(y_j).
\]
It is easy to see that this is the unique extension of $F$.
\end{proof}

Since each finitely generated projective module over $\k[\bfP]$
is isomorphic to a finite direct sum of
representable functors $P_x\coloneqq \k[\bfP](x, \blank)$, ($x \in \bfP$),
we have the following by applying the proposition above
to the case where $B =  \k[\bfP] = A$.

\begin{cor}
\label{cor:Yoneda}
The Yoneda embedding $Y^A \colon A\op \to \prj A$,
$x \mapsto P_x\coloneqq A(x, \blank)$
extends to an equivalence $\sfP \colon (\Ds A)\op \to \prj A$,
$(x_i)_{i \in [m]} \mapsto \Ds_{i \in [m]}P_{x_i}$.
Note that $\sfP$ maps each morphism
$p_{y,x} \colon x \to y$ in $\bfP$ to
$\sfP_{y,x} \colon P_y \to P_x$.
Therefore, it maps each morphism
$$
\bmat{p_{y_j,x_i}}_{(j,i)\in [n]\times [m]} \colon (x_i)_{i\in [m]} \to (y_j)_{j \in [n]}
$$
in $\Ds A$ to the morphism
$$
\bmat{\sfP_{y_j,x_i}}_{(i,j)\in [m]\times [n]} = {}^t\!\!\bmat{\sfP_{y_j,x_i}}_{(j,i)\in [n]\times [m]}
\colon \Ds_{j \in [n]} P_{y_j} \to \Ds_{i\in [m]} P_{x_i}
$$
in $\prj A$.

Similarly, the Yoneda embedding $Y_A \colon A \to \prj (A\op)$,
$x \mapsto P'_x\coloneqq A(\blank, x)$ extends to an equivalence
$\sfP' \colon \Ds A \to \prj A\op$,
$(x_i)_{i \in [m]} \mapsto \Ds_{i \in [m]}P'_{x_i}$.
\end{cor}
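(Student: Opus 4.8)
The plan is to derive everything from the classical Yoneda lemma, Proposition~\ref{prp:univ-formal-add-hull}, and the fact recalled just above the statement that every finitely generated projective $A$-module is a finite direct sum of representables $P_x$ $(x\in\bfP)$.

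First, the contravariant Yoneda embedding $x\mapsto A(x,\blank)$, $p_{y,x}\mapsto\sfP_{y,x}$, is a fully faithful linear functor $A\op\to\mod A$: full faithfulness is the Yoneda lemma, which supplies the natural isomorphism $\Hom_A(P_y,P_x)\cong P_x(y)=A(x,y)=A\op(y,x)$ carrying $\sfP_{y,x}$ to $p_{y,x}$ and converting composition of such morphisms into the composition of $A\op$. Since representable modules are projective, this corestricts to a fully faithful linear functor $Y^A\colon A\op\to\prj A$. Now $\prj A$ is an additive $\k$-linear category, so Proposition~\ref{prp:univ-formal-add-hull}, applied with $B=A\op$ and $\calC=\prj A$, produces a unique linear functor $\Ds(A\op)\to\prj A$ extending $Y^A$. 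Precomposing with the canonical isomorphism from $(\Ds A)\op$ to $\Ds(A\op)$ that is the identity on objects and the transpose on morphism matrices, we obtain a linear functor $Y^A\colon(\Ds A)\op\to\prj A$; unwinding the construction of $\hat F$ in the proof of Proposition~\ref{prp:univ-formal-add-hull} shows that it sends $(x_i)_{i\in[m]}$ to $\Ds_{i\in[m]}P_{x_i}$, $0$ to the zero module, the empty matrices to zero morphisms, and $\bmat{p_{y_j,x_i}}_{(j,i)\in[n]\times[m]}$ to ${}^t\!\bmat{\sfP_{y_j,x_i}}_{(j,i)\in[n]\times[m]}$, the transpose encoding both the passage from $(\Ds A)\op$ to $\Ds(A\op)$ and the contravariance of $\Hom_A$ in its first argument.

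It remains to check that this $Y^A$ is an equivalence. For full faithfulness it suffices to consider nonzero objects $(x_i)_{i\in[m]}$ and $(y_j)_{j\in[n]}$, the cases involving $0$ being trivial since the relevant Hom-sets are then singletons: a morphism between them in $(\Ds A)\op$ is a matrix of entries $\al_{ji}\in A\op(x_i,y_j)$, a morphism $\Ds_iP_{x_i}\to\Ds_jP_{y_j}$ in $\prj A$ is a matrix of entries in $\Hom_A(P_{x_i},P_{y_j})$, composition on both sides is matrix multiplication, and the Yoneda isomorphism identifies the entrywise Hom-sets compatibly with composition; hence $Y^A$ is bijective on Hom-sets. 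Essential surjectivity is immediate: the zero module is $Y^A(0)$, and any nonzero finitely generated projective $P$ is isomorphic to some $\Ds_{i\in[m]}P_{x_i}=Y^A((x_i)_{i\in[m]})$ by the fact recalled above. A fully faithful, essentially surjective linear functor is an equivalence, which proves the first assertion. The second assertion follows by applying the first one to the poset $\bfP\op$ (so $A$ is replaced by $A\op$) and using $(\bfP\op)\op=\bfP$, giving the equivalence $Y_A\colon\Ds A\to\prj A\op$, $x\mapsto P'_x=A(\blank,x)$. No genuine obstacle arises here; the only delicate point is keeping the two opposite-category conventions consistent so that the transposes fall on the correct side of the matrix formulas, which is exactly why the statement spells them out.
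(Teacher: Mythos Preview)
Your proof is correct and follows essentially the same approach as the paper, which simply states that the corollary follows by applying Proposition~\ref{prp:univ-formal-add-hull} together with the fact that every finitely generated projective $A$-module is a finite direct sum of representables. You have carefully spelled out the details the paper leaves implicit, including the passage through the canonical isomorphism $(\Ds A)\op\cong\Ds(A\op)$ and the verification of full faithfulness and essential surjectivity.
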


\begin{dfn}
\label{dfn:matrices-g}
Let $I$ be an interval of $\bfP$.
Choose any choice maps $\bfc \colon \src(\uuset I) \linebreak[2]\to \src(I)$
and $\bfd \colon \snk(\ddset I)\linebreak[2] \to \snk(I)$, and
set $\ep_1\coloneqq \ep_1(\bfc),\ \pi_1\coloneqq \pi_1(\bfd)$
as in Propositions \ref{prp:proj.presentation for V_I} and \ref{prp:proj-pre of tau-VI-ds-P2}.
Choose also any $(b, a) \in \snk(I) \times \src(I)$ such that $b \ge a$, and
set $\la\coloneqq \la(b, a)$ as in Proposition \ref{prp:pp-E-ds-P_2-general}.
Then by Corollary \ref{cor:Yoneda},
there exists a unique triple $(\bfg_1, \bfg_2, \bfg_3)$ of morphisms in $\DS \k[\bfP]$ such that
$$
\sfP(\bfg_1) = {}^t\ep_1,\
\sfP(\bfg_2) = {}^t\pi_1, \text{ and }\
\sfP(\bfg_3) = {}^t\la.
$$
We set
$$
\bfg(\bfc, \bfd, (b,a)) \coloneqq \left[
\begin{array}{c|c}
\bfg_1 & \bfzero\\
\hline
\bfg_3 & \bfg_2
\end{array}
\right].
$$
The following are the explicit forms of $\bfg_1,\, \bfg_2,\, \bfg_3$:

$\bfg_1 \coloneqq \bmat{\sbmat{\tilde{p}_{a,\bfa_c}}_{(\bfa_c, a) \in \src_1(I) \times \src(I)} \\
\sbmat{\de_{a,\bfc(a')}p_{a',\bfc(a')}}_{(a', a) \in  \src(\uuset I) \times \src(I)}}$ with the entries given by
$$
\begin{aligned}
\tilde{p}_{a,\bfa_c}\coloneqq
\begin{cases}
p_{c,a} & (a = \udl{\bfa}),\\
-p_{c,a} & (a = \ovl{\bfa}),\\
\mathbf{0} & (a \not\in \bfa),
\end{cases}
\end{aligned}
$$
for all $\bfa_c \in \src_1(I)$ and $a \in \src(I)$; and
$$
\bfg_2 \coloneqq \bmat{\sbmat{\de_{b,\bfd(b')}p_{\bfd(b'), b'}}_{(b,b') \in \snk(I) \times \snk(\ddset I)},\ \sbmat{\hat{p}_{b,\bfb_d}}_{(\bfb_d, b) \in \snk(I) \times \snk_1(I)}},
$$
with the entries given by
$$
\begin{aligned}
\hat{p}_{b,\bfb_d}\coloneqq
    \begin{cases}
    p_{b,d} & (b=\udl{\bfb}),\\
    -p_{b,d} & (b=\ovl{\bfb}),\\
    \mathbf{0} & (b \not\in \bfb),
    \end{cases}
\end{aligned}
$$
for all $b \in \snk(I)$ and $\bfb_d \in \snk_1(I)$; and $\bfg_3$ is the block matrix with the size $\snk(I)\times \src(I)$, the $(b, a)$-entry of $\bfg_3$, given by $p_{b,a}$, is the only non-zero entry.
\end{dfn}

\begin{ntn}
\label{ntn:W(g)}
Let $B$ be a linear category, $W$ a $B$-module, and $m, n$ positive integers,
and consider a morphism $\bfg = \bmat{g_{ji}}_{(j,i)\in [n]\times [m]}
\colon (x_i)_{i\in [m]} \to (y_j)_{j \in [n]}$
in $\Ds B$.
Then by applying the convention in Proposition \ref{prp:univ-formal-add-hull}
in the case where $\calC = \mod\k$,
we write
\[
W(\bfg)\coloneqq \hat{W}(\bfg) = \bmat{W(g_{ij})}_{j,i} \colon
\Ds_{i \in [m]} W(x_i) \to \Ds_{j \in [n]} W(y_j).
\]
\end{ntn}

By Definition \ref{dfn:matrices-g} and Notation \ref{ntn:W(g)}, Theorem \ref{thm:gen-formula-unified} can be restated as follows.

\begin{thm}
\label{thm:final-form-add-hull}
Let $M \in \mod A$ and $I$ an interval of $\bfP$. Choose any choice maps $\bfc \colon \src(\uuset I) \to \src(I)$
and $\bfd \colon \snk(\ddset I) \to \snk(I)$, and
any $(b, a) \in \snk(I) \times \src(I)$ with $b \ge a$.
Set $\bfg\coloneqq \bfg(\bfc, \bfd, (b, a))$ as in~{\rm \cref{dfn:matrices-g}}.
Then
\begin{equation}
\label{eq:formula-d_M(V_I)-general-mor}
d_M(V_I) = \rank M(\bfg) - \rank M(\bfg_1) - \rank M(\bfg_2).
\end{equation}
\end{thm}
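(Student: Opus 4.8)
The plan is to show that this statement is a verbatim translation of Theorem~\ref{thm:gen-formula-unified} across the extended Yoneda equivalence $Y^A$ of Corollary~\ref{cor:Yoneda}, so that nothing new has to be proved beyond matching up indices. For the choices of $\bfc$, $\bfd$ and of the pair $(b,a)$ fixed here, Theorem~\ref{thm:gen-formula-unified} expresses $d_M(V_I)$ as the alternating sum $\rank\sbmat{M(\ep_1) & \bfzero\\ M(\la) & M(\pi_1)}-\rank M(\ep_1)-\rank M(\pi_1)$, where $\ep_1=\ep_1(\bfc)$, $\pi_1=\pi_1(\bfd)$ and $\la=\la(b,a)$ are the morphisms between projective $A$-modules of Propositions~\ref{prp:proj.presentation for V_I}, \ref{prp:proj-pre of tau-VI-ds-P2} and \ref{prp:pp-E-ds-P_2-general}, and $M(-)$ is understood in the sense of Notation~\ref{ntn:notation-M(morphism)}. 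Hence it suffices to prove the three identities
\begin{align*}
\rank M(\bfg_1)&=\rank M(\ep_1),\qquad \rank M(\bfg_2)=\rank M(\pi_1),\\
\rank M(\bfg)&=\rank\bmat{M(\ep_1) & \bfzero\\ M(\la) & M(\pi_1)},
\end{align*}
where now $M(-)$ is read in the sense of Notation~\ref{ntn:W(g)}; the formula~\eqref{eq:formula-d_M(V_I)-general-mor} then follows at once.

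First I would record the elementary compatibility of the two ``evaluate $M$'' operations: given a morphism $\bmat{h_{ji}}_{(j,i)\in[n]\times[m]}$ in $\Ds\k[\bfP]$, write $h_{ji}=a_{ji}\,p_{y_j,x_i}$ with scalars $a_{ji}\in\k$, which is legitimate since every Hom-space of $\k[\bfP]$ is at most one-dimensional. By Corollary~\ref{cor:Yoneda} its image under $Y^A$ is the projective morphism $\bmat{a_{ji}\sfP_{y_j,x_i}}_{(i,j)}$, and evaluating the latter via Notation~\ref{ntn:notation-M(morphism)} returns $\bmat{a_{ji}M_{y_j,x_i}}_{(j,i)}$, which is precisely $\bmat{M(h_{ji})}_{(j,i)}$, i.e.\ the matrix obtained by evaluating the original morphism in the sense of Notation~\ref{ntn:W(g)}: the transposition built into $Y^A$ and the transposition built into Notation~\ref{ntn:notation-M(morphism)} cancel each other. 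Since $Y^A$ is additive and carries block matrices to block matrices, the same holds for morphisms written in block form. Now by Definition~\ref{dfn:matrices-g} the morphisms $\bfg_1$, $\bfg_2$, $\bfg_3$ in $\Ds\k[\bfP]$ were chosen precisely so that $Y^A(\bfg_1)={}^t\ep_1$, $Y^A(\bfg_2)={}^t\pi_1$ and $Y^A(\bfg_3)={}^t\la$, while $\bfg=\sbmat{\bfg_1 & \bfzero\\ \bfg_3 & \bfg_2}$ is assembled from $\bfg_1$, $\bfg_2$, $\bfg_3$ in the same block pattern in which the matrix of Theorem~\ref{thm:gen-formula-unified} is assembled from $M(\ep_1)$, $M(\pi_1)$, $M(\la)$; moreover the explicit entries of $\bfg_1$, $\bfg_2$, $\bfg_3$ recorded in Definition~\ref{dfn:matrices-g} are obtained from those of $\ep_1$, $\pi_1$, $\la$ by simply replacing each $\sfP_{\ast,\ast}$ by $p_{\ast,\ast}$, so the comparison is symbol by symbol. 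Combining these observations, $M(\bfg_1)$ agrees with $M(\ep_1)$, $M(\bfg_2)$ with $M(\pi_1)$, and $M(\bfg)$ with the block matrix $\sbmat{M(\ep_1) & \bfzero\\ M(\la) & M(\pi_1)}$, up to transposition; since rank is invariant under transposition, the three displayed identities follow.

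I expect the only delicate part to be the orchestration of the several transposition conventions in play --- the ``replace $\sfP$ by $M$, then transpose'' of Notation~\ref{ntn:notation-M(morphism)}, the transposition built into the extended Yoneda equivalence of Corollary~\ref{cor:Yoneda}, and the ``${}^t$'' occurring in Definition~\ref{dfn:matrices-g} --- so that all of them cancel correctly and no block ends up in the wrong corner; once the conventions are pinned down, this is purely mechanical. I would also dispose of the degenerate cases separately: when any of $\src_1(I)$, $\src(\uuset I)$, $\snk_1(I)$, $\snk(\ddset I)$ is empty --- equivalently, when $V_I$ is projective and/or injective --- the corresponding blocks of $\bfg$, $\bfg_1$, $\bfg_2$ are vacuous and the associated rows and columns are deleted, exactly as stipulated in Theorem~\ref{thm:gen-formula-unified}, so the argument carries over without change.
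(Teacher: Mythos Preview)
Your proof is correct and follows exactly the approach the paper takes: the paper introduces Theorem~\ref{thm:final-form-add-hull} with the sentence ``By Definition~\ref{dfn:matrices-g} and Notation~\ref{ntn:W(g)}, Theorem~\ref{thm:gen-formula-unified} can be restated as follows'' and gives no further argument, so your detailed verification that the two $M(-)$ conventions match across $Y^A$ is precisely what is being asserted. One small over-caution: your compatibility computation already shows that $M(\bfg_1)=M(\ep_1)$, $M(\bfg_2)=M(\pi_1)$, $M(\bfg_3)=M(\la)$ as matrices \emph{on the nose} (the two built-in transpositions cancel, as you note), so the later hedge ``up to transposition; since rank is invariant under transposition'' is unnecessary and could be dropped.
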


Let $\ze\colon Z\to \bfP$ be an order-preserving map with $Z$ a poset.
The order-preserving map $\ze$ is uniquely extended to a linear functor
$\k[\ze] \colon \k[Z] \to \k[\bfP]$, which
induces a functor $R\colon \mod\k[\bfP]\to \mod\k[Z]$, $M \mapsto M \circ \k[\ze]$.
This is called the \emph{restriction functor} induced by $\ze$.

\begin{dfn}
Let $B$ be a linear category, and $1 \le m,\, n \in \bbZ$.
For each $(j,i) \in [n] \times [m]$, let $\al_{ji} \colon x_{ji} \to y_{ji}$
be a morphism in $B$.
Then the family $\pmat{\al_{ji}}_{(j,i) \in [n]\times [m]}$ is said to satisfy
the \emph{matrix condition} if
$\bmat{\al_{ji}}_{(j,i) \in [n]\times [m]}$ is a morphism in the category $\Ds B$,
namely, if
for any $i, p \in [m]$ and $j, q \in [n]$, we have
$x_{ji} = x_{qp}$ if $i = p$
and $y_{ji} = y_{qp}$ if $j=q$.
If this is the case, then by setting $x_i\coloneqq x_{1,i}$ and $y_j\coloneqq y_{j,1}$,
we have $\al_{ji} \in B(x_i, y_j)$ for all $(j,i) \in [n]\times [m]$, and
$\al\coloneqq \bmat{\al_{ji}}_{(j,i) \in [n] \times [m]} \colon (x_i)_{i\in [m]} \to (y_j)_{j\in [n]}$
becomes a morphism in $\Ds B$.
\end{dfn}

\begin{lem}
\label{lem:ze-covers-g}
Let $\ze \colon Z \to \bfP$ be an order-preserving map with $Z$ a poset, and take a morphism $\al\coloneqq \bmat{\al_{ji}}_{(j,i) \in [n] \times [m]} \colon (x_i)_{i\in [m]} \to (y_j)_{j\in [n]}$
in $\Ds \k[\bfP]$. Then the following are equivalent:
\begin{enumerate}[font=\normalfont,label=(\arabic*)]
\item
There exists a morphism $\al' \colon x' \to y'$ in $\Ds\k[Z]$
such that $\ze(\al') = \al$ (see~{\rm \cref{exm:Ds-ze}} for $\ze(\al')$).
\item
There exist maps $\ze' \colon \{x_i \mid i\in [m]\} \to Z$ and $\ze'' \colon \{y_j \mid j\in [n]\} \to Z$ with $\ze\ze' = \id$ and $\ze\ze'' = \id$ (i.e., these are sections of $\ze$)
such that
for any nonzero entry $\al_{ji} \colon x_i \to y_j$ of $\al$,
there exists some $\al'_{ji} \colon \ze'(x_i) \to \ze''(y_j)$ with $\ze(\al'_{ji}) = \al_{ji}$.
\end{enumerate}
\end{lem}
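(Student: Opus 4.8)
The plan is to prove both implications by directly unwinding the definition of the formal additive hull $\Ds\k[Z]$ and of the induced functor $\Ds\k[\ze]$ (see \cref{dfn:formal-add-hull} and \cref{exm:Ds-ze}), together with the elementary fact that every Hom-space in an incidence category is at most one-dimensional. The key observation to isolate first is the following local lifting criterion: if $\al_{ji}\colon x_i\to y_j$ is a nonzero entry of $\al$, then $\al_{ji}=c\,p_{y_j,x_i}$ for some $c\in\k\setminus\{0\}$, and once preimages $x'_i\in\ze\inv(x_i)$ and $y'_j\in\ze\inv(y_j)$ are fixed, there is a morphism $\al'_{ji}\colon x'_i\to y'_j$ in $\k[Z]$ with $\ze(\al'_{ji})=\al_{ji}$ if and only if $x'_i\le y'_j$ in $Z$, and then $\al'_{ji}=c\,p_{y'_j,x'_i}$ is the unique such morphism; for a zero entry one takes $\al'_{ji}=0$ and uses $\ze(0)=0$. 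With this in hand, the whole statement becomes a bookkeeping comparison between ``a morphism of $\Ds\k[Z]$ lying over $\al$'' and ``a choice of sections together with lifts of the nonzero entries''.

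For $(2)\Rightarrow(1)$ I would set $x':=(\ze'(x_i))_{i\in[m]}$ and $y':=(\ze''(y_j))_{j\in[n]}$, which are objects of $\Ds\k[Z]$, and assemble $\al'$ from the lifts $\al'_{ji}$ given by (2) on the nonzero entries and $\al'_{ji}:=0$ on the remaining ones. Because the domain and codomain of $\al'$ are each a single tuple, the matrix condition of \cref{dfn:formal-add-hull} holds automatically, so $\al'$ is a morphism $x'\to y'$ in $\Ds\k[Z]$; applying $\Ds\k[\ze]$ entrywise and using $\ze\ze'=\id$, $\ze\ze''=\id$ and $\ze(0)=0$ yields $\ze(\al')=\al$. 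For $(1)\Rightarrow(2)$ I would write $x'=(x'_i)_{i\in[m]}$, $y'=(y'_j)_{j\in[n]}$ for the domain and codomain of the given $\al'$, observe that $\ze(\al')=\al$ unpacks exactly to $\ze(x'_i)=x_i$, $\ze(y'_j)=y_j$ and $\ze(\al'_{ji})=\al_{ji}$ for all $(j,i)$, and then read off $\ze'$ and $\ze''$ by sending each value in $\{x_i\mid i\in[m]\}$, resp.\ $\{y_j\mid j\in[n]\}$, to one of the preimages occurring in the tuple $x'$, resp.\ $y'$. These are sections of $\ze$ by the first two identities, and for every nonzero $\al_{ji}$ the entry $\al'_{ji}\colon x'_i\to y'_j$ is precisely a morphism $\ze'(x_i)\to\ze''(y_j)$ lifting $\al_{ji}$.

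The step I expect to be the main obstacle is exactly this last read-off in $(1)\Rightarrow(2)$: one must turn the indexed families $(x'_i)_i$, $(y'_j)_j$ into honest functions on the underlying sets $\{x_i\}$, $\{y_j\}$, i.e.\ assign a single preimage to any value that occurs with multiplicity in the tuple, in a way still compatible with all the nonzero entries in the corresponding rows and columns. (If one is content to define $\ze'$, $\ze''$ on the index sets rather than on the underlying sets, both directions are immediate.) For the morphisms $\bfg_1,\bfg_2,\bfg_3$ and $\bfg$ of \cref{dfn:matrices-g} the domain tuples are indexed by $\src(I)$, $\snk(I)$, and $\src(I)\sqcup\snk(I)$ — which we may arrange to have pairwise distinct entries — so $\ze'$ is already unambiguous; the only multiplicities arise in the codomain index sets $\src_1(I)\sqcup\src(\uuset I)$ and $\snk(\ddset I)\sqcup\snk_1(I)$, where the needed compatibility reduces, for a repeated value $c$, to selecting one preimage of $c$ sitting above the already-fixed preimages of all the sources that $\bfg_1$ attaches to $c$ (each $\src_1$-row contributing exactly two such sources), and dually for $\bfg_2$; here the poset structure of $Z$ and the explicit shape of $\bfg_1$, $\bfg_2$ are what make the selection possible. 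Granting this, the equivalence is a mechanical unwinding of \cref{dfn:formal-add-hull}, \cref{exm:Ds-ze} and \cref{cor:Yoneda}.
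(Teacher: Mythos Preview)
Your argument matches the paper's proof exactly: for $(2)\Rightarrow(1)$ both you and the paper assemble $\al'$ entrywise from the given lifts (using $0$ on zero entries), and for $(1)\Rightarrow(2)$ both simply set $\ze'(x_i):=x'_i$, $\ze''(y_j):=y'_j$ from the tuple forming the domain and codomain of $\al'$.

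The well-definedness issue you flag in $(1)\Rightarrow(2)$ is real, and the paper's proof glosses over it in the same way. If two indices $i\ne k$ satisfy $x_i=x_k$ but the lift $\al'$ happens to have $x'_i\ne x'_k$, the assignment $x_i\mapsto x'_i$ is not a function on $\{x_i\mid i\in[m]\}$, and one cannot in general repair this: take $\bfP=\{a<b,\ a<c\}$, $Z=\{a_1<b_1,\ a_2<c_1\}$ with $\ze(a_1)=\ze(a_2)=a$, $\ze(b_1)=b$, $\ze(c_1)=c$, and $\al=\sbmat{p_{b,a}&0\\0&p_{c,a}}\colon(a,a)\to(b,c)$. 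Then (1) holds via $\al'=\sbmat{p_{b_1,a_1}&0\\0&p_{c_1,a_2}}$, but (2) fails since no single preimage of $a$ lies below both $b_1$ and $c_1$. So as literally stated the equivalence is false; it becomes correct if $\ze',\ze''$ are taken as maps on the index sets $[m]$, $[n]$ (which is what both you and the paper are implicitly doing), or if the $x_i$ and the $y_j$ are assumed pairwise distinct.

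This does not affect the paper: \cref{dfn:map-covers-morphism} defines ``$\ze$ covers $\al$'' as either condition holding, and since $(2)\Rightarrow(1)$ is unproblematic, the operative content is condition (1); all later verifications (e.g.\ in Section~5) exhibit an explicit $\al'$ directly. Your additional analysis of why the issue is harmless for the specific matrices $\bfg_1,\bfg_2,\bfg_3,\bfg$ is more than the paper provides.
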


\begin{proof}
(1) \implies (2). Assume (1).
If the morphism $\al'$
has the form $\al' \colon (x'_{i})_{i \in [m]} \to (y'_j)_{j\in [n]}$,
then the map $x_i \mapsto x'_i$ (resp.\ $y_j \mapsto y'_j$) defines
the desired section $\ze'$ of $\ze$
(resp.\ $\ze''$ of $\ze$).

(2) \implies (1).
Assume (2).  Then for any $(j,i) \in [n] \times [m]$,
there exists a morphism $\al'_{ji} \colon \ze'(x_i) \to \ze''(y_j)$ such that
$\ze(\al'_{ji}) = \al_{ji}$.
Then the family $\pmat{\al'_{ji}}_{(j,i)\in [n]\times [m]}$ satisfies the matrix condition,
and hence $\al'\coloneqq \bmat{\al'_{ji}}_{(j,i)\in [n]\times [m]} \colon (\ze'(x_i))_{i\in [m]} \to (\ze''(y_j))_{j\in [n]}$
is a morphism in $\DS \k[Z]$, and satisfies $\ze(\al') = \al$.
\end{proof}

\begin{dfn}
\label{dfn:map-covers-morphism}
Let $\ze \colon Z \to \bfP$ be an order-preserving map with $Z$ a poset,
and
$\al\colon x \to y$ a morphism in $\Ds \k[\bfP]$.
We say that $\ze$ \emph{covers} $\al$ if one of the conditions in Lemma \ref{lem:ze-covers-g}
is satisfied.
Note that if this is the case, then $\ze$ covers all submatrices of $\al$.
\end{dfn}

\begin{dfn}
\label{dfn:muti-mat}
Let $I$ be an interval of $\bfP$, and
$\bfg\coloneqq \bmat{\bfg_1 & \mathbf{0}\\
\bfg_3 & \bfg_2\\
} \colon X \ds X' \to Y \ds Y'$ a morphism in $\Ds\k[\bfP]$.
Then $\bfg$ is called a \emph{multiplicity matrix} for $I$ if
for any $M \in \mod A$ we have
\begin{equation}
\label{eq:rank-multiplicity-formula}
    d_M(V_I) = \rank \bmat{M(\bfg_1) & \mathbf{0}\\
M(\bfg_3) & M(\bfg_2)\\
}
- \rank \bmat{M(\bfg_1) & \mathbf{0}\\
\bfzero & M(\bfg_2)\\
}.
\end{equation}
\end{dfn}

For example, if $\bfc \colon \src(\uuset I) \to \src(I)$ and
$\bfd \colon \snk(\ddset I) \to \snk(I)$ are choice maps, and
$(b, a) \in \snk(I)\times \src(I)$ is a pair with $b \ge a$,
then $\bfg(\bfc, \bfd, (b,a))$ is a multiplicity matrix for $I$.
See $\tilde{\bfg}$
in Example \ref{exm:int-rk_D_4_cases_2nd} for another type of
multiplicity matrix.

\begin{dfn}
\label{dfn:ess-cov2}
Let $\ze \colon Z \to \bfP$ be an order-preserving map with $Z$ a poset,
and $I$ an interval of $\bfP$.
Then we say that $\ze$ \emph{essentially covers} $I$
(or $\ze$ is an \emph{essential cover} of $I$) if
$\ze$ covers a multiplicity matrix for $I$.
\end{dfn}

\begin{rmk}
\label{rmk:1par-multpar}
In the Definition \ref{dfn:ess-cov2},
we allow the cases where any matrices among $\bfg_1,\, \bfg_2$ are $\bfg_3$ empty matrices. We also note the reader that the formula \eqref{eq:rank-multiplicity-formula} is a natural generalization of the well-known rank formula appearing in the one-parameter persistence case. In more detail, suppose $\bfP\coloneqq\bbA_{n}$ and the interval $I\coloneqq[s,t]$ ($s, t\in [n]$). If we set $\bfg_1\coloneqq\bmat{p_{t+1,s}}$, $\bfg_2\coloneqq\bmat{p_{t,s-1}}$, and $\bfg_3\coloneqq\bmat{p_{t,s}}$, then for any $M\in \mod\k[\bfP]$ the right-hand side of \eqref{eq:rank-multiplicity-formula} yields
\begin{align}
&\quad \rank\left[
\begin{array}{c|c}
M_{t+1,s} & \bfzero\\
\hline
M_{t,s} & M_{t,s-1}
\end{array}
\right]
- \rank M_{t+1,s} - \rank M_{t,s-1}\nonumber\\
& \overset{(a)}{=} \rank\left[
\begin{array}{c|c}
\bfzero & -M_{t+1,s-1}\\
\hline
M_{t,s} & M_{t,s-1}
\end{array}
\right]
- \rank M_{t+1,s} - \rank M_{t,s-1} \nonumber\\
& \overset{(b)}{=} \rank\left[
\begin{array}{c|c}
\bfzero & -M_{t+1,s-1}\\
\hline
M_{t,s} & \bfzero
\end{array}
\right]
- \rank M_{t+1,s} - \rank M_{t,s-1} \nonumber\\
& = \rank M_{t,s} + \rank M_{t+1,s-1} - \rank M_{t+1,s} - \rank M_{t,s-1} \nonumber\\
& = d_M(V_I),\nonumber
\end{align}
where the equality $\overset{(a)}{=}$ follows by the elementary row operation that adds
the product of $-M_{t+1, t}$ and the second row block to the first row block,
and the equality $\overset{(b)}{=}$ follows by the elementary column operation that
adds the product of the first column block and $-M_{s,s-1}$ to the second column. The last equality is known as the formula of the persistent Betti numbers and the multiplicity in persistent homology (see \cite[Chapter VII]{edelsbrunner2010computational}).
\end{rmk}

We cite the following statement from \cite[Lemma~6.8]{asashiba2024interval}.

\begin{lem}
\label{lem:direct-sum-rank}
Let $B$ be a linear category, $W$ a $B$-module, and $\bfg$ a morphism in $\Ds B$. Assume that we have a direct sum decomposition $W \iso W_1 \ds W_2$
of $B$-modules. Then we have an equivalence $W(\bfg) \iso W_1(\bfg) \ds W_2(\bfg)$ of linear maps. In particular, the equality
\[
\rank W(\bfg) = \rank W_1(\bfg) + \rank W_2(\bfg)\]
holds.
\end{lem}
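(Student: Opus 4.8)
The plan is to reduce the statement to two elementary linear-algebra facts: a linear map that is conjugate on both source and target to another map by vector-space isomorphisms has the same rank, and $\rank(f \ds g) = \rank f + \rank g$ for linear maps $f$ and $g$. Everything else is bookkeeping that unwinds the definition of $W(\bfg)$ and applies naturality.

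First I would write $\bfg = \bmat{g_{ji}}_{(j,i)\in [n]\times [m]} \colon (x_i)_{i\in [m]} \to (y_j)_{j\in [n]}$; the cases in which $\bfg$ involves empty matrices, or has $0 \in (\Ds B)_0$ as domain or codomain, are trivial because then one of the vector spaces involved is $0$. By \cref{ntn:W(g)}, $W(\bfg)$ is the block matrix $\bmat{W(g_{ji})}_{j,i} \colon \bigoplus_{i \in [m]} W(x_i) \to \bigoplus_{j \in [n]} W(y_j)$, and the same description applies verbatim to $W_1(\bfg)$ and $W_2(\bfg)$.

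Next, fix an isomorphism $\theta \colon W \isoto W_1 \ds W_2$ in $\mod B$. By definition $\theta$ is a natural transformation whose component $\theta_z \colon W(z) \isoto W_1(z) \ds W_2(z)$ is a vector-space isomorphism for every object $z$ of $B$, and which satisfies $\theta_{z'} \circ W(h) = (W_1(h) \ds W_2(h)) \circ \theta_z$ for every morphism $h \colon z \to z'$ in $B$. Assembling the $\theta_{x_i}$ (resp.\ the $\theta_{y_j}$) into a block-diagonal isomorphism and composing with the canonical ``shuffle'' isomorphism $\bigoplus_i (W_1(x_i) \ds W_2(x_i)) \isoto (\bigoplus_i W_1(x_i)) \ds (\bigoplus_i W_2(x_i))$ (and its counterpart on the $y$-side), I obtain vector-space isomorphisms $\alpha$ and $\beta$ fitting into a square whose other two sides are $W(\bfg)$ and $W_1(\bfg) \ds W_2(\bfg)$. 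Applying naturality of $\theta$ to each entry $g_{ji}$ and reassembling the blocks, I would verify that this square commutes, i.e.\ $\beta \circ W(\bfg) = (W_1(\bfg) \ds W_2(\bfg)) \circ \alpha$. This is precisely the claimed equivalence $W(\bfg) \iso W_1(\bfg) \ds W_2(\bfg)$ of linear maps.

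The rank equality then follows at once, since an equivalence of linear maps preserves rank and the rank of a direct sum of linear maps is the sum of the ranks. I expect the only slightly delicate point to be the shuffle isomorphisms: one must apply the reordering of direct summands consistently on source and target so that the conjugated map is genuinely $W_1(\bfg) \ds W_2(\bfg)$ and not some other interleaving of the blocks $W_k(g_{ji})$. This is a routine permutation-of-coordinates check, and it is the only place where anything could go wrong.
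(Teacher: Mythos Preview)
Your argument is correct and is exactly the natural proof of this elementary lemma. Note that the paper does not actually prove \cref{lem:direct-sum-rank}; it is quoted verbatim from \cite[Lemma 5.24]{asashiba2024interval} without a proof in the present paper, so there is nothing to compare against here.
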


Before giving the main theorem, we need the following notation.

\begin{ntn}\label{def:ds-counting}
Let $M\in \mod \k[\bfP]$. If $M\iso L^{n} \ds N$ with $n \ge 0$ such that $N$ has no direct summand isomorphic to $L$, then we set $\bar{d}_{M}(L)\coloneqq n$. In particular, if $L$ is indecomposable, then $\bar{d}_{M}(L)$ coincides with $d_{M}(L)$.
Moreover, by the Krull--Schmidt theorem, we easily see that
if $L = \Ds_{i\in [m]} L_i$ for some $m \ge 1$ with each $L_i$ indecomposable,
then $\bar{d}_M(L) = \min_{i\in [m]}d_M(L_i)$.
\end{ntn}

We are now in a position to state our main result in this section.

\begin{thm}
\label{thm:ess-cover-equality}
Let $\ze \colon Z \to \bfP$ be an order-preserving map with $Z$ a poset,
and $R \colon \mod \k[\bfP] \to \mod \k[Z]$ the restriction functor induced by $\ze$.
Take an interval $I$ of $\bfP$.
If $\ze$ essentially covers $I$, then for any $M \in \mod A$, we have
$$
d_M(V_I) = \bar{d}_{R(M)}(R(V_I)).
$$
\end{thm}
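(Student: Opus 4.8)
The plan is to turn the hypothesis into a rank formula over $Z$ and then compare direct-sum decompositions on both sides. First I would unwind Definition~\ref{dfn:ess-cov2}: since $\ze$ essentially covers $I$, there is a morphism $\bfg = \bmat{\bfg_1 & \mathbf{0}\\ \bfg_3 & \bfg_2}$ in $\Ds\k[\bfP]$ with $d_M(V_I) = \rank M(\bfg) - \rank M(\bfg_1) - \rank M(\bfg_2)$ for every $M \in \mod A$, and $\ze$ covers $\bfg$. By Lemma~\ref{lem:ze-covers-g} and Definition~\ref{dfn:map-covers-morphism} there is a morphism $\bfg'$ in $\Ds\k[Z]$ with $\ze(\bfg') = \bfg$; moreover $\bfg'$ inherits the block form $\bmat{\bfg'_1 & \mathbf{0}\\ \bfg'_3 & \bfg'_2}$ with $\ze(\bfg'_i) = \bfg_i$, because an entry of $\bfg'$ sitting over the zero block of $\bfg$ lies in some $\k[Z](x,y) = \k p_{y,x}$ and $\ze(\la p_{y,x}) = \la p_{\ze(y),\ze(x)}$ with $p_{\ze(y),\ze(x)} \neq 0$ whenever $x \le y$, forcing $\la = 0$. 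Since the restriction functor is $R(-) = (-)\circ\k[\ze]$, applying $R(M)$ to $\bfg'$ and $M$ to $\bfg = \ze(\bfg')$ gives $R(M)(\bfg') = M(\bfg)$, and likewise $R(M)(\bfg'_i) = M(\bfg_i)$. Hence, setting $\Phi(N) := \rank N(\bfg') - \rank N(\bfg'_1) - \rank N(\bfg'_2)$ for $N \in \mod\k[Z]$, we obtain $\Phi(R(M)) = d_M(V_I)$ for all $M \in \mod A$; in particular $\Phi(R(V_I)) = d_{V_I}(V_I) = 1$.

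Next I would record two properties of $\Phi$. By Lemma~\ref{lem:direct-sum-rank}, $\Phi$ is additive: $\Phi(N_1 \ds N_2) = \Phi(N_1) + \Phi(N_2)$. And $\Phi(N) \ge 0$ for every $N \in \mod\k[Z]$: since $\bfg'$ is block triangular with diagonal blocks $\bfg'_1, \bfg'_2$, the matrix $N(\bfg')$ is block triangular with diagonal blocks $N(\bfg'_1), N(\bfg'_2)$, and any block-triangular matrix satisfies $\rank\bmat{A & \mathbf{0}\\ C & B} \ge \rank A + \rank B$ (reduce $A$ to $\bmat{E_r & \mathbf{0}\\ \mathbf{0} & \mathbf{0}}$ using operations confined to the rows and columns of the $A$-block, use the identity block to clear the portion of $C$ below it, and read off the surviving rank, which is at least $r + \rank B$). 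This elementary nonnegativity is the only genuinely new input, and it is precisely what makes the argument robust against $R(V_I)$ being decomposable.

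Finally I would prove the two inequalities. For $d_M(V_I) \le \bar{d}_{R(M)}(R(V_I))$: by Krull--Schmidt (Theorem~\ref{thm:KS}) write $M \iso V_I^{\,d_M(V_I)} \ds M''$ and apply the additive functor $R$ to get $R(M) \iso R(V_I)^{\,d_M(V_I)} \ds R(M'')$, so $R(V_I)^{\,d_M(V_I)}$ is a direct summand of $R(M)$ and $\bar{d}_{R(M)}(R(V_I)) \ge d_M(V_I)$. For the reverse inequality, put $n := \bar{d}_{R(M)}(R(V_I))$ and write $R(M) \iso R(V_I)^{\,n} \ds N'$ as in Notation~\ref{def:ds-counting}; applying $\Phi$ and using additivity together with $\Phi(R(V_I)) = 1$ and $\Phi(N') \ge 0$ gives $d_M(V_I) = \Phi(R(M)) = n\,\Phi(R(V_I)) + \Phi(N') = n + \Phi(N') \ge n$. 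The two inequalities together yield $d_M(V_I) = \bar{d}_{R(M)}(R(V_I))$. I expect the only fiddly point to be the bookkeeping in the first paragraph — that covering $\bfg$ genuinely produces a lift $\bfg'$ with the same block decomposition and that $M \mapsto M(\bfg)$ factors through $R$ — while the substantive content is the interplay between additivity of $\Phi$ and its nonnegativity.
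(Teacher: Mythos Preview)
Your proof is correct and follows essentially the same approach as the paper: lift $\bfg$ to $\bfg'$ in $\Ds\k[Z]$, express $d_M(V_I)$ via ranks of $R(M)$-matrices, use additivity of ranks under direct sums (Lemma~\ref{lem:direct-sum-rank}) together with the specialization to $M = V_I$, and combine with the trivial inequality coming from $R(M) \iso R(V_I)^{d_M(V_I)} \ds R(N)$. Your packaging via the functional $\Phi$ and your explicit justification of both the block form of $\bfg'$ and the nonnegativity $\rank\bmat{A & \mathbf{0}\\ C & B} \ge \rank A + \rank B$ are details the paper leaves implicit, but the substance is identical.
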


\begin{proof}
Assume that $\ze$ essentially covers $I$.
Then there exists $\bfg\coloneqq \bmat{\bfg_1 & \mathbf{0}\\
\bfg_3 & \bfg_2\\
}$ in $\Ds\k[\bfP]$
such that
for any $M \in \mod A$ we have a formula
$$
d_M(V_I) = \rank \bmat{M(\bfg_1) & \mathbf{0}\\
M(\bfg_3) & M(\bfg_2)\\
}
- \rank \bmat{M(\bfg_1) & \mathbf{0}\\
\bfzero & M(\bfg_2)\\
},
$$
and $\ze$ covers $\bfg$, say $\ze(\bfg') = \bfg$ for some $\bfg'$ in $\Ds\k[Z]$.
Let $g_{vu}$ (resp.\ $g'_{vu}$) be the $(v,u)$-entry of $\bfg$ (resp.\ $\bfg'$).
Then
\[
M(g_{vu}) = M(\ze(g'_{vu})) = R(M)(g'_{vu}).
\]
Thus we have $M(\bfg_i) = R(M)(\bfg'_i)$ for all $i = 1,2,3$. Hence
\begin{equation}
\label{eq:written-formula-by-mor-Z-00}
   d_M(V_I) =
\rank \bmat{R(M)(\bfg'_1) & \mathbf{0}\\
R(M)(\bfg'_3) & R(M)(\bfg'_2)\\
}
- \rank \bmat{R(M)(\bfg'_1) & \mathbf{0}\\
\bfzero & R(M)(\bfg'_2)\\
}.
\end{equation}
Set here $r\coloneqq d_M(V_I),\ s\coloneqq \bar{d}_{R(M)}(R(V_I))$.
Then it is enough to show that $r = s$.
By the former, we have $M \iso V_I^r \ds N$
for some module $N$ in $\mod \k[\bfP]$, which shows that
$R(M) \iso R(V_I)^r \ds R(N)$.
Hence we have $r \le s$.
On the other hand, by the latter
we have an isomorphism $R(M) \iso R(V_I)^s \ds L$
for some module $L$ in $\mod \k[Z]$.
Then by Lemma \ref{lem:direct-sum-rank}, we have the following equalities:
\begin{equation}
\label{eq:eq:1stRI(M)_(n,m)case-00}
\rank\bmat{R(M)(\bfg'_{1}) & \mathbf{0}\\
R(M)(\bfg'_3) & R(M)(\bfg'_2)}
= s \rank \bmat{R(V_I)(\bfg'_{1}) & \mathbf{0}\\
R(V_I)(\bfg'_3) & R(V_I)(\bfg'_2)} + \rank \bmat{L(\bfg'_{1}) & \mathbf{0}\\
L(\bfg'_3) & L(\bfg'_2)
},
\end{equation}
and
\begin{equation}
\label{eq:2ndRI(M)_(n,m)case-00}
\rank\bmat{R(M)(\bfg'_{1}) & \mathbf{0}\\
\mathbf{0} & R(M)(\bfg'_2)\\
}
= s \rank\bmat{R(V_I)(\bfg'_{1}) & \mathbf{0}\\
\mathbf{0} & R(V_I)(\bfg'_2)
} + \rank\bmat{L(\bfg'_{1}) & \mathbf{0}\\
\mathbf{0} & L(\bfg'_2)
}.
\end{equation}
Note that the formula \eqref{eq:written-formula-by-mor-Z-00} holds also for $M = V_I$.
Thus we have
\begin{equation}
\label{eq:written-formula-by-mor-Z-for-VI-00}
   d_{V_I}(V_I) =
\rank \bmat{R(V_I)(\bfg'_1) & \mathbf{0}\\
R(V_I)(\bfg'_3) & R(V_I)(\bfg'_2)\\
}
- \rank \bmat{R(V_I)(\bfg'_1) & \mathbf{0}\\
\bfzero & R(V_I)(\bfg'_2)\\
}.
\end{equation}
By the equalities \eqref{eq:written-formula-by-mor-Z-00},
\eqref{eq:eq:1stRI(M)_(n,m)case-00},
\eqref{eq:2ndRI(M)_(n,m)case-00}, and
\eqref{eq:written-formula-by-mor-Z-for-VI-00},
we see that
\begin{align*}
    r = d_{M}(V_I) & = s\cdot d_{V_{I}}(V_I)+\rank \bmat{L(\bfg'_{1}) & \mathbf{0}\\
L(\bfg'_3) & L(\bfg'_2)\\
}-\rank \bmat{L(\bfg'_{1}) & \mathbf{0}\\
\mathbf{0} & L(\bfg'_2)\\
}\nonumber \\
& = s + \rank \bmat{L(\bfg'_{1}) & \mathbf{0}\\
L(\bfg'_3) & L(\bfg'_2)\\
}-\rank \bmat{L(\bfg'_{1}) & \mathbf{0}\\
\mathbf{0} & L(\bfg'_2)\\
}\geq s.
\end{align*}
Hence we have $r = s$, and the proof is completed.
\end{proof}

\section{Interval multiplicities by presentations}
\label{sec:multi-by-pp}

For each $M \in \mod A$ and an each interval $I$ of $\bfP$,
we compute, in this section, the multiplicity $d_M(V_I)$
in terms of a projective presentation of $M$ rather than the structure linear maps of $M$.

In what follows, for each event $\mathcal{E}$ such as $(x \le y)$ for $x, y \in \bfP$, we denote by $\de_{\mathcal{E}}$ the $\k$-valued indicator function of $\mathcal{E}$: it takes value $1\in \k$ if $\mathcal{E}$ is true and $0\in\k$ otherwise. To shorten the notation, we write $x \le y, z$ for $x \le y$ and $x \le z$.

\subsection{The formula by projective presentations}

\begin{thm}
\label{thm:multi-pp}
Let $M \in \mod A$ and $I$ an interval of $\bfP$.
Then there exists a projective presentation
\begin{equation}
\label{eq:proj-pres-M}
    \sfP(y) \ya{\sfP(\al)} \sfP(x) \ya{\ep} M \to 0
\end{equation}
of $M$ for some morphism
$\al \colon x \to y$ in $\Ds\!A$,
where we set $x\coloneqq (x_i)_{i \in [m]},\, y\coloneqq (y_j)_{j \in [n]}$.

{\rm Case 1:} $V_I$ is non-projective.  In this case, let
\begin{align}
\label{eq:almost-split-sequence-tauVI}
   0 \to \ta V_I \ya{\mu_I} E_I \ya{\ep_I} V_I \to 0
\end{align}
be an almost split sequence ending in $V_I$.
Then we have the following formula:
\begin{equation}
\label{eq:formula-using-pp-nonproj}
d_M(V_I) =
\rank E_I(\al) - \rank V_I(\al) - \rank\, (\ta V_I)(\al).
\end{equation}

{\rm Case 2:} $V_I$ is projective.  In this case, $I = \uset a$ with $a = \min I$.
We may set $\al = [\al_{ji}]_{(j,i) \in [n]\times [m]}$, where
$\al_{ji} = a_{ji}p_{y_j,x_i}$ for some $a_{ji} \in \k$ and
$\al_{ji} = a_{ji} = 0$ unless $x_i \le y_j$ for all $(j,i) \in [n] \times [m]$.
We set $n_{M,I}\coloneqq \#\{i \in [m] \mid x_i = a\}$.
Then we have the following formula:
\begin{equation}
\label{eq:formula-using-pp-proj}
d_M(V_I) =
\rank [\de_{(a < x_i, y_j)}a_{ji}]_{(j,i)\in [n]\times [m]} - \rank [\de_{(a \le x_i, y_j)}a_{ji}]_{(j,i)\in [n]\times [m]} + n_{M,I}.
\end{equation}
Note that the right hand side is directly computed by information of $\al$.
\end{thm}

\begin{proof}
By Lemma \ref{lem:dim-Hom-coker} we can compute $d_M(V_I)$ as follows:

Case 1. By \cite[Theorem 17 (2.6)]{asashibaIntervalDecomposability2D2022}
(the dual of Theorem \ref{thm:dMV}) and Lemma \ref{lem:dim-Hom-coker}, we have
$$
\begin{aligned}
d_M(V_I) &= \dim \Hom_A(M, \ta V_I) - \dim\Hom_A(M, E_I)
+ \dim\Hom_A(M, V_I)\\
&= \left(\sum_{i \in [m]} \dim (\ta V_I)(x_i) - \rank(\ta V_I)(\al)\right)
- \left(\sum_{i \in [m]} \dim E_I(x_i) - \rank E_I(\al)\right)\\
&\quad\ + \left(\sum_{i \in [m]} \dim V_I(x_i) - \rank V_I(\al)\right)\\
&= \rank E_I(\al) - \rank V_I(\al) - \rank\, (\ta V_I)(\al)
\end{aligned}
$$
because $\sum_{i \in [m]}(\dim (\ta V_I)(x_i) - \dim E_I(x_i) + \dim V_I(x_i)) = 0$ by the exactness of the almost split sequence.

Case 2.  In this case, we have $V_I = V_{\uset a}\iso P_a$, $\rad P_a = V_{\uuset a}$,
and $V_I/V_{\uuset a} \iso V_{\{a\}}$.
By \cite[Theorem 17 (2.5)]{asashibaIntervalDecomposability2D2022}, we have
\begin{align*}
d_M(V_I) &= \dim \Hom_A(M, P_a) - \dim\Hom_A(M, \rad P_a)\\
&= \left(\sum_{i \in [m]} \dim V_{\uset a})(x_i) - \rank V_{\uset a}(\al)\right)
- \left(\sum_{i \in [m]} \dim V_{\uuset a}(x_i) - \rank V_{\uuset a}(\al)\right)\\
&= \sum_{i \in [m]} \dim V_{\{a\}}(x_i)
+\rank V_{\uuset a}(\al) - \rank V_{\uset a}(\al)\\
&= \sum_{i \in [m]} \de_{a, x_i}
+\rank V_{\uuset a}(\al) - \rank V_{\uset a}(\al)\\
&= \rank V_{\uuset a}(\al) - \rank V_{\uset a}(\al) + n_{M,I}.
\end{align*}
Hence the assertion follows from the following:
\begin{equation*}
V_{\uset a}(a_{ji}p_{y_j,x_i}) = \begin{cases}
a_{ji} & \text{if }a \le x_i, y_j,\\
0 & \text{otherwise},
\end{cases}
\quad \text{and}\quad
V_{\uuset a}(a_{ji}p_{y_j,x_i}) = \begin{cases}
a_{ji} & \text{if }a < x_i, y_j,\\
0 & \text{otherwise}.
\end{cases} \tag*{\qedhere}
\end{equation*}
\end{proof}

For convenience, $\sfP(\al)$ in~\eqref{eq:proj-pres-M} is called a \emph{presentation matrix} of $M$. We now exhibit an example of the application of Theorem \ref{thm:multi-pp}.

\begin{exm}
\label{exm:G42-mult-by-pp}
Let $\bfP = G_{4,2}$ and $I \in \bbI$ be as in Example \ref{exm:2Dgrid}.
Then each term of the almost split sequence
$0 \to \ta V_I \to E_I \to V_I \to 0$ ending in $V_I$ is given as follows:
$$
V_I:
\begin{tikzcd}[column sep=20pt]
\k & \k & \k & \k & 0\\
0 & \k & \k & \k & \k
\Ar{1-1}{1-2}{"1"}
\Ar{1-2}{1-3}{"1"}
\Ar{1-3}{1-4}{"1"}
\Ar{1-4}{1-5}{}
\Ar{2-1}{2-2}{}
\Ar{2-2}{2-3}{"1"}
\Ar{2-3}{2-4}{"1"}
\Ar{2-4}{2-5}{"1"}
\Ar{2-1}{1-1}{}
\Ar{2-2}{1-2}{"1"}
\Ar{2-3}{1-3}{"1"}
\Ar{2-4}{1-4}{"1"}
\Ar{2-5}{1-5}{}
\end{tikzcd},
\quad
\ta V_I:
\begin{tikzcd}[column sep=20pt]
\k & \k^2 & \k & \k & \k\\
0 & \k & \k & \k & \k
\Ar{1-1}{1-2}{"\sbmat{1\\0}"}
\Ar{1-2}{1-3}{"{[1,1]}"}
\Ar{1-3}{1-4}{"1"}
\Ar{1-4}{1-5}{"1"}
\Ar{2-1}{2-2}{}
\Ar{2-2}{2-3}{"1"}
\Ar{2-3}{2-4}{"1"}
\Ar{2-4}{2-5}{"1"}
\Ar{2-1}{1-1}{}
\Ar{2-2}{1-2}{"\sbmat{0\\1}"}
\Ar{2-3}{1-3}{"1"}
\Ar{2-4}{1-4}{"1"}
\Ar{2-5}{1-5}{"1"}
\end{tikzcd},
$$
and $E_I = E_1 \ds E_2$, where
$$
E_1:
\begin{tikzcd}[column sep=20pt]
\k & \k^2 & \k & \k & 0\\
0 & \k & \k & \k & \k
\Ar{1-1}{1-2}{"\sbmat{1\\0}"}
\Ar{1-2}{1-3}{"{[1,1]}"}
\Ar{1-3}{1-4}{"1"}
\Ar{1-4}{1-5}{}
\Ar{2-1}{2-2}{}
\Ar{2-2}{2-3}{"1"}
\Ar{2-3}{2-4}{"1"}
\Ar{2-4}{2-5}{"1"}
\Ar{2-1}{1-1}{}
\Ar{2-2}{1-2}{"\sbmat{0\\1}"}
\Ar{2-3}{1-3}{"1"}
\Ar{2-4}{1-4}{"1"}
\Ar{2-5}{1-5}{}
\end{tikzcd},
\quad
E_2:
\begin{tikzcd}[column sep=20pt]
\k & \k & \k & \k & \k\\
0 & \k & \k & \k & \k
\Ar{1-1}{1-2}{"1"}
\Ar{1-2}{1-3}{"1"}
\Ar{1-3}{1-4}{"1"}
\Ar{1-4}{1-5}{"1"}
\Ar{2-1}{2-2}{}
\Ar{2-2}{2-3}{"1"}
\Ar{2-3}{2-4}{"1"}
\Ar{2-4}{2-5}{"1"}
\Ar{2-1}{1-1}{}
\Ar{2-2}{1-2}{"1"}
\Ar{2-3}{1-3}{"1"}
\Ar{2-4}{1-4}{"1"}
\Ar{2-5}{1-5}{"1"}
\end{tikzcd}.
$$
Define an $M \in \mod A$ by $M\coloneqq V_I \ds V_{\{3'\}}$.
Then a projective presentation of $M$ is given by
$\sfP(y) \ya{\sfP(\al)} \sfP(x) \to M \to 0$, where the morphism
$\al \colon x \to y$ in $\Ds\! A$ is given by
$$
\bmat{p_{2',1'} & -p_{2',2} & 0\\
p_{4',1'} & 0 & 0\\
0 & 0 & p_{4',3'}}
\colon (1', 2, 3') \to (2',4',4').
$$
Therefore, by Theorem \ref{thm:multi-pp}, formula \eqref{eq:formula-using-pp-nonproj}, we have
$$
\begin{aligned}
d_M(V_I) &= \rank E_1(\al) + \rank E_2(\al) - \rank V_I(\al) - \rank \,(\ta V_I)(\al)\\
&= \rank \sbmat{1&0&0\\0& -1 & 0\\0&0&0\\0&0&0}
+\rank \sbmat{1&-1&0\\1&0&0\\0&0&1}
- \rank \sbmat{1& -1& 0\\9&0&0\\0&0&0}
- \rank \sbmat{1&0&0\\0&-1&0\\1&0&0\\0&0&1}\\
&= 2+3-1-3 = 1.
\end{aligned}
$$

Take $J \in \bbI$ as $J\coloneqq \uset 2$.
Then $V_J$ is projective, and $d_M(V_J)$ is computed directly from $\al$ above
by Theorem \ref{thm:multi-pp}, formula \eqref{eq:formula-using-pp-proj}.
In this case, $n_{M,J} = 1$, and we have
$$
d_M(V_J) = \rank\sbmat{0&0&0\\0&0&0\\0&0&1}
-\rank\sbmat{0&-1&0\\0&0&0\\0&0&1} + 1 = 0.
$$
\end{exm}

In the theorem above,
the rank of $C(\al)$ for an $A$-module $C$ is easily computed for $C = V_I$
because its structure linear maps are known.
However, for $C = E_I$ or $\ta V_I$ those are not known at first.
In that case, it would be convenient if $\rank C(\al)$ can be computed
by a projective presentation (or an injective copresentation) of $C$
because it can be obtained from the results obtained before.
In this connection,
we now give a formula of $\rank C(\al)$
using a projective presentation (or an injective copresentation) of $C$
for any $C \in \mod A$ and morphism $\al$ in $\Ds\! A$.

\begin{prp}
\label{prp:rank-using-pp}
Let $\al \colon x \to y$ be a morphism in $\Ds\! A$ and
$C \in \mod A$.
Assume that $C$ has a projective presentation
$$
\sfP(v) \ya{\sfP(\be)} \sfP(u) \ya{\pi} C \to 0
$$
for some morphism $\be \colon u \to v$
in $\Ds\! A$.
Then we have (see~\eqref{eq:dfn-sfP-M} for notations)
$$
\rank C(\al) = \dim C(y) - \dim \sfP(u)(y) + \rank [\sfP(\be)(y), \sfP(u)(\al)].
$$
\end{prp}

\begin{proof}
We apply the salamander lemma in the proof, for which we refer the reader
to~\cite{bergmanDiagramchasingDoubleComplexes2012}.
In particular, we use the notations introduced by~\cite{geraschenkoAntonGeraschenkoSalamander2007}.
In a double complex with a term $X$, we denote by
${}_=X,\, X^\parallel,\, {}^\square X$ and $X_\square$,
the horizontal homology, the vertical homology, the receptor,
and the donor at $X$, respectively (see Appendix for details).
By assumption, we have the following double complex
(at first ignore dashed edges, which mean isomorphisms):
\[\begin{tikzcd}[column sep=40pt]
	& 0 & 0 & 0 \\
	0 & {\sfP(v)(x)} & {\sfP(u)(x)} & \Nname{Cx}{{}_=C(x)_\square} & 0 \\
	0 & {\sfP(v)(y)} &\Nname{Puy} {{}_=\sfP(u)(y)_\square} & \Nname{Cy}{{}^\square _=C(y)^\parallel} & 0 \\
	& 0 & 0 & 0
	\arrow[from=1-2, to=2-2]
	\arrow[from=1-3, to=2-3]
	\arrow[from=1-4, to=2-4]
	\arrow[from=2-1, to=2-2]
	\arrow[from=2-2, to=2-3, "\sfP(\be)(x)"]
	\arrow[from=2-2, to=3-2, "\sfP(v)(\al)"']
	\arrow[from=2-3, to=2-4, "\pi_x"]
	\arrow[from=2-3, to=3-3, "\sfP(u)(\al)"]
	\arrow[from=2-4, to=2-5]
	\arrow[from=2-4, to=3-4, "C(\al)"]
	\arrow[from=3-1, to=3-2]
	\arrow[from=3-2, to=3-3, "\sfP(\be)(y)"']
	\arrow[from=3-2, to=4-2]
	\arrow[from=3-3, to=3-4, "\pi_y"' pos=0.7]
	\arrow[from=3-3, to=4-3]
	\arrow[from=3-4, to=3-5]
	\arrow[from=3-4, to=4-4]
\Ar{Cx}{Cx}{dashed, rounded corners, no head,
to path={
([xshift=3ex, yshift=1ex]Cx.south west)
--([yshift=0.3ex]Cx.south)
--([xshift=-3ex, yshift=0.8ex]Cx.south east)
}
}
\Ar{Cy}{Cy}{dashed, rounded corners, no head,
to path={
([xshift=3ex, yshift=-1.2ex]Cy.north west)
--([yshift=-0.2ex]Cy.north)
--([xshift=-2.4ex, yshift=-1.2ex]Cy.north east)
}
}
\Ar{Cy}{Puy}{dashed, rounded corners, no head,
to path={
([xshift=0.8ex, yshift=-1.2ex]Cy.north west)
--([xshift=-2.5ex,yshift=-1.2ex]Cy.north west)
--([xshift=2.5ex,yshift=1ex]Puy.south east)
--([xshift=-0.8ex,yshift=1ex]Puy.south east)
}
}
\end{tikzcd},\]
where the horizontal homologies
${}_=\sfP(u)(x)$, ${}_=C(x)$, ${}_=\sfP(u)(y)$, and ${}_=C(y)$
vanish.
By the salamander lemma, we have
$C(y)^\parallel \iso \sfP(u)(y)_\square$.
Indeed,
since $\pi_x$ is an epimorphism,
$0 = {}_=C(x) \iso C(x)_\square$ by Corollary \ref{cor:salamander-corner}. This shows that $C(y)^\parallel \iso {}^\square C(y)$ again by Corollary \ref{cor:salamander-corner}. We also have ${}^\square C(y) \iso \sfP(u)(y)_\square$
by Corollary \ref{cor:salamander-extra} because ${}_=\sfP(u)(y) = 0 = {}_=C(y)$.
Thus $C(y)^\parallel \iso \sfP(u)(y)_\square$, as desired.
Since by definition $\Cok C(\al) = C(y)^\parallel$,
we have $\Cok C(\al) \iso \sfP(u)(y)_\square$, where
the right hand side is, by definition, isomorphic to
$\sfP(u)(y)/\Im[\sfP(\be)(y), \sfP(u)(\al)]$.
Hence
\begin{align*}
\rank C(\al) &= \dim \Im C(\al) = \dim C(y) - \dim \Cok C(\al)\\
&= \dim C(y) -(\dim \sfP(u)(y) - \dim \Im [\sfP(\be)(y), \sfP(u)(\al)])
\\
&= \dim C(y) - \dim \sfP(u)(y) + \rank [\sfP(\be)(y), \sfP(u)(\al)]. \tag*{\qedhere}
\end{align*}
\end{proof}

We set $\sfQ\coloneqq D\sfP'$ to be the composite of
$\Ds\!A \ya{\sfP'} \mod A\op \ya{D} \mod A $.
Since $\sfP'$ is covariant, $\sfQ$ is contravariant.
Then any morphism between injectives in $\mod A$ can be written
as $\sfQ(\be) \colon \sfQ(u) \to \sfQ(v)$ for some
$\be \colon v \to u$ in $\Ds\! A$.
By using the duality $D$, we obtain the following
from Proposition \ref{prp:rank-using-pp}:

\begin{prp}
\label{prp:rank-using-icp}
Let $\al \colon x \to y$ be a morphism in $\Ds\! A$ and
$C \in \mod A$.
Assume that $C$ has an injective copresentation
$$
0 \to C \ya{\si} \sfQ(u) \ya{\sfQ(\be)} \sfQ(v)
$$
for some morphism $\be \colon v \to u$ in $\Ds\! A$.
Then we have
$$
\rank C(\al) = \dim C(x) - \dim \sfP'(u)(x) + \rank [\sfP'(\be)(x), \sfP'(u)(\al)].
$$
\end{prp}

To apply the proposition above, we record injective copresentations of
$V_I$, $E_I$ and $\ta V_I$ in the general finite poset case and the 2D-grid case.

\begin{prp}[General case]
\label{prp:min-inj-p-VI-gen}
Let $\bfP$ be a finite poset, and $I$ an interval of $\bfP$.
Assume that $V_I$ is non-projective
and let \eqref{eq:almost-split-sequence-tauVI}
be an almost split sequence ending in $V_I$.
Choose any choice maps $\bfc \colon \src(\uuset I) \linebreak[2]\to \src(I)$
and $\bfd \colon \snk(\ddset I)\linebreak[2] \to \snk(I)$, and choose also any $(b, a) \in \snk(I) \times \src(I)$ such that $b \ge a$.
Using these, define morphisms $\bfg_1$, $\bfg_2$ and $\bfg_3$
in $\DS \k[\bfP]$ as in~{\rm \cref{dfn:matrices-g}}, and set
$$
\bfg\coloneqq \bfg(\bfc, \bfd, (b,a)) \coloneqq \left[
\begin{array}{c|c}
\bfg_1 & \bfzero\\
\hline
\bfg_3 & \bfg_2
\end{array}
\right].
$$
Then there exists an injective module $Q$ such that
$V_I$, $\ta V_I \ds Q$ and $E_I \ds Q$ have the following injective copresentations.
\begin{align}
&0\to V_{I} \to \sfQ(\snk(I)) \ya{\sfQ(\bfg_2)} \sfQ(\snk(\ddset I) \ds \snk_{1}(I)),\\
&0\to \ta V_{I} \ds Q \to \sfQ(\src_{1}(I) \ds \src(\uuset I)) \ya{\sfQ(\bfg_1)} \sfQ(\src(I)), \label{eq:min-inj-p-tauVI-gen}\\
&0\to E_I \ds Q \to \sfQ(\src_{1}(I) \ds \src(\uuset I) \ds \snk(I)) \ya{\sfQ(\bfg)} \sfQ(\src(I) \ds \snk(\ddset I) \ds \snk_{1}(I)).
\end{align}
\end{prp}

\begin{proof}
By applying $D$ to \eqref{eq:almost-split-sequence-tauVI}, we have
the following almost split sequence of $DV_I$ in $\mod A\op$:
$$
0 \to DV_I \to DE_I \to \Tr V_I \to 0.
$$
Since $DV_I \iso V_{I\op}^{\bfP\op}$, this sequence becomes
an almost split sequence starting from $V_{I\op}^{\bfP\op}$,
and hence we can compute projective presentations
of these three terms by~\cref{prp:proj.presentation for V_I}
and \ref{prp:proj-pre of tau-VI-ds-P2} and
\eqref{eq:pp-E+P2}.
For $DV_I$ it is already given in
\eqref{eq:proj-pre-DV_I-nsrccase-lem w/o conditions}.
Altogether, these have the following forms:
\begin{align}
\label{eq:proj-pre-DV_I-nsrccase-lem w/o conditions-unif-form}
\sfP'(\snk(\ddset I) \ds \snk_1(I)) \ya{\sfP'(\bfg_2)}
\sfP'(\snk(I)) &\to DV_I \to 0,\\
\sfP'(\src(I)) \ya{\sfP'(\bfg_1)} \sfP'(\src_1(I) \ds \src(\uuset I)) &\to \Tr V_I \ds P'_2 \to 0,\\
\sfP'(\src(I) \ds \snk(\ddset I) \ds \snk_1(I)) \ya{\sfP'(\bfg)}
\sfP'(\src_1(I)\ds \src(\uuset I) \ds \snk(I)) &\to DE_I \ds P'_2 \to 0,
\end{align}
where $P'_2$ is a projective module in $\mod A\op$.
By applying $D$ to these projective presentations, we obtain the assertion.
\end{proof}

By specializing in the 2D-grid case, we have the following.
Note that in this case, we can take $Q = 0$.

\begin{prp}[2D-grid case]
\label{prp:min-inj-p-VI}
Let $\bfP$ be a 2D-grid, and $I$ an interval of $\bfP$. Assume that $V_I$ is non-projective
and let \eqref{eq:almost-split-sequence-tauVI}
be an almost split sequence ending in $V_I$.
Then using the same notations as above,
$V_I$, $\ta V_I$ and $E_I$ have the following injective copresentations.
\begin{align}
&0\to V_{I} \to \sfQ(\snk(I)) \ya{\sfQ(\bfg_2)} \sfQ(\snk(\ddset I) \ds \snk^\circ_{1}(I)),\\
&0\to \ta V_{I} \to \sfQ(\src_{1}^\circ(I) \ds \src(\uuset I)) \ya{\sfQ(\bfg_1)} \sfQ(\src(I)), \label{eq:min-inj-p-tauVI}\\
&0\to E_I \to \sfQ(\src^\circ_{1}(I) \ds \src(\uuset I) \ds \snk(I)) \ya{\sfQ(\bfg)} \sfQ(\src(I) \ds \snk(\ddset I) \ds \snk^\circ_{1}(I)).
\end{align}
\end{prp}

Finally, we have the following formula for $d_M(V_I)$.

\begin{thm}
\label{thm:formula-pp(M)-icp(ass)}
Let $I$ be an interval of $\bfP$, and $M \in \mod A$ with
a projective presentation
$$
\sfP(y) \ya{\sfP(\al)} \sfP(x) \ya{\ep} M \to 0
$$
for some morphism $\al \colon x \to y$ in $\Ds\!A$.
Keep the notations introduced in~{\rm \cref{prp:min-inj-p-VI-gen}}.  Then we have the following formula for $d_M(V_I)$:
\begin{equation}
\label{eq:formula-with-pp}
\begin{aligned}
d_M(V_I)
&= \rank \left[\begin{array}{c|c|cc}
 \sfP'(\bfg_1)(x) & \bfzero & \sfP'(\src_{1}(I) \ds \src(\uuset I))(\al) & \bfzero\\
 \hline
\sfP'(\bfg_3)(x)
& \sfP'(\bfg_2)(x) &
\bfzero & \sfP'(\snk(I))(\al)
 \end{array}\right]\\
 &\quad - \rank \left[\begin{array}{c|c|cc}
 \sfP'(\bfg_1)(x) & \bfzero & \sfP'(\src_{1}(I) \ds \src(\uuset I))(\al) & \bfzero\\
 \hline
\bfzero
& \sfP'(\bfg_2)(x) &
\bfzero & \sfP'(\snk(I))(\al)
 \end{array}\right].
 \end{aligned}
\end{equation}
Note that for the 2D-grid case, we can replace $\src_1(I)$ with $\src_1^\circ(I)$.
\end{thm}

\begin{proof}
{\bf Case 1.} $V_I$ is non-projective.

By Theorem \ref{thm:multi-pp} and Propositions \ref{prp:rank-using-icp} and \ref{prp:min-inj-p-VI-gen},
we have
$$
\begin{aligned}
\rank E_I(\al) + \rank Q(\al)&= \dim E_I(x) - \dim \sfP'(\src_{1}(I) \ds \src(\uuset I) \ds \snk(I))(x) \\
  &\quad + \rank[\sfP'(\bfg)(x), \sfP'(\src_{1}(I) \ds \src(\uuset I) \ds \snk(I))(\al)],\\
\rank V_I(\al) &= \dim V_I(x) - \dim \sfP'(\snk(I))(x) + \rank [\sfP'(\bfg_2)(x), \sfP'(\snk(I))(\al)],\\
\rank (\ta V_I)(\al) + \rank Q(\al)&= \dim (\ta V_I)(x) - \dim \sfP'(\src_{1}(I) \ds \src(\uuset I))(x) \\
  &\quad + \rank [\sfP'(\bfg_1)(x), \sfP'(\src_{1}(I) \ds \src(\uuset I))(\al)].
\end{aligned}
$$
Therefore
by \eqref{eq:formula-using-pp-nonproj}, we have
{\small
\[
\setlength{\jot}{1pt}
\mspace{-10mu}
\begin{aligned}
d_M(V_I)
&= \dim E_I(x)-\dim V_I(x)-\dim(\ta V_I)(x)
 + \dim \sfP'(\snk(I))(x)\\
&\quad + \dim \sfP'(\src_{1}(I)\ds \src(\uuset I))(x)
 - \dim \sfP'(\src_{1}(I)\ds \src(\uuset I)\ds \snk(I))(x)\\
&{}+ \rank[\sfP'(\bfg)(x),\sfP'(\src_{1}(I)\ds \src(\uuset I)\ds \snk(I))(\al)]\\
&{}- \rank[\sfP'(\bfg_2)(x),\sfP'(\snk(I))(\al)]
 - \rank[\sfP'(\bfg_1)(x),\sfP'(\src_{1}(I)\ds \src(\uuset I))(\al)]\\
&= \dim \sfP'(\snk(I))(x)
 + \dim \sfP'(\src_{1}(I)\ds \src(\uuset I))(x)
 - \dim \sfP'(\src_{1}(I)\ds \src(\uuset I)\ds \snk(I))(x)\\
&{}+ \rank[\sfP'(\bfg)(x),\sfP'(\src_{1}(I)\ds \src(\uuset I)\ds \snk(I))(\al)]\\
&{}- \rank[\sfP'(\bfg_2)(x),\sfP'(\snk(I))(\al)]
 - \rank[\sfP'(\bfg_1)(x),\sfP'(\src_{1}(I)\ds \src(\uuset I))(\al)]\\
&= \rank[\sfP'(\bfg)(x),\sfP'(\src_{1}(I)\ds \src(\uuset I)\ds \snk(I))(\al)]\\
&{}- \rank[\sfP'(\bfg_2)(x),\sfP'(\snk(I))(\al)]
 - \rank[\sfP'(\bfg_1)(x),\sfP'(\src_{1}(I)\ds \src(\uuset I))(\al)]\\
&= \text{RHS of \eqref{eq:formula-with-pp}}.
\end{aligned}
\]
}

{\bf Case 2.} $V_I$ is projective.

The assertion is proved in a way similar to Case 2 in Theorem \ref{thm:formula-icp(M)-pp(ass)} below.
\end{proof}

\begin{rmk}
\label{rmk:order-smnds}
Notice the domains and the codomains of the block matrices of
the big matrix in Theorem \ref{thm:formula-pp(M)-icp(ass)}, which are as follows:
$$
\begin{aligned}
\sfP'(\bfg_1)(x) &\colon \sfP'(\src(I))(x) \to \sfP'(\src_1(I) \ds \src(\uuset I))(x),\\
\sfP'(\bfg_2)(x) &\colon \sfP'(\snk(\ddset I) \ds \snk_1(I))(x) \to \sfP'(\snk(I))(x),\\
\sfP'(\bfg_3)(x) &\colon \sfP'(\src(I))(x) \to \sfP'(\snk(I))(x),\\
\sfP'(\src_1(I) \ds \src(\uuset I))(\al) &\colon \sfP'(\src_1(I) \ds \src(\uuset I))(y)
\to \sfP'(\src_1(I) \ds \src(\uuset I))(x),\\
\sfP'(\snk(I))(\al) &\colon \sfP'(\snk(I))'y) \to \sfP'(\snk(I))(x).
\end{aligned}
$$
In particular, $\sfP'(\bfg_1)(x)$ and
$\sfP'(\src_1(I) \ds \src(\uuset I))(\al)$ have the common codomain
$\sfP'(\src_1(I) \ds \src(\uuset I))(x)$ so that they are in the same row
in the big matrices.
When we write their concrete matrices,
we have to have the same order of the direct summands
of $\sfP'(\src_1(I) \ds \src(\uuset I))(x)$.
In that case, if $u = (u_1,\dots, u_m), v = (v_1, \dots, v_n)$ with $m, n \ge 1$
in $\Ds\k[\bfP]$, then we use the following order
\begin{multline*}
\sfP'(u)(v) = \sfP'(u)(v_1)\ds \cdots \ds \sfP'(u)(v_n) = \sfP'_{u_1}(v_1) \ds \cdots \ds \sfP'_{u_m}(v_1)\ds \sfP'_{u_1}(v_2) \ds \cdots \ds\\
\sfP'_{u_m}(v_2)\ds \cdots \ds \sfP'_{u_1}(v_n) \ds \cdots \ds \sfP'_{u_m}(v_n).
\end{multline*}
The same remark is made for the second row of the big matrices
(about the order of the summands of $\sfP'(\snk(I))(x)$).
\end{rmk}

\begin{exm}
\label{exm:proj-int-mult-comp}
We compute the same multiplicity as in Example \ref{exm:G42-mult-by-pp}
by using Theorem \ref{thm:formula-pp(M)-icp(ass)}.
Let $\bfP = G_{4,2}$ and $I \in \bbI$ be as in Example \ref{exm:2Dgrid}.
Then $\udim V_I\coloneqq \sbmat{1&1&1&0\\0&1&1&1}$, and
$a_1 = 2,\, a_2 = 1',\, a_{12} = 2',\, b_1 = 4,\, b_2 = 3',\, b_{12} = 3,
\, a'_1 = 4',\, b'_1 = 1$.
Therefore,
$$
\bfg =
\left[\begin{array}{cc|cc}
p_{a_{12},a_1} & -p_{a_{12},a_2} & \bfzero& \bfzero\\
p_{a'_1,a_1} & \bfzero & \bfzero& \bfzero\\
\hline
p_{b_1,a_1} & \bfzero & p_{b_1, b'_1} & p_{b_1,b_{12}}\\
\bfzero & \bfzero & \bfzero & -p_{b_2, b_{12}}
\end{array}\right]
=
\left[\begin{array}{cc|cc}
p_{2',2} & -p_{2',1'} & \bfzero& \bfzero\\
p_{4',2} & \bfzero & \bfzero& \bfzero\\
\hline
p_{4,2} & \bfzero & p_{4, 1} & p_{4,3}\\
\bfzero & \bfzero & \bfzero & -p_{3', 3}
\end{array}\right].
$$
Namely,
$$
\begin{aligned}
\bfg_1 &= \sbmat{p_{2',2} & -p_{2',1'}\\p_{4',2} & \bfzero} \colon
(2, 1') \to (2', 4'),\\
\bfg_2 &= \sbmat{p_{4, 1} & p_{4,3}\\\bfzero & -p_{3', 3}} \colon(1,3) \to (4,3'),\\
\bfg_3 &= \sbmat{p_{4,2} & \bfzero\\\bfzero & \bfzero} \colon
(2,1') \to (4,3').
\end{aligned}
$$
Moreover, a projective presentation of $M$ is given by
$\sfP(y) \ya{\sfP(\al)} \sfP(x) \to M \to 0$, where the morphism
$\al \colon x \to y$ in $\Ds\! A$ is given by
$$
\bmat{p_{2',1'} & -p_{2',2} & 0\\
p_{4',1'} & 0 & 0\\
0 & 0 & p_{4',3'}}
\colon (1', 2, 3') \to (2',4',4').
$$
We compute the matrix
$$
\sfM\coloneqq \left[\begin{array}{c|c|cc}
 \sfP'(\bfg_1)(x) & \bfzero & \sfP'(\src_{1}(I) \ds \src(\uuset I))(\al) & \bfzero\\
 \hline
\sfP'(\bfg_3)(x)
& \sfP'(\bfg_2)(x) & \bfzero & \sfP'(\snk(I))(\al)
 \end{array}\right],
$$
which is equal to
$$
\begin{aligned}
\left[\begin{array}{c|c|cc}
 \sfP'(\bfg_1)(1',2,3') & \bfzero & \sfP'(2',4')(\al) & \bfzero\\
 \hline
\sfP'(\bfg_3)(1',2,3')
& \sfP'(\bfg_2)(1',2,3') & \bfzero & \sfP'(4,3')(\al)
 \end{array}\right].
\end{aligned}
$$
Here, $\sfP'(\bfg_1)(1',2,3') = \sfP'(\bfg_1)(1') \ds \sfP'(\bfg_1)(2) \ds \sfP'(\bfg_1)(3')$ and
$$
\sfP'(\bfg_1)(1') = \sbmat{P'_{2',2}(1') & -P'_{2',1'}(1')\\P'_{4',2}(1') & \bfzero}
\colon P'_2(1') \ds P'_{1'}(1') \to P'_{2'}(1') \ds P'_{4'}(1'),
$$
which is a linear map
$$
\begin{tikzcd}[column sep=40pt, ampersand replacement=\&]
\k[\bfP](1',2) \ds \k[\bfP](1',1') \& \k[\bfP](1',2') \ds \k[\bfP](1',4')\\
0 \ds \k p_{1',1'} \& \k p_{2',1'} \ds \k p_{1',4'}\\
0 \ds \k \& \k \ds \k
\Ar{1-1}{1-2}{"\sfP'(\bfg_1)(1')"}
\Ar{1-1}{2-1}{equal}
\Ar{1-2}{2-2}{equal}
\Ar{2-1}{2-2}{"\sbmat{0 &-\la_{p_{2',1'}}\\0 & 0}"}
\Ar{3-1}{3-2}{"\sbmat{0 & -1\\0&0}"}
\Ar{2-1}{3-1}{"\iso"}
\Ar{2-2}{3-2}{"\iso"}
\end{tikzcd}
$$
Therefore, we may have an identification $\sfP'(\bfg_1)(1') = \sbmat{0 & -1\\0&0}$.
Similarly, we have identifications
$\sfP'(\bfg_1)(2) = \sbmat{1 & 0\\1&0}$ and
$\sfP'(\bfg_1)(3') = \sbmat{0 & 0\\0&0}$, and hence we have
$$
\sfP'(\bfg_1)(x) = \sbmat{0 & -1\\0&0} \ds \sbmat{1 & 0\\1&0} \ds \sbmat{0 & 0\\0&0}.
$$
Similarly,
$$
\begin{aligned}
\sfP'(\bfg_2)(x) &= \sbmat{0 & 0\\0&0} \ds \sbmat{0 & 1\\0&-1} \ds \sbmat{0 & 0\\0&0},\\
\sfP'(\bfg_3)(x) &= \sbmat{0 & 0\\0&0} \ds \sbmat{1 & 0\\0&0} \ds \sbmat{0 & 0\\0&0}.
\end{aligned}
$$
Moreover,
$\sfP'(2',4')(\al)\colon \sfP'(2',4')(y) \to \sfP'(2',4')(x)$ is
computed as follows.
$$
\begin{aligned}
& \sfP'(2',4')(y) \\
= & \sfP'_{2'}(y) \ds \sfP'_{4'}(y)\\
= & \sfP'_{2'}(2') \ds \sfP'_{2'}(4') \ds \sfP'_{2'}(4')\\
 & \ds \sfP'_{4'}(2') \ds \sfP'_{4'}(4') \ds \sfP'_{4'}(4')\\
\iso & \k \ds 0 \ds 0 \ds \k \ds \k \ds \k,
\end{aligned}
\qquad
\begin{aligned}
& \sfP'(2',4')(x) \\
= & \sfP'_{2'}(x) \ds \sfP'_{4'}(x)\\
= & \sfP'_{2'}(1') \ds \sfP'_{2'}(2) \ds \sfP'_{2'}(3')\\
 & \ds \sfP'_{4'}(1') \ds \sfP'_{4'}(2) \ds \sfP'_{4'}(3')\\
\iso &\k \ds \k \ds 0 \ds \k \ds \k \ds \k.
\end{aligned}
$$
Between these modules,
$$
\begin{aligned}
\sfP'(2',4')(\al) &= \sfP'_{2'}(\al) \ds \sfP'_{4'}(\al)\\
&=\sbmat{\sfP'_{2'}(p_{2',1'}) &\sfP'_{2'}(p_{4',1'}) & 0\\
-\sfP'_{2'}(p_{2',2}) & 0 & 0\\
0 & 0& \sfP'_{2'}(p_{4',3'})} \ds
\sbmat{\sfP'_{4'}(p_{2',1'}) &\sfP'_{4'}(p_{4',1'}) & 0\\
-\sfP'_{4'}(p_{2',2}) & 0 & 0\\
0 & 0& \sfP'_{4'}(p_{4',3'})}\\
&= \sbmat{1 & 0 & 0\\
-1 &0 & 0\\
0 &0&0} \ds
\sbmat{1&1&0\\
-1 &0 &0\\
0 & 0& 1}.
\end{aligned}
$$
Hence by reordering the summands of the codomain as in Remark \ref{rmk:order-smnds},
we have
$$
\sfP'(2',4')(\al) = \sbmat{
1&0&0&&&\\
&&&1&1&0\\
-1&0&0&&&\\
&&&-1&0&0\\
0&0&0&&&\\
&&&0&0&1
},
$$
where blanks are zeros.
Similarly, we have
$$
\begin{aligned}
\sfP'(4,3')(\al) &= \sfP'_{4}(\al) \ds \sfP'_{3'}(\al)\\
&=\sbmat{\sfP'_{4}(p_{2',1'}) &\sfP'_{4}(p_{4',1'}) & 0\\
-\sfP'_{4}(p_{2',2}) & 0 & 0\\
0 & 0& \sfP'_{4}(p_{4',3'})} \ds
\sbmat{\sfP'_{3'}(p_{2',1'}) &\sfP'_{3'}(p_{4',1'}) & 0\\
-\sfP'_{3'}(p_{2',2}) & 0 & 0\\
0 & 0& \sfP'_{3'}(p_{4',3'})}\\
&= \sbmat{0 & 0 & 0\\
0 &0 & 0\\
0 &0&0} \ds
\sbmat{1&0&0\\
-1 &0 &0\\
0 & 0& 0}.
\end{aligned}
$$
After reordering the summands of the codomain, we have
$$
\sfP'(4,3')(\al) = \sbmat{
0&0&0&&&\\
&&&1&0&0\\
0&0&0&&&\\
&&&-1&0&0\\
0&0&0&&&\\
&&&0&0&0
}.
$$
Therefore,
{\footnotesize
$$
\sfM = \left[\begin{array}{c|c|cc}
\sbmat{0 & -1\\0&0} \ds \sbmat{1 & 0\\1&0} \ds \sbmat{0 & 0\\0&0} &
\bfzero & \sbmat{
1&0&0&&&\\
&&&1&1&0\\
-1&0&0&&&\\
&&&-1&0&0\\
0&0&0&&&\\
&&&0&0&1
}& \bfzero\\
\hline
\sbmat{0 & 0\\0&0} \ds \sbmat{1 & 0\\0&0} \ds \sbmat{0 & 0\\0&0}
&\sbmat{0 & 0\\0&0} \ds \sbmat{0 & 1\\0&-1} \ds \sbmat{0 & 0\\0&0}
&\bfzero & \sbmat{
0&0&0&&&\\
&&&1&0&0\\
0&0&0&&&\\
&&&-1&0&0\\
0&0&0&&&\\
&&&0&0&0
}
\end{array}\right].
$$
}

A direct computation shows that $\rank \sfM = 8$, and the rank of the remaining big matrix is 7.
Hence by Theorem \ref{thm:formula-pp(M)-icp(ass)}, we have
$$
d_M(V_I) = 8-7 =1.
$$
\end{exm}

For easy use in the future, we summarize a fast way to write down the block matrices appearing in~\cref{thm:formula-pp(M)-icp(ass),exm:proj-int-mult-comp} as in Proposition \ref{prp:fast-way-block-matrices} below. For this sake, we first prepare the following notation.

\begin{ntn}
Let $\al \colon (x_i)_{i \in [m]} \to (y_j)_{j \in [n]}$ be a morphism
in $\Ds\! A$.
Then there exists a unique matrix $[a_{ji}]_{(j,i) \in [n]\times [m]}$ over $\k$ such that
$$
\al\coloneqq [a_{ji} p_{y_j, x_i}]_{(j,i) \in [n]\times [m]},
$$
where $a_{ji} = 0$ unless $x_i \le y_j$ in $\bfP$.
In this case, for each $u \in \bfP$, we also have
$$
\begin{aligned}
\sfP(\al)&= {}^t[a_{ji} \sfP_{y_j, x_i}]_{(j,i) \in [n]\times [m]},\\
\sfP'(\al)(u)&= [a_{ji} \sfP'(p_{y_j, x_i})(u)]_{(j,i) \in [n]\times [m]},\\
\sfP'_u(\al)&= {}^t[a_{ji} \sfP'_u(p_{y_j, x_i})]_{(j,i) \in [n]\times [m]}.
\end{aligned}
$$
We then set
$$
\begin{aligned}
\Mat(\al)&\coloneqq [a_{ji}]_{(j,i) \in [n]\times [m]},\\
 \Mat(\sfP'(\al)(u))&\coloneqq [a'_{ji}]_{(j,i) \in [n]\times [m]},
\end{aligned}
\qquad
\begin{aligned}
\Mat(\sfP(\al))&\coloneqq {}^t[a_{ji}]_{(j,i) \in [n]\times [m]},\\
\Mat(\sfP'_u(\al))&\coloneqq {}^t[a''_{ji}]_{(j,i) \in [n]\times [m]},
\end{aligned}
$$
where
$$
a'_{ji}\coloneqq \begin{cases}
a_{ji} & \text{if }\sfP'(p_{y_j, x_i})(u) \ne 0,\\
0 & \text{otherwise}
\end{cases},
\quad
a''_{ji}\coloneqq \begin{cases}
a_{ji} & \text{if }\sfP'_u(p_{y_j, x_i}) \ne 0,\\
0 & \text{otherwise}
\end{cases}
$$
for all $(j,i) \in [n]\times [m]$, and call each of them the \emph{coefficient matrix} of $\al$,
$\sfP(\al), \sfP'(\al)(u)$, and $\sfP'_u(\al)$, respectively.
\end{ntn}

\begin{prp}
\label{prp:fast-way-block-matrices}
Let $\bfP = (\bfP, \leq)$ be a poset, $I$ an interval of $\bfP$, and $M\in \mod A$.
Suppose that $\sfP(\al)$ is a presentation matrix of $M$
for some morphism $\al \colon (x_i)_{i \in [m]} \to (y_j)_{j \in [n]}$ in $\Ds\! A$ as in Theorem \ref{thm:multi-pp}, and
$\bfg_{t} \colon (z_j)_{j \in [s]} \to (w_i)_{i \in [r]} \ (t = 1,2,3)$ is a nonzero block of the multiplicity matrix for $I$.
Set the coefficient matrices of $\bfg_t$ and of $\sfP(\al)$ as follows:
\begin{equation}
\Mat(\bfg_t)\coloneqq\begin{blockarray}{cccc}
        & z_1 & \cdots & z_s \\
    \begin{block}{c[c|c|c]}
        w_1    &     &        &     \\
        \vdots & \bfg^{(1)}_{t} & \cdots & \bfg^{(s)}_{t} \\
        w_r    &     &        &     \\
    \end{block}
\end{blockarray},\
     \textnormal{and}\
\Mat(\sfP(\al))\coloneqq \begin{blockarray}{cccc}
        & y_1 & \cdots & y_n \\
    \begin{block}{c[c|c|c]}
        x_1    &     &        &     \\
        \vdots & \bsa^{(1)} & \cdots & \bsa^{(n)} \\
        x_m    &     &        &     \\
    \end{block}
\end{blockarray}
\end{equation}
as row vectors consisting of column vectors.

Then $\sfP'(\bfg_{t})\big((x_i)_{i \in [m]}\big) = \Ds^{m}_{i=1} \sfP'(\bfg_{t})(x_{i})$ is a diagonal block matrix, where for each $i \in [m]$, the $i$-{\rm th} block $\sfP'(\bfg_{t})(x_{i})$ has the following coefficient matrix:
\begin{equation}
\label{eq:P'(g)(u)}
    \left[
    \begin{array}{c|c|c}
         & & \\
         \de_{(x_{i}\leq z_{1})}\cdot\bfg^{(1)}_{t} & \cdots & \de_{(x_{i}\leq z_{s})}\cdot\bfg^{(s)}_{t} \\
         & &
    \end{array}
    \right].
\end{equation}
Intuitively, for each $j \in [s]$, if $x_{i} \leq z_j$ in $\bfP$, then the $j$-{\rm th} columns of $\sfP'(\bfg_{t})(x_{i})$ and of $\bfg_{t}$ coincide, otherwise the $j$-{\rm th} column of $\sfP'(\bfg_{t})(x_{i})$ is zero.

On the other hand, $\sfP'\big((w_i)_{i \in [r]}\big)(\al) = \Ds^{r}_{i=1} \sfP'_{w_{i}}(\al)$ is a diagonal block matrix, where for each $i \in [r]$, the $i$-{\rm th} block $\sfP'_{w_{i}}(\al)$ has the following coefficient matrix:
\begin{equation}
\label{eq:P'_u(al)}
    \left[
    \begin{array}{c|c|c}
         & & \\
         \de_{(y_{1}\leq w_{i})}\cdot\bsa^{(1)} & \cdots & \de_{(y_{n}\leq w_{i})}\cdot\bsa^{(n)}\\
         & &
    \end{array}
    \right].
\end{equation}
\end{prp}

\begin{proof}
Set $\Mat(\bfg_t)\coloneqq [b_{ij}]_{(i,j)\in [r]\times [s]}$ and
$\Mat(\al)\coloneqq [a_{ji}]_{(j,i)\in [n]\times [m]}$, and let $u \in \bfP$.
Then since $\sfP'(p_{w_i, z_j})(u) = A(u, p_{w_i,z_j})$, we have
$$
\sfP'(\bfg_t)(u) = [b_{ij} A(u, p_{w_i,z_j})]_{(i,j)\in [r]\times [s]},
$$
where for any pair $(w_i, z_j)$ with $z_j \le w_i$,
the morphism $A(u, p_{w_i,z_j}) \colon A(u, z_j) \to A(u, w_i)$ is nonzero if and only if $A(u, z_j) \ne 0$, if and only if $u \le z_j$.
Hence we have $\Mat(\sfP'(g_t)(u)) = [b'_{ij}]_{(i,j)\in [r]\times [s]}$, where
$$
b'_{ij} = \de_{(A(u, p_{w_i,z_j}) \ne 0)}b_{ij}
= \de_{(u \le z_j)}b_{ij}.
$$
Therefore, \eqref{eq:P'(g)(u)} follows by setting $u\coloneqq x_i$.

Similarly, since $\sfP'_u(p_{y_j, x_i}\!) = A(p_{y_j, x_i}, u)$, we have
$$
\sfP'_u(\al) = {}^t[a_{ji}A(p_{y_j, x_i}, u)]_{(j,i)\in [n]\times [m]},
$$
where for any pair $(x_i, y_j)$ with $x_i \le y_j$,
the morphism $A(p_{y_j, x_i}, u) \colon A(y_j, u) \to A(x_i, u)$ is nonzero
if and only if $A(y_j, u) \ne 0$, if and only if $y_j \le u$.
Hence we have $\Mat(\sfP'_u(\al)) = {}^t[a''_{ji}]_{(j,i) \in [n]\times [m]}$, where
$$
a''_{ji} = \de_{(A(p_{y_j.x_i},u) \ne 0)} a_{ji}
= \de_{(y_j \le u)} a_{ji}.
$$
By setting $u = w_i$, this shows \eqref{eq:P'_u(al)}.
\end{proof}

\begin{exm}
\label{exm:Fugacci}
We take a bifiltration example from~\cite{fugacciCompression2parameterPersistent2023}, as displayed in~\cref{fig:3x3-bifiltration}, to demonstrate our formulas. Set $M\coloneqq H_{1}(\blank;\bbZ/2\bbZ)\circ \calF$. Following the notation given in~\cref{thm:multi-pp}, the presentation matrix $\sfP(\al)$ is given by
\begin{align}
\label{eq:pp-of-M-exm}
    \begin{blockarray}{ccc}
        & \scalebox{0.8}{$(1, 2)$} & \scalebox{0.8}{$(2, 1)$} \\
        \begin{block}{c[cc]}
            \scalebox{0.8}{$(0, 0)$} & 1 & 0\\
            \scalebox{0.8}{$(1, 1)$} & 1 & 1\\
        \end{block}
    \end{blockarray},
\end{align}
and thus $x$ in~\eqref{eq:formula-with-pp} is given by a sequence of row indices of $\sfP(\al)$. Namely, $x = \big((0,0),(1,1)\big)$.

\begin{figure}[htbp]
    \centering
    \includegraphics[width=0.4\textwidth]{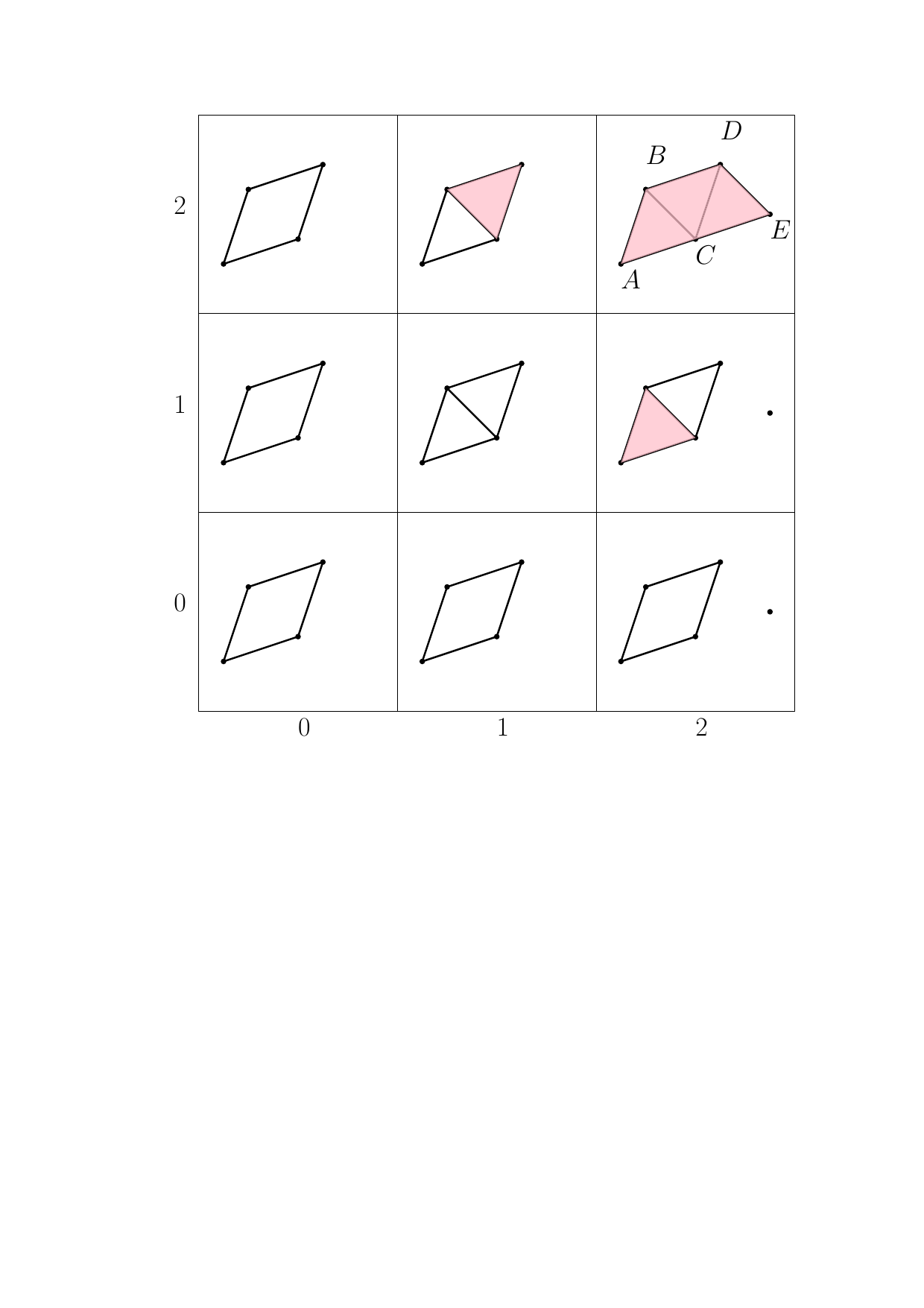}
    \caption{A $G_{3,3}$-filtration $\calF$}
    \label{fig:3x3-bifiltration}
\end{figure}

Now we consider an interval: $I = [\{(0, 2),(1, 1)\}, \{(1, 2), (2, 1)\}]$. Thus more visually, $\udim V_{I} = \sbmat{110\\011\\000}$. Three block matrices $\bfg_{1}$, $\bfg_{2}$, $\bfg_{3}$ of the multiplicity matrix for $I$ are given by
\begin{equation}
    \begin{blockarray}{ccc}
        & \scalebox{0.8}{$(0, 2)$} & \scalebox{0.8}{$(1, 1)$} \\
        \begin{block}{c[cc]}
            \scalebox{0.8}{$(1, 2)$} & 1 & -1\\
            \scalebox{0.8}{$(2, 2)$} & 1 & 0\\
        \end{block}
    \end{blockarray},\
    \begin{blockarray}{cccc}
        & \scalebox{0.8}{$(0, 1)$} & \scalebox{0.8}{$(2, 0)$} & \scalebox{0.8}{$(1, 1)$}\\
        \begin{block}{c[ccc]}
            \scalebox{0.8}{$(1, 2)$} & 0 & 0 & 1\\
            \scalebox{0.8}{$(2, 1)$} & 1 & 1 & -1\\
        \end{block}
    \end{blockarray},\
    \begin{blockarray}{ccc}
        & \scalebox{0.8}{$(0, 2)$} & \scalebox{0.8}{$(1, 1)$} \\
        \begin{block}{c[cc]}
            \scalebox{0.8}{$(1, 2)$} & 1 & 0\\
            \scalebox{0.8}{$(2, 1)$} & 0 & 0\\
        \end{block}
    \end{blockarray},
\end{equation}
respectively. By~\cref{prp:fast-way-block-matrices}, we have
$$
\begin{aligned}
\sfP'(\bfg_1)\big((0,0),(1,1)\big) &= \sbmat{1 & -1\\1 & 0} \ds \sbmat{0 & -1\\0 & 0},\\
\sfP'(\bfg_2)\big((0,0),(1,1)\big) &= \sbmat{0 & 0 & 1\\1 & 1 & -1} \ds \sbmat{0 & 0 & 1\\0 & 0 & -1},\\
\sfP'(\bfg_3)\big((0,0),(1,1)\big) &= \sbmat{1 & 0\\0&0} \ds \sbmat{0 & 0\\0&0},
\end{aligned}
\ \
\begin{aligned}
\sfP'\big((1,2),(2,2)\big)(\al)&= \sbmat{1 & 0\\1 & 0} \ds \sbmat{1 & 0\\1 & 1},\\
\sfP'\big((1,2),(2,1)\big)(\al)&= \sbmat{1 & 0\\1 & 0} \ds \sbmat{0 & 0\\0 & 1}.
\end{aligned}
$$
By~noticing \cref{rmk:order-smnds}, put
\[
\sfM = \left[\begin{array}{c|c|cc}
\sbmat{1 & -1\\1 & 0} \ds \sbmat{0 & -1\\0 & 0} &
\bfzero & \sbmat{
1&0&&\\
&&1&0\\
1&0&&\\
&&1&1
}& \bfzero\\
\hline
\sbmat{1 & 0\\0&0} \ds \sbmat{0 & 0\\0&0}
&\sbmat{0 & 0 & 1\\1 & 1 & -1} \ds \sbmat{0 & 0 & 1\\0 & 0 & -1}
&\bfzero & \sbmat{
1&0&&\\
&&0&0\\
1&0&&\\
&&0&1
}
\end{array}\right].
\]
The ranks of $\sfM$ and another matrix in~\cref{thm:formula-pp(M)-icp(ass)} are both $8$. Therefore, $d_{M}(V_{I}) = 8 - 8 = 0$.

Indeed, recalling~\cref{prp:criterion-int-sum,rmk:criterion-by-proj}. Since $\src(I) = \{(0, 2),(1, 1)\}$ is not a subset of $\{(0,0),(1,1)\}$ (the set of row indices of $\sfP(\al)$, which is exactly $\supp(\top M)$), we have that $I\notin \crt_0(M)$ and thereby $d_{M}(V_{I}) = 0$. This therefore provides a twofold confirmation of the validity of formula~\eqref{eq:formula-with-pp} and the correctness of the criterion stated in~\cref{prp:criterion-int-sum}.
\end{exm}

We remark that the presentation matrix of a bifiltration can be obtained by using RIVET developed by~\cite{lesnickComputingMinimalPresentations2022}, or mpfree developed by~\cite{Kerber-Rolle2021,fugacciCompression2parameterPersistent2023}.

\subsection{The formula by injective copresentations}

For an $M \in \mod A$, there also exists a way to compute an injective
copresentation $Q\up$ of $M$ in the filtration level.
Therefore, we next give a formula of $d_M(V_I)$ for any $I \in \bbI$ using $Q\up$.
For this purpose, we first give the dual statement of
Lemma \ref{lem:dim-Hom-coker}.

\begin{lem}
Let $C, M \in \mod A$.
Assume that $M$ has an injective copresentation
\begin{equation}
\label{eq:inj-copres-M}
0 \to M \ya{\si} \sfQ(x') \ya{\sfQ(\al')} \sfQ(y')
\end{equation}
for some $\al' \colon y' \to x'$ in $\Ds\! A$, where
$x' = (x'_i)_{i \in [m']}$ for some $m'$.
Then we have
\begin{equation}
\label{eq:dimHom-dual-formula}
\dim \Hom_{A}(C, M) = \sum_{i\in [m']} \dim C(x'_i) - \rank C(\al').
\end{equation}
\end{lem}

\begin{proof}
From \eqref{eq:inj-copres-M}, we have the following projective presentation
of $DM$:
$$
\sfP'(y') \ya{\sfP'(\al')} \sfP'(x') \ya{D\si} DM \to 0.
$$
Then by Lemma \ref{lem:dim-Hom-coker}, we have
$$
\dim \Hom_{A\op}(DM, DC) = \sum_{i \in [m']} \dim (DC)(x'_i) - \rank\, (DC)(\al'),
$$
which shows \eqref{eq:dimHom-dual-formula} because $\dim \Hom_{A\op}(DM, DC) = \dim \Hom_{A}(C, M)$,\,
$\dim (DC)(x'_i) = \dim C(x'_i)$ for all $i \in [m]$, and $\dim (DC)(\al') = \dim C(\al')$.
\end{proof}

For convenience, $\sfQ(\al')$ in~\eqref{eq:inj-copres-M} is called a \emph{copresentation matrix} of $M$.

\begin{thm}
\label{thm:multi-icp}
Let $M \in \mod A$ and $I$ an interval of $\bfP$, and assume
that $M$ has an injective copresentation \eqref{eq:inj-copres-M}
for some morphism
$\al' \colon y' \to x'$ in $\Ds\!A$,
where we set $x'\coloneqq (x'_i)_{i \in [m']},\, y'\coloneqq (y'_j)_{j \in [n']}$.

{\rm Case 1:} $V_I$ is non-injective.  In this case, let
$$
0 \to V_I \to E \to \ta\inv V_I \to 0
$$
be an almost split sequence starting from $V_I$.
Then we have the following formula:
\begin{equation}
\label{eq:formula-using-icp}
d_M(V_I) =
\rank E(\al') - \rank V_I(\al') - \rank\, (\ta\inv V_I)(\al').
\end{equation}

{\rm Case 2:} $V_I$ is injective.  In this case, $I = \dset a$ with $a = \max I$
and $V_I \iso Q_a$.
We may set $\al' = [\al'_{ji}]_{(j,i) \in [n']\times [m']}$, where
$\al'_{ji} = a'_{ji}p_{y_j,x_i}$ for some $a'_{ji} \in \k$ and
$\al'_{ji} = a'_{ji} = 0$ unless $x_i \le y_j$ for all $(j,i) \in [n'] \times [m']$.
We set $n'_{M,I}\coloneqq \#\{i \in [m'] \mid x_i = x\}$.
Then we have the following formula.
\begin{align}
\label{eq:formula-using-icp-inj}
&\quad d_M(V_I) =
\rank (V_{\dset b}/V_{\{b\}})(\al')  - \rank V_{\dset b}(\al')
+ \sum_{i \in [m]}\dim V_{\{b\}}(x_i) \nonumber\\
& =\rank [\de_{(a > x_i, y_j)}a'_{ji}]_{(j,i)\in [n']\times [m']} - \rank [\de_{(a \ge x_i, y_j)}a'_{ji}]_{(j,i)\in [n']\times [m']} + n'_{M,I}.
\end{align}
\end{thm}

\begin{proof}
This is proved in the same way as the proof of Theorem \ref{thm:multi-pp}.
\end{proof}

\begin{rmk}
\label{rmk:argument-inj-copres-case}
We have a statement corresponding to
Proposition \ref{prp:min-inj-p-VI-gen}
(resp.\ Proposition \ref{prp:min-inj-p-VI})
in this case, which is given by formulas
\eqref{eq:pp-V_I},
\eqref{eq:pp-E+P2} and
\eqref{eq:proj-resol-ta-invV_I+P-nsrccase-lem w/o conditions}
(resp.\
\eqref{eq:mpp-VI},
\eqref{eq:pp-E} and \eqref{min-pp-ta-inv-VI}).
\end{rmk}

By using \eqref{eq:formula-using-icp}, Proposition \ref{prp:rank-using-pp} and Remark \ref{rmk:argument-inj-copres-case}, we can prove the following:

\begin{thm}
\label{thm:formula-icp(M)-pp(ass)}
Let $I$ be an interval of $\bfP$, and $M \in \mod A$ with
an injective copresentation
$$
0 \to M \ya{\si} \sfQ(x') \ya{\sfQ(\al')} \sfQ(y')
$$
for some morphism $\al' \colon y' \to x'$ in $\Ds\!A$.
Keep the notations introduced in~{\rm \cref{prp:min-inj-p-VI-gen}}.  Then we have the following formula for $d_M(V_I)$:
$$
\begin{aligned}
d_M(V_I)
&= \rank \left[\begin{array}{c|c|cc}
 \sfP(\bfg_1)(x') & \bfzero & \sfP(\src(I))(\al') & \bfzero\\
 \hline
\sfP(\bfg_3)(x')
& \sfP(\bfg_2)(x') &
\bfzero & \sfP(\snk(\ddset I) \ds \snk_1(I))(\al')
 \end{array}\right]\\
 &\quad - \rank \left[\begin{array}{c|c|cc}
 \sfP(\bfg_1)(x') & \bfzero & \sfP(\src(I))(\al') & \bfzero\\
 \hline
\bfzero
& \sfP(\bfg_2)(x') &
\bfzero & \sfP(\snk(\ddset I) \ds \snk_1(I))(\al')
 \end{array}\right].
 \end{aligned}
$$
Note that
for the 2D-grid case, we can replace $\snk_1(I)$ with $\snk_1^\circ(I)$.
\end{thm}

\begin{proof}
{\rm Case 1.} $V_I$ is non-injective.

The assertion is proved in a way similar to Case 1 in Theorem \ref{thm:formula-pp(M)-icp(ass)}.

{\rm Case 2.} $V_I$ is injective.

Note in this case that
since $\snk(I) = \{b\}$ has only one element,
we have $C_2 \snk(I) = \emptyset$.
Thus $\snk_1(I) = \emptyset = \snk(\ddset I)$, and
$\bfg_2$ is an empty matrix.

By \eqref{eq:formula-using-icp-inj}, we have
$$
d_M(V_I) = \rank (V_I/\soc V_I)(\al') - \rank V_I(\al') + \sum_{i\in [m]}\dim(\soc V_I)(x_i).
$$
To compute the first two terms, we apply Proposition \ref{prp:rank-using-pp}
to the following projective presentations
of $V_I$ and $V_I/\soc V_I$ given in Theorem \ref{thm:dMV-VI-inj-general}:
$$
\begin{aligned}
\sfP(\src_1(I) \ds \src(\uuset I)) \ya{\sfP(\bfg_1)} \sfP(\src(I)) &\to V_I \to 0\\
\sfP(\src_1(I) \ds \src(\uuset I) \ds b) \ya{\sfP(\sbmat{\bfg_1\\\bfg_3})}
\sfP(\src(I)) &\to V_I/\soc V_I \to 0.
\end{aligned}
$$
Then we obtain
$$
\begin{aligned}
\rank (V_I/\soc V_I)(\al')
&=
\dim (V_I/\soc V_I)(x') - \dim \sfP(\src(I))(x')\\
&\quad + \rank \sbmat{\sfP\sbmat{\bfg_1\\\bfg_3}(x'), \ \sfP(\src(I))(\al')}\\
\rank V_I(\al')
&=
\dim V_I(x') - \dim \sfP(\src(I))(x')\\
&\quad + \rank \sbmat{\sfP(\bfg_1)(x'),\ \sfP(\src(I))(\al')}\\
\sum_{i\in [m]}\dim(\soc V_I)(x_i)
&= \dim (\soc V_I)(x').
\end{aligned}
$$
Altogether, we have
$$
d_M(V_I) = \rank \sbmat{\sfP\sbmat{\bfg_1\\\bfg_3}(x'), \ \sfP(\src(I))(\al')}
- \rank \sbmat{\sfP(\bfg_1)(x'),\ \sfP(\src(I))(\al')},
$$
which coincides with the formula above in the case
where $\snk_1(I) = \emptyset = \snk(\ddset I)$, and
$\bfg_2$ is an empty matrix.
\end{proof}

When applying Theorem \ref{thm:formula-icp(M)-pp(ass)},
the same remark as Remark \ref{rmk:order-smnds} on the order of direct summands should be kept in mind.

\section{Examples}
\label{section-5}

We first provide an example to explain how to use the essential-cover technique to compute interval multiplicities from the filtration level.

\subsection{The case of 2D-grids}

\begin{exm}
\label{exm:ess-cover-filt}
Let $\bfP = G_{5,2}$ and let $\calF$ be a $\bfP$-filtration shown in~\cref{fig:Filtration_ess_cov_int_mult_eq_1}, page~\pageref{fig:Filtration_ess_cov_int_mult_eq_1}. Set $M\coloneqq H_{1}(\blank;\bbZ/2\bbZ)\circ \calF$.
We compute the interval multiplicity of $V_{I}$ in $M$ where
\[
I\coloneqq\ \
\begin{tikzcd}[ampersand replacement=\&]
	{(1,2)} \& {(2,2)} \& {(3,2)} \\
	\& {(2,1)} \& {(3,1)} \& {(4,1)} \& {(5,1)}
	\arrow[from=1-1, to=1-2]
	\arrow[from=1-2, to=1-3]
	\arrow[from=2-2, to=1-2]
	\arrow[from=2-2, to=2-3]
	\arrow[from=2-3, to=1-3]
	\arrow[from=2-3, to=2-4]
	\arrow[from=2-4, to=2-5]
\end{tikzcd}.
\]
To make the notations of morphisms in $\k[\bfP]$ short, we denote each vertex $(i,j)$ of $\bfP$ simply by $ij$.
By suitable choice maps, we have the following multiplicity matrix for $I$:
$$
\bfg = \bmat{
p_{22,21} & -p_{22,12} & 0 & 0\\
0 & p_{42,12} & 0 & 0\\
0 & 0& p_{51, 11} & p_{51, 31}\\
p_{32,21} & 0& 0& -p_{32, 31}
}.
$$
By looking at the entries of $\bfg$, we define a
subposet $Z$ of $\bfP$ by
\[Z\coloneqq\ \
\begin{tikzcd}[ampersand replacement=\&]
	{(1,2)} \& {(2,2)} \& {(3,2)} \& {(4,2)} \\
	{(1,1)} \& {(2,1)} \& {(3,1)} \&\& {(5,1)}
	\arrow[from=1-1, to=1-2]
	\arrow[curve={height=-28pt}, from=1-1, to=1-4]
	\arrow[curve={height=28pt}, from=2-1, to=2-5]
	\arrow[from=2-2, to=1-2]
	\arrow[from=2-2, to=1-3]
	\arrow[from=2-3, to=1-3]
	\arrow[from=2-3, to=2-5]
\end{tikzcd},
\]
which  is a finite zigzag poset.
Then by \cref{dfn:ess-cov2}, the inclusion map $\ze\colon Z\hookrightarrow \bfP$ essentially covers $I$.
Note that $R(V_I)$ turns out to be the interval module $V_{I'}$ over $\k[Z]$,
where $I'$ is given by
	\[
	\begin{tikzcd}[ampersand replacement=\&]
	{(1,2)} \& {(2,2)} \& {(2,3)} \\
	\& {(2,1)} \& {(3,1)} \&\& {(5,1)}
	\arrow[from=1-1, to=1-2]
	\arrow[from=2-2, to=1-2]
	\arrow[from=2-2, to=1-3]
	\arrow[from=2-3, to=1-3]
	\arrow[from=2-3, to=2-5]
\end{tikzcd},
\]
or equivalently, the dimension vector of $V_{I'}$ is $\sbmat{1110\\0111}$.
Then by \cref{thm:ess-cover-equality},
we have $d_M(V_I) = d_{R(M)}(V_{I'})$.
Thus the problem is reduced to the computation of $d_{R(M)}(V_{I'})$,
the multiplicity of $V_{I'}$ in the barcodes of the zigzag persistence module
$R(M) = H_{1}(\blank;\bbZ/2\bbZ)\circ \calF \circ \ze$, where
$\calF \circ \ze =:\calF'$ is a $Z$-filtration shown in \cref{fig:Filtration_ess_cov_int_mult_zigzag_eq_1}, page~\pageref{fig:Filtration_ess_cov_int_mult_zigzag_eq_1}, in which arrows represent inclusions.
This kind of problem is already solved in the filtration level (for instance, see \cite{deyFastComputationZigzag2022,milosavljevicZigzagPersistentHomology2011,carlssonZigzagPersistentHomology2009}).

\begin{figure}[htbp]
  \centering
  \includegraphics[width=0.9\textwidth]{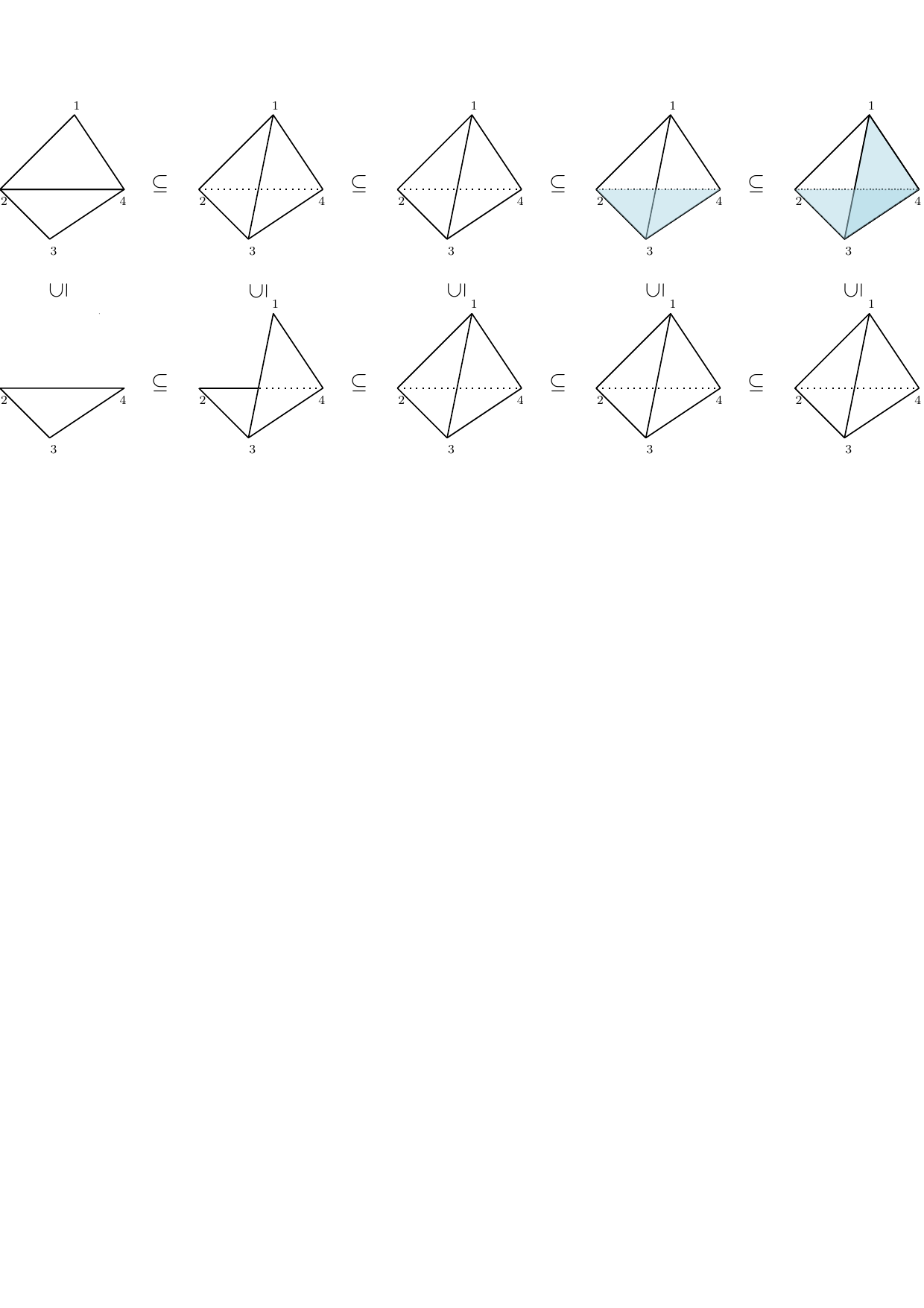}
  \caption{A $G_{5,2}$-filtration $\calF$}
  \label{fig:Filtration_ess_cov_int_mult_eq_1}
\end{figure}

\begin{figure}[htbp]
  \centering
  \includegraphics[width=0.9\textwidth]{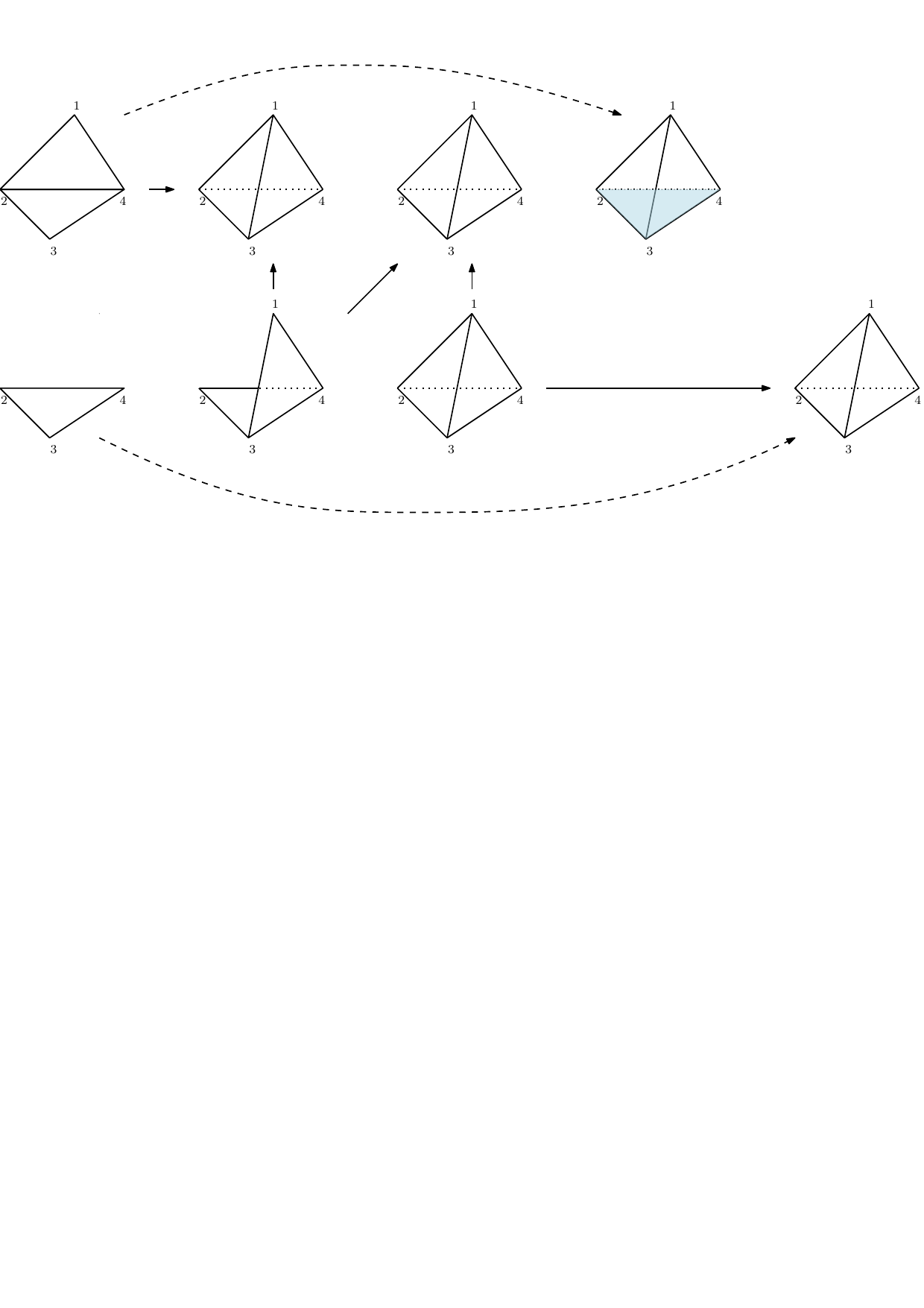}
  \caption{A $Z$-filtration $\calF'$ where $\ze \colon Z \to \bfP$ essentially covers $I$}
  \label{fig:Filtration_ess_cov_int_mult_zigzag_eq_1}
\end{figure}
\end{exm}

\begin{rmk}
In Example \ref{exm:ess-cover-filt} above,
we see that $d_{R(M)}(V_{I'}) = 0$ as follows.
Assume that it is nonzero.
Then $M'\coloneqq R(M)$ has a direct summand $X$ such that there is an isomorphism
$\al \colon V_{I'} \to X$,
say $M' = X \ds Y$.
For each vertex $(i,j)$ of $Z$,
let $\{v_{ij}\}$ be the standard basis of $V_{I'}(i,j)$,
and set $a_{ij}\coloneqq \al(v_{ij})$, and then
$\{a_{ij}\}$ becomes a basis of $X(i,j)$.
In Fig.\ \ref{fig:Filtration_ess_cov_int_mult_zigzag_eq_1},
we denote the 1-cycle $\{a,b\}+\{b,c\}+\{c,a\}$ by $(abc)$ for short.
Then $M'(1,2)$ and $M'(2,1)$ have bases $\{(124), (234)\}$ and $\{(134), (234)\}$,
respectively.
Then we can write $X(1,2) = \k a_{12}$ and $X(2,1) = \k a_{21}$ with
$a_{12} = s(124) + t(234)$, $a_{21} = u(134) + v(234)$
for some $s, t, u, v \in \k$.
Since $X(p_{22,12})(a_{12}) = a_{22} = X(p_{22,21})(a_{21})$ by construction, we have
$s(124) + t(234) = u(134) + v(234)$ in $M'(2,2)$ that has a basis $\{(124), (134), (234)\}$,
which shows that $t = v \ne 0$ and $s = u = 0$.
Hence $a_{12} = t(234)$ and $a_{21} = t(234)$, and we have
$X(5,1) = \k(234)$.
Since $X(1,1) = 0$, we have $(234) \in M'(1,1) = Y(1,1)$,
and hence $(234) = Y(p_{51,11})(234) \in Y(5,1)$.
Therefore, $X(5,1) \cap Y(5,1) \ni (234) \ne 0$, a contradiction.
In fact, the decomposition of $R(M)$ is given by
$$
R(M) = \sbmat{1101\\0000} \ds \sbmat{0010\\0011} \ds \sbmat{0001\\0000}\ds \sbmat{0110\\0111}
\ds \sbmat{1110\\1111},
$$
where all summands are interval modules presented by their dimension vectors.

However, we remark that the interval rank of $I$ under the total compression system defined in~\cite{asashiba2024interval}, or equivalently, the generalized rank of $I$ defined in~\cite{kim2021generalized} is at least $1$ (actually equal to 1) because
the restriction $R_I(M)$ of $M$ to $I$ has a direct summand $X$
isomorphic to $V_I$ with spaces $X(i)= \k(234)$ for all $i \in I$.
In summary, the ``generalized'' rank of interval $I\subseteq \bfP$ only need information inside of $I$, while its multiplicity need extra information outside of $I$, causing their distinctions.
\end{rmk}

\begin{exm}
\label{exm:ess-cover-filt-2}
	Let $\bfP = G_{6,2}$ and consider the following interval of $\bfP$:
\[
I\coloneqq\ \
\begin{tikzcd}[ampersand replacement=\&]
	{(2,2)} \& {(3,2)} \& {(4,2)} \\
	\& {(3,1)} \& {(4,1)} \& {(5,1)}
	\arrow[from=1-1, to=1-2]
	\arrow[from=1-2, to=1-3]
	\arrow[from=2-2, to=1-2]
	\arrow[from=2-2, to=2-3]
	\arrow[from=2-3, to=1-3]
	\arrow[from=2-3, to=2-4]
\end{tikzcd}.
\]
We compute the interval multiplicity of $V_{I}$. For brevity we set $a_1\coloneqq (3,1)$, $a_2\coloneqq(2,2)$, $b_1\coloneqq (5,1)$, $b_2\coloneqq(4,2)$ by adopting~\cref{ntn:notation-in-2D-grid}. Then $a_{12}=a_1\vee a_2 = (3,2)$, $b_{12}=b_1\wedge b_2 = (4,1)$, $\src(\uuset I) = \{a'_1,a'_2\} = \{(6,1),(5,2)\}$, and $\snk(\ddset I) = \{b'_1,b'_2\} = \{(2,1),(1,2)\}$.

By~\cref{thm:final-form-add-hull}, there exists a multiplicity matrix $\bfg = \bmat{\bfg_1 & \mathbf{0}\\ \bfg_3 & \bfg_2}$ for $I$.
Here $\bfg$ may be taken as the form:
\[
\bfg\coloneqq\left[
\begin{array}{c|c}
\bfg_1 & \bfzero\\
\hline
\bfg_3 & \bfg_2
\end{array}
\right] =
\left[
\begin{array}{cc|ccc}
p_{a_{12}, a_{1}} & -p_{a_{12}, a_{2}} & \bfzero & \bfzero & \bfzero\\
p_{x_{1}, a_{1}} & \bfzero & \bfzero & \bfzero & \bfzero\\
\bfzero & p_{x_{2}, a_{2}} & \bfzero & \bfzero & \bfzero\\
\hline
\bfzero & \bfzero & p_{b_{1}, y_{1}} & \bfzero & p_{b_{1}, b_{12}}\\
p_{b_{2}, a_{1}} & \bfzero & \bfzero & p_{b_{2}, y_{2}} & -p_{b_{2}, b_{12}}
\end{array}
\right].
\]
Hence if we take the (not full) subposet $Z$ of $\bfP$ given by the right-hand side of \cref{fig:Exm_G_6_2}, then $Z$ together with the usual inclusion map $\ze\colon Z\hookrightarrow \bfP$ essentially covers $I$. We remark that in this example, $Z$ itself is not the zigzag poset, but the Hasse quiver of $Z$ is a directed tree formed by connecting two zigzag posets (shown in green and blue colors in~\cref{fig:Exm_G_6_2}).

\begin{figure}[ht]
  \centering
  \includegraphics[width=\textwidth]{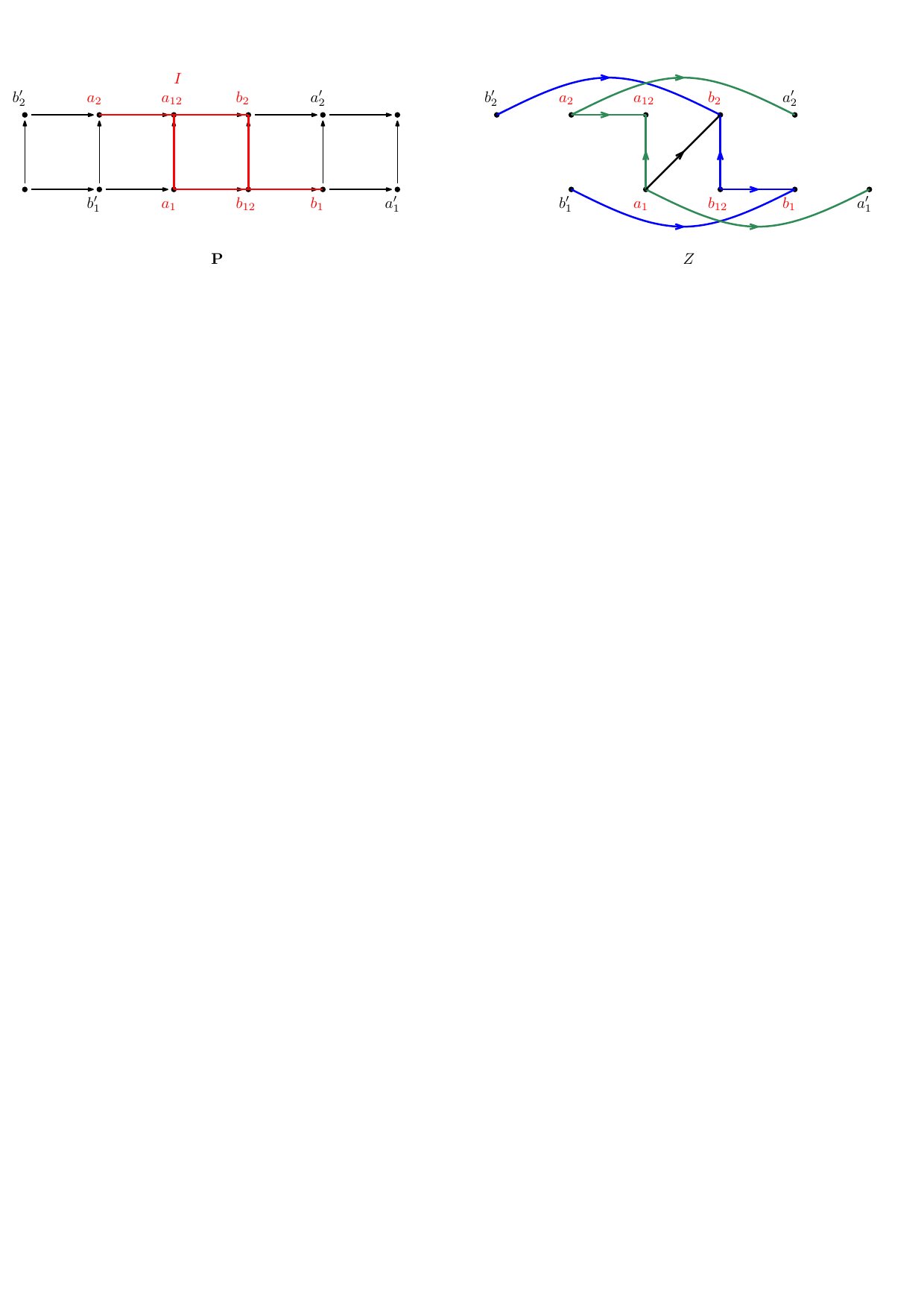}
  \caption{An illustration of essential cover of $I$}
  \label{fig:Exm_G_6_2}
\end{figure}

\end{exm}

In the subsequent examples, we demonstrate the computation in other types of posets.

\subsection{The case of Dynkin type \texorpdfstring{$\mathbb{D}$}{D}}

\begin{exm}
\label{exm:int-rk_D_4_cases}
	Consider a poset $\bfP$ having the following Hasse quiver:
\[
\begin{tikzcd}[ampersand replacement=\&]
	\& 4 \\
	1 \& 2 \& 3
	\arrow[from=1-2, to=2-2]
	\arrow[from=2-1, to=2-2]
	\arrow[from=2-2, to=2-3]
\end{tikzcd}.
\]
Let $M_1$ and $M_2$ be in $\mod \k[\bfP]$ given by
\[M_1\coloneqq
\begin{tikzcd}[ampersand replacement=\&]
	\& \k \\
	\k \& {\k^2} \& \k
	\arrow["\sbmat{1 \\ 1}", from=1-2, to=2-2]
	\arrow["\sbmat{1 \\ 1}"', from=2-1, to=2-2]
	\arrow["\sbmat{1,\, 0}"', from=2-2, to=2-3]
\end{tikzcd}
\quad \text{and}\quad
M_2\coloneqq
\begin{tikzcd}[ampersand replacement=\&]
	\& \k \\
	\k \& {\k^2} \& \k
	\arrow["\sbmat{1 \\ 0}", from=1-2, to=2-2]
	\arrow["\sbmat{0 \\ 1}"', from=2-1, to=2-2]
	\arrow["\sbmat{1,\, 1}"', from=2-2, to=2-3]
\end{tikzcd}.
\]
We compute the interval multiplicity of $V_{\bfP}$ in both two modules. In this case, $I = \bfP$.

By~\cref{thm:final-form-add-hull}, there exists a multiplicity matrix $\bfg$ for $I$ having the form
\[
\bfg\coloneqq\left[
\begin{array}{c|c}
\bfg_1 & \bfzero\\
\hline
\bfg_3 & \bfg_2
\end{array}
\right] =
\left[
\begin{array}{c|c}
p_{2, 1} & -p_{2, 4}\\
\hline
p_{3, 1} & \bfzero
\end{array}
\right].
\]
Hence, it is now clear that if we take the (not full) subposet $Z$ of $\bfP$ given by
\[Z\coloneqq
\begin{tikzcd}[ampersand replacement=\&]
	\& 4 \\
	1 \& 2 \& 3
	\arrow[from=1-2, to=2-2]
	\arrow[from=2-1, to=2-2]
	\arrow[curve={height=20pt}, from=2-1, to=2-3]
\end{tikzcd},
\]
together with the usual inclusion map $\ze\colon Z\hookrightarrow \bfP$, then $\ze$ essentially covers $\bfP$. By~\cref{thm:ess-cover-equality} it suffices to compute $\bar{d}_{R(M_{j})}(R(V_I)) = d_{R(M_{j})}(V_{Z})$ for $j\in \{1, 2\}$. Now, because
\[
R(M_1)=
\begin{tikzcd}[ampersand replacement=\&]
	\& \k \\
	\k \& \k^2 \& \k
	\arrow["\sbmat{1 \\ 1}", from=1-2, to=2-2]
	\arrow["\sbmat{1 \\ 1}", from=2-1, to=2-2]
	\arrow["1"', curve={height=20pt}, from=2-1, to=2-3]
\end{tikzcd}
\iso
\begin{tikzcd}[ampersand replacement=\&]
	\& \k \\
	\k \& \k \& \k
	\arrow["1", from=1-2, to=2-2]
	\arrow["1", from=2-1, to=2-2]
	\arrow["1"', curve={height=20pt}, from=2-1, to=2-3]
\end{tikzcd}
\ds
\begin{tikzcd}[ampersand replacement=\&]
	\& 0 \\
	0 \& \k \& 0
	\arrow[from=1-2, to=2-2]
	\arrow[from=2-1, to=2-2]
	\arrow[curve={height=20pt}, from=2-1, to=2-3]
\end{tikzcd}
\]
and
\[
R(M_2)=
\begin{tikzcd}[ampersand replacement=\&]
	\& \k \\
	\k \& \k^2 \& \k
	\arrow["\sbmat{1 \\ 0}", from=1-2, to=2-2]
	\arrow["\sbmat{0 \\ 1}", from=2-1, to=2-2]
	\arrow["1"', curve={height=20pt}, from=2-1, to=2-3]
\end{tikzcd}
\iso
\begin{tikzcd}[ampersand replacement=\&]
	\& 0 \\
	\k \& \k \& \k
	\arrow[from=1-2, to=2-2]
	\arrow["1", from=2-1, to=2-2]
	\arrow["1"', curve={height=20pt}, from=2-1, to=2-3]
\end{tikzcd}
\ds
\begin{tikzcd}[ampersand replacement=\&]
	\& \k \\
	0 \& \k \& 0
	\arrow["1", from=1-2, to=2-2]
	\arrow[from=2-1, to=2-2]
	\arrow[curve={height=20pt}, from=2-1, to=2-3]
\end{tikzcd},
\]
we conclude that $d_{M_1}(V_{\bfP}) = 1$, but $d_{M_2}(V_{\bfP}) = 0$.
\end{exm}

\begin{exm}
\label{exm:int-rk_D_4_cases_2nd}
	Consider a poset $\bfP$ having the following Hasse quiver:
\[
\begin{tikzcd}[ampersand replacement=\&]
	\& 4 \\
	1 \& 2 \& 3
	\arrow[from=2-2, to=1-2]
	\arrow[from=2-2, to=2-1]
	\arrow[from=2-2, to=2-3]
\end{tikzcd}.
\]
Let $M\in \mod \k[\bfP]$ given by
\[M\coloneqq
\begin{tikzcd}[ampersand replacement=\&]
	\& \k \\
	\k \& {\k^3} \& {\k^2}
	\arrow["\sbmat{1,\, 0,\, 0}", from=2-2, to=1-2]
	\arrow["\sbmat{1,\, 0,\, 0}", from=2-2, to=2-1]
	\arrow["\sbmat{1 & 0 & 0\\0 & 1 & 1}"', from=2-2, to=2-3]
\end{tikzcd}.
\]
We compute the interval multiplicity of $V_{\bfP}$ in $M$. In this case, $I = \bfP$.

By~\cref{thm:final-form-add-hull}, there exists a multiplicity matrix $\bfg$ for $I$
of the form
\begin{equation}
\label{eq:original-morphism}
\bfg\coloneqq\left[
\begin{array}{c|c}
\bfg_3 & \bfg_2
\end{array}
\right] =
\left[
\begin{array}{c|ccc}
\bfzero & p_{1, 2} & p_{1, 2} & \bfzero\\
p_{3, 2} & -p_{3, 2} & \bfzero & p_{3, 2}\\
\bfzero & \bfzero & -p_{4, 2} & -p_{4, 2}
\end{array}
\right].
\end{equation}
Notice that the last column of $\bfg_2$ is the linear combination of its first two columns, hence we may take another morphism $\tilde{\bfg}$ in $\Ds\k[\bfP]$ given by
\[
\tilde{\bfg}\coloneqq \left[
\begin{array}{c|c}
\bfg_3 & \tilde{\bfg}_2
\end{array}
\right] =
\left[
\begin{array}{c|cc}
\bfzero & p_{1, 2} & p_{1, 2}\\
p_{3, 2} & -p_{3, 2} & \bfzero\\
\bfzero & \bfzero & -p_{4, 2}
\end{array}
\right],
\]
such that $\rank M(\bfg) - \rank M(\bfg_2) = \rank M(\tilde{\bfg}) - \rank M(\tilde{\bfg}_2)$. This shows that the new morphism $\tilde{\bfg}$ is also a multiplicity matrix for $I$.

Now, let us take the following finite zigzag poset
\[Z\coloneqq
\begin{tikzcd}[ampersand replacement=\&]
	2 \&\& {2'} \&\& {2''} \\
	\& 3 \&\& 1 \&\& 4
	\arrow[from=1-1, to=2-2]
	\arrow[from=1-3, to=2-2]
	\arrow[from=1-3, to=2-4]
	\arrow[from=1-5, to=2-4]
	\arrow[from=1-5, to=2-6]
\end{tikzcd}
\]
and define the order-preserving map $\ze\colon Z\to \bfP$ by
\[
\ze(x)\coloneqq
\begin{cases}
	2, & \textnormal{if } x\in \{2, 2', 2''\},\\
	x, & \textnormal{if } x\in \{1, 3, 4\}.
\end{cases}
\]
Then $\ze$ essentially covers $\bfP$. Indeed, we have the following equality:
\[
\k[\ze]\left( \left[
\begin{array}{c|cc}
\bfzero & p_{1, 2'} & p_{1, 2''}\\
p_{3, 2} & -p_{3, 2'} & \bfzero\\
\bfzero & \bfzero & -p_{4, 2''}
\end{array}
\right] \right) = \left[
\begin{array}{c|cc}
\bfzero & p_{1, 2} & p_{1, 2}\\
p_{3, 2} & -p_{3, 2} & \bfzero\\
\bfzero & \bfzero & -p_{4, 2}
\end{array}
\right]
\]
Hence by~\cref{thm:ess-cover-equality} it suffices to compute $\bar{d}_{R(M)}(R(V_I)) = d_{R(M_{j})}(V_{Z})$. Now, because
\begin{align*}
	R(M) &=
\begin{tikzcd}[ampersand replacement=\&]
	{\k^3} \&\& {\k^3} \&\& {\k^3} \\
	\& {\k^2} \&\& \k \&\& \k
	\arrow["\sbmat{1 & 0 & 0\\0 & 1 & 1}"', from=1-1, to=2-2]
	\arrow["\sbmat{1 & 0 & 0\\0 & 1 & 1}"', from=1-3, to=2-2]
	\arrow["\sbmat{1,\, 0,\, 0}"', from=1-3, to=2-4]
	\arrow["\sbmat{1,\, 0,\, 0}", from=1-5, to=2-4]
	\arrow["\sbmat{1,\, 0,\, 0}", from=1-5, to=2-6]
\end{tikzcd}\\
	& \cong \sbmat{1 & \space & 1 & \space & 1 & \space \\ \space & 1 & \space & 1 & \space & 1} \ds \sbmat{1 & \space & 1 & \space & 0 & \space \\ \space & 1 & \space & 0 & \space & 0} \ds \sbmat{1 & \space & 0 & \space & 0 & \space \\ \space & 0 & \space & 0 & \space & 0} \ds \sbmat{0 & \space & 1 & \space & 0 & \space \\ \space & 0 & \space & 0 & \space & 0}\ds \sbmat{0 & \space & 0 & \space & 1 & \space \\ \space & 0 & \space & 0 & \space & 0}^{2},
\end{align*}
we conclude that $d_{M}(V_{\bfP}) = 1$.
\end{exm}

We highlight that in the example above, finding a new multiplicity matrix $\tilde{\bfg}$ for $I$ is crucial for finding the zigzag poset $Z$. Indeed, we first notice that $\ze$ does not cover the original choice of $\bfg$ given in~\eqref{eq:original-morphism}. Next, it is straightforward to verify from~\cref{dfn:map-covers-morphism} that the following order-preserving map $\ze'\colon Z'\to \bfP$ covers both $\bfg$ and $\tilde{\bfg}$:
\[
Z'\coloneqq \begin{tikzcd}[ampersand replacement=\&,sep=small]
	\&\& {2'} \&\& {2''} \\
	2 \& 3 \&\& 1 \&\& 4 \\
	\\
	\&\&\& {2'''}
	\arrow[from=1-3, to=2-2]
	\arrow[from=1-3, to=2-4]
	\arrow[from=1-5, to=2-4]
	\arrow[from=1-5, to=2-6]
	\arrow[from=2-1, to=2-2]
	\arrow[from=4-4, to=2-2]
	\arrow[from=4-4, to=2-6]
\end{tikzcd},
\ \text{and }
\ze'(x)\coloneqq
\begin{cases}
	2, & \textnormal{if } x\in \{2, 2', 2'', 2'''\},\\
	x, & \textnormal{if } x\in \{1, 3, 4\}.
\end{cases}
\]
However, $Z'$ is not the poset of Dynkin type $\bbA$.

\subsection{The case of bipath posets}

Recently, Aoki--Escolar--Tada provided a complete classification of posets whose module category only consists of interval-decomposable modules \cite{aoki2023summand}. They showed that every persistence module in $\mod \k[\bfP]$ is interval-decomposable if and only if
$\bfP$ is either a poset of Dynkin type $\bbA$ or a bipath poset.
The former poset is well studied and applied in the one-parameter persistent homology, while the latter is not commonly considered in the multi-parameter setting. They subsequently investigated the so-called bipath persistent homology in \cite{aokiBipathPersistence2024}. To obtain the visualization of the bipath persistence diagram, decomposing the bipath persistent homology in each dimension becomes the central task. In this subsection, we would apply our formula to compute the multiplicities of interval modules in the bipath poset setting, and inspired by the obtained formulas, we propose an alternate way of computing the bipath persistence diagram in the practical TDA pipeline, without obtaining the bipath persistent homology.

To begin with, we review the definition of bipath poset.
Let $n,m\in \bbZ_{\ge 1}$.
Then the \emph{bipath} poset $B_{n,m}$ is defined to be a poset having the following Hasse quiver:
\[\begin{tikzcd}[ampersand replacement=\&]
	\& 1 \& 2 \& \cdots \& n \\
	{\hat{0}} \&\&\&\&\& {\hat{1}} \\
	\& {1'} \& {2'} \& \cdots \& {m'}
	\arrow[from=1-2, to=1-3]
	\arrow[from=1-3, to=1-4]
	\arrow[from=1-4, to=1-5]
	\arrow[from=1-5, to=2-6]
	\arrow[from=2-1, to=1-2]
	\arrow[from=2-1, to=3-2]
	\arrow[from=3-2, to=3-3]
	\arrow[from=3-3, to=3-4]
	\arrow[from=3-4, to=3-5]
	\arrow[from=3-5, to=2-6]
\end{tikzcd}.
\]
We set $[m]'\coloneqq \{1',\dots, m'\}$. The full subposet $\mathbf{U}\coloneqq \{\hat{0}, \hat{1}\}\sqcup [n]$ (resp. $\mathbf{D}\coloneqq \{\hat{0}, \hat{1}\}\sqcup [m]'$) is called the \emph{upper} (resp. \emph{lower}) \emph{path} of $B_{n,m}$. In the sequel, we would let the ambient poset $\bfP$ be $B_{n,m}$.
It is easy to check the interval $I$ of $B_{n,m}$ belongs to the following five types:
\begin{itemize}
    \item[(i)] $I = B_{n,m}$.
    \item[(ii)] $I\coloneqq [s,t]\coloneqq \{x\in B_{n,m}|s\leq x\leq t\}$ for some $s,t\in [n]$. We write $\bbIu$ to denote the set of all intervals having this type.
    \item[(iii)] $I\coloneqq [s',t']\coloneqq \{x'\in B_{n,m}|s'\leq x'\leq t'\}$ for some $s',t'\in [m]'$. We write $\bbId$ to denote the set of all intervals having this type.
    \item[(iv)] $I\coloneqq [\hat{0}, t] \cup [\hat{0}, t']$ for some $t\in \mathbf{U}\setminus \{\hat{1}\}$ and $t'\in \mathbf{D}\setminus \{\hat{1}\}$. We write $\bbIl$ to denote the set of all intervals having this type.
    \item[(v)] $I\coloneqq [s, \hat{1}] \cup [s', \hat{1}]$ for some $s\in \mathbf{U}\setminus \{\hat{0}\}$ and $s'\in \mathbf{D}\setminus \{\hat{0}\}$. We write $\bbIr$ to denote the set of all intervals having this type.
\end{itemize}

From now on, we provide the formula of $d_M(V_I)$ case by case.
Before doing so, we set the following conventions, which are used below:
\begin{equation}
\label{eq:conv+-1}
\begin{aligned}
&\hat{0} + 1\coloneqq 1,\,n + 1\coloneqq \hat{1}\\
&\hat{0} + 1'\coloneqq 1',\, t' + 1'\coloneqq (t+1)' \ (t' \in [m-1]'),\, m' + 1'\coloneqq \hat{1},\\
&1 - 1\coloneqq \hat{0},\, \hat{1} - 1\coloneqq n,\\
&1' - 1'\coloneqq \hat{0},\, s' - 1'\coloneqq (s -1)'\ (s' \in [2', m']),\, \hat{1} - 1'\coloneqq m',
\end{aligned}
\end{equation}

(i)
Let $I = B_{n,m}$. Since $V_I$ is injective in $\mod \k[\bfP]$, we apply \eqref{eq:formula for injective modules} here and obtain
\begin{equation}
\label{eq:formula-for-B}
    d_M(V_I) = \rank M_{\hat{1}, \hat{0}}
\end{equation}
by noticing both $\src_1(I)$ and $\src(\uuset I)$ are empty.

(ii)
Let $I\coloneqq [s,t]\coloneqq \{x\in B_{n,m}|s\leq x\leq t\}$ for some $s,t\in [n]$.
Since $V_I$ is non-injective in $\mod \k[\bfP]$, we apply \eqref{eq:formula-d_M(V_I)-general} here.
In this case, $\src(I)=\{s\}$, $\src_1(I)=\emptyset$, $\src(\uuset I) = \{t+1\}$. On the other hand, $\snk(I)=\{t\}$, $\snk_1(I)=\emptyset$, $\snk(\ddset I) = \{s-1\}$. Then we obtain
\begin{align}
d_M(V_I) & = \rank\left[
\begin{array}{c|c}
M_{t+1,s} & \bfzero\\
\hline
M_{t,s} & M_{t,s-1}
\end{array}
\right]
- \rank M_{t+1,s} - \rank M_{t,s-1}. \label{eq:formula-for-upper-00}
\end{align}

(iii)
Let $I\coloneqq [s',t']\coloneqq \{x'\in B_{n,m}|s'\leq x'\leq t'\}$ for some $s',t'\in [m]'$. This case is similar to the above case and thus we obtain
\begin{align}
d_M(V_I) & = \rank\left[
\begin{array}{c|c}
M_{t'+1',s'} & \bfzero\\
\hline
M_{t',s'} & M_{t',s'-1'}
\end{array}
\right]
- \rank M_{t'+1',s'} - \rank M_{t',s'-1'}. \label{eq:formula-for-lower}
\end{align}

(iv)
Let $I\coloneqq [\hat{0}, t] \cup [\hat{0}, t']$ for some $t\in \mathbf{U}\setminus \{\hat{1}\}$ and $t'\in \mathbf{D}\setminus \{\hat{1}\}$.

\noindent {\bf Case 1.}
Suppose $t = \hat{0}$ or $t' = \hat{0}$. Then $V_I$ is an injective module in $\mod \k[\bfP]$ with $\max(I)=\max\{t,t'\}$. Hence we apply \eqref{eq:formula for injective modules} here. In this case, $\src(I)=\{\hat{0}\}$, $\src_1(I)=\emptyset$, $\src(\uuset I) = \{t+1, t'+1'\}$. Then we obtain
\begin{align}
d_M(V_I) & = \rank\left[
\begin{array}{c}
M_{t+1,\hat{0}}\\
M_{t'+1',\hat{0}}\\
\hline
M_{\max\{t,t'\},\hat{0}}
\end{array}
\right]
- \rank
\begin{bmatrix}
M_{t+1,\hat{0}}\\
M_{t'+1',\hat{0}}
\end{bmatrix}. \label{eq:formula-for-left-case1}
\end{align}

\noindent {\bf Case 2.} Suppose $t=n$ and $t' = m$. Then $\src(I)=\{\hat{0}\}$, $\src_1(I)=\emptyset$, $\src(\uuset I) = \{\hat{1}\}$. On the other hand, $\snk(I)=\{t,t'\}$, $\snk_1(I) = \{\hat{0}\}$, $\snk(\ddset I)=\emptyset$. Then we obtain
\begin{align}
d_M(V_I) & = \rank\left[
\begin{array}{c|c}
M_{\hat{1},\hat{0}} & \bfzero\\
\hline
M_{t,\hat{0}} & M_{t,\hat{0}}\\
\bfzero & -M_{t',\hat{0}}
\end{array}
\right]
- \rank M_{\hat{1},\hat{0}} - \rank \begin{bmatrix}
M_{t,\hat{0}}\\
-M_{t',\hat{0}}
\end{bmatrix} \label{eq:non-inj-bipath-special}
\end{align}

\noindent {\bf Case 3.} Suppose $t,t'$ are not in the above two cases. Then $\src(I)=\{\hat{0}\}$, $\src_1(I)=\emptyset$, $\src(\uuset I) = \{t+1, t'+1'\}$. On the other hand, $\snk(I)=\{t,t'\}$, $\snk_1(I) = \{\hat{0}\}$, $\snk(\ddset I)=\emptyset$. Then we obtain
\begin{align}
\label{eq:non-inj-case-bipath}
d_M(V_I) & = \rank\left[
\begin{array}{c|c}
M_{t+1,\hat{0}} & \bfzero\\
M_{t'+1',\hat{0}} & \bfzero\\
\hline
M_{t,\hat{0}} & M_{t,\hat{0}}\\
\bfzero & -M_{t',\hat{0}}
\end{array}
\right]
- \rank \begin{bmatrix}
M_{t+1,\hat{0}} \\
M_{t'+1',\hat{0}}
\end{bmatrix}
- \rank \begin{bmatrix}
M_{t,\hat{0}}\\
-M_{t',\hat{0}}
\end{bmatrix}.
\end{align}

Notice that if we let $t = n$ and $t' = m$ in \eqref{eq:non-inj-case-bipath}, then the result coincides with \eqref{eq:non-inj-bipath-special}. Therefore, we can unify {\bf Case 2} and {\bf Case 3} and summarize the final result as follows:

\noindent {\bf Case 1*.} If $t = \hat{0}$ or $t' = \hat{0}$, then we have
\begin{align}
d_M(V_I) & = \rank\left[
\begin{array}{c}
M_{t+1,\hat{0}}\\
M_{t'+1',\hat{0}}\\
\hline
M_{\max\{t,t'\},\hat{0}}
\end{array}
\right]
- \rank
\begin{bmatrix}
M_{t+1,\hat{0}}\\
M_{t'+1',\hat{0}}
\end{bmatrix}. \label{eq:formula-for-left-1}
\end{align}

\noindent {\bf Case 2*.} If $t \neq \hat{0}$ and $t' \neq \hat{0}$, then
\begin{align}
d_M(V_I) & = \rank\left[
\begin{array}{c|c}
M_{t+1,\hat{0}} & \bfzero\\
M_{t'+1',\hat{0}} & \bfzero\\
\hline
M_{t,\hat{0}} & M_{t,\hat{0}}\\
\bfzero & -M_{t',\hat{0}}
\end{array}
\right]
- \rank \begin{bmatrix}
M_{t+1,\hat{0}} \\
M_{t'+1',\hat{0}}
\end{bmatrix}
- \rank \begin{bmatrix}
M_{t,\hat{0}}\\
-M_{t',\hat{0}}
\end{bmatrix}. \label{eq:formula-for-left-2}
\end{align}

(v)
Let $I\coloneqq [s, \hat{1}] \cup [s', \hat{1}]$ for some $s\in \mathbf{U}\setminus \{\hat{0}\}$ and $s'\in \mathbf{D}\setminus \{\hat{0}\}$.
This case is just the dual of case (iv) above, and we analogously obtain the following.

\noindent {\bf Case $1'$.} If $s = \hat{1}$ or $s' = \hat{1}$, then we have
\begin{align}
d_M(V_I) & = \rank
\begin{bmatrix}
M_{\hat{1}, \min\{s,s'\}}\ \vline\ M_{\hat{1}, s-1} & M_{\hat{1}, s'-1'}
\end{bmatrix}
- \rank
\begin{bmatrix}
M_{\hat{1}, s-1} & M_{\hat{1}, s'-1'}
\end{bmatrix}. \label{eq:formula-for-right-1}
\end{align}

\noindent {\bf Case $2'$.} If $s \neq \hat{1}$ and $s' \neq \hat{1}$, then we have
\begin{align}
d_M(V_I) & = \rank\left[
\begin{array}{cc|cc}
M_{\hat{1},s} & M_{\hat{1},s'} & \bfzero & \bfzero\\
\hline
M_{\hat{1},s} & \bfzero & M_{\hat{1},s-1} & M_{\hat{1},s'-1'}
\end{array}
\right]
- \rank \begin{bmatrix}
M_{\hat{1},s} & M_{\hat{1},s'}
\end{bmatrix}\nonumber \\
&\quad - \rank \begin{bmatrix}
M_{\hat{1},s-1} & M_{\hat{1},s'-1'}
\end{bmatrix}. \label{eq:formula-for-right-2}
\end{align}

The obtained formulas \eqref{eq:formula-for-B}, \eqref{eq:formula-for-upper-00}, \eqref{eq:formula-for-lower}, \eqref{eq:formula-for-left-1}, \eqref{eq:formula-for-left-2}, \eqref{eq:formula-for-right-1}, \eqref{eq:formula-for-right-2} suggest us to consider the essential covering of the bipath poset. As a rough description, it suffices to consider two special subposets of $B_{n,m}$ that are of Dynkin type $\bbA$, and decompose the restricted module in each respective module category. This strategy of decomposing the bipath persistence module can utilize the fast computation of zigzag persistence. Another remarkable advantage is that our strategy does not consider the basis changes at the global minimum and maximum elements, which is the key difference compared with the original decomposition method proposed by Aoki--Escolar--Tada in \cite{aokiBipathPersistence2024}.

Let $Z$ be a poset having the Hasse quiver
\begin{equation}
\label{eq:right-cover}
\begin{tikzcd}[row sep=0pt]
	&\Nname{1} 1 &\Nname{2} 2 &\Nname{3} \cdots &\Nname{4} n \\[8pt]
	 \Nname{ovl0}{\ovl{0}} &&&&&\\
     &&&&& \Nname{h1}{\hat{1}} \\
    \Nname{h0}{\hat{0}}\\[8pt]
	& \Nname{1'}{1'} &\Nname{2'} {2'} &\Nname{3'} \cdots &\Nname{4'} {m'}
	\arrow[from=1, to=2]
	\arrow[from=2, to=3]
	\arrow[from=3, to=4]
	\arrow[from=4, to=h1]
	\arrow[from=ovl0, to=1]
	\arrow[from=h0, to=1']
	\arrow[from=1', to=2']
	\arrow[from=2', to=3']
	\arrow[from=3', to=4']
	\arrow[from=4', to=h1]
\end{tikzcd},
\end{equation}
and define the order-preserving map $\ze\colon Z \to \bfP$ by
$\ze(x)\coloneqq x$ if $x \in Z \setminus \{ \ovl{0}\}$,
and $\ze(\ovl{0})\coloneqq\hat{0}$. Then we have the following.

\begin{prp}
\label{prp:corresponding-Q_1}
Let $\ze$ be an order-preserving map defined above, and let $R$
be the restriction functor induced by $\ze$. Then for every interval $I\in \bbId\sqcup \bbIu\sqcup \bbIr\sqcup \{B_{n, m}\}$ and every $M\in \mod\k[\bfP]$, we have
\begin{equation}
\label{eq:main-eq-1}
d_M(V_I) = \bar{d}_{R(M)}(R(V_I)) = d_{R(M)}(R(V_I)).
\end{equation}
\end{prp}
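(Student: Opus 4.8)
The plan is to deduce everything from \cref{thm:ess-cover-equality}: once we know that $\ze$ essentially covers each $I$ in the list and that $R(V_I)$ is indecomposable, the first theorem gives $d_M(V_I)=\bar{d}_{R(M)}(R(V_I))$, and indecomposability gives $\bar{d}_{R(M)}(R(V_I))=d_{R(M)}(R(V_I))$ by \cref{def:ds-counting}. So the proof splits into two independent tasks: (a) compute $R(V_I)$ and check it is an interval module over $\k[Z]$; (b) produce, for each $I$, a morphism $\bfg$ satisfying \eqref{eq:rank-multiplicity-formula} that $\ze$ covers.

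For (a), note that $R(V_I)=V_I\circ\k[\ze]$ sends a vertex $x\in Z$ to $\k$ if $\ze(x)\in I$ and to $0$ otherwise, with all transition maps equal to $\id_\k$; hence $R(V_I)=V_{\ze^{-1}(I)}$ provided $\ze^{-1}(I)$ is an interval of $Z$. Convexity of $\ze^{-1}(I)$ is automatic because $\ze$ is order-preserving and $I$ is convex, so only connectedness has to be checked, and this I would verify case by case: for $I=[s,t]\in\bbIu$ the preimage is the chain $\{s,\dots,t\}$ on the upper branch of $Z$; for $I=[s',t']\in\bbId$ it is the chain $\{s',\dots,t'\}$ on the lower branch; for $I=[s,\hat{1}]\cup[s',\hat{1}]\in\bbIr$ it is $\{s,\dots,n\}\cup\{s',\dots,m'\}\cup\{\hat{1}\}$, which is connected through $\hat{1}$; and $\ze^{-1}(B_{n,m})=Z$, so $R(V_{B_{n,m}})=V_Z$. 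In every case $R(V_I)$ is an interval module, hence indecomposable.

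For (b), I would use the morphisms already implicit in the formulas \eqref{eq:formula-for-B}, \eqref{eq:formula-for-upper-00}, \eqref{eq:formula-for-lower}, \eqref{eq:formula-for-right-1}, \eqref{eq:formula-for-right-2}: each of these was obtained as a specialization of \cref{thm:final-form-add-hull}, so each is of the shape $\rank M(\bfg)-\rank M(\bfg_1)-\rank M(\bfg_2)$ for a morphism $\bfg=\bmat{\bfg_1&\mathbf{0}\\\bfg_3&\bfg_2}$ in $\Ds\k[\bfP]$ whose nonzero entries are exactly the maps $p_{y,x}$ displayed there. It then remains to show that $\ze$ covers each such $\bfg$, which by \cref{lem:ze-covers-g} means choosing sections of $\ze$ over the (finite) sets of domains and of codomains of the entries of $\bfg$ through which every entry lifts. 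Here the geometry of $Z$ is decisive: every vertex of $Z$ lies below $\hat{1}$, and the only fibre of $\ze$ with more than one element is $\ze^{-1}(\hat{0})=\{\ovl{0},\hat{0}\}$, where $\ovl{0}$ lies below exactly the upper branch (together with $\hat{1}$) and $\hat{0}$ below exactly the lower branch (together with $\hat{1}$). Consequently, for $I\in\bbIr\cup\{B_{n,m}\}$ all entries of $\bfg$ have target $\hat{1}$, so any choice of sections works; for $I\in\bbIu$ (resp. $\bbId$) all entries of $\bfg$ lie in the upper sub-path $\hat{0}<1<\dots<n<\hat{1}$ (resp. lower sub-path $\hat{0}<1'<\dots<m'<\hat{1}$), which embeds isomorphically into $Z$ via the section $\hat{0}\mapsto\ovl{0}$ (resp. via the identity), so every entry lifts. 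This shows $\ze$ essentially covers $I$, and combining with (a) and \cref{thm:ess-cover-equality} completes the argument.

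The main obstacle I anticipate is the bookkeeping in step (b): one must pick the sections uniformly enough to cover all entries of a single $\bfg$ at once, which needs care at the boundary conventions \eqref{eq:conv+-1} where an endpoint degenerates to $\hat{0}$ or $\hat{1}$ (for instance two different entries forcing the value of the section at $\hat{0}$). This is precisely the point at which $\bbIl$ must be excluded: for $I\in\bbIl$ the relevant $\bfg$ has entries with source $\hat{0}$ and targets on both branches, so no section of $\ze$ at $\hat{0}$ can lift all of them simultaneously, whereas for the four listed families the observation above guarantees that no such conflict arises.
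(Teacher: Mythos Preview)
Your proposal is correct and follows essentially the same approach as the paper: reduce to \cref{thm:ess-cover-equality} by checking that $R(V_I)$ is indecomposable and that $\ze$ covers the morphism $\bfg$ arising from the relevant formula \eqref{eq:formula-for-B}, \eqref{eq:formula-for-upper-00}, \eqref{eq:formula-for-lower}, \eqref{eq:formula-for-right-1}, or \eqref{eq:formula-for-right-2}. The paper's own proof is terser (it simply asserts indecomposability of $R(V_I)$ and treats most cases as ``trivial by observing the formula and the definition of $\ze$'', singling out only $I=[1,t]\in\bbIu$ for an explicit lift), whereas you supply the extra detail of identifying $R(V_I)$ with $V_{\ze^{-1}(I)}$ and give a uniform geometric reason for why the sections exist; both arguments are the same in substance.
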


\begin{proof}
We recall the $\bar{d}$ notation given in Notation~\ref{def:ds-counting}. The second equality of \eqref{eq:main-eq-1} holds since $R(V_I)$ is the indecomposable module in $\mod \k[Z]$ for every interval $I\in \bbId\sqcup \bbIu\sqcup \bbIr\sqcup \{B_{n, m}\}$. It suffices to show that $\ze$ essentially covers every $I \in \bbId\sqcup \bbIu\sqcup \bbIr\sqcup \{B_{n, m}\}$ by Theorem~\ref{thm:ess-cover-equality}.

(i) Let $I = B_{n, m}$. Then it is obvious that there exists a morphism $\bfg'\coloneqq\bmat{p_{\hat{1}, \hat{0}}}$ in $\Ds \k[Z]$ such that $\ze(\bfg') = \bmat{p_{\hat{1}, \hat{0}}}$ in $\Ds \k[\bfP]$, and hence $\ze$ essentially covers $B_{n, m}$ by~\eqref{eq:formula-for-B}.

(ii) Let $I\in \bbId$. This case is trivial by observing \eqref{eq:formula-for-lower} and the definition of $\ze$.

(iii) Let $I\in \bbIu$. Write $I \coloneqq [s,t]$ for some $s, t\in [n]$. All cases are trivial except $s = 1$. If $I = [1, t]$, then by \eqref{eq:formula-for-upper-00} the morphism in $\Ds \k[\bfP]$ can be taken as
$$
\bfg \coloneqq \left[
\begin{array}{c|c}
p_{t+1,1} & \bfzero\\
\hline
p_{t,1} & p_{t, \hat{0}}
\end{array}
\right].
$$
Let
$$
\bfg' \coloneqq \left[
\begin{array}{c|c}
p_{t+1,1} & \bfzero\\
\hline
p_{t,1} & p_{t, \ovl{0}}
\end{array}
\right].
$$
Then $\bfg'$ is the morphism in $\Ds \k[Z]$ satisfying $\ze(\bfg') = \bfg$.

(iv) Let $I\in \bbIr$. This case is also trivial by observing \eqref{eq:formula-for-right-1}, \eqref{eq:formula-for-right-2}, and the definition of $\ze$.

Therefore, the assertion follows.
\end{proof}

\begin{rmk}
    From this proposition, one can easily see that to compute four sub-diagrams $\mathcal{D}(\bbId)$, $\mathcal{D}(\bbIu)$, $\mathcal{D}(\bbIr)$ and $\mathcal{D}(B_{n,m})$ of the bipath persistence diagram (see the precise definition in \cite{aokiBipathPersistence2024}), it suffices to compute the persistence diagram of zigzag poset $Z$ and retrieve the corresponding intervals.
\end{rmk}

\begin{rmk}
The above $\ze$ does not essentially cover the interval $I\in \bbIl$. To interpret this, let us take $I = [\hat{0}, 1]$ as an example. By \eqref{eq:formula-for-left-1} we may take
$$
\bfg \coloneqq \left[
\begin{array}{c}
p_{2,\hat{0}}\\
p_{1',\hat{0}}\\
\hline
p_{1,\hat{0}}
\end{array}
\right],
$$
and by the definition of $\ze$, $\ze(p_{2,\ovl{0}}) = p_{2,\hat{0}}$, $\ze(p_{1',\hat{0}}) = p_{1',\hat{0}}$, $\ze(p_{1,\ovl{0}}) = p_{1,\hat{0}}$. However, we notice the reader that the family
$$
\left(
\begin{array}{c}
p_{2,\ovl{0}}\\
p_{1',\hat{0}}\\
\hline
p_{1,\ovl{0}}
\end{array}
\right)
$$
does not satisfy the matrix condition, and cannot be a morphism in $\Ds \k[Z]$,
and hence $\ze$ does not essentially cover $I = [\hat{0}, 1]$.
\end{rmk}

For those intervals in $\bbIl$, we shall consider another $Z'$ and $\ze'$. Let $Z'$ be a poset having the Hasse quiver
\[\begin{tikzcd}[row sep=0pt]
	&\Nname{1} 1 &\Nname{2} 2 &\Nname{3} \cdots &\Nname{4} n \\[8pt]
	 &&&&& \Nname{h1}{\hat{1}} \\
    \Nname{0}{\hat{0}}\\
    &&&&& \Nname{c1}{\ovl{1}}\\[8pt]
	& \Nname{1'}{1'} &\Nname{2'} {2'} &\Nname{3'} \cdots &\Nname{4'} {m'}
	\arrow[from=1, to=2]
	\arrow[from=2, to=3]
	\arrow[from=3, to=4]
	\arrow[from=4, to=h1]
	\arrow[from=0, to=1]
	\arrow[from=0, to=1']
	\arrow[from=1', to=2']
	\arrow[from=2', to=3']
	\arrow[from=3', to=4']
	\arrow[from=4', to=c1]
\end{tikzcd},\]
and define the order-preserving map $\ze'\colon Z' \to \bfP$ by
$\ze'(x)\coloneqq x$ if $x \in Z \setminus \{ \ovl{1}\}$,
and $\ze'(\ovl{1})\coloneqq\hat{1}$. Then we have the following.

\begin{prp}
\label{prp:corresponding-Q_2}
Let $\ze'$ be an order-preserving map defined above, and let $R'$ be the restriction functor induced by $\ze'$. Then for every interval $I\in \bbId\sqcup \bbIu\sqcup \bbIl\sqcup \{B_{n, m}\}$ and every $M\in \mod\k[\bfP]$, we have
    \begin{equation}
        d_M(V_I) = \bar{d}_{R'(M)}(R'(V_I)) = d_{R'(M)}(R'(V_I)).
    \end{equation}
\end{prp}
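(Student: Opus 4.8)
The plan is to run the argument dual to the one for \prpref{prp:corresponding-Q_1}, interchanging the roles of the global minimum $\hat{0}$ and the global maximum $\hat{1}$, and of the interval families $\bbIl$ and $\bbIr$. First I would dispose of the second equality: for every $I\in\bbId\sqcup\bbIu\sqcup\bbIl\sqcup\{B_{n,m}\}$ the map $\ze'$ carries Hasse arrows of $Z'$ to Hasse arrows of $\bfP$, and the set $\{z\in Z'\mid\ze'(z)\in I\}$ is an interval of $Z'$ --- a segment on the upper chain when $I\in\bbIu$, a segment on the lower chain when $I\in\bbId$, the union of two such segments glued at $\hat{0}$ when $I\in\bbIl$, and all of $Z'$ when $I=B_{n,m}$. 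Hence $R'(V_I)$ is an indecomposable interval module over $\k[Z']$, so $\bar{d}_{R'(M)}(R'(V_I))=d_{R'(M)}(R'(V_I))$. By \thmref{thm:ess-cover-equality} it then remains to show that $\ze'$ essentially covers each such $I$.

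For the latter I would proceed through the cases (i)--(v) as in the explicit formulas \eqref{eq:formula-for-B}, \eqref{eq:formula-for-upper-00}, \eqref{eq:formula-for-lower}, \eqref{eq:formula-for-left-1}, \eqref{eq:formula-for-left-2}, reading off from each the block morphism $\bfg=\bmat{\bfg_1 & \mathbf{0}\\ \bfg_3 & \bfg_2}$ realizing \eqref{eq:rank-multiplicity-formula}, and exhibiting a lift $\bfg'$ in $\Ds\k[Z']$ with $\k[\ze'](\bfg')=\bfg$. The structural point is that inside $Z'$ the vertex $\hat{0}$ is a common source carrying both Hasse arrows $\hat{0}\to 1$ and $\hat{0}\to 1'$, while the whole upper chain $\hat{0}\to\cdots\to n\to\hat{1}$ and the whole lower chain $\hat{0}\to\cdots\to m'\to\ovl{1}$ survive. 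Consequently: for $I=B_{n,m}$ take $\bfg'=\bmat{p_{\hat{1},\hat{0}}}$; for $I\in\bbIu$ every entry of $\bfg$ from \eqref{eq:formula-for-upper-00} lies on the upper chain (including $\hat{0}$ and $\hat{1}$), so $\bfg'=\bfg$ works verbatim; and for $I\in\bbIl$ every entry of $\bfg$ from \eqref{eq:formula-for-left-1}--\eqref{eq:formula-for-left-2} has domain $\hat{0}$ and codomain on one of the two chains, so, both arrows out of $\hat{0}$ being present in $Z'$, the identification $\bfg'=\bfg$ again yields a legitimate morphism of $\Ds\k[Z']$. The only case needing a genuine change of representative is $I=[s',t']\in\bbId$ with $t'=m'$: there \eqref{eq:formula-for-lower} involves $M_{m'+1',\,s'}=M_{\hat{1},s'}$, and since $s'\not\le\hat{1}$ in $Z'$ one uses the section of $\ze'$ sending $\hat{1}\mapsto\ovl{1}$, so that $\bfg'=\bmat{p_{\ovl{1},s'} & \mathbf{0}\\ p_{m',s'} & p_{m',s'-1'}}$, which still satisfies the matrix condition and maps to $\bfg$ under $\ze'$.

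The step I expect to be delicate --- the exact dual of the Remark following \prpref{prp:corresponding-Q_1} --- is checking that in the $\bbIl$ cases the candidate $\bfg'$ really is a morphism of $\Ds\k[Z']$, i.e.\ that the matrix condition survives. This is precisely where the asymmetry between $\bbIl$ and $\bbIr$ shows up: an interval in $\bbIl$ forces morphisms out of $\hat{0}$ into both chains simultaneously, which is harmless because $\hat{0}$ keeps both outgoing arrows in $Z'$; an interval in $\bbIr$ would instead force morphisms into $\hat{1}$ out of both chains, and since in $Z'$ only the upper chain reaches $\hat{1}$ one would be forced to use $\ovl{1}$ on the lower side, producing a column of shape $\bmat{p_{\hat{1},\ \cdot}\\ p_{\ovl{1},\ \cdot}}$ that violates the matrix condition --- which is exactly why $\bbIr$ is absent from the statement. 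Apart from this bookkeeping the proof is routine, resting only on the two chain structures of $Z'$ together with \thmref{thm:final-form-add-hull} and \thmref{thm:ess-cover-equality}.
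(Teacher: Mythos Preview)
Your proposal is correct and follows precisely the approach the paper intends: the paper's own proof reads in full ``It is similar to the proof of Proposition~\ref{prp:corresponding-Q_1} and thus we leave this proof to the reader,'' and you have carried out that dual argument in detail, correctly identifying the one nontrivial lift (the case $I=[s',m']\in\bbId$, where $p_{\hat{1},s'}$ must be replaced by $p_{\ovl{1},s'}$) and correctly explaining why the $\bbIl$ cases go through verbatim while the $\bbIr$ cases would fail.
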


\begin{proof}
    It is similar to the proof of Proposition~\ref{prp:corresponding-Q_1} and thus we leave this proof to the reader.
\end{proof}

\begin{rmk}
    From this proposition, one can easily see that to get the remaining sub-diagram $\mathcal{D}(\bbIl)$ of the bipath persistence diagram, it suffices to compute the persistence diagram of zigzag poset $Z'$ and retrieve the corresponding intervals.
\end{rmk}

\subsection*{Summary of the procedure}
To summarize, we propose our decomposition strategy in the following practical data analysis setting. Given a bipath filtration $\mathcal{F}_{\mathrm{bipath}}$ (that is, a functor $\mathcal{F}_{\mathrm{bipath}} \colon B_{n,m}\to \mathrm{Top}$):
\[\begin{tikzcd}[ampersand replacement=\&]
	\&\& {\calK_{1}} \& {\calK_{2}} \& \cdots \& {\calK_{n}} \\
	{\mathcal{F}_{\mathrm{bipath}}:} \& {\calK_{\hat{0}}} \&\&\&\&\& {\calK_{\hat{1}}} \\
	\&\& {\calK_{1'}} \& {\calK_{2'}} \& \cdots \& {\calK_{m'}}
	\arrow[from=1-3, to=1-4]
	\arrow[from=1-4, to=1-5]
	\arrow[from=1-5, to=1-6]
	\arrow[from=1-6, to=2-7]
	\arrow[from=2-2, to=1-3]
	\arrow[from=2-2, to=3-3]
	\arrow[from=3-3, to=3-4]
	\arrow[from=3-4, to=3-5]
	\arrow[from=3-5, to=3-6]
	\arrow[from=3-6, to=2-7]
\end{tikzcd}.\]
We would like to obtain the $q$-th bipath persistence diagram $\mathcal{D}_{q}(\mathcal{F}_{\mathrm{bipath}})\coloneqq \mathcal{D}_{q}(H_q(\mathcal{F}_{\mathrm{bipath}}; \k))$ of $\mathcal{F}_{\mathrm{bipath}}$.

\noindent {\bf Step 1.} Take the information of spaces and maps from $\mathcal{F}_{\mathrm{bipath}}$ and build the following zigzag filtration.
$$
\mathcal{F}_{\mathrm{r}}\coloneqq\hspace{10pt}
\begin{tikzcd}[row sep=0pt]
	&\Nname{1} \calK_1 &\Nname{2} \calK_2 &\Nname{3} \cdots &\Nname{4} \calK_n \\[8pt]
	 \Nname{ovl0}{\calK_{\hat{0}}} &&&&&\\
     &&&&& \Nname{h1}{\calK_{\hat{1}}} \\
    \Nname{h0}{\calK_{\hat{0}}}\\[8pt]
	& \Nname{1'}{\calK_{1'}} &\Nname{2'} {\calK_{2'}} &\Nname{3'} \cdots &\Nname{4'} {\calK_{m'}}
	\arrow[from=1, to=2]
	\arrow[from=2, to=3]
	\arrow[from=3, to=4]
	\arrow[from=4, to=h1]
	\arrow[from=ovl0, to=1]
	\arrow[from=h0, to=1']
	\arrow[from=1', to=2']
	\arrow[from=2', to=3']
	\arrow[from=3', to=4']
	\arrow[from=4', to=h1]
\end{tikzcd}.
$$
We can compute the zigzag persistence diagram $\mathcal{D}_q(\mathcal{F}_{\mathrm{r}})$ by utilizing the fast computation algorithm provided in \cite{deyFastComputationZigzag2022}, and obtain the sub-diagrams $\mathcal{D}_q(\bbId)$, $\mathcal{D}_q(\bbIu)$, $\mathcal{D}_q(\bbIr)$, and $\mathcal{D}_q(B_{n,m})$ based on Proposition~\ref{prp:corresponding-Q_1}.

\noindent {\bf Step 2.} Take the information of spaces and maps from $\mathcal{F}_{\mathrm{bipath}}$ and build the following zigzag filtration.
\[
\mathcal{F}_{l}\coloneqq\hspace{10pt}
\begin{tikzcd}[row sep=0pt]
	&\Nname{1} \calK_1 &\Nname{2} \calK_2 &\Nname{3} \cdots &\Nname{4} \calK_n \\[8pt]
	 &&&&& \Nname{h1}{\calK_{\hat{1}}} \\
    \Nname{0}{\calK_{\hat{0}}}\\
    &&&&& \Nname{c1}{\calK_{\hat{1}}}\\[8pt]
	& \Nname{1'}{\calK_{1'}} &\Nname{2'} {\calK_{2'}} &\Nname{3'} \cdots &\Nname{4'} {\calK_{m'}}
	\arrow[from=1, to=2]
	\arrow[from=2, to=3]
	\arrow[from=3, to=4]
	\arrow[from=4, to=h1]
	\arrow[from=0, to=1]
	\arrow[from=0, to=1']
	\arrow[from=1', to=2']
	\arrow[from=2', to=3']
	\arrow[from=3', to=4']
	\arrow[from=4', to=c1]
\end{tikzcd}.\]
Again, we compute the zigzag persistence diagram $\mathcal{D}_q(\mathcal{F}_{l})$ by utilizing the fast computation provided in \cite{deyFastComputationZigzag2022}, and obtain the remaining sub-diagram $\mathcal{D}_q(\bbIl)$ based on Proposition~\ref{prp:corresponding-Q_2}.

    \begin{rmk}
    \label{rmk:snake}
    It is possible to compute the bipath persistence diagram by computing the persistence diagram of a single zigzag filtration. Without loss of generality, we now assume the number of vertices in the lower path (i.e., $m$) is not greater than that in the upper path (i.e., $n$) of $B_{n,m}$, in terms of the usual total order of $\bbZ$.

Let $S$ be a poset having the Hasse quiver:
\begin{equation}
\label{eq:uni-cover}
\begin{tikzcd}[ampersand replacement=\&]
	\& {1''} \& {2''} \& \cdots \& {m''} \\
	{\tilde{0}} \\
	\& 1 \& 2 \& \cdots \& n \\
	{\hat{0}} \&\&\&\&\& {\hat{1}} \\
	\& {1'} \& {2'} \& \cdots \& {m'}
	\arrow[from=1-2, to=1-3]
	\arrow[from=1-3, to=1-4]
	\arrow[from=1-4, to=1-5]
	\arrow[from=2-1, to=1-2]
	\arrow[from=2-1, to=3-2]
	\arrow[from=3-2, to=3-3]
	\arrow[from=3-3, to=3-4]
	\arrow[from=3-4, to=3-5]
	\arrow[from=3-5, to=4-6]
	\arrow[from=4-1, to=5-2]
	\arrow[from=5-2, to=5-3]
	\arrow[from=5-3, to=5-4]
	\arrow[from=5-4, to=5-5]
	\arrow[from=5-5, to=4-6]
\end{tikzcd},
\end{equation}
where we set $[m]''\coloneqq \{1'',\dots, m''\}$. Now, we define the order-preserving map $\ze\colon S \to \bfP$ by setting
\begin{align}
\ze (x)\coloneqq\begin{cases}
x, & \textnormal{if } x\in [n]\sqcup [m]' \sqcup \{\hat{0}, \hat{1}\},\\
\hat{0}, & \textnormal{if } x=\tilde{0},\\
i', & \textnormal{if } x\coloneqq i''\in [m]''.
\end{cases}
\end{align}
Then it is straightforward to check that $\ze$ essentially covers all intervals of $B_{n, m}$, and hence we have
\begin{equation}
\label{eq:main-eq-uni}
d_M(V_{I}) = \bar{d}_{R(M)}(R(V_{I})).
\end{equation}
We remark here that in this case, $\bar{d}$ appearing in~\eqref{eq:main-eq-uni} can not be changed to the usual symbol $d$ for the multiplicity since $R(V_{I})$ may not be indecomposable in $\mod \k[S]$. For instance, considering an interval $I = [s', t']\in \bbId$ of $B_{n, m}$.
\end{rmk}

\section*{Declarations}
\begin{itemize}
\item Funding: H.A.\ is partially supported by JSPS Grant-in-Aid for Scientific Research (C) 18K03207, 25K06922, JSPS Grant-in-Aid for Transformative Research Areas (A) (22A201), and by Osaka Central Advanced Mathematical Institute
(MEXT Promotion of Distinctive Joint Research Center Program JPMXP0723833165). E.L.\ was supported by JST SPRING, Grant Number JPMJSP2110.
\item Conflict of interest: the authors declare that they have no relevant financial or non-financial conflicts of interest with regard to the content of this article.
\end{itemize}

\begin{appendices}
\section{The salamander lemma}
\label{sect:app}
We apply the salamander lemma in this paper,
for which we refer the reader to papers \cite{bergmanDiagramchasingDoubleComplexes2012} and \cite{geraschenkoAntonGeraschenkoSalamander2007} by Bergman and Geraschenko, respectively.
In particular, we use the notations introduced by Geraschenko.
Here we recall some necessary definitions and statements.

\begin{dfn}
\label{dfn:double-cpx}
A \emph{double complex} in an abelian category $\calA$ is a complex of complexes,
i.e., a family $X = (X_{i,j}, d^H_{i,j}, d^V_{i,j})_{(i,j) \in \bbZ^2}$ of objects $X_{i,j}$ and morphisms $d^H_{i,j} \colon X_{i,j} \to X_{i,j+1}$,
$d^V_{i,j} \colon X_{i,j} \to X_{i+1,j}$,
which satisfy the zero relations $d^H_{i,j+1}d^H_{i,j} = 0$,
$d^V_{i+1,j}d^V_{i,j} = 0$,
and the full commutativity relations
$(d^D_{i,j}\coloneqq )\, d^V_{i,j+1}d^H_{i,j} = d^H_{i+1,j}d^V_{i,j}$  for all $i,j \in \bbZ$.
We usually draw $d^H_{i,j}$ from the left to the right, and
$d^V_{i,j}$ downward in the diagram as in
$$
\begin{tikzcd}
{X_{i-1,j-1}} & {X_{i-1,j}} \\
{X_{i,j-1}} & {X_{i,j}} & {X_{i,j+1}} \\
& {X_{i+1,j}} & {X_{i+1,j+1}}
\arrow["{d^D_{i-1,j-1}}" near start, from=1-1, to=2-2]
\arrow["{d^V_{i-1,j}}", from=1-2, to=2-2]
\arrow["{d^H_{i,j-1}}", from=2-1, to=2-2]
\arrow["{d^H_{i,j}}", from=2-2, to=2-3]
\arrow["{d^V_{i,j}}", from=2-2, to=3-2]
\arrow["{d^D_{i,j}}" near end, from=2-2, to=3-3]
\end{tikzcd}.
$$
When we have a finite double complex, then we always extend it by adding zeros.

Here we define four homologies at $A\coloneqq X_{i,j}$ for each $(i,j) \in \bbZ^2$:
$$
\begin{gathered}
{}_=A\coloneqq \Ker d^H_{i,j}/\Im d^H_{i,j-1},\ A^\parallel\coloneqq \Ker d^V_{i,j}/\Im d^V_{i-1,j},\\
{}^\square A\coloneqq (\Ker d^H_{i,j} \cap \Ker d^V_{i,j})/\Im d^D_{i-1, d-1},\
A_\square\coloneqq \Ker d^D_{i,j}/(\Im d^H_{i-1,j} + \Im d^V_{i,j-1}),
\end{gathered}
$$
which are called the {\em horizontal homology}, the {\em vertical homology},
the {\em receptor} and the {\em donor}, respectively.

Inclusion morphisms induce canonical morphisms
$$
\begin{tikzcd}
{}^\square A & A^\parallel\\
{}_=A & A_\square
\Ar{1-1}{1-2}{}
\Ar{1-2}{2-2}{}
\Ar{1-1}{2-1}{}
\Ar{2-1}{2-2}{}
\end{tikzcd},
$$
which are called {\em intramural} morphisms, and a horizontal arrow (or a vertical arrow) $A \to B$ in the double complex induces a canonical morphism $A_\square \to {}^\square B$, called an {\em extramural} morphism.
\end{dfn}

\begin{prp}[The salamander lemma]
\label{prp:salamander}
Let $C \ya{f} A \ya{g} B \ya{h} D$ be a path in a double complex, where
both $f$ and $h$ are horizontal (resp.\ vertical) and $g$ is a vertical
(resp.\ horizontal) arrow.
Then there exists an exact sequence
$$
\begin{gathered}
C_\square \ya{a} A^\parallel \ya{b} A_\square \ya{c} {}^\square B
\ya{d} B^\parallel \ya{e} {}^\square D\\
(\text{resp.\ }
C_\square \ya{a} {}_=A \ya{b} A_\square \ya{c} {}^\square B
\ya{d} {}_=B \ya{e} {}^\square D),
\end{gathered}
$$
where $a$ is the composite of an extramural and an intramural,
$b, d$ are intramurals, $c$ is an extramural and $e$ is the composite of
an intramural and an extramural.
\end{prp}

The following two corollaries are immediate from the salamander lemma.

\begin{cor}
\label{cor:salamander-extra}
Assume the setting above.
If $A^\parallel = 0 = B^\parallel$
(resp.\ ${}_=A = 0 = {}_=B$), then the extramural $A_\square \ya{c} {}^\square B$
is an isomorphism. \qed
\end{cor}

\begin{cor}
\label{cor:salamander-corner}
Let $X$ be a complex in Definition \ref{dfn:double-cpx}, and
$A\coloneqq X_{i,j}$ for some $(i,j) \in \bbZ^2$.
Set $(C,B)\coloneqq (X_{i-1,j}, X_{i,j+1})$ $($resp.\ $(X_{i,j-1}, X_{i+1,j})$$)$
so that we have the following situation $($at first ignore homology signs$)$:
$$
\begin{tikzcd}
\Nname{C}C_\square\\
\Nname{A}{}^\square _=A_\square^\parallel &\Nname{B} {}^\square B
\Ar{C}{A}{}
\Ar{A}{B}{}
\Ar{A}{A}{dashed, no head,rounded corners, to path={([xshift=10pt,yshift=-5pt]A.north west)--
([yshift=-2pt]A.north)--([xshift=-10pt,yshift=-5pt]A.north east)}}
\Ar{A}{A}{dashed, no head,rounded corners, to path={([xshift=10pt,yshift=5pt]A.south west)--
([yshift=-2pt]A.south)--([xshift=-10pt,yshift=5pt]A.south east)}}
\end{tikzcd}
\quad
\left(\text{resp.\ }
\begin{tikzcd}
C_\square & \Nname{A}{}^\square_=A_\square^\parallel\\
 & {}^\square B
\Ar{1-1}{1-2}{}
\Ar{1-2}{2-2}{}
\Ar{A}{A}{dashed, no head,rounded corners, to path={([xshift=5pt,yshift=-6pt]A.north west)--
([xshift=2pt]A.west)--([xshift=5pt,yshift=5pt]A.south west)}}
\Ar{A}{A}{dashed, no head,rounded corners, to path={([xshift=-5pt,yshift=-6pt]A.north east)--
([xshift=-2pt]A.east)--([xshift=-5pt,yshift=6pt]A.south east)}}
\end{tikzcd}.
\right)
$$
If $C_\square = 0 = {}^\square B$, then we have
${}^\square A \iso A^\parallel$ and ${}_=A \iso A_\square$
$($i.e., the horizontal dashed lines become isomorphisms$)$
$($resp. ${}^\square A \iso {}_=A$ and $A^\parallel \iso A_\square\ ($i.e. the vertical dashed lines become isomorphisms$))$.
\qed
\end{cor}
\end{appendices}

\bibliography{sn-bibliography}
\end{document}